\DeclareMathOperator{\diag}{diag}
\DeclareMathOperator*{\Res}{Res}
\DeclareMathOperator{\CHF}{CHF}
\DeclareMathOperator{\tr}{Tr}
\DeclareMathOperator{\Var}{Var}
\newcommand{\Bes}{\mathrm{Bes}}
\newcommand{\C}{\mathbb{C}}
\renewcommand{\Re}{\mathrm{Re}\,}
\renewcommand{\Im}{\mathrm{Im}\,}
\renewcommand{\vec}{\mathbf}
\newcommand{\ud}{\,\mathrm{d}}
\newcommand{\what}{\widehat}
\newcommand{\msf}{\mathsf}
\newcommand{\ii}{\mathrm{i}}
\newcommand{\Boh}{\mathcal{O}}
\newtheorem{theorem}{Theorem}[section]
\newtheorem{lemma}[theorem]{Lemma}
\newtheorem{proposition}[theorem]{Proposition}
\newtheorem{corollary}[theorem]{Corollary}
\newtheorem{rhp}[theorem]{RH problem}
\newcommand{\tac}{\mathrm{htac}}
\newcommand{\HM}{\mathrm{HM}}
\theoremstyle{definition}
\theoremstyle{remark}
\newtheorem{remark}[theorem]{Remark}
\numberwithin{equation}{section}
\begin{document}
	\title{Large gap asymptotics of the hard edge tacnode process}
	
	\author{Junwen Liu\footnotemark[1],\quad Luming Yao\footnotemark[2], \quad Lun Zhang\footnotemark[3]}
	
	\renewcommand{\thefootnote}{\fnsymbol{footnote}}
	\footnotetext[1]{School of Mathematical Sciences, Fudan University, Shanghai 200433, P. R. China. E-mail: junwenliu21\symbol{'100}m.fudan.edu.cn}
	\footnotetext[2]{Institute for Advanced Study, Shenzhen University, Shenzhen 518060, P. R. China. E-mail: lumingyao\symbol{'100}szu.edu.cn}
 \footnotetext[3]{School of Mathematical Sciences, Fudan University, Shanghai 200433, P. R. China. E-mail: lunzhang\symbol{'100}fudan.edu.cn}
	
	\date{\today}
	
	\maketitle

\begin{abstract}
A special type of geometric situation in ensembles of non-intersecting paths occurs when the non-intersecting trajectories are required to be nonnegative so that the limit shape becomes tangential to the hard-edge $0$. The local fluctuation is governed by the universal hard edge tacnode process, which also arises from some tiling problems. It is the aim of this work to explore the integrable structure and asymptotics for the gap probability of the hard edge thinned/unthinned tacnode process over $(0,s)$. We establish an integral representation of the gap probability in terms of the Hamiltonian associated with a system of differential equations. With the aids of some remarkable differential identities for the Hamiltonian, we  are able to derive the associated large gap asymptotics, up to and including the constant term in the thinned case. Some applications of our results are discussed as well. 
\end{abstract}

\tableofcontents
	
\section{Introduction}
	
	Since Dyson's seminal work on non-intersecting Brownian motions and the evolution of eigenvalues of the Gaussian unitary ensemble \cite{Dyson}, ensembles of non-intersecting paths have appeared in a variety of physical, combinatorial and probabilistic models. Some examples include the polynuclear growth models \cite{John03,PS}, directed polymers \cite{Comet} and random tilings of various domains \cite{AV23,John06,John18}, just to mention a few. These connections are particularly helpful in establishing edge fluctuations of limit shapes, which give rise to universal limit laws related to random matrix theory and the KPZ universality class.
	
	A special type of geometric situation occurs when the boundaries of the limit shape touch each other creating a tacnode point. The fluctuation around the touching point will lead to a critical determinantal point process called the tacnode process. By using different approaches, the correlation kernel of tacnode process was given in the context of non-intersecting random walks on $\mathbb{Z}$ in \cite{AFV13}, of non-intersecting Brownian motions in \cite{DKZ11,FV12,John13,LW16}, and of double Aztec diamonds in \cite{ACJV15,AJV14}.
	
	The tacnode geometry appears also if the non-intersecting trajectories are required to be nonnegative so that the limit shape becomes tangential to the hard-edge $0$. This hard edge tacnode process was first reported in the setting of non-intersecting squared Bessel process \cite{Del13}. More precisely, let $X(t)$ be a squared Bessel process with parameter $\alpha>-1$, i.e., a diffusion process on the positive real axis with transition probability density given by (cf.~\cite{AndreiNPaavo,
		KonigOconnell})
	\begin{align}
		p_t^{(\alpha)}(x,y)&=\frac{1}{2t}\left(\frac{y}{x}\right)^{\frac{\alpha}{2}}e^{-\frac{x+y}{2t}}
		I_{\alpha}\left(\frac{\sqrt{xy}}{t}\right), &\qquad x,y>0,\label{p_t alpha} \\
		p_t^{(\alpha)}(0,y)&=\frac{y^{\alpha}}{(2t)^{\alpha+1}\Gamma(\alpha+1)}e^{-\frac{y}{2t}},
		&\qquad y>0,\label{p_t alpha 0}
	\end{align}
	where
	\begin{equation}
		I_{\alpha}(z)=\sum_{k=0}^{\infty}\frac{(z/2)^{2k+\alpha}}{k!\Gamma(k+\alpha+1)}
	\end{equation}
	is the modified Bessel function of the first kind of order $\alpha$ (cf. \cite[\S 10.25]{DLMF}) and $\Gamma(\cdot)$ is the Gamma function. If $d = 2(\alpha + 1)$ is an integer, it can be obtained as the square of the distance to the origin of a $d$-dimensional Brownian motion. By introducing a rescaling of the time
	$$
	t \mapsto \frac{T}{2n}t
	$$
	with $T$ being interpreted as the temperature, the model consists of $n$ independent copies of $X(t)$ such that they all start in $a \geq 0 $ at $t = 0$, remain nonnegative, end in $b \geq 0$ at $t = 1$, and do not intersect one another for $0 < t < 1$; see also \cite{Kat12,KIK08,KatTan04,KatTan11} for different types of initial and ending conditions.
	
  If $a=b=0$, i.e., all paths start and end at the origin, this system provides a process version of the Laguerre unitary ensemble, cf.~\cite{DF,KIK08,KonigOconnell}. The case where $a>0$ and $b=0$ was considered in \cite{KMW09,KMW11}, and in \cite{DKRZ12} for $a,b>0$. For the latter case, by assuming $T=1$ and letting $n\to\infty$, the paths fill out one of the regions of as shown in Figure \ref{fig:nsbp}.  It comes out that classification of the three distinguished cases depends on the product $ab$, as discussed in \cite{DKRZ12}. If $ab>1/4$, all paths remain positive for all the time and we are in a situation illustrated by the left picture in Figure \ref{fig:nsbp}. If $ab<1/4$, there are two critical times $t_1$ and $t_2$, as depicted in the middle picture of Figure \ref{fig:nsbp}. All paths hit the hard edge and stick to it during the time interval $(t_1,t_2)$. The times $t_1$ and $t_2$ will come together if $ab=1/4$, and we are then led to an intermediate situation shown in the right picture in Figure \ref{fig:nsbp}. The paths fill out a region bounded by the solid line, which is tangent to the hard edge at a multicritical time $t_c=\sqrt{a}/(\sqrt{a}+\sqrt{b})$.
	
	\begin{figure}
		\begin{center}
			\subfloat{\begin{overpic}[scale=0.3,viewport=79 210 551 595]{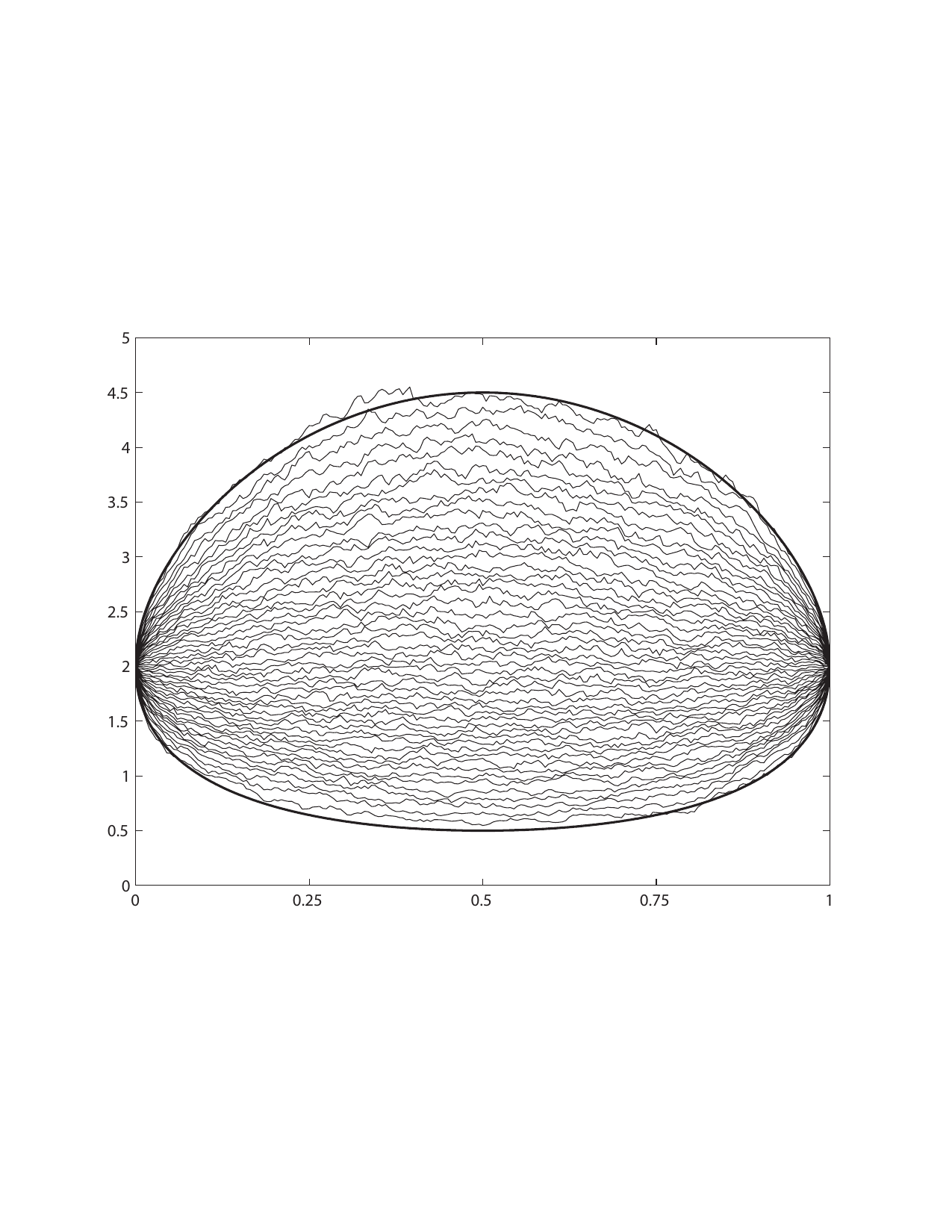}
			\end{overpic}}\hspace{1.5mm}
			\subfloat{\begin{overpic}[scale=0.3,viewport=79 210 551 595]{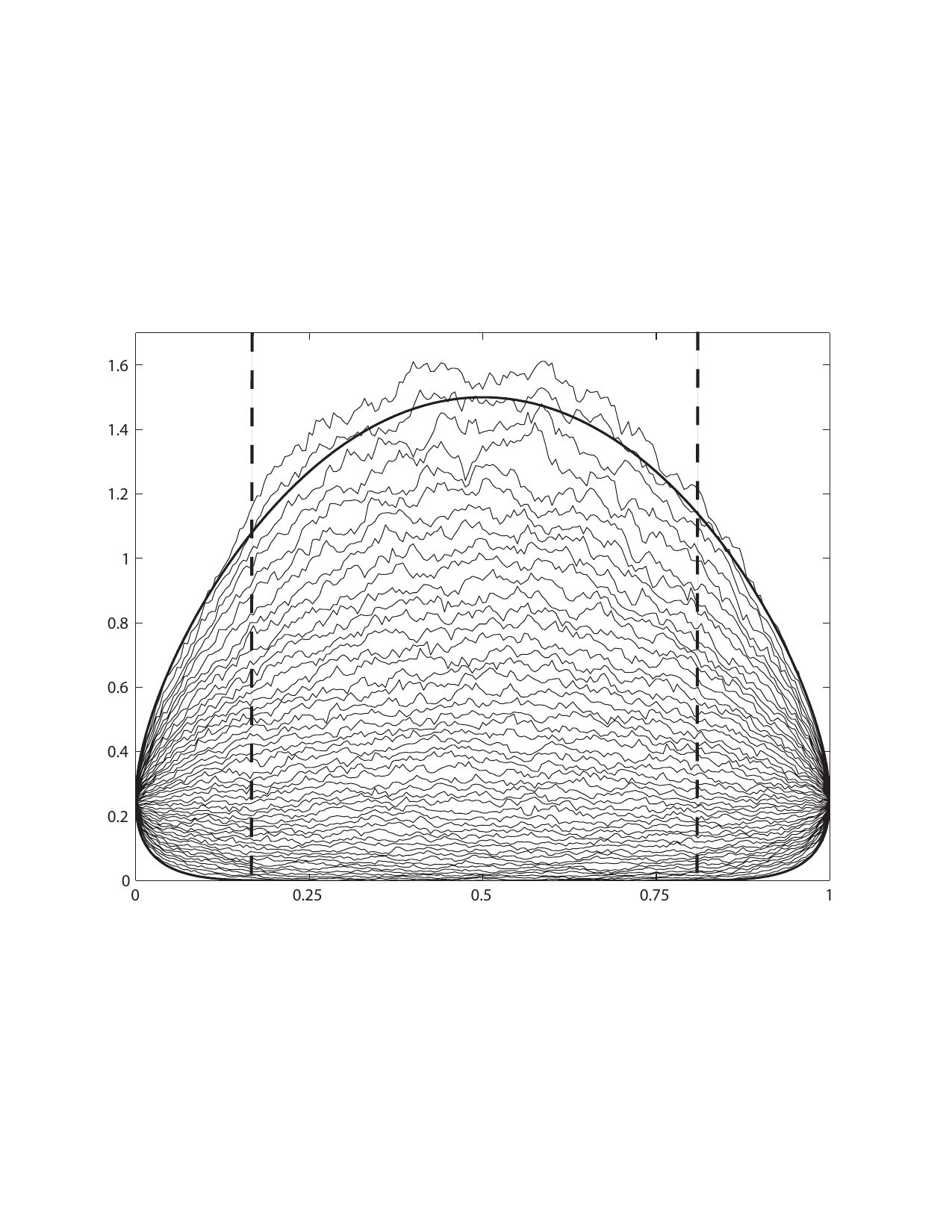}
					\put(15,-3){{\small $t_1$}} \put(77,-3){{\small $t_2$}}
			\end{overpic}}\hspace{1.5mm}
			\subfloat{\begin{overpic}[scale=0.3,viewport=79 210 551 595]{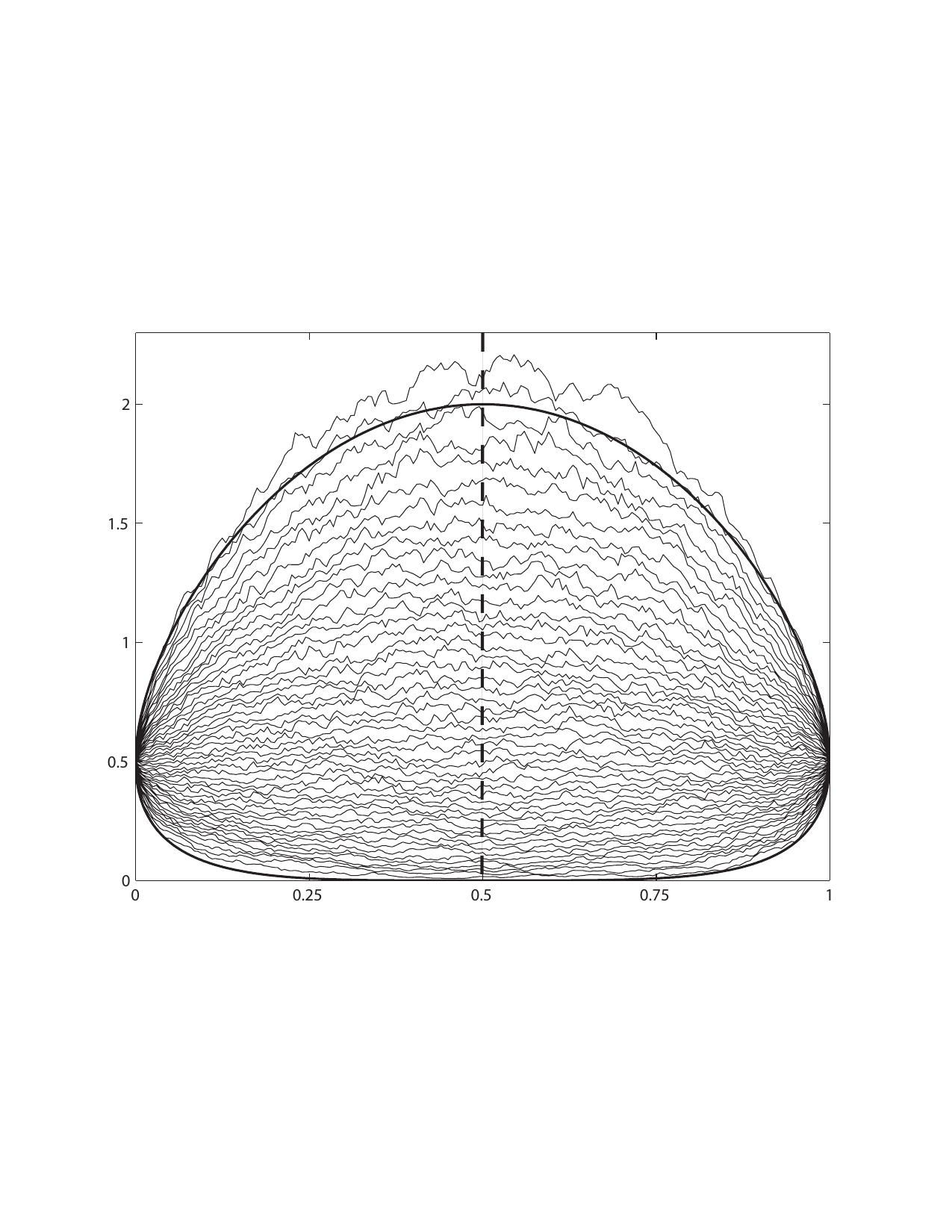} \put(49,-3){{\small $t_c$}}
			\end{overpic}}
		\end{center}
		\caption{Simulation picture of 50 rescaled non-intersecting squared Bessel paths at $T=1$ that start at $a$ and end at $b$ with $a=b=2$ (left), $a=b=1/4$ (middle) and $a=b=1/2$ (right).}
		\label{fig:nsbp}
	\end{figure}

As aforementioned, it is of great interest to study the random fluctuations around the limit shapes. For the non-intersecting squared Bessel paths model with one positive starting and ending point, the positions of these paths form a determinantal point process. Generically, the local path statistics is governed by the Airy process and the Bessel process at the soft edge and the hard edge of the limit shape \cite{DKRZ12}, respectively, which are canonical universal point processes in random matrix theory, cf. \cite{Deift1999,For,Mehta}. After rescaling around the two critical times $t_1$ and $t_2$, it is expected that one encounters the hard edge Pearcey process \cite{BK10,DF,KMW11}, corresponding to a hard-to-soft transition; see also \cite{DV15} for a multi-time extension. The hard edge tacnode process then describes local correlations of the paths around the tacnode point $t_c$. Assume that the endpoints $a$ and $b$ are fixed with $ab=1/4$, and let the time $t$ and the temperature $T$ depend on $n$ such that
$$ 
t=t_c+Kn^{-\frac{1}{3}},\quad
T=1+Ln^{-\frac{2}{3}}, \quad K,L\in \mathbb{R}. 
$$
Delvaux in \cite{Del13} showed that the triple scaling
limit of the correlation kernel for the non-intersecting squared Bessel paths is given by the hard edge tacnode kernel $K_{\tac}(u,v;s^*,\tau^*,\alpha)$, where the paramerters $s^*$ and $\tau^*$ are functions explicitly given in terms of $a,b,K$ and $L$. The kernel $K_{\tac}$  was built therein with the aid of the solution of a $4 \times 4$ Riemann-Hilbert (RH) problem; see also \cite{DV15,LW17} for different forms and extensions. This RH problem has a remarkable connection with the Hastings-McLeod solution \cite{FIKNBook,HM} of the inhomogeneous Painlev\'{e} II equation
	\begin{align}\label{eq:PII}
		q''(x)=2q(x)^3+xq(x)-\nu, \qquad \nu>-1/2,
	\end{align}
and plays an important role in the description of a critical phenomenon in the chiral 2-matrix model \cite{DGZ}. Later appearances of the hard edge tacnode process in the context of non-intersecting Brownian bridges with reflecting or absorbing walls \cite{FV17,LW17} and of random domino tilings of the restricted Aztec diamond \cite{FV21} reveal its universal feature.
	
 Consideration in this paper is the gap probability of the hard tacnode process -- a basic object in the theory of point processes. More precisely, let $\mathcal{K}_{\tac}$ be the integral operator acting on $L^2\left(0, s\right)$, $s\geq 0$, with the hard edge tacnode kernel $K_{\tac}$.  We are interested in the associated Fredholm determinant $\det\left(I-\gamma  \mathcal K_{\tac}\right)$, where $0< \gamma \leq 1$ is a real parameter. On the one hand, it is well-known that, due to the determinantal structure, $\det\left(I-\gamma  \mathcal K_{\tac}\right)$ can be interpreted as the probability of finding no particles (a.k.a. the gap probability) on the interval $(0,s)$ for the hard edge tacnode process and the thinned version, which corresponds to $\gamma=1$ and $0<\gamma<1$, respectively. The thinned process is obtained from the original one by removing each particle independently with probability $1-\gamma$. On the other hand, intensive studies of various Fredholm determinants and their deformed versions arising from random matrix theory or beyond have unraveled their rich structures and elegant forms of the large gap asymptotics. The relevant results can be found in \cite{BBD08,BB18,DIK2008,TW94} for the Airy determinant, in \cite{BIP19,Charlier21,DKV11,Ehr10,TW94b} for the Bessel determinant, and in \cite{DXZ23,YZ24a} for the hard edge Pearcey determinant, among others.
	
It is the aim of this work to add to the collection of formulas for the very few basic gap probabilities by working on the hard edge tacnode determinant. Our results are stated in the next section.
	
	\paragraph{Notations} Throughout this paper, the following notations are frequently used.
	\begin{itemize}
		\item If $A$ is a matrix, then $(A)_{ij}$ stands for its $(i,j)$-th entry and $A^{\msf T}$ stands for its transpose. An unimportant entry of $A$ is denoted by $\ast$. We use  $I$ to denote an identity matrix, and the size might differ in different contexts. To emphasize a $k\times k$ identity matrix, we also use the notation $I_k$.
		\item  It is notationally convenient to denote by $E_{jk}$ the $4\times 4$ elementary matrix
		whose entries are all $0$, except for the $(j,k)$-entry, which is $1$, that is,
		\begin{equation}\label{def:Eij}
			E_{jk}=\left( \delta_{l,j}\delta_{k,m} \right)_{l,m=1}^4,
		\end{equation}
		where $\delta_{j,k}$ is the Kronecker delta.

		\item We denote by $D(z_0, r)$ the open disc centred at $z_0$ with radius $r > 0$, i.e.,
		\begin{equation}\label{def:dz0r}
			D(z_0, r) := \{ z\in \mathbb{C} \mid |z-z_0|<r \},
		\end{equation}
		and by $\partial D(z_0, r)$ its boundary. The orientation of $\partial D(z_0, r)$ is taken in a clockwise manner.
		\item As usual, the three Pauli matrices $\{\sigma_j\}_{j=1}^3$ are defined by
		\begin{equation}\label{def:Pauli}
			\sigma_1=\begin{pmatrix}
				0 & 1 \\
				1 & 0
			\end{pmatrix},
			\qquad
			\sigma_2=\begin{pmatrix}
				0 & -\ii \\
				\ii & 0
			\end{pmatrix},
			\qquad
			\sigma_3=
			\begin{pmatrix}
				1 & 0 \\
				0 & -1
			\end{pmatrix}.
		\end{equation}
	\end{itemize}
 
\section{Statement of results}

\subsection{A RH characterization of $K_{\tac}$}
In \cite{Del13}, the hard edge tacnode kernel $K_{\tac}$ is constructed by a matrix-valued function 
$$
\widehat M(z) := \diag(z^{\frac14},z^{-\frac14},z^{\frac14},z^{-\frac14})\diag\left(\begin{pmatrix} 1 & -1
\\
1 & 1
\end{pmatrix}
,
\begin{pmatrix}
1 & 1
\\
-1 & 1
\end{pmatrix}
\right)M(z^{\frac12}),
$$
where $M$ is the unique solution of a $4\times 4$ RH problem that generalizes the one in \cite{DKZ11,DG13}. Although not explicitly formulated therein, it follows from the above definition, \cite[RH Problem 1]{Del13} and direct calculations that  $\widehat M$ solves the following RH problem.

	\begin{rhp}\label{rhp:hard edge tac}
		\hfill
		\begin{itemize}
			\item[\rm (a)]  $\widehat M(z)=\widehat M(z; \tilde{s},\nu,\tau)$ is analytic for $ z \in \C \setminus \Gamma_{\what M} $, where the parameters $\tilde{s},\nu,\tau$ are real with $\nu >-\frac{1}{2}$, and
			\begin{equation}
				\Gamma_{\widehat M}:=\cup_{k=0}^5\Gamma_k \cup \{0\}
			\end{equation}
			with
			\begin{equation}\label{phi}
				\begin{aligned}
					&\Gamma_0= (0,+\infty),&& \Gamma_1=e^{\varphi \ii}(0,+\infty), &&\Gamma_2=e^{-\varphi \ii}(-\infty,0),\\
					&\Gamma_3= (-\infty,0),&& \Gamma_4=e^{\varphi \ii}(-\infty,0), &&\Gamma_5=e^{-\varphi \ii}(0,+\infty), \quad 0<\varphi<\frac{\pi}{3};
				\end{aligned}
			\end{equation}
			see Figure \ref{fig:hard edge tacnode} for an illustration of the contour $\Gamma_{\what M}$ and the orientation.
   \begin{figure}[t]
		\begin{center}
			\setlength{\unitlength}{1truemm}
			\begin{picture}(100,70)(-5,2)
				\put(40,40){\line(-1,-1){20}}
				\put(40,40){\line(-1,1){20}}
				\put(40,40){\line(-1,0){30}}
				\put(40,40){\line(1,0){30}}
				\put(40,40){\line(1,1){20}}
				\put(40,40){\line(1,-1){20}}
				
				\put(30,50){\thicklines\vector(1,-1){1}}
				\put(25,40){\thicklines\vector(1,0){1}}
				\put(30,30){\thicklines\vector(1,1){1}}
				\put(50,50){\thicklines\vector(1,1){1}}
				\put(55,40){\thicklines\vector(1,0){1}}
				\put(50,30){\thicklines\vector(1,-1){1}}

				\put(39,36.3){$0$}
				
				\put(27,22){$\Gamma_4$}
				\put(-21,18) {$\begin{pmatrix} 1&0&0&0\\ -e^{-\nu \pi \ii}&1&0&0\\0&0&1& e^{-\nu \pi \ii}\\0&0&0&1 \end{pmatrix}$}
				
				\put(27,55){$\Gamma_2$}
				\put(-21,66){$\begin{pmatrix} 1&0&0&0\\ e^{\nu \pi \ii}&1&0&0\\0&0&1& -e^{\nu \pi \ii}\\0&0&0&1 \end{pmatrix}$}
				
				\put(17,42){$\Gamma_3$}
				\put(-21,40){$\begin{pmatrix}0&-\ii&0&0\\-\ii&0&0&0\\0&0&0&\ii\\0&0&\ii&0 \end{pmatrix}$}
				
				\put(48,22){$\Gamma_5$}
				\put(60,18){$\begin{pmatrix} 1&0&0&0\\0&1&0&0\\1&0&1&0\\0&0&0&1 \end{pmatrix}$}
				
				\put(48,55){$\Gamma_1$}
				\put(60,66) {$\begin{pmatrix} 1&0&0&0 \\ 0 &1&0&0\\1&0&1&0\\0&0&0&1 \end{pmatrix}$}
				
				\put(58,42){$\Gamma_0$}
				\put(72,40){ $\begin{pmatrix}
						0&0&1&0\\0&1&0&0\\-1&0&0&0\\0&0&0&1 \end{pmatrix}$ }
				\put(37,55){$\Omega_1$}
				\put(53,48){$\Omega_0$}
				\put(22,48){$\Omega_2$}
				\put(22,32){$\Omega_3$}
				\put(37,22){$\Omega_4$}
				\put(53,32){$\Omega_5$}
				\put(40,40){\thicklines\circle*{1}}
				
			\end{picture}
			
			\caption{Regions $\Omega_k$, the jump contours $\Gamma_k$ and the corresponding jump matrices $J_{k}$, $k=0,\ldots,5$, in the RH problem for $\widehat{M}$.}
			\label{fig:hard edge tacnode}
		\end{center}
	\end{figure}
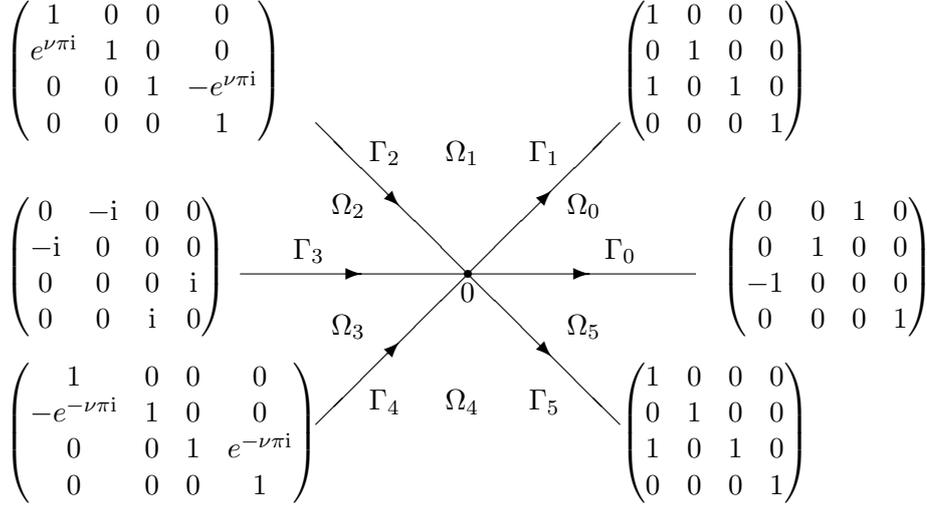
			\item[\rm (b)]  For $z\in\Gamma_k$, $k=0,1,\ldots,5$, we have
			\begin{equation}\label{jumps:Mhat}
				\widehat M_{+}(z) = \widehat M_{-}(z)J_k,\qquad k=0,\ldots,5,
			\end{equation}
			where the jump matrix $J_k$ for each ray $\Gamma_k$ is shown in Figure \ref{fig:hard edge tacnode}.
			\item[\rm (c)] As $z \to \infty$ with $z \in \mathbb{C} \setminus  \Gamma_{\widehat{M}}$, we have
			\begin{align}\label{eq:asy:Mhat}
					\widehat{M}(z)&= \diag \left(z^{\frac{1}{4}}, z^{-\frac{1}{4}},z^{\frac{1}{4}}, z^{-\frac{1}{4}}\right)    \diag \left(\begin{pmatrix} 1 & -1 \\ 1 & 1 \end{pmatrix},\begin{pmatrix} 1 & 1 \\ -1 & 1 \end{pmatrix}\right) \nonumber \\ 
	&\quad\times\left(I+\frac{M^{(1)}}{\sqrt{z}}+\frac{M^{(2)}}{z}+\Boh(z^{-\frac{3}{2}})\right)
					\diag \left((-\sqrt{z})^{-\frac{1}{4}}, z^{-\frac{1}{8}},(-\sqrt{z})^{\frac{1}{4}}, z^{\frac{1}{8}}\right) \nonumber\\
					&\quad\times A \diag \left(e^{-\theta_1(\sqrt{z})+\tau \sqrt{z}}, e^{-\theta_2(\sqrt{z})-\tau \sqrt{z}}, e^{\theta_1(\sqrt{z})+\tau \sqrt{z}}, e^{\theta_2(\sqrt{z})-\tau \sqrt{z}}\right),
 			\end{align}  
			where $M^{(1)}, M^{(2)}$ are independent of $z$ but depend on the parameter $\tilde{s},\nu,\tau$,
			\begin{align} \label{def:A}
				A:&=\frac{1}{\sqrt 2} \begin{pmatrix} 1 & 0 & -\ii & 0 \\ 0 & 1& 0& \ii \\
					-\ii & 0& 1& 0
					\\
					0 & \ii & 0 & 1 \end{pmatrix},
				\\
				\theta_1(z):&= \frac23 (-z)^{\frac 32} +2 \tilde{s} (-z)^{\frac 12}, \qquad z\in \mathbb{C}\setminus [0,\infty), \label{def:theta1}
				\\
				\theta_2(z):&=\frac23 z^{\frac 32} +2 \tilde{s} z^{\frac 12}, \qquad z\in \mathbb{C}\setminus (-\infty,0]. \label{def:theta2}
			\end{align}

        \item [\rm (d)] As $z \to 0$, there exists an analytic matrix-valued function $M_0(z)$ such that
		\begin{align}\label{def:M_0}
			\what M(z) = M_0(z) \begin{pmatrix}
				z^{\frac{2\nu-1}{4}} & \delta_{\nu}(z) & e^{-\nu\pi \mathrm{i}}\delta_{\nu}(z) & 0 \\
				0 & z^{-\frac{2\nu-1}{4}} & 0 & 0 \\
				0 & 0 & z^{-\frac{2\nu-1}{4}} & 0 \\
				0 & 0 & -\delta_{\nu}(z) & z^{\frac{2\nu-1}{4}}
			\end{pmatrix}\begin{cases}
				J_1^{-1} \quad & z\in \Omega_0, \\
				I \quad & z\in \Omega_1, \\
				J_2^{-1} \quad & z\in \Omega_2, \\
				J_1^{-1}J_0^{-1}J_5^{-1}J_4 \quad & z\in \Omega_3, \\
				J_1^{-1}J_0^{-1}J_5^{-1} \quad & z\in \Omega_4, \\
				J_1^{-1}J_0^{-1} \quad & z\in \Omega_5,
			\end{cases}
		\end{align}
        where the regions $\Omega_k$, $k=0,1,\ldots,5$, are shown in Figure \ref{fig:hard edge tacnode}, 
		\begin{equation}\label{def:delta}
			\delta_{\nu}(z) = \begin{cases}
				-\frac{\ln z\cdot z^{\frac{2\nu-1}{4}}}{2\pi e^{ (\nu-\frac{1}{2})\pi \ii}} ,\qquad & \nu-\frac{1}{2} \in \mathbb{Z}, \\
				-\frac{z^{\frac{2\nu-1}{4}}}{2\sin\left((\nu-\frac{1}{2})\pi \right)} ,\qquad & \nu-\frac{1}{2} \notin \mathbb{Z},
			\end{cases} 
		\end{equation}
and the principal branch cuts are taken for $\ln z$ and $z^{(2\nu-1)/4}$. 

		\end{itemize}
 \end{rhp}
It comes out that some entries of the matrix $M^{(1)}$ are related to 
the Hastings-McLeod solution $q_{\HM}$ of the inhomogeneous Painlev\'{e} II equation \eqref{eq:PII} characterized by the asymptoics
\begin{align}
	q_{\HM}(x)\sim \begin{cases}
				 \nu/x, \qquad & x \to +\infty, \\
				\sqrt{-x/2}, \qquad & x\to -\infty.
			\end{cases} 
\end{align}
More precisely, by \cite{Del13}, it follows that
\begin{align}\label{eq:M13M14}
	&M^{(1)}_{13}=-2^{-\frac{1}{3}}\ii u(2^{\frac{2}{3}}(\tilde{s}-\tau^2) + \tilde{s}^2,\\
	&M^{(1)}_{14}=2^{-\frac{1}{3}}\ii q_{\HM}(2^{\frac{2}{3}}(\tilde{s}-\tau^2),
\end{align}
where
\begin{align}
	u(x) := q_{\HM}'(x)^2 - xq_{\HM}(x)^2 - q_{\HM}(x)^4 + 2\nu q(x) 
\end{align}
is the associated Hamiltonian. Specifically, if $\tau=0$, we have 
\begin{align}\label{eq:zerotau}
    M^{(1)}_{11}=M^{(1)}_{12}=-M^{(1)}_{33}=M^{(1)}_{34}=\frac{(M^{(1)}_{13})^2-(M^{(1)}_{14})^2+\tilde{s}}{2},
\end{align}
which would be proved in Section \ref{sec:proof}.

 \begin{remark}
For later use, we rewrite large $z$ asymtotics of $\widehat{M}$ in \eqref{eq:asy:Mhat} as
			\begin{align}\label{eq:asy:Mhat2}
				\widehat{M}(z)&= \left(\widetilde{M}^{(0)} + \frac{\widetilde{M}^{(1)}}{z}+ \Boh(z^2)\right)
				\diag \left(z^{\frac{1}{4}}, z^{-\frac{1}{4}},z^{\frac{1}{4}}, z^{-\frac{1}{4}}\right)    \diag \left(\begin{pmatrix} 1 & -1 \\ 1 & 1 \end{pmatrix},\begin{pmatrix} 1 & 1 \\ -1 & 1 \end{pmatrix}\right) \nonumber\\ 
				&\quad\times\diag \left((-\sqrt{z})^{-\frac{1}{4}}, z^{-\frac{1}{8}},(-\sqrt{z})^{\frac{1}{4}}, z^{\frac{1}{8}}\right) \nonumber\\
				&\quad\times A \diag \left(e^{-\theta_1(\sqrt{z})+\tau \sqrt{z}}, e^{-\theta_2(\sqrt{z})-\tau \sqrt{z}}, e^{\theta_1(\sqrt{z})+\tau \sqrt{z}}, e^{\theta_2(\sqrt{z})-\tau \sqrt{z}}\right),
			\end{align}
			where
			\begin{equation}\label{def:M1}
				\widetilde{M}^{(0)} = \begin{pmatrix}
					1 & M^{(1)}_{11}+M^{(1)}_{12} & 0 & -M^{(1)}_{13}+M^{(1)}_{14} \\
					0 & 1 & 0 & 0 \\
					0 & M^{(1)}_{31}+M^{(1)}_{32} & 1 & -M^{(1)}_{33}+M^{(1)}_{34} \\
					0 & 0 & 0 & 1 
				\end{pmatrix}
			\end{equation}
			and
			\begin{equation}\label{def:M2}
				\widetilde{M}^{(1)} = \begin{pmatrix}
					*&*&*&*\\
					M^{(1)}_{11}-M^{(1)}_{12} &*&*&*\\
					*&*&*&*\\
					*&*&-M^{(1)}_{33}-M^{(1)}_{34}&*
				\end{pmatrix}.
			\end{equation}
 \end{remark}

\begin{remark}
 The analyticity of $M_0(z)$ in \eqref{def:M_0} can be verified directly through RH problem \ref{rhp:hard edge tac} for $\what M$. Furthermore, $M_0(0)$ takes the form
 \begin{align}\label{defi:M00}
			M_0(0)=\begin{pmatrix}
				0 & m_{12} & m_{13} &0 \\
				m_{21} & m_{22} & m_{23} & m_{24} \\
				0 & m_{32} & m_{33} & 0 \\
				m_{41} & m_{42} & m_{43} & m_{44} \\
			\end{pmatrix}
		\end{align}
with $\det M_0(0)= -\left((m_{13} m_{32} - m_{12} m_{33}) (m_{24} m_{41} - m_{21} m_{44})\right)=1$.   
\end{remark}

Let $\widetilde{M}$ be the analytic continuation of the restriction of $\widehat{M}$ in the sector $\Omega_1$  to the whole complex plane except for the negative real axis. By \cite[Equation (33)]{Del13}, we have 
	\begin{align}\label{def:hard tacnode kernel}
		& K_{\tac}(u,v) :=K_{\tac}(u,v;\tilde{s},\nu,\tau)
  \nonumber \\
  &= \frac{1}{2\pi \ii(u-v)}\begin{pmatrix}
				-1&0&1&0
			\end{pmatrix}
			\widehat{M}_{+}^{-1}(v) \widehat{M}_{+}(u)
			\begin{pmatrix}
				1&0&1&0
			\end{pmatrix}^\msf T \nonumber\\ 
  &=\frac{1}{2\pi \ii(u-v)}\begin{pmatrix}
			0&0&1&0
		\end{pmatrix}
		\widetilde{M}^{-1}(v) \widetilde{M}(u)
		\begin{pmatrix}
			1&0&0&0
		\end{pmatrix}^\msf T, \qquad u,v>0,
	\end{align}
where the second equality follows from item (b) of RH problem \ref{rhp:hard edge tac} for $\widehat M$. 



\subsection{Main results}
Let
	\begin{equation}\label{def:Fnotation}
		F(s;\gamma)=F(s;\gamma,\tilde{s},\nu,\tau):=\ln \det\left(I-\gamma  \mathcal K_{\tac}\right), 
	\end{equation}
our first result is an integral representation of $F$. The integral representation involves the Hamiltonian of a system of coupled differential equations, which reads as follows.
	\begin{align}\label{def:pq's}
		\left\{
		\begin{aligned}
			p_1'(s)&=-\frac{1}{s}\left(\sum_{j=1}^{4} p_j(s)(p_5(s) q_{4+j}(s)+p_9(s) q_{8+j}(s))\right)+\frac{\ii }{2}p_3(s),\\
			p_2'(s)&=-\frac{1}{s}\left(\sum_{j=1}^{4} p_j(s)(p_6(s) q_{4+j}(s)+p_{10}(s) q_{8+j}(s))\right)+\frac{\tau-\mathrm{i}(-M^{(1)}_{13}+M^{(1)}_{14})}{2}p_1(s)\\
			&\quad-\frac{\mathrm{i}\tilde{s}+\mathrm{i}(M^{(1)}_{11}+M^{(1)}_{12})+\mathrm{i}(-M^{(1)}_{33}+M^{(1)}_{34})}{2}p_3(s)-\frac{\mathrm{i}}{2}p_4(s),\\
			p_3'(s)&= -\frac{1}{s}\left(\sum_{j=1}^{4} p_j(s)(p_7(s) q_{4+j}(s)+p_{11}(s) q_{8+j}(s))\right),\\
			p_4'(s)&=-\frac{1}{s}\left(\sum_{j=1}^{4} p_j(s)(p_8(s) q_{4+j}(s)+p_{12}(s) q_{8+j}(s))\right)-\frac{\mathrm{i}}{2}p_1(s)-\frac{\tau+\mathrm{i}(-M^{(1)}_{13}+M^{(1)}_{14})}{2}p_3(s),\\
			p_{4+i}'(s)&=-\frac{1}{s}\left(\sum_{j=1}^{4} p_i(s) q_j(s) p_{4+j}(s)\right), \quad i=1,2,3,4, \\
			p_{8+i}'(s)&=-\frac{1}{s}\left(\sum_{j=1}^{4} p_i(s) q_j(s) p_{4+j}(s)\right), \quad i=1,2,3,4,\\
			q_1'(s)&=\frac{1}{s}\left(\sum_{j=1}^{4} q_j(s)(q_5(s) p_{4+j}(s)+q_9(s) p_{8+j}(s))\right)+\frac{\tau-\ii (-M^{(1)}_{13}+M^{(1)}_{14}) }{2}q_2(s)-\frac{\ii }{2}q_4(s),\\
			q_2'(s)&=\frac{1}{s}\left(\sum_{j=1}^{4} q_j(s)(q_6(s) p_{4+j}(s)+q_{10}(s) p_{8+j}(s))\right),\\
			q_3'(s)&=\frac{1}{s}\left(\sum_{j=1}^{4} q_j(s)(q_7(s) p_{4+j}(s)+q_{11}(s) p_{8+j}(s))\right)+\frac{\ii }{2}q_1(s)\\
			&\quad-\frac{\mathrm{i}\tilde{s}+\mathrm{i}(M^{(1)}_{11}+M^{(1)}_{12})+\mathrm{i}(-M^{(1)}_{33}+M^{(1)}_{34})}{2}q_2(s)-\frac{\tau+\mathrm{i}(-M^{(1)}_{13}+M^{(1)}_{14})}{2}q_4(s),\\
			q_4'(s)&=\frac{1}{s}\left(\sum_{j=1}^{4} q_j(s)(q_8(s) p_{4+j}(s)+q_{12}(s) p_{8+j}(s))\right)-\frac{\ii}{2}q_2(s),\\
			q_{4+i}'(s)&=\frac{1}{s}\sum_{j=1}^{4}q_i(s) p_j(s) q_{4+j}(s), \quad i=1,2,3,4,\\
			q_{8+i}'(s)&=\frac{1}{s} \sum_{j=1}^{4}q_i(s) p_j(s) q_{8+j}(s), \quad i=1,2,3,4,
		\end{aligned}\right.
	\end{align}
	where
	$$
	p_k(s)=p_k(s;\gamma, \tilde{s},\nu, \tau), \qquad q_k(s)=q_k(s;\gamma, \tilde{s},\nu, \tau), \qquad k=1,\ldots,12,
	$$
are 24 unknown functions and recall that $M^{(1)}_{ij}$ stands for the $(i,j)$-th entry of the matrix $M^{(1)}$ in \eqref{eq:asy:Mhat}. By introducing the matrix-valued functions
	\begin{equation}\label{def:A0}
		A_0=\begin{pmatrix}
			0 & \frac{\tau-\mathrm{i}(-M^{(1)}_{13}+M^{(1)}_{14})}{2} & 0 & -\frac{\mathrm{i}}{2} \\
			0 & 0 & 0 & 0 \\
			\frac{\mathrm{i}}{2} & -\frac{\mathrm{i}\tilde{s}+\mathrm{i}(M^{(1)}_{11}+M^{(1)}_{12})+\mathrm{i}(-M^{(1)}_{33}+M^{(1)}_{34})}{2} & 0 & -\frac{\tau+\mathrm{i}(-M^{(1)}_{13}+M^{(1)}_{14})}{2} \\
			0 & -\frac{\mathrm{i}}{2} & 0 & 0
		\end{pmatrix},
	\end{equation}
	\begin{equation}\label{def:A1}
		A_1(s) = \begin{pmatrix}
			q_1(s)\\q_2(s)\\q_3(s)\\q_4(s)
		\end{pmatrix}\begin{pmatrix}
			p_1(s) & p_2(s)&p_3(s)&p_4(s)
		\end{pmatrix}
	\end{equation}
	and
	\begin{align}\label{def:A2}
			A_2(s) &= \begin{pmatrix}
				q_5(s)\\q_6(s)\\q_7(s)\\q_8(s)
			\end{pmatrix}\begin{pmatrix}
				p_5(s) & p_6(s)&p_7(s)&p_8(s)
			\end{pmatrix}
			+\begin{pmatrix}
				q_9(s)\\q_{10}(s)\\q_{11}(s)\\q_{12}(s)
			\end{pmatrix}\begin{pmatrix}
				p_9(s) & p_{10}(s)&p_{11}(s)&p_{12}(s) 
			\end{pmatrix}\nonumber\\
			&\quad +\frac{2\nu-1}{4}I_4,
	\end{align}
	one can check
	\begin{equation}\label{def:H}
		H(s)=H(s;\gamma,\tilde{s}, \nu,\tau):= \begin{pmatrix}
			p_1(s) & p_2(s) & p_3(s) & p_4(s)
		\end{pmatrix}\left( A_0+\frac{A_2(s)}{s}\right)\begin{pmatrix}
			q_1(s)\\q_2(s)\\q_3(s)\\q_4(s)
		\end{pmatrix}
	\end{equation}
	is the Hamiltonian for the system of differential equations \eqref{def:pq's}, under
	the extra condition
	\begin{equation}\label{eq:sumpq}
		\sum_{k=1}^4 p_k(s)q_k(s)=0.
	\end{equation}
	That is, we have
	\begin{equation}\label{pq}
		q_k'(s)=\frac{\partial H}{\partial p_k}, \qquad p_k'(s)=-\frac{\partial H}{\partial q_k}, \qquad k=1,\ldots,12.
	\end{equation}
 
	\begin{theorem}\label{th:F-H}
		With the function $F(s;\gamma)$ defined in \eqref{def:Fnotation}, we have,
		\begin{align}\label{eq:F-H2}
			F(s;\gamma) = \int_0^s H(t; \gamma) \ud t, \qquad s \in (0, \infty),
		\end{align}
		where $H$ is the Hamiltonian \eqref{def:H} associated with a family of special solutions to the system of differential equations \eqref{def:pq's}. Moreover, $H(s)$ satisfies the following asymptotic behaviors: as $s \to 0^+$,
		\begin{equation}\label{eq:H-0}
			H(s)=\Boh(s^{\frac{2\nu-1}{2}}),
		\end{equation}
		and as $s \to +\infty$,
		\begin{align}\label{asy:H}
			H(s)=\begin{cases}
				-\frac{1}{8}s^{\frac{1}{2}} + \frac{\tilde{s}}{2} - \frac{\tilde{s}^{2}}{2\sqrt{s}} + \frac{(4\alpha^2-1)}{16s}-\frac{3}{128s}+\Boh(s^{-\frac{5}{4}}), &\qquad \gamma=1,\\
				\beta \ii  s^{-\frac{1}{4}}-\beta \ii \tilde{s} s^{-\frac{3}{4}}-\frac{7 \beta^2 }{8}s^{-1}-\frac{\beta \ii }{4} \cos  \left(2 \vartheta (s)\right)s^{-1}+\Boh({s^{-\frac{5}{4}}}),& \qquad 0 \le \gamma <1,
			\end{cases}
		\end{align}
		where
		\begin{equation}\label{def:beta}
			\beta := \frac{1}{2 \pi \ii} \ln (1-\gamma),\qquad  \alpha :=\nu-\frac{1}{2},
		\end{equation}
		and
		\begin{align}\label{def:theta}
			\vartheta (s):= \vartheta (s;\tilde{s})= \frac{2 }{3} s^{\frac 34}-2\tilde{s} s^{\frac 14} + \frac{3  \beta \ii}{4} \ln s +  \beta \ii \ln (8(1 - \frac{\tilde{s}}{\sqrt{s}})) + \arg \Gamma (1+\beta),
		\end{align}
		with $\Gamma$ being the Euler's gamma function.
	\end{theorem}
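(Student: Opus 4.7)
The plan is to implement the Its--Izergin--Korepin--Slavnov (IIKS) framework for integrable operators tailored to the $4\times 4$ RH characterization \eqref{def:hard tacnode kernel}. Writing $K_{\tac}$ as an integrable kernel with $\mathbf f,\mathbf g$ built from columns of $\widetilde M$ and $\widetilde M^{-\msf T}$, the Fredholm determinant $\det(I-\gamma\mathcal K_{\tac})$ is encoded in an auxiliary RH problem for a matrix $X(z;s)$ whose jump on $(0,s)$ incorporates the factor $I-2\pi\ii\gamma\,\mathbf f\mathbf g^{\msf T}$, with the remaining jumps inherited from $\widehat M$. The $24$ unknowns $\{p_k(s),q_k(s)\}_{k=1}^{12}$ are to be identified with specific entries of $X$ and $X^{-1}$ at the singular points $z=0$ and $z=s$, the constraint \eqref{eq:sumpq} being the tracelessness of the residue of $X_-^{-1}X_+$ at $z=s$.

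Next I would derive \eqref{eq:F-H2} through the standard IIKS differential identity
\begin{equation*}
\frac{\mathrm{d}}{\mathrm{d}s}\ln\det(I-\gamma\mathcal K_{\tac}) = -\gamma\, K_{\tac,\gamma}(s,s),
\end{equation*}
where $K_{\tac,\gamma}$ denotes the resolvent kernel. Expressing $K_{\tac,\gamma}(s,s)$ through the boundary values of $X$ at $z=s$ recovers $H(s)$ as in \eqref{def:H}. The Hamiltonian system \eqref{def:pq's} is then obtained by differentiating $X$ in $s$: the matrix $(\partial_s X)X^{-1}$ is rational in $z$ with simple poles at $z=0$ and $z=s$, whose residues are fixed by the outer data (contributing $A_0$ via the $1/\sqrt z$-coefficient in \eqref{eq:asy:Mhat2} together with the parameters $\tilde s,\tau$) and by the local structure at $z=0$ (contributing $A_2(s)/s$, the shift $(2\nu-1)/4\,I_4$ coming from item (d) of RH problem \ref{rhp:hard edge tac}). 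Matching entries yields \eqref{def:pq's}, and direct verification confirms the canonical form \eqref{pq}. The small-$s$ bound \eqref{eq:H-0} follows because, as $s\downarrow 0$, the $X$-problem collapses to the identity while $\mathbf f\mathbf g^{\msf T}$ inherits the local behaviour $z^{\pm(2\nu-1)/4}$ at $z=0$, forcing $H(s)=\Boh(s^{(2\nu-1)/2})$.

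The main obstacle will be the large-$s$ asymptotics \eqref{asy:H}, requiring a nonlinear steepest descent (Deift--Zhou) analysis of $X$. I anticipate the following chain of transformations: a global rescaling $z\mapsto sz$ together with a $g$-function normalization; opening of lenses so that all jumps become exponentially close to the identity off a small neighbourhood of the origin and of the endpoint $z=s$; construction of a global parametrix built out of $\widehat M$ itself, a Bessel-type local parametrix near the hard edge $z=0$, and --- for the thinned case $0\le\gamma<1$ --- a confluent hypergeometric parametrix near $z=s$ responsible for the oscillatory factor $\cos(2\vartheta(s))$ with phase \eqref{def:theta} (the $\arg\Gamma(1+\beta)$ summand arising naturally from the CHF model). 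The unthinned case $\gamma=1$ needs only the Bessel parametrix, producing the purely algebraic expansion with the $(4\alpha^2-1)/16$ and $-3/128$ coefficients. A small-norm estimate for the error and careful extraction of sub-leading contributions from the matching of these parametrices with the outer solution then delivers \eqref{asy:H}.
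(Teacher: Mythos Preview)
Your overall architecture---IIKS for the integral representation, the Lax pair from $(\partial_z X)X^{-1}$ and $(\partial_s X)X^{-1}$, and Deift--Zhou for the large-$s$ behaviour of $H$---matches the paper's proof. The identification $\frac{\mathrm d}{\mathrm d s}F=H$ and the small-$s$ bound \eqref{eq:H-0} are obtained exactly as you outline.

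There is, however, a concrete misidentification of the local parametrices that would cause the steepest descent to fail as written. You propose a global parametrix ``built out of $\widehat M$'' together with a Bessel parametrix near $z=0$. In the paper the roles are the opposite: the global parametrix in both cases is an \emph{explicit} algebraic/Szeg\H{o}-type object (cf.\ \eqref{def:N} for $\gamma=1$, and \eqref{eq:hatNexp} for $0\le\gamma<1$), and it is $\widehat M$ that serves as the \emph{local} parametrix at $z=0$ in the thinned case (see \eqref{def:hatP0}). The reason is structural: after rescaling $z\mapsto sz$, the jump pattern of $X$ in a neighbourhood of the origin is precisely that of RH problem \ref{rhp:hard edge tac}, including the $(1-\gamma)$ factor on $(0,1)$, so $\widehat M(sz)$ matches the jumps exactly. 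A Bessel model would not reproduce the four-ray jump structure with the block $J_3$ on $(-\infty,0)$ and the $(1-\gamma)$ jump on $(0,1)$; trying to use it there would leave residual jumps that do not decay.

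For $\gamma=1$ your description is also incomplete: there are \emph{two} Bessel local parametrices, one near $z=0$ with index $\alpha=\nu-\tfrac12$ and a second near the right endpoint $z=1$ with index $0$ (see \eqref{def:P0} and \eqref{def:P1}). The leading term $-\tfrac18 s^{1/2}$ of \eqref{asy:H} and the $-3/128$ correction come from the $z=1$ parametrix via $f_1'(1)$ and the matching term $\mathscr{J}_1$, while the $(4\alpha^2-1)/16$ contribution enters through the $z=0$ parametrix (cf.\ \eqref{eq:SS'11} and \eqref{eq:R1'1}). With the parametrices relocated as above, the remainder of your plan---CHF near $z=1$ for $0\le\gamma<1$ producing the $\cos(2\vartheta(s))$ term with phase \eqref{def:theta}, small-norm estimate for the error---goes through as in the paper.
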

The local behavior of $H$ near the origin in \eqref{eq:H-0} ensures the integral \eqref{eq:F-H2} is well-defined.  The existence of a family of special solutions to the coupled differential equations follows from their explicit representations given in \eqref{def:q1p1}, \eqref{def:p5} and \eqref{def:q5} below. 

By inserting \eqref{asy:H} into \eqref{eq:F-H2}, we can obtain the first few terms in the asymptotic expansion of $F(s;\gamma)$ as $s \to +\infty$ except for the constant term. For $0<\gamma<1$, we are also able to determine the difficult constant term.
	\begin{theorem}\label{th:1}
		With the function $F(s;\gamma)$ defined in \eqref{def:Fnotation}, we have, as $s \to +\infty$,
		\begin{align}\label{ds-F}
			F(s;\gamma) = \begin{cases}
				-\frac{1}{12} s^{\frac{3}{2}} + \frac{\tilde{s}}{2} s - \tilde{s}^2 \sqrt{s}+\frac{(4\alpha^2-1)\ln{s}}{16} - \frac{3\ln{s}}{128} + C + \Boh(s^{-\frac{1}{4}}), & \gamma=1,\\
				\frac{4}{3}\beta \ii  s^{\frac{3}{4}}-4\beta \ii \tilde{s} s^{\frac{1}{4}}-\frac{3}{4}\beta^2 \ln s+(D_{\nu,1}+\frac{13}{24}-\ln 8)\beta^2+D_{\nu,2}\beta^4\\
   \qquad +\ln G(1+\beta)G(1-\beta)+\Boh(s^{-\frac{1}{4}})	, & 0 \le \gamma <1,
			\end{cases}
		\end{align}
 uniformly for  $\tilde{s} \in \mathbb{R}$, where $C$ is an undetermined constant independent of $s$,  $\beta$ is given in \eqref{def:beta}, the constants $D_{\nu,1}$ and $D_{\nu,2}$ are given in \eqref{def:D1} and \eqref{def:D2}, respectively, which are independent of $\beta$ and $\tau$, 
 and $G(z)$ is the Barnes G-function.
	\end{theorem}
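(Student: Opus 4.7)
My plan is to integrate the large-$s$ expansion of $H$ provided by Theorem~\ref{th:F-H} and then fix the remaining constant of integration.

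\textbf{Step 1: Termwise integration.} For a fixed $s_{0}>0$, write $F(s;\gamma)=F(s_{0};\gamma)+\int_{s_{0}}^{s}H(t;\gamma)\,\ud t$ and substitute the asymptotics \eqref{asy:H}. When $\gamma=1$, each algebraic or logarithmic term integrates directly: $-\tfrac18 t^{1/2}\mapsto -\tfrac{1}{12}s^{3/2}$, $\tfrac{\tilde s}{2}\mapsto \tfrac{\tilde s}{2}s$, $-\tfrac{\tilde s^{2}}{2\sqrt t}\mapsto -\tilde s^{2}\sqrt s$, the two $1/t$ coefficients give $\tfrac{(4\alpha^{2}-1)\ln s}{16}-\tfrac{3\ln s}{128}$, and the $\Boh(t^{-5/4})$ tail produces an $\Boh(s^{-1/4})$ error. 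All lower-limit contributions collapse into the single undetermined constant $C$ left in \eqref{ds-F}.

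\textbf{Step 2: Oscillatory term in the thinned case.} For $0\le\gamma<1$, the purely algebraic terms in \eqref{asy:H} integrate in the same manner, producing $\tfrac{4}{3}\beta\ii s^{3/4}$, $-4\beta\ii\tilde s s^{1/4}$ and, together with the oscillatory contribution, the $-\tfrac34\beta^{2}\ln s$ coefficient. The new ingredient is $-\tfrac{\beta\ii}{4}\cos(2\vartheta(t))/t$, which I handle by iterated integration by parts against $\ud[\sin(2\vartheta(t))]=2\vartheta'(t)\cos(2\vartheta(t))\,\ud t$. Since $\vartheta'(t)=\tfrac12 t^{-1/4}+\Boh(t^{-3/4})$, the boundary terms are $\Boh(s^{-3/4})$ (absorbed into $\Boh(s^{-1/4})$) and the remainder converges; the slowly varying phases $\tfrac{3\beta\ii}{4}\ln t$ and $\beta\ii\ln(8(1-\tilde s/\sqrt t))$ inside $\vartheta$ must be tracked carefully so that both the $\ln s$ coefficient and the $s$-independent piece are extracted correctly.

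\textbf{Step 3: Fixing the thinned constant.} The main task is to identify the $s$-independent function of $(\beta,\nu,\tau)$ as $(D_{\nu,1}+\tfrac{13}{24}-\ln 8)\beta^{2}+D_{\nu,2}\beta^{4}+\ln G(1+\beta)G(1-\beta)$. I intend to follow the strategies developed for the Airy, Bessel and hard edge Pearcey determinants \cite{BBD08,BIP19,Charlier21,DIK2008,DXZ23,Ehr10,YZ24a}. Using $F(s;0)\equiv 0$ as an anchor at $\beta=0$, I differentiate $F$ with respect to $\beta$, express $\partial_{\beta}F$ through the resolvent of $I-\gamma\mathcal K_{\tac}$, perform a refined Riemann--Hilbert steepest descent for $\widehat M$ uniform in $\beta$ and $\tilde s$ to obtain precise large-$s$ asymptotics for $\partial_{\beta}F$, and integrate the resulting identity in $\beta$ back to $0$. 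The Barnes $G$-function appears through the classical Taylor expansion of $\ln G(1\pm\beta)$, and the $\nu$-dependent constants $D_{\nu,1},D_{\nu,2}$ arise from the local Bessel-type parametrix at $z=0$, which explains their independence of $\beta$ and $\tau$. The hardest step is this last identification: the quartic term $D_{\nu,2}\beta^{4}$ lies beyond the standard Barnes $G$ contribution and demands fourth-order matching in the refined RH analysis, and uniformity of all estimates in $\tilde s\in\mathbb R$ must be preserved throughout since the result is stated uniformly in $\tilde s$.
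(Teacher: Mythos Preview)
Your Step~1 for $\gamma=1$ is exactly what the paper does. For $0\le\gamma<1$, however, your plan diverges from the paper's argument in a way that leaves real gaps.

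\textbf{The paper does not integrate $H$ termwise in $s$.} Instead it combines the differential identities \eqref{eq:differentialH1}--\eqref{eq:differential:gamma} to obtain the exact relation
\[
F(s;\gamma)=sH(s)+\int_0^{\beta}\sum_{k=1}^{12}p_k(s)\,\frac{\partial q_k(s)}{\partial\beta'}\,\ud\beta',
\]
see \eqref{eq:asy:H3}. The large-$s$ asymptotics of $\sum p_k\partial_\beta q_k$ (Proposition~\ref{th:pq}) are then inserted, and the $\beta$-integral produces \emph{simultaneously} the $-\tfrac34\beta^2\ln s$ coefficient, the Barnes $G$ term (via $\int_0^\beta 2\beta' \ii\,\partial_{\beta'}\vartheta(s)\,\ud\beta'$, equation \eqref{eq:intetheta}), and the constants $D_{\nu,i}$. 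Your Step~2, by contrast, integrates the displayed terms of \eqref{asy:H} directly; the non-oscillatory $-\tfrac{7\beta^2}{8}t^{-1}$ alone yields $-\tfrac78\beta^2\ln s$, and your integration-by-parts argument shows the $\cos(2\vartheta)/t$ term contributes only a constant plus decay, so you would not recover the correct $-\tfrac34\beta^2\ln s$. The paper's route through the $\beta$-integral sidesteps this issue entirely.

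\textbf{Two further gaps.} First, your identification of the source of $D_{\nu,1},D_{\nu,2}$ is wrong: in the thinned analysis (Section~\ref{sec:AsyXgamma}) the local parametrix near $z=0$ is built from the tacnode model $\widehat M$ itself, not a Bessel parametrix, and the constants $D_{\nu,i}$ are explicit polynomials in the entries of $M^{(1)}$ (equations \eqref{def:D1}--\eqref{def:D2}), i.e.\ Painlev\'e~II data via \eqref{eq:M13M14}. Second, you omit the $\tau$-independence step: the paper first proves $\partial_\tau F=\Boh(s^{-1/4})$ using the identity \eqref{eq:derivativeins-tau} and the asymptotics \eqref{eq:asyX111}, which justifies setting $\tau=0$ before extracting the constants. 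Without this reduction the claim that $D_{\nu,i}$ are independent of $\tau$ is unproved.
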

If $\gamma \to 0^+$, we have that $\beta=0$ and $G(1)=0$. It is then easily seen from \eqref{ds-F} that $F(s;0)=\Boh(s^{-1/4})$. This is compatible with the fact that $F(s;0)=0$. By Forrester–Chen–Eriksen–Tracy proposition \cite{CET95,For1993} about the large gap behavior of the Fredholm determinant, it is expected that  $F(s;1)=\Boh(s^{3/2})$, since the limiting mean density of the non-intersecting squared Bessel paths near the hard edge tacnode behaves like $\Boh(x^{-1/4})$. Our Theorem \ref{th:1} then confirms this proposition for the hard edge tacnode determinant. Also, the constants $D_{\nu,i}$, $i=1,2$, in \eqref{ds-F-1} are expressed in terms of $(1,3),(1,4)$ and $(3,1)$ entries of the matrix $M^{(1)}$ in 
\eqref{eq:asy:Mhat}. By \eqref{eq:M13M14}, some of them are related to the Hastings-McLeod solution of the inhomogeneous Painlev\'{e} II equation \eqref{eq:PII}. 

    
As an application of the large $s$ asymptotics of $F(s;\gamma)$, $0<\gamma<1$, we consider the counting statistics of the hard edge tacnode process. More precisely, let $N(s)$ be the random variable that counts the number of points in the hard edge tacnode process falling into the interval $(0,s)$, $s\geq 0$. It is well-known that the following generating function
	\begin{equation}
		\mathbb{E} \left(e^{-2\pi x N(s)}\right) = \sum_{k=0}^{\infty} \mathbb{P} (N(s)=k)e^{-2\pi x k}, \qquad x \ge 0,
	\end{equation}
is equal to the deformed Fredholm determinant $\det \left(I - (1-e^{-2\pi x})\mathcal K_{\tac}\right)$. This, together with \eqref{ds-F}, allows us to establish various asymptotic statistical properties of $N$. We refer to \cite{Charlier21,CC21,CM,DXZ22,DXZ23,Sosh,YZ24b} for relevant results about the Sine, Airy, (hard edge) Pearcey, Pearcey and the soft tacnode point determinantal processes.

	\begin{corollary}\label{coro1}
		As $s \to +\infty$, we have
		\begin{align}
			\mathbb{E}(N(s)) &=\mu (s) + \Boh \left(\frac{\ln s}{s^{1/4}}\right),\label{def:EN}\\
			\Var (N(s)) & = \sigma (s)^2 + \frac{1 + \gamma_E-\frac{13}{24} + \ln 8-D_{\nu,1}}{2 \pi^2} + \Boh \left(\frac{(\ln s)^2}{s^{1/4}}\right),\label{def:VarN}
		\end{align}
		where $\gamma_E=-\Gamma'(1)\approx 0.57721$ is Euler’s constant, $D_{\nu,1}$ is defined in \eqref{def:D1}, and
		\begin{align}\label{def:mu-sigma}
			\mu (s) = \frac{2}{3 \pi} s^{\frac 34} - \frac{2\tilde{s}}{\pi} s^{\frac 14}, \qquad \sigma (s)^2 = \frac{3}{8 \pi^2} \ln s.
		\end{align}
		Furthermore, the random variable $\frac{N(s) - \mu (s)}{\sqrt{\sigma (s)^2}}$ converges in distribution to the normal law $\mathcal{N} (0,1)$ as $s \to +\infty$, and for any $\epsilon > 0$, we have
		\begin{align}\label{ub}
			\lim_{a \to \infty} \mathbb{P} \left(\sup_{s>a} \left|\frac{N(s) - \mu(s)}{\ln s}\right| \le \frac{3 }{4 \pi} + \epsilon\right)=1.
		\end{align}
	\end{corollary}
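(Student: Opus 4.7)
The plan is to read off everything from the cumulant generating function
\begin{equation*}
\Lambda(s,x) := \ln \mathbb{E}\bigl(e^{-2\pi x N(s)}\bigr) = \ln \det\bigl(I - (1-e^{-2\pi x})\mathcal{K}_{\tac}\bigr) = F\bigl(s;\, 1-e^{-2\pi x}\bigr),
\end{equation*}
to which the $0\le\gamma<1$ case of Theorem \ref{th:1} applies with the reparametrization $\beta = \tfrac{1}{2\pi\ii}\ln(e^{-2\pi x}) = \ii x$. Substituting $\beta=\ii x$ into \eqref{ds-F} turns all $s$-growing terms into polynomials in $x$ and leaves the $s$-independent contribution $\ln G(1+\ii x)G(1-\ii x)$ plus the $\beta$-polynomial constants $D_{\nu,1}\beta^2$, $D_{\nu,2}\beta^4$ and $(\tfrac{13}{24}-\ln 8)\beta^2$, up to an error $\Boh(s^{-1/4})$. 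To legitimize moment extraction this expansion must be uniform for $x$ in a complex neighbourhood of $0$; this uniformity can be tracked through the proof of Theorem \ref{th:1} since $\beta$ enters every nonlinear step polynomially.

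For \eqref{def:EN} and \eqref{def:VarN}, use $\mathbb{E}(N(s)) = -(2\pi)^{-1}\partial_x\Lambda|_{x=0}$ and $\Var(N(s)) = (2\pi)^{-2}\partial_x^{2}\Lambda|_{x=0}$. The coefficient of $x$ in $F$ is supplied only by the leading $\tfrac{4}{3}\beta\ii s^{3/4} - 4\beta\ii\tilde{s}s^{1/4}$ terms (the Barnes-$G$ piece is even in $x$), which reproduces $\mu(s)$ in \eqref{def:mu-sigma} at once. For the coefficient of $x^{2}$, combine the polynomial contribution $-\tfrac{3}{4}\beta^{2}\ln s + (D_{\nu,1}+\tfrac{13}{24}-\ln 8)\beta^{2}$ with the Weierstrass-product expansion
\begin{equation*}
\ln G(1+z) + \ln G(1-z) = -(1+\gamma_E)z^{2} + 2\sum_{k\ge 2}(-1)^{k-1}\tfrac{\zeta(2k-1)}{2k}z^{2k},
\end{equation*}
set $z=\ii x$, collect the $x^{2}$-terms, and divide by $2\pi^{2}$; this yields exactly $\sigma(s)^{2} + (1+\gamma_E-\tfrac{13}{24}+\ln 8 - D_{\nu,1})/(2\pi^{2})$, matching \eqref{def:VarN}.

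The central limit theorem follows by the method of cumulants. Writing $\kappa_k(s) = k!(-2\pi)^{-k}[x^k]F(s; 1-e^{-2\pi x})$, the $s$-growing contributions from \eqref{ds-F} populate only $\kappa_1$ and $\kappa_2$; every higher cumulant is $\Boh(1)$ as $s\to\infty$, converging to the corresponding Taylor coefficient of $\ln G(1+\ii x)G(1-\ii x)+D_{\nu,2}x^{4}+\ldots$ Since $\sigma(s)^{2}\asymp \ln s \to\infty$, the normalized cumulants $\kappa_k(s)/\sigma(s)^{k}$ vanish for $k\ge 3$, so the cumulants of $(N(s)-\mu(s))/\sigma(s)$ converge to those of $\mathcal{N}(0,1)$, giving the stated convergence in distribution.

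For the supremum bound \eqref{ub}, a Chernoff-plus-Borel--Cantelli scheme applies. Using $\Lambda$ together with its analytic continuation to $\gamma$ slightly outside $[0,1]$ (available from analyticity of the Fredholm determinant in $\gamma$), for small real $x$ of either sign one obtains
\begin{equation*}
\mathbb{P}\bigl(\pm(N(s)-\mu(s)) > T\bigr) \le \exp\Bigl(\mp 2\pi xT + \tfrac{3x^{2}}{4}\ln s + \Boh(1)\Bigr),
\end{equation*}
and optimizing in $x$ at $T = (\tfrac{3}{4\pi}+\epsilon)\ln s$ gives a bound of the form $s^{-1-\delta(\epsilon)}$, summable along the dyadic sequence $s_n=2^{n}$. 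Monotonicity of $N(\cdot)$ then upgrades the dyadic estimates to a genuine supremum bound, since on each interval $[s_n,s_{n+1}]$ the fluctuation $N(s)-N(s_n)$ is controlled by $N(s_{n+1})-N(s_n)$ whose mean and variance we have just computed. The main technical obstacle throughout is the uniformity claim: the asymptotic expansion in Theorem \ref{th:1} must hold uniformly for $\beta$ in a complex disc about $0$ (and for the tail bound, real $\beta$ in a fixed interval independent of $s$); once this is secured by a careful review of the proof of Theorem \ref{th:1}, the remaining steps are mechanical.
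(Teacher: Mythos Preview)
Your approach to the mean, variance, and central limit theorem is essentially the same as the paper's: both extract the cumulants of $N(s)$ by Taylor-expanding $F(s;1-e^{-2\pi x})$ in $x$ via Theorem~\ref{th:1}, use the expansion of $\ln G(1+\ii x)G(1-\ii x)$ near $x=0$, and then invoke the convergence of the moment/cumulant generating function (the paper phrases this as $\mathbb{E}(e^{t(N(s)-\mu(s))/\sigma(s)})\to e^{t^2/2}$, you phrase it as vanishing of the normalized higher cumulants---these are the same argument).

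The one genuine difference is the supremum bound \eqref{ub}. The paper does not carry out any Chernoff or Borel--Cantelli argument itself; it simply invokes \cite[Lemma~2.1 and Theorem~1.2]{CC21}, which packages exactly the exponential-moment-to-rigidity implication you are sketching (Chernoff bound from the cumulant generating function, a union bound over a discrete grid, and monotonicity of the counting function to pass to the continuum). Your route is therefore a re-derivation of the Charlier--Claeys lemma in situ rather than a citation of it; this makes your argument self-contained but longer, and you would need to be careful with the optimization---at the threshold $T=(\tfrac{3}{4\pi}+\epsilon)\ln s$ the optimal Chernoff parameter $x$ stays of order one, so the uniformity you flag (for real $\beta$ in a fixed interval, not just near $0$) is indeed the key input, and your claimed decay $s^{-1-\delta(\epsilon)}$ is stronger than what is actually needed for summability along $s_n=2^n$. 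Both approaches rely on the same uniformity of \eqref{ds-F} in $\beta$, which the paper leaves implicit.
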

	The probabilistic bound \eqref{ub} particularly implies that, for large positive $s$, the
	counting function of the tacnode process lies in the interval $(\mu(s)-(3/(4\pi)+\epsilon) \ln s, \mu(s)+
	(3/(4\pi)+\epsilon) \ln s)$ with high probability.

	\paragraph{Organization of the rest of the paper}
	The rest of this paper is devoted to the proofs of our main results. The idea is to relate various derivatives of $F$ to a $4 \times 4$ RH problem under the general framework \cite{BD02, DIKZ}. The precise relationship in our case is discussed in Section \ref{sec:RHP}, where we connect $\ud F/\ud s$ to a $4 \times 4$ RH problem for $X$ with constant jumps. We then derive a lax pair for $X$ in Section \ref{sec:Lax}, and some useful differential identities for the Hamiltonian will also be included for later calculation. We perform a Deift-Zhou steepest descent analysis \cite{DIZ97} on the RH problem for $X$ as $s \to +\infty$ in Sections \ref{sec:AsyX1} and \ref{sec:AsyXgamma} for the cases $\gamma=1$ and $0 \leq \gamma < 1$, respectively, and deal with the small positive $s$ case in Section \ref{sec:AsyX0}. After computing the asymptotics of $\sum p_k(s) \frac{\partial q_k(s)}{\partial \beta}$, where $p_k$, $q_k$ satisfy differential equations \eqref{def:pq's} and \eqref{eq:sumpq} in Section \ref{sec:pq}, we finally present the proofs of our main results in Section \ref{sec:proof}.
	
	
	\section{A RH formulation } \label{sec:RHP} 
 We intend to establish a relation between $\ud F / \ud s$ and a RH problem with constant jumps. To proceed, we note that
	\begin{align}
		\frac{\ud}{\ud s}F(s;\gamma)= -\textrm{tr}\left((I-\gamma \mathcal{K}_{\tac})^{-1} \gamma
		\frac{\ud}{\ud s}\mathcal{K}_{\tac}\right) = -R(s,s), \label{eq:derivatives}
	\end{align}
	where $R(u,v)$ stands for the kernel of the resolvent operator and F is defined in \eqref{def:Fnotation}.

	By \eqref{def:hard tacnode kernel}, one readily sees that
	\begin{equation}\label{eq:tildeKdef}
		\gamma K_\tac (x,y) = \frac{\vec{f}(x)^{\msf T}\vec{h}(y)}{x-y},
	\end{equation}
	where
	\begin{equation}\label{def:fh}
		\vec{f}(x)=\begin{pmatrix}
			f_1
			\\
			f_2
			\\
			f_3
			\\
			f_4
		\end{pmatrix}:= \widetilde M (x)
		\begin{pmatrix}
			1
			\\
			0
			\\
			0
			\\
			0
		\end{pmatrix}, \qquad
		\vec{h}(y)=\begin{pmatrix}
			h_1
			\\
			h_2
			\\
			h_3
			\\
			h_4
		\end{pmatrix}
		:=
		\frac{\gamma}{2 \pi \ii}
		\widetilde M(y)^{-\msf T}
		\begin{pmatrix}
			0
			\\
			0
			\\
			1
			\\
			0
		\end{pmatrix}.
	\end{equation}
	This integrable structure of kernel $K_\tac$  in the sense of Its et al. \cite{IIKS90} implies that the resolvent kernel $R(u,v)$ is integrable as well; cf. \cite{DIZ97,IIKS90}. Indeed, by setting
	\begin{equation}\label{def:FH}
		\vec{F}(u)=
		\begin{pmatrix}
			F_1 \\
			F_2 \\
			F_3 \\
			F_4
		\end{pmatrix}:=\left(I-\gamma \mathcal{K}_{\tac}\right)^{-1}\vec{f}, \qquad \vec{H}(v)=\begin{pmatrix}
			H_1 \\
			H_2 \\
			H_3 \\
			H_4
		\end{pmatrix}
		:=\left(I-\gamma \mathcal{K}_{\tac}\right)^{-1}\vec{h},
	\end{equation}
	we have
	\begin{equation}\label{eq:resolventexpli}
		R(u,v)=\frac{\vec{F}(u)^{\msf T}\vec{H}(v)}{u-v}.
	\end{equation}
	Moreover, the functions $\vec{F}(u)$ and $\vec{H}(v)$ are closely related to the following RH problem.
	\begin{rhp}\label{rhp:Y}
		\hfill
		\begin{enumerate}
			\item[\rm (a)] $Y(z)$ is a $4 \times 4$ matrix-valued function defined and analytic in $\mathbb{C} \setminus [0, s]$.
			
			\item[\rm (b)] For $x \in(0, s)$, we have
			\begin{equation}
				Y_{+}(x)=Y_{-}(x)\left(I-2 \pi \mathrm{i} \mathbf{f}(x) \mathbf{h}(x)^{\msf T}\right),
			\end{equation}
			where the functions $\mathbf{f}$ and $\mathbf{h}$ are defined above .
			
			\item[\rm (c)] As $z \to \infty$, we have
			\begin{equation}\label{eq:Y-infty}
				Y(z)=I+\frac{Y^{(1)}}{z}+\mathcal{O}\left(z^{-2}\right),
			\end{equation}
			where the function $Y^{(1)}$ is independent of $z$.
			\item[\rm (d)] As $z \to s$, we have 
			\begin{equation}
				Y(z)=\mathcal{O}(\ln (z-s)).
			\end{equation}
			\item[\rm (e)] As $z \to 0$, we have 
			\begin{equation}
				Y(z) = \begin{cases}
					O(\ln z), \quad & \nu \geq \frac{1}{2},\\
					O(z^{\frac{2\nu-1}{2}}), \quad &-\frac{1}{2}< \nu \leq \frac{1}{2}.
				\end{cases}
			\end{equation}
		\end{enumerate}
	\end{rhp}
	By \cite{DIZ97}, it follows that
	\begin{equation}\label{eq:Yexpli}
		Y(z)=I-\int_{0}^s \frac{\mathbf{F}(w) \mathbf{h}(w)^{\msf T}}{w-z} \ud w
	\end{equation}
	and
	\begin{equation}\label{def:FH2}
		\mathbf{F}(z)=Y(z) \mathbf{f}(z), \quad \mathbf{H}(z)=Y(z)^{-\msf{T}} \mathbf{h}(z).
	\end{equation}

	We now make an undressing transformation to arrive at an RH problem that is related to $\ud F/ \ud s$ with the aid of the RH problems for $M$ and $Y$. We start with definitions
	\begin{equation}\label{def:Gammajs}
		\begin{aligned}
			&\Gamma_0^{(s)}:=(s,+\infty), &&  \Gamma_1^{(s)}:=s+e^{\varphi \ii }(0,+\infty),  && \Gamma_2^{(s)}:=e^{-\varphi \ii}(-\infty,0),\\
			&\Gamma_3^{(s)}:= (-\infty,0),&& \Gamma_4^{(s)}:=e^{\varphi \ii}(-\infty,0), &&\Gamma_5^{(s)}:=s+e^{-\varphi \ii}(0,+\infty), \quad 0<\varphi<\frac{\pi}{3}.
		\end{aligned}
	\end{equation}
	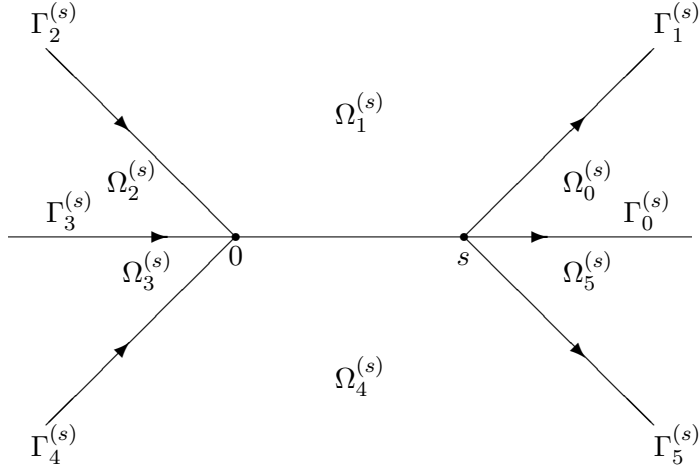
\begin{figure}[t]
		\begin{center}
			\setlength{\unitlength}{1truemm}
			\begin{picture}(100,70)(-5,2)
				\put(25,40){\line(-1,0){30}}
				\put(55,40){\line(1,0){30}}
				
				\put(25,40){\line(1,0){30}}
				
				\put(25,40){\line(-1,-1){25}}
				\put(25,40){\line(-1,1){25}}
				
				\put(55,40){\line(1,1){25}}
				\put(55,40){\line(1,-1){25}}
				
				\put(15,40){\thicklines\vector(1,0){1}}
				\put(65,40){\thicklines\vector(1,0){1}}

				\put(10,55){\thicklines\vector(1,-1){1}}
				\put(10,25){\thicklines\vector(1,1){1}}
				\put(70,25){\thicklines\vector(1,-1){1}}
				\put(70,55){\thicklines\vector(1,1){1}}
				
				\put(-2,11){$\Gamma_4^{(s)}$}

				\put(-2,67){$\Gamma_2^{(s)}$}
				\put(0,42){$\Gamma_3^{(s)}$}
				\put(80,11){$\Gamma_5^{(s)}$}
				\put(80,67){$\Gamma_1^{(s)}$}
				\put(76,42){$\Gamma_0^{(s)}$}

				\put(8,46){$\Omega_2^{(s)}$}
				\put(10,34){$\Omega_3^{(s)}$}
				\put(68,46){$\Omega_0^{(s)}$}
				\put(68,34){$\Omega_5^{(s)}$}
				\put(38,55){$\Omega_1^{(s)}$}
				\put(38,20){$\Omega_4^{(s)}$}
				
				\put(25,40){\thicklines\circle*{1}}
				\put(55,40){\thicklines\circle*{1}}
				
				\put(24,36.3){$0$}
				\put(54,36.3){$s$}
				
			\end{picture}
			\caption{Regions $\Omega_j^{(s)}$,  and the contours $\Gamma_{j}^{(s)}$, $j=0,1,\ldots,5$ in the RH problem for $X$.}
			\label{fig:X}
		\end{center}
	\end{figure}
Clearly, the rays $\Gamma_k^{(s)}$, $k=1,2,4,5$, and $\mathbb{R}$ divide the whole complex plane into 6 regions $\Omega_j^{(s)}$, $j=0,\ldots,5$; see Figure \ref{fig:X} for an illustration. The transformation is defined by
	\begin{equation}\label{eq:YtoX}
		X(z)=X\left(z ;  s,\gamma,\tilde{s},\nu ,\tau\right) = \begin{cases}
			Y(z) \widetilde{M}(z), \quad & z \in \Omega_1^{(s)},\\
			Y(z) \widehat{M}(z), \quad &  \textrm{elsewhere}, \\
			Y(z) \widetilde{M}(z)\begin{pmatrix}
				1 & 0 & -1 & 0 \\
				0 & 1 & 0 & 0 \\
				0 & 0 & 1 & 0 \\
				0 & 0 & 0 & 1
			\end{pmatrix}, \quad & z \in \Omega_4^{(s)}.
		\end{cases}
	\end{equation}
	Then $X$ satisfies the following RH problem.
	\begin{rhp}\label{rhp:X}
		\hfill
		\begin{enumerate}
			\item[\rm (a)] $X(z)$ is defined and analytic in $\mathbb{C} \setminus  \Gamma_X$, where
			\begin{equation}\label{def:gammaX}
				\Gamma_X:=\cup_{j=0}^5 \Gamma_j^{(s)} \cup[0, s]
			\end{equation}
			with the rays $\Gamma_j^{(s)}, j=0,1, \ldots, 5$, defined in \eqref{def:Gammajs}; see Figure \ref{fig:X} for the orientations of $\Gamma_X$.
			
			\item[\rm (b)] For $z \in \Gamma_X, X$ satisfies the jump condition
			\begin{align}\label{eq:X-jump}
				X_{+}(z)=X_{-}(z) J_X(z)
			\end{align}
			where
			\begin{equation}\label{def:JX}
				J_X(z):= \begin{cases}\begin{pmatrix}
						0 & 0 & 1 & 0 \\
						0 & 1 & 0 & 0 \\
						-1 & 0 & 0 & 0 \\
						0 & 0 & 0 & 1
					\end{pmatrix}, & \quad z \in \Gamma_0^{(s)}, \\
					
					I+E_{3,1}, & \quad z \in \Gamma_1^{(s)},\\
					
					I+e^{\nu\pi \mathrm{i}} E_{2,1}-e^{\nu\pi \mathrm{i}} E_{3,4}, & \quad z \in \Gamma_2^{(s)}, \\
					\begin{pmatrix}
						0 & -\mathrm{i}& 0 & 0 \\
						-\mathrm{i} & 0 & 0 & 0 \\
						0 & 0 & 0 & \mathrm{i}\\
						0 & 0 & \mathrm{i} & 0
					\end{pmatrix}, & \quad z \in \Gamma_3^{(s)},\\
					I-e^{-\nu\pi \mathrm{i}} E_{2,1}+e^{-\nu\pi \mathrm{i}} E_{3,4}, & \quad z \in \Gamma_4^{(s)}, \\
					
					I+E_{3,1}, & \quad z \in \Gamma_5^{(s)},\\
					
					I+(1-\gamma) E_{1,3}, & \quad z\in (0,s).
				\end{cases}
			\end{equation}

			\item[\rm (c)] As $z \to \infty$ with $z \in \mathbb{C} \setminus  \Gamma_X$, we have
				\begin{align}\label{eq:asyX}
					X(z)
					&= \left(X_{0}+\frac{X^{(1)}}{z}+\Boh(z^{-2})\right)\diag \left(z^{\frac{1}{4}}, z^{-\frac{1}{4}},z^{\frac{1}{4}}, z^{-\frac{1}{4}}\right)    \diag \left(\begin{pmatrix} 1 & -1 \\ 1 & 1 \end{pmatrix},\begin{pmatrix} 1 & 1 \\ -1 & 1 \end{pmatrix}\right)\nonumber\\ 
					&\quad \times \diag \left((-\sqrt{z})^{-\frac{1}{4}}, z^{-\frac{1}{8}},(-\sqrt{z})^{\frac{1}{4}}, z^{\frac{1}{8}}\right) 
					A \nonumber\\
					&\quad \times \diag \left(e^{-\theta_1(\sqrt{z})+\tau \sqrt{z}}, e^{-\theta_2(\sqrt{z})-\tau \sqrt{z}}, e^{\theta_1(\sqrt{z})+\tau \sqrt{z}}, e^{\theta_2(\sqrt{z})-\tau \sqrt{z}}\right),
				\end{align}
			where $A$, $\theta_1$ and $\theta_2$ are defined in \eqref{def:A}--\eqref{def:theta2} respectively, $X_0$ and $X^{(1)}$ are independent of $z$ with
			\begin{equation}\label{def:X0}
				X_0=\widetilde{M}^{(0)},
			\end{equation}
			and
			\begin{equation}\label{def:X1}
				X^{(1)}=Y^{(1)} \widetilde{M}^{(0)} +  \widetilde{M}^{(1)},
			\end{equation}
			with $\widetilde{M}^{(0)}$ and $ \widetilde{M}^{(1)}$  given in \eqref{def:M1} and \eqref{def:M2}, 
             respectively.	
			\item[\rm (d)] As $z \to s$, we have
			\begin{align}\label{eq:X-near-s} 
					X(z)= X_R(z)\begin{pmatrix}
						1 & 0 & -\frac{\gamma}{2 \pi \mathrm{i}} \ln (z-s) & 0 \\
						0 & 1 & 0 & 0 \\
						0 & 0 & 1 & 0 \\
						0 & 0 & 0 & 1
					\end{pmatrix}  \left\{\begin{aligned}
						&I, &\quad z \in \Omega_1^{(s)}, \\
						&\begin{pmatrix}
							1 & 0 & -1 & 0 \\
							0 & 1 & 0 & 0 \\
							0 & 0 & 1 & 0 \\
							0 & 0 & 0 & 1
						\end{pmatrix}, &\quad z \in \Omega_4^{(s)},
					\end{aligned}\right.
				\end{align}

			where the principal branch is taken for $\ln (z-s)$, and $X_R(z)$ is analytic at $z=s$ satisfying 
			\begin{equation}\label{eq: X-expand-s}
				X_R(z)=X_{R, 0}(s)\left(I+X_{R, 1}(s)(z-s)+\mathcal{O}\left((z-s)^2\right)\right), \quad z \to s,
			\end{equation}
			for some functions $X_{R, 0}(s)$ and $X_{R, 1}(s)$ depending on the parameters $\gamma, \tilde{s}, \nu$ and $\tau$.
			\item[\rm (e)] As $z \to 0$, we have
			\begin{align}\label{eq:X-near-0}
					X(z)&=  X_L(z)\begin{pmatrix}
						z^{\frac{2\nu-1}{4}} & \delta_{\nu}(z) & (1-\gamma ) e^{-\nu\pi \mathrm{i}}\delta_{\nu}(z) & 0 \\
						0 & z^{-\frac{2\nu-1}{4}} & 0 & 0 \\
						0 & 0 & z^{-\frac{2\nu-1}{4}} & 0 \\
						0 & 0 & -\delta_{\nu}(z) & z^{\frac{2\nu-1}{4}}
					\end{pmatrix} \nonumber\\
					& \quad \times\left\{\begin{aligned}
						&I, &\quad z \in \Omega_1^{(s)}, \\
						&\begin{pmatrix}
							1 & 0 & \gamma-1 & 0 \\
							0 & 1 & 0 & 0 \\
							0 & 0 & 1 & 0 \\
							0 & 0 & 0 & 1
						\end{pmatrix},& \quad z \in \Omega_4^{(s)} ,
					\end{aligned}\right.
			\end{align}
			where $\delta_{\nu}(z)$ is given in \eqref{def:delta}. $X_L(z)$ is analytic at $z=0$ satisfying 
			\begin{equation}\label{eq:X-expand-0}
				X_L(z)=X_{L, 0}(s)\left(I+X_{L, 1}(s)z+\Boh(z^2)\right), \quad z \to 0,
			\end{equation}	
			for some functions $X_{L, 0}(s)$ and $X_{L, 1}(s)$ depending on the parameters $\gamma,\tilde{s},\nu$ and $\tau$.
		\end{enumerate}
	\end{rhp}
\begin{proof}
The jump condition is easy to check,
we verify the local behavior of $X(z)$ near $z=0$ given in \eqref{eq:X-near-0}, and it suffices to show that it satisfies the jump condition of $X$. For $x<0$, it follows from \eqref{def:delta}, \eqref{eq:X-jump}, \eqref{def:JX} and \eqref{eq:X-near-0} that 
\begin{align}
    &X_{-}(z)^{-1}X_{+}(z) \nonumber\\
    &= \begin{pmatrix}
							1 & -e^{\nu\pi \mathrm{i}} & 0 & 0 \\
							0 & 1 & 0 & 0 \\
							0 & 0 & 1 & e^{\nu\pi \mathrm{i}} \\
							0 & 0 & 0 & 1
						\end{pmatrix} \begin{pmatrix}
						(e^{-\pi \ii}\lvert z \rvert)^{-\frac{2\nu-1}{4}} & \delta_{\nu,-}(z) & (1-\gamma ) e^{-\nu\pi \mathrm{i}}\delta_{\nu,-}(z) & 0 \\
						0 & (e^{-\pi \ii}\lvert z \rvert)^{\frac{2\nu-1}{4}} & 0 & 0 \\
						0 & 0 & (e^{-\pi \ii}\lvert z \rvert)^{\frac{2\nu-1}{4}} & 0 \\
						0 & 0 & -\delta_{\nu,-}(z) &(e^{-\pi \ii}\lvert z \rvert)^{-\frac{2\nu-1}{4}}
					\end{pmatrix} \nonumber\\
    &\quad \times X_{L,-}(z)^{-1} X_{L,+}(z) \nonumber\\
    &\quad \times\begin{pmatrix}
						(e^{\pi \ii}\lvert z \rvert)^{\frac{2\nu-1}{4}} & \delta_{\nu,+}(z) & (1-\gamma ) e^{-\nu\pi \mathrm{i}}\delta_{\nu,+}(z) & 0 \\
						0 & (e^{\pi \ii}\lvert z \rvert)^{-\frac{2\nu-1}{4}} & 0 & 0 \\
						0 & 0 & (e^{\pi \ii}\lvert z \rvert)^{-\frac{2\nu-1}{4}} & 0 \\
						0 & 0 & -\delta_{\nu,+}(z) & (e^{\pi \ii}\lvert z \rvert)^{\frac{2\nu-1}{4}}
					\end{pmatrix} \begin{pmatrix}
							1 & -e^{\nu\pi \mathrm{i}} & 0 & 0 \\
							0 & 1 & 0 & 0 \\
							0 & 0 & 1 & e^{\nu\pi \mathrm{i}} \\
							0 & 0 & 0 & 1
						\end{pmatrix} \nonumber\\
    &=\begin{pmatrix}
						0 & -\mathrm{i}& 0 & 0 \\
						-\mathrm{i} & 0 & 0 & 0 \\
						0 & 0 & 0 & \mathrm{i}\\
						0 & 0 & \mathrm{i} & 0
					\end{pmatrix},
\end{align}
as required.
For $x\in (0,s)$, it could be checked directly by \eqref{eq:X-near-0}.

Finally, the local behavior of $X(z)$ at $z=s$ can be verified in a manner similar to that at origin, we omit the details here.
\end{proof}
	\begin{proposition}\label{prop:derivativeandX}
		With $F$ defined in \eqref{def:Fnotation}, we have
		\begin{align}
			\frac{\ud}{\ud s} F(s;\gamma,\tilde{s},\nu, \tau) =-\frac{\gamma}{2 \pi \ii} \lim_{z \to s} \left(X(z)^{-1}X'(z)\right)_{31} ,
			\label{eq:derivativeinsX-2}
		\end{align}
		where the limit is taken from $\Omega_1^{(s)}$ and
		\begin{equation}\label{eq:derivativeins-tau}
			\frac{\ud}{\ud \tau} F(s;\gamma,\tilde{s},\nu, \tau) = X^{(1)}_{43}-X^{(1)}_{21}-\widetilde{M}^{(1)}_{43}+\widetilde{M}^{(1)}_{21},		\end{equation}
		where $X^{(1)}$ and $\widetilde{M}^{(1)}$ are given in \eqref{def:X1} and \eqref{def:M2}.
	\end{proposition}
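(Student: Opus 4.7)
For the first identity, I would start from the trace--log formula \eqref{eq:derivatives}, which gives $\ud F/\ud s = -R(s,s)$, and combine it with the integrable form of the resolvent in \eqref{eq:resolventexpli}. The undressing \eqref{eq:YtoX} reads $X(z)=Y(z)\widetilde M(z)$ on $\Omega_1^{(s)}$, so the definitions \eqref{def:fh} and \eqref{def:FH2} yield $\vec F(z)=X(z)\vec e_1$ and $\vec H(z)=(\gamma/(2\pi\ii))X(z)^{-\msf T}\vec e_3$ as $z\to s$ from that sector. This immediately gives $\vec F(s)^{\msf T}\vec H(s)=(\gamma/(2\pi\ii))\vec e_1^{\msf T}\vec e_3=0$, so L'H\^opital's rule applied to $R(u,v)=\vec F(u)^{\msf T}\vec H(v)/(u-v)$ yields $R(s,s)=-\vec F(s)^{\msf T}\vec H'(s)$. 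A short manipulation with $\partial_z X^{-\msf T}=-X^{-\msf T}(X')^{\msf T}X^{-\msf T}$ reduces this to $(\gamma/(2\pi\ii))(X^{-1}X')_{31}(s)$, the limit taken from $\Omega_1^{(s)}$, which is \eqref{eq:derivativeinsX-2}.

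For the second identity, my plan is to exploit that the jump matrices in \eqref{def:JX} and the singular factors in \eqref{eq:X-near-0}--\eqref{eq:X-near-s} are all independent of $\tau$. Consequently $\mathcal T(z):=\partial_\tau X(z)\cdot X(z)^{-1}$ has no jumps across $\Gamma_X$ and extends analytically across $z=0$ and $z=s$, so it is entire. Inserting \eqref{eq:asyX} and observing that the $\tau$-dependence at infinity sits only in the diagonal exponential factor $\mathrm{diag}(e^{\pm\tau\sqrt z})$, I would next compute the conjugation $\sqrt z\,\Psi_{\mathrm{alg}}(z)\,\mathrm{diag}(1,-1,1,-1)\,\Psi_{\mathrm{alg}}(z)^{-1}$, where $\Psi_{\mathrm{alg}}$ denotes the purely algebraic prefactor in \eqref{eq:asyX}. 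Since $A$ commutes with $\mathrm{diag}(1,-1,1,-1)$, the conjugation collapses to the $2\times 2$ block-rotation and the $z^{\pm 1/4}$-scaling, producing the polynomial $z(E_{12}-E_{34})+(E_{21}-E_{43})$. Hence $\mathcal T(z)$ is a polynomial of degree one in $z$ whose two coefficients are explicit in terms of $X_0=\widetilde M^{(0)}$ and $X^{(1)}$ via \eqref{def:X0}--\eqref{def:X1}.

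To convert $\mathcal T$ into a formula for $\partial_\tau F$, I would combine the Its--Izergin--Korepin--Slavnov differential identity $\partial_\tau F=-\tr((I-\gamma\mathcal K_{\tac})^{-1}\gamma\,\partial_\tau\mathcal K_{\tac})$ with the integrable form \eqref{eq:tildeKdef} and the analogous polynomial $\partial_\tau\widetilde M\cdot\widetilde M^{-1}$ for the bare problem; the trace is then converted to a residue at infinity using \eqref{eq:Yexpli} and \eqref{eq:asy:Mhat2}. This should collapse the right-hand side to $Y^{(1)}_{43}-Y^{(1)}_{21}$. Finally, \eqref{def:X1} together with the observation from \eqref{def:M1} that the first and third columns of $\widetilde M^{(0)}$ are $\vec e_1$ and $\vec e_3$ gives $X^{(1)}_{43}-\widetilde M^{(1)}_{43}=Y^{(1)}_{43}$ and $X^{(1)}_{21}-\widetilde M^{(1)}_{21}=Y^{(1)}_{21}$, yielding \eqref{eq:derivativeins-tau}. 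The main obstacle is executing this last residue reduction cleanly and verifying that only the $(4,3)$ and $(2,1)$ entries of $Y^{(1)}$ survive --- precisely the support of the constant-in-$z$ piece $\Omega_-=E_{21}-E_{43}$ of the conjugated $\sqrt z$-diagonal, with all other off-diagonal traces killed by the block structure of $\widetilde M^{(0)}$.
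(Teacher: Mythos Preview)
Your approach is correct and follows essentially the paper's route for both identities. For \eqref{eq:derivativeins-tau} the paper is slightly more direct and resolves your stated obstacle cleanly: it establishes the bare Lax equation $\partial_\tau\widehat M=[z(E_{12}-E_{34})+\text{const}]\,\widehat M$ (this is your ``analogous polynomial'' for the bare problem, stated as Lemma~\ref{prop:M}), from which the difference quotient $[\mathcal T_{\widetilde M}(x)-\mathcal T_{\widetilde M}(y)]/(x-y)=E_{12}-E_{34}$ gives
\[
\partial_\tau\bigl(\gamma K_{\tac}(x,y)\bigr)=\vec f(x)^{\msf T}(E_{12}-E_{34})^{\msf T}\vec h(y)=f_2(x)h_1(y)-f_4(x)h_3(y).
\]
The IIKS trace identity together with the integral representation $Y^{(1)}=\int_0^s\vec F\,\vec h^{\msf T}$ then yields $\partial_\tau F=Y^{(1)}_{43}-Y^{(1)}_{21}$ directly, with no residue computation needed; this is the mechanism that singles out the $(2,1)$ and $(4,3)$ entries, rather than the block structure of $\widetilde M^{(0)}$.

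Two minor corrections. First, your claim that ``the $\tau$-dependence at infinity sits only in the diagonal exponential factor'' is not literally true for $X$, since $X_0=\widetilde M^{(0)}$ and $X^{(1)}$ depend on $\tau$ through the entries $M^{(1)}_{ij}$; this only shifts the constant coefficient of $\mathcal T_X$, but it means your preliminary computation of $\mathcal T_X$ is incomplete as stated. Second, that preliminary step is in fact not needed at all: only the bare operator $\mathcal T_{\widetilde M}$ enters the argument, and the paper never computes $\mathcal T_X$.
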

	To prove Proposition \ref{prop:derivativeandX}, we need the following lemma.
	\begin{lemma}\label{prop:M}
		Let $\widehat{M}$ be the unique solution to the RH problem \ref{rhp:hard edge tac}. We have
		\begin{align}
			\frac{\partial \widehat{M}}{\partial \tau} &= \left( \begin{pmatrix}
				0 & z & 0 & 0\\
				0 & 0 & 0 & 0\\
				0 & 0 & 0 & -z\\
				0 & 0 & 0 & 0
			\end{pmatrix}+ \widetilde{M}^{(0)}\begin{pmatrix}
				0 & 0 & 0 & 0\\
				1 & 0 & 0 & 0\\
				0 & 0 & 0 & 0\\
				0 & 0 & -1 & 0
			\end{pmatrix}\left(\widetilde{M}^{(0)}\right)^{-1}+\frac{\partial \widetilde{M}^{(0)}}{\partial \tau}\left(\widetilde{M}^{(0)}\right)^{-1}\right.\nonumber\\
			&\left.\quad+\left[\widetilde{M}^{(1)}\left(\widetilde{M}^{(0)}\right)^{-1}, \begin{pmatrix}
				0 & 1 & 0 & 0\\
				0 & 0 & 0 & 0\\
				0 & 0 & 0 & -1\\
				0 & 0 & 0 & 0
			\end{pmatrix}\right]\right)\widehat{M}, \label{eq:lax:M2}
		\end{align}
		where $[A,B]$ denotes the commutator of two matrices, i.e., $[A,B]=AB-BA$.
	\end{lemma}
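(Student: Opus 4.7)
The proof follows the standard Lax-pair strategy for Riemann--Hilbert problems. The plan is to introduce
\[
\Psi(z) := \frac{\partial \widehat{M}}{\partial \tau}(z) \cdot \widehat{M}(z)^{-1},
\]
show that $\Psi$ is an entire function of $z$ with at most linear growth at infinity, read off its two nonzero coefficients from the large-$z$ expansion of $\widehat{M}$ in \eqref{eq:asy:Mhat2}, and finally recover \eqref{eq:lax:M2} by multiplying on the right by $\widehat{M}$.

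The analyticity of $\Psi$ across $\Gamma_{\widehat{M}}$ is immediate, because every jump matrix $J_k$ in RH problem \ref{rhp:hard edge tac} is independent of $\tau$: differentiating $\widehat{M}_{+} = \widehat{M}_{-} J_k$ in $\tau$ yields $\partial_\tau \widehat{M}_{+} \cdot \widehat{M}_{+}^{-1} = \partial_\tau \widehat{M}_{-} \cdot \widehat{M}_{-}^{-1}$. Regularity at the origin follows from item (d) of the same RH problem: writing $\widehat{M}(z) = M_0(z) E_\nu(z) C$ with $E_\nu$ the middle factor involving only powers of $z$ and $\delta_\nu$, and $C$ the sector-dependent constant factor, both $E_\nu$ and $C$ are $\tau$-independent, so $\Psi = (\partial_\tau M_0) M_0^{-1}$ in a punctured neighborhood of $0$, which extends analytically to $z=0$ since $M_0(0)$ is invertible.

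For the expansion at infinity we factor $\widehat{M}(z) = N(z) G(z)$, where $N(z) = \widetilde{M}^{(0)} + \widetilde{M}^{(1)}/z + \Boh(z^{-2})$ and $G(z)$ collects all the remaining right-hand factors in \eqref{eq:asy:Mhat2}. Only the rightmost diagonal exponential $E(z) = \diag(e^{-\theta_1+\tau\sqrt{z}}, e^{-\theta_2-\tau\sqrt{z}}, e^{\theta_1+\tau\sqrt{z}}, e^{\theta_2-\tau\sqrt{z}})$ depends on $\tau$, and $(\partial_\tau E) E^{-1} = \sqrt{z}\, T$ with $T = \diag(1,-1,1,-1)$. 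Conjugating $T$ back through the remaining factors of $G$ is a short block computation: $A$ mixes only the index pairs $\{1,3\}$ and $\{2,4\}$ on which $T$ is scalar, hence $A T A^{-1} = T$; the diagonal $D_2 = \diag((-\sqrt{z})^{-1/4}, z^{-1/8}, (-\sqrt{z})^{1/4}, z^{1/8})$ commutes with $T$; a $2\times 2$ check gives $P T P^{-1} = \diag(\sigma_1,-\sigma_1)$; and the final conjugation by $D_1 = \diag(z^{1/4}, z^{-1/4}, z^{1/4}, z^{-1/4})$ together with the extra factor $\sqrt{z}$ yields
\[
(\partial_\tau G)\, G^{-1} = z B_1 + B_0, \quad
B_1 := \begin{pmatrix} 0 & 1 & 0 & 0\\ 0 & 0 & 0 & 0\\ 0 & 0 & 0 & -1\\ 0 & 0 & 0 & 0 \end{pmatrix},\quad
B_0 := \begin{pmatrix} 0 & 0 & 0 & 0\\ 1 & 0 & 0 & 0\\ 0 & 0 & 0 & 0\\ 0 & 0 & -1 & 0 \end{pmatrix}.
\]

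Combining the two contributions gives $\Psi = (\partial_\tau N) N^{-1} + N(z B_1 + B_0) N^{-1}$. Expanding in powers of $1/z$ produces the coefficient of $z$ equal to $\widetilde{M}^{(0)} B_1 (\widetilde{M}^{(0)})^{-1}$, which simplifies to $B_1$ itself thanks to the special nilpotent structure of $\widetilde{M}^{(0)}$ in \eqref{def:M1} (columns $1$ and $3$ are $e_1$ and $e_3$, so $\widetilde M^{(0)} B_1 = B_1$ and $B_1 (\widetilde M^{(0)})^{-1} = B_1$), and constant coefficient $[\widetilde{M}^{(1)}(\widetilde{M}^{(0)})^{-1}, B_1] + \widetilde{M}^{(0)} B_0 (\widetilde{M}^{(0)})^{-1} + (\partial_\tau \widetilde{M}^{(0)})(\widetilde{M}^{(0)})^{-1}$. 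Since $\Psi$ is entire with at most linear growth, Liouville's theorem forces $\Psi(z)$ to equal this explicit degree-one polynomial in $z$, and \eqref{eq:lax:M2} follows upon multiplying by $\widehat{M}(z)$ on the right. The main obstacle is bookkeeping in the block-conjugation step that yields $(\partial_\tau G) G^{-1}$; everything else is routine RH/Liouville machinery.
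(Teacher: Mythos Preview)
Your proof is correct and follows essentially the same approach as the paper: constant jumps give analyticity across $\Gamma_{\widehat M}$, the $\tau$-independence of the middle factor in \eqref{def:M_0} gives regularity at the origin, and the large-$z$ expansion \eqref{eq:asy:Mhat2} together with Liouville yields the explicit degree-one polynomial. Your step-by-step conjugation through $A$, $D_2$, $P$, $D_1$ and your observation that $\widetilde M^{(0)}$ commutes with $B_1$ (so the leading term is exactly $zB_1$) make explicit what the paper compresses into a single displayed computation, but the argument is the same.
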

	\begin{proof}
		Since the RH problem for $\widehat{M}$ has constant jumps, we obtain $(\partial \widehat{M} / \partial \tau) M^{-1}$ is analytic for $z \in \mathbb{C}\setminus \{0\}$. It's also analytic at $z=0$, due to the fact that the matrix $\begin{pmatrix}
			z^{\frac{2\nu-1}{4}} & \delta_{\nu}(z) & e^{-\nu \pi \mathrm{i}}\delta_{\nu}(z) & 0 \\
			0 & z^{-\frac{2\nu-1}{4}} & 0 & 0 \\
			0 & 0 & z^{-\frac{2\nu-1}{4}} & 0 \\
			0 & 0 & -\delta_{\nu}(z) & z^{\frac{2\nu-1}{4}}
		\end{pmatrix}$ in \eqref{def:M_0} is independent of $\tau$.
		As $z \to \infty$, we find from \eqref{eq:asy:Mhat2} that
		\begin{align}
			\frac{\partial \widehat{M}}{\partial \tau} \widehat{M}^{-1} &= \left(\widetilde{M}^{(0)} + \frac{\widetilde{M}^{(1)}}{z}+ \Boh(z^{-2})\right)
			\diag \left(z^{\frac{1}{4}}, z^{-\frac{1}{4}},z^{\frac{1}{4}}, z^{-\frac{1}{4}}\right)    \diag \left(\begin{pmatrix} 1 & -1 \\ 1 & 1 \end{pmatrix},\begin{pmatrix} 1 & 1 \\ -1 & 1 \end{pmatrix}\right) \nonumber\\ 
			&\quad\times \diag \left((-\sqrt{z})^{-\frac{1}{4}}, z^{-\frac{1}{8}},(-\sqrt{z})^{\frac{1}{4}}, z^{\frac{1}{8}}\right)  A \diag \left(\sqrt{z}, -\sqrt{z},\sqrt{z},-\sqrt{z} \right),
			\nonumber  \\
			& \quad \times A^{-1} \diag \left((-\sqrt{z})^{\frac{1}{4}}, z^{\frac{1}{8}},(-\sqrt{z})^{-\frac{1}{4}}, z^{-\frac{1}{8}}\right) \diag \left(\begin{pmatrix} 1 & 1 \\ -1 & 1 \end{pmatrix},\begin{pmatrix} 1 & -1 \\ 1 & 1 \end{pmatrix}\right) \nonumber \\
			&\quad\times\diag \left(z^{-\frac{1}{4}}, z^{\frac{1}{4}},z^{-\frac{1}{4}}, z^{\frac{1}{4}}\right)\left(\left(\widetilde{M}^{(0)}\right)^{-1} - \frac{\left(\widetilde{M}^{(0)}\right)^{-1}\widetilde{M}^{(1)}\left(\widetilde{M}^{(0)}\right)^{-1}}{z}+ \Boh(z^{-2})\right) \nonumber\\
			&\quad +\frac{\partial \widetilde{M}^{(0)}}{\partial \tau}\left(\widetilde{M}^{(0)}\right)^{-1} +\Boh(z^{-1}) \nonumber\\
			&=  \begin{pmatrix}
				0 & z & 0 & 0\\
				0 & 0 & 0 & 0\\
				0 & 0 & 0 & -z\\
				0 & 0 & 0 & 0
			\end{pmatrix}+ \widetilde{M}^{(0)}\begin{pmatrix}
				0 & 0 & 0 & 0\\
				1 & 0 & 0 & 0\\
				0 & 0 & 0 & 0\\
				0 & 0 & -1 & 0
			\end{pmatrix}\left(\widetilde{M}^{(0)}\right)^{-1}+\frac{\partial \widetilde{M}^{(0)}}{\partial \tau}\left(\widetilde{M}^{(0)}\right)^{-1}\nonumber\\
			&\quad+\left[\widetilde{M}^{(1)}\left(\widetilde{M}^{(0)}\right)^{-1}, \begin{pmatrix}
				0 & 1 & 0 & 0\\
				0 & 0 & 0 & 0\\
				0 & 0 & 0 & -1\\
				0 & 0 & 0 & 0
			\end{pmatrix}\right] + \Boh(z^{-1}).
		\end{align}
		Keeping only the polynomial terms in $z$, we obtain \eqref{eq:lax:M2}.
		
		This finishes the proof of Lemma \ref{prop:M}.
	\end{proof}
 
\paragraph{Proof of Proposition \ref{prop:derivativeandX}}
		For $z\in \Omega_1^{(s)}$, we see from \eqref{def:fh}, \eqref{def:FH2} and \eqref{eq:YtoX} that
		\begin{equation}\label{eq:FHinX}
			\vec{F}(z)=Y(z)\vec{f}(z)=Y(z)\widetilde M(z)
			\begin{pmatrix}
				1
				\\
				0
				\\
				0
				\\
				0
			\end{pmatrix}= X(z)
			\begin{pmatrix}
				1
				\\
				0
				\\
				0
				\\
				0
			\end{pmatrix}
		\end{equation}
		and
		\begin{align}
			\vec{H}(z)&=Y(z)^{-\msf T}\vec{h}(z)=  X(z)^{-\msf T}\widetilde M(z)^{\msf T} \cdot \frac{\gamma }{2 \pi \ii} \widetilde M(z)^{-\msf T}
			\begin{pmatrix}
				0
				\\
				0
				\\
				1
				\\
				0
			\end{pmatrix}
			=\frac{\gamma }{2 \pi \ii}  X(z)^{-\msf T}
			\begin{pmatrix}
				0
				\\
				0
				\\
				1
				\\
				0
			\end{pmatrix}.
		\end{align}
		Combining the above formulas and \eqref{eq:resolventexpli}, we obtain
		\begin{equation} \label{eq: R-Xij}
			R(z,z) = \frac{\gamma}{2 \pi \ii}  \left(X(z)^{-1}X'(z)\right)_{31}, \qquad  z\in \Omega_1^{(s)}.
		\end{equation}
		This, together with \eqref{eq:derivatives}, gives us \eqref{eq:derivativeinsX-2}.
		To show \eqref{eq:derivativeins-tau}, we note from \eqref{def:fh} and \eqref{eq:lax:M2} that
		\begin{align}
			\frac{\partial \vec{f}}{\partial \tau}(x) &=  \left( \begin{pmatrix}
				0 & x & 0 & 0\\
				0 & 0 & 0 & 0\\
				0 & 0 & 0 & -x\\
				0 & 0 & 0 & 0
			\end{pmatrix}+ \widetilde{M}^{(0)}\begin{pmatrix}
				0 & 0 & 0 & 0\\
				1 & 0 & 0 & 0\\
				0 & 0 & 0 & 0\\
				0 & 0 & -1 & 0
			\end{pmatrix}\left(\widetilde{M}^{(0)}\right)^{-1}+\frac{\partial \widetilde{M}^{(0)}}{\partial \tau}\left(\widetilde{M}^{(0)}\right)^{-1}\right.\nonumber\\
			&\left.\quad+\left[\widetilde{M}^{(1)}\left(\widetilde{M}^{(0)}\right)^{-1}, \begin{pmatrix}
				0 & 1 & 0 & 0\\
				0 & 0 & 0 & 0\\
				0 & 0 & 0 & -1\\
				0 & 0 & 0 & 0
			\end{pmatrix}\right]\right)\vec{f}(x),\\
			\frac{\partial \vec{h}}{\partial \tau}(y) &= - \left( \begin{pmatrix}
				0 & y & 0 & 0\\
				0 & 0 & 0 & 0\\
				0 & 0 & 0 & -y\\
				0 & 0 & 0 & 0
			\end{pmatrix}+ \widetilde{M}^{(0)}\begin{pmatrix}
				0 & 0 & 0 & 0\\
				1 & 0 & 0 & 0\\
				0 & 0 & 0 & 0\\
				0 & 0 & -1 & 0
			\end{pmatrix}\left(\widetilde{M}^{(0)}\right)^{-1}+\frac{\partial \widetilde{M}^{(0)}}{\partial \tau}\left(\widetilde{M}^{(0)}\right)^{-1}\right.\nonumber\\
			&\left.\quad+\left[\widetilde{M}^{(1)}\left(\widetilde{M}^{(0)}\right)^{-1}, \begin{pmatrix}
				0 & 1 & 0 & 0\\
				0 & 0 & 0 & 0\\
				0 & 0 & 0 & -1\\
				0 & 0 & 0 & 0
			\end{pmatrix}\right]\right)^{\msf T}\vec{h}(y).
		\end{align}
		The above equations, together with \eqref{eq:tildeKdef}, imply that
		\begin{align}
			\frac{\partial}{\partial \tau} \left(\gamma K_{\tac} (x,y)\right)& = \frac{\frac{\partial \vec{f}^{\msf T}}{\partial \tau}(x)\vec{h}(y)+ \vec{f}(x)^{\msf T}\frac{\partial \vec{h}}{\partial \tau}(y)}{x-y} \nonumber\\
			&=\vec{f}(x)^{\msf T}\left( E_{2,1}-E_{4,3}\right)\vec{h}(y) =  f_2(x)h_1(y)-f_4(x)h_3(y).
		\end{align}
		Thus, it is readily seen from \eqref{def:Fnotation} and \eqref{def:FH} that
		\begin{align}
			\frac{\partial}{\partial \tau}F(s;\gamma,\tilde{s},\nu, \tau) &=\frac{\partial}{\partial \tau} \ln \det\left(I-\gamma  \mathcal K_{\tac}\right) = -\tr{\left((I - \gamma  \mathcal K_{\tac})^{-1} \frac{\partial}{\partial \tau} \left(\gamma \mathcal K_{\tac} \right)\right)}\nonumber\\
			&=\int_{0}^s F_4(v) h_3(v)-F_2(v) h_1(v) \ud v.
		\end{align}
		On the other hand, from \eqref{eq:Y-infty} and \eqref{eq:Yexpli} we have
		\begin{align}
			Y^{(1)} = \int_{0}^s \vec{F}(v) \vec{h}(v)^{\msf T} \ud v.
		\end{align}
		A combination of the above two equations gives us
		\begin{equation}
			\frac{\ud}{\ud \tau} F(s;\gamma,\tilde{s},\nu, \tau)	= Y^{(1)}_{43}-Y^{(1)}_{21}.
		\end{equation}
We thus obtain \eqref{eq:derivativeins-tau} by applying \eqref{def:X1} to the above formula.

This completes the proof of Proposition \ref{prop:derivativeandX}.
\qed 
	

\section{Lax pair equations and differential identities}\label{sec:Lax}

\subsection{The Lax system for $X$}

\begin{proposition}\label{pro:lax}
For the matrix-valued function $X(z) = X(z;s)$ defined in \eqref{eq:YtoX}, we have
\begin{equation}\label{eq:lax}
	\frac{\partial}{\partial z} X(z;s) = L(z;s) X(z;s), \qquad \frac{\partial}{\partial s} X(z;s) = U(z;s) X(z;s),
\end{equation}
where
\begin{equation}\label{eq:L}
	L(z;s) =A_0+\frac{A_1(s)}{z-s}+\frac{A_2(s)}{z},
\end{equation}
and
\begin{equation}\label{eq:U}
	U(z;s) = -\frac{A_1(s)}{z-s}
\end{equation}
with the functions $A_k$, $k=0,1,2$, defined in \eqref{def:A0}--\eqref{def:A2}. In addition, the functions $p_i$ and $q_i$, $i =1,\ldots,12$ in the definitions of $A_k$ satisfy the coupled differential equations \eqref{def:pq's}. 
\end{proposition}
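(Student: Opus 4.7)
My plan follows the standard recipe for extracting a Lax pair from an RH problem with piecewise-constant jumps. Since every entry of $J_X$ in \eqref{def:JX} is independent of both $z$ and $s$, the logarithmic derivatives
\begin{equation*}
L(z;s) := \frac{\partial X(z;s)}{\partial z}\, X(z;s)^{-1}, \qquad U(z;s) := \frac{\partial X(z;s)}{\partial s}\, X(z;s)^{-1}
\end{equation*}
have no jumps across $\Gamma_X$ and are therefore meromorphic in $z$ on the full plane with isolated singularities only possibly at $z=0$, $z=s$, and $z=\infty$. I would determine $L$ and $U$ globally by analyzing these three points using items (c)--(e) of RH problem \ref{rhp:X}, and simultaneously use the local expansion data to \emph{define} the functions $p_k(s)$ and $q_k(s)$ that appear in $A_1(s)$ and $A_2(s)$.

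I would handle $U$ first, since it is simpler. The only $s$-dependent singularity of $X$ is at $z=s$: differentiating the factor $-\frac{\gamma}{2\pi\ii}\ln(z-s)$ in \eqref{eq:X-near-s} with respect to $s$ produces $\frac{\gamma}{2\pi\ii(z-s)}$, and the surrounding constant matrices together with $E_{1,3}^2=0$ collapse the expression to a simple pole with rank-one residue of the form $\frac{\gamma}{2\pi\ii}\, X_{R,0}(s)\, E_{1,3}\, X_{R,0}(s)^{-1}$. I would declare this residue to equal $-A_1(s)$ and take the rank-one factorization $A_1(s) = (q_1,q_2,q_3,q_4)^{\msf T}(p_1,p_2,p_3,p_4)$ as the definition of $p_i, q_i$ for $i=1,\ldots,4$. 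At $z=0$ the prefactor in \eqref{eq:X-near-0} depends on $s$ only through $X_L$, so $U$ is regular there; from item (c) and the $s$-independence of the exponential factor in \eqref{eq:asyX}, $U(z;s) \to 0$ as $z \to \infty$. Liouville's theorem then forces \eqref{eq:U}.

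The analysis of $L$ is parallel but has three contributions. At $z=s$, the same rank-one computation gives $A_1(s)/(z-s)$. At $z=0$, a direct calculation of $\partial_z D \cdot D^{-1}$, where $D(z)$ is the explicit matrix in \eqref{eq:X-near-0} containing $z^{(2\nu-1)/4}$ and $\delta_{\nu}(z)$, shows that $L$ has a simple pole whose residue, after conjugation by $X_{L,0}(s)$, matches the structure of $A_2(s)$: a diagonal contribution proportional to $(2\nu-1)/4$ together with a rank-two contribution built from the first and fourth columns of $X_{L,0}(s)$ and the first and fourth rows of $X_{L,0}(s)^{-1}$, which I would package as the two outer products in \eqref{def:A2} to define $(p_{4+i}, q_{4+i})$ and $(p_{8+i}, q_{8+i})$ for $i=1,\ldots,4$. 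At $z=\infty$, writing $X = \mathcal{E}(z) \mathcal{G}(z)$ with $\mathcal{E} := X_0 + X^{(1)}/z + \cdots$ and $\mathcal{G}$ the prescribed diagonal/branch/exponential factor from \eqref{eq:asyX}, one has
\begin{equation*}
\frac{\partial X}{\partial z}\, X^{-1} = \frac{\partial \mathcal{E}}{\partial z}\, \mathcal{E}^{-1} + \mathcal{E}\,\Big(\frac{\partial \mathcal{G}}{\partial z} \mathcal{G}^{-1}\Big)\,\mathcal{E}^{-1}.
\end{equation*}
The contributions from $\theta_1'$, $\theta_2'$ and $\tau$ inside $\partial_z \mathcal{G}\cdot\mathcal{G}^{-1}$ produce, after conjugation by $A$ and by the block-rotation matrix, the entries $\pm\ii/2$ and $\pm\tau/2$ of $A_0$; the commutator $[\widetilde{M}^{(1)}(\widetilde{M}^{(0)})^{-1},\,\cdot\,]$ (structurally identical to the one already computed in Lemma \ref{prop:M}) produces the entries involving $M^{(1)}_{11}+M^{(1)}_{12}$, $-M^{(1)}_{13}+M^{(1)}_{14}$, and $-M^{(1)}_{33}+M^{(1)}_{34}$ in \eqref{def:A0}. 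Matching principal parts at $z = 0, s, \infty$ then yields \eqref{eq:L}.

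Finally, the system \eqref{def:pq's} falls out of the zero-curvature compatibility $\partial_s L - \partial_z U = [U, L]$ once the parametrizations above are in hand: equating the $1/(z-s)^2$, $1/(z-s)$, and $1/z$ principal parts produces matrix ODEs for $A_1(s)$ and for the rank-two part of $A_2(s)$ which, upon inserting the rank-one and rank-two factorizations and the explicit form \eqref{def:A0} of $A_0$, reduce to the twelve scalar equations listed in \eqref{def:pq's}; the extra constraint \eqref{eq:sumpq} follows from tracking the gauge ambiguity in the rank-one factorization of $A_1(s)$. The principal technical obstacle is the infinity computation fixing $A_0$: the prefactor in \eqref{eq:asyX} mixes $z^{\pm 1/4}$, $(-\sqrt z)^{\pm 1/4}$, $z^{\pm 1/8}$ with the exponentials $e^{\pm\theta_{1,2}(\sqrt z)\pm\tau\sqrt z}$, so one must track branches sector by sector, manage the cancellations between the $\partial_z \mathcal{E}\cdot\mathcal{E}^{-1}$ and $\mathcal{E}(\partial_z\mathcal{G}\cdot\mathcal{G}^{-1})\mathcal{E}^{-1}$ pieces, and verify that the apparent growth at infinity truly collapses to the constant matrix \eqref{def:A0} rather than leaving unwanted polynomial-in-$z$ terms.
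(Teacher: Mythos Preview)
Your approach is essentially the paper's: exploit the $z,s$-independence of $J_X$ to make $L=(\partial_z X)X^{-1}$ and $U=(\partial_s X)X^{-1}$ meromorphic, read off the polar parts at $z=0,s,\infty$ using items (c)--(e) of RH problem \ref{rhp:X}, and then extract the scalar system from compatibility. Two small slips are worth correcting before you write it out.

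First, in the decomposition of the residue of $L$ at $z=0$: computing $D'D^{-1}$ for the explicit matrix $D$ in \eqref{eq:X-near-0} (non-half-integer case) gives $\frac{1}{z}\diag\!\big(\tfrac{2\nu-1}{4},-\tfrac{2\nu-1}{4},-\tfrac{2\nu-1}{4},\tfrac{2\nu-1}{4}\big)$, so with the scalar shift $+\tfrac{2\nu-1}{4}I$ specified in \eqref{def:A2} the rank-two remainder is built from the \emph{second and third} columns of $X_{L,0}(s)$ and the second and third rows of $X_{L,0}(s)^{-1}$, not the first and fourth. This matches the paper's definitions \eqref{def:p5}--\eqref{def:q5}. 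Second, the constraint \eqref{eq:sumpq} is simply $\tr A_1(s)=0$, an immediate consequence of the nilpotency of $E_{1,3}$ in $A_1=-\tfrac{\gamma}{2\pi\ii}X_{R,0}E_{1,3}X_{R,0}^{-1}$; it is not a gauge-fixing condition.

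Finally, the zero-curvature relation alone only yields the matrix ODEs $A_1'=-[A_1,A_0+A_2/s]$ and $A_2'=[A_1/s,A_2]$; it does \emph{not} by itself split $q'p^{\msf T}+qp'^{\msf T}=(\cdots)$ into the separate equations $q'=(A_0+A_2/s)q$ and $p'=-(A_0+A_2/s)^{\msf T}p$, since the rank-one factorization has a scalar gauge freedom. The paper closes this gap by additionally evaluating $L+U$ at $z=s$ and $U$ at $z=0$ to obtain the direct ODEs $\tfrac{\ud}{\ud s}X_{R,0}=(A_0+A_2/s)X_{R,0}$ and $\tfrac{\ud}{\ud s}X_{L,0}=\tfrac{A_1}{s}X_{L,0}$, then differentiates the column/row definitions of $q_k,p_k$. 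You should include this step explicitly.
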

\begin{proof}
The proof is based on the RH problem \ref{rhp:X} 
 for $X$. Since the jump matrices for $X$ are all independent of $z$ and $s$, it's easily noticed that
\begin{equation}
	L(z;s)=\frac{\partial}{\partial z}X(z;s) X(z;s)^{-1}, \qquad U(z;s)=\frac{\partial}{\partial s}X(z;s) X(z;s)^{-1}
\end{equation}
are analytic in the complex plane except for possible isolated singularities at $z=0$, $z=s$ and $z=\infty$. We now solve the functions $L(z;s)$ and $U(z;s)$ one by one.

From the large $z$ behavior of $X$ in \eqref{eq:asyX}, we have, as $z \to \infty$,
\begin{align}
		L(z;s)&= \left(X_{0}+\frac{X_1}{z}+\Boh(z^{-2})\right) \diag \left(z^{\frac{1}{4}}, z^{-\frac{1}{4}},z^{\frac{1}{4}}, z^{-\frac{1}{4}}\right) \nonumber\\   
		&\quad\times \diag \left(\begin{pmatrix} 1 & -1 \\ 1 & 1 \end{pmatrix},\begin{pmatrix} 1 & 1 \\ -1 & 1 \end{pmatrix}\right)
		\diag \left((-\sqrt{z})^{-\frac{1}{4}}, z^{-\frac{1}{8}},(-\sqrt{z})^{\frac{1}{4}}, z^{\frac{1}{8}}\right)  A \nonumber\\ 
		&\quad\times\diag \left(e^{-\theta_1(\sqrt{z})+\tau \sqrt{z}}\left(-\theta_1^{'}(\sqrt{z})+\frac{\tau}{2\sqrt{z}}\right) , e^{-\theta_2(\sqrt{z})-\tau \sqrt{z}}\left(-\theta_2^{'}(\sqrt{z})-\frac{\tau}{2\sqrt{z}}\right), \right.\nonumber\\  
		&\left. \quad  e^{\theta_1(\sqrt{z})+\tau \sqrt{z}}\left(\theta_1^{'}(\sqrt{z})+\frac{\tau}{2\sqrt{z}}\right), e^{\theta_2(\sqrt{z})-\tau \sqrt{z}}\left(\theta_2^{'}(\sqrt{z})-\frac{\tau}{2\sqrt{z}}\right) \right) X(z;s)^{-1} \nonumber \\
         & \quad +\Boh(z^{-\frac{1}{2}}) 
         \nonumber \\
		&= X_0 \begin{pmatrix}
			0 & \frac{\tau}{2} & 0 & -\frac{\mathrm{i}}{2} \\
			0 & 0 & 0 & 0 \\
			\frac{\mathrm{i}}{2} & -\frac{\mathrm{i}\tilde{s}}{2} & 0 & -\frac{\tau}{2} \\
			0 & -\frac{\mathrm{i}}{2} & 0 & 0
		\end{pmatrix}
		X_0^{-1}+\Boh(z^{-\frac{1}{2}}).
\end{align}
From the definition of $X_{0}$ in \eqref{def:X0}, we derive
\begin{align}
	A_0=\begin{pmatrix}
		0 & \frac{\tau-\mathrm{i}(-M^{(1)}_{13}+M^{(1)}_{14})}{2} & 0 & -\frac{\mathrm{i}}{2} \\
		0 & 0 & 0 & 0 \\
		\frac{\mathrm{i}}{2} & -\frac{\mathrm{i}\tilde{s}+\mathrm{i}(M^{(1)}_{11}+M^{(1)}_{12})+\mathrm{i}(-M^{(1)}_{33}+M^{(1)}_{34})}{2} & 0 & -\frac{\tau+\mathrm{i}(-M^{(1)}_{13}+M^{(1)}_{14})}{2} \\
		0 & -\frac{\mathrm{i}}{2} & 0 & 0
	\end{pmatrix},
\end{align}
where $M^{(1)}$ is given in \eqref{eq:asy:Mhat}. 
It's easily seen that $A_0$ is independent of $s$.

If $z \to s$, applying \eqref{eq:X-near-s}, we have 
\begin{equation}
	L(z;s) \sim \frac{A_1(s)}{z-s},
\end{equation}
where
\begin{equation}\label{def:A1-1}
	A_1(s)= -\frac{\gamma}{2\pi \mathrm{i}} X_{R,0}(s) 
 \begin{pmatrix}
		0 & 0 & 1 & 0 \\
		0 & 0 & 0 & 0 \\
		0 & 0 & 0 & 0\\
		0 & 0 & 0 & 0 
	 \end{pmatrix}
	X_{R,0}(s)^{-1}.
\end{equation}
By setting
\begin{equation}\label{def:q1p1}
	\begin{pmatrix}
		q_1(s)\\q_2(s)\\q_3(s)\\q_4(s)
	\end{pmatrix}=X_{R,0}(s) \begin{pmatrix}
		1\\0\\0\\0
	\end{pmatrix} \quad \textrm{and} \quad \begin{pmatrix}
		p_1(s)\\p_2(s)\\p_3(s)\\p_4(s)
	\end{pmatrix}=-\frac{\gamma}{2 \pi \ii}X_{R,0}(s)^{-\msf T} \begin{pmatrix}
		0\\0\\1\\0
	\end{pmatrix},
\end{equation}
we obtain the expression of $A_1(s)$ in \eqref{def:A1}. From \eqref{def:A1-1}, we also note that
\begin{equation}\label{eq:trA1}
	\tr A_1(s) = \sum_{k=1}^4 q_k(s)p_k(s)=0.
\end{equation}
As $z \to 0$, applying \eqref{eq:X-near-0}, we have 
\begin{equation}
	L(z;s) \sim \frac{A_2(s)}{z},
\end{equation}
where
\begin{equation}
	A_2(s)=X_{L,0}(s)T_2(s)X_{L,0}(s)^{-1}
\end{equation}
with
\begin{equation}
	T_2(s)=\left\{\begin{aligned}
		&\begin{pmatrix}
			\frac{2\nu-1}{4} & 0 & 0 & 0 \\
			0 & \frac{1-2\nu}{4} & 0 & 0 \\
			0 & 0 & \frac{1-2\nu}{4} & 0\\
			0 & 0 & 0 & \frac{2\nu-1}{4} 
		\end{pmatrix}, &\quad \nu \neq \frac{1}{2} , \\
		&\begin{pmatrix}
			0 & -\frac{1}{2\pi } & -\frac{\mathrm{i}(\gamma-1)}{2\pi } & 0 \\
			0 & 0 & 0 & 0 \\
			0 & 0 & 0 & 0 \\
			0 & 0 & \frac{1}{2\pi }& 0
		\end{pmatrix},& \quad \nu=\frac{1}{2}.
	\end{aligned}\right.
\end{equation}	
By setting
\begin{equation}\label{def:p5}
	\begin{pmatrix}
		p_5(s)\\p_6(s)\\p_7(s)\\p_8(s)
	\end{pmatrix}=X_{L,0}^{-\msf T}(s)
	\left\{\begin{aligned}
		\begin{pmatrix}
			0\\ \frac{1-2\nu}{2}\\0\\0
		\end{pmatrix} ,&\quad \nu\neq \frac{1}{2}, \\
		\begin{pmatrix}
			0\\	-\frac{1}{2\pi}\\0\\0
		\end{pmatrix}  , &\quad \nu = \frac{1}{2}, 
	\end{aligned}   \right.
 \quad 
 \begin{pmatrix}
		p_9(s)\\p_{10}(s)\\p_{11}(s)\\p_{12}(s)
	\end{pmatrix}=X_{L,0}^{-\msf T}(s)
	\left\{\begin{aligned}
		\begin{pmatrix}
			0\\ 0\\\frac{1-2\nu}{2}\\0
		\end{pmatrix} ,&\quad \nu\neq \frac{1}{2}, \\
		\begin{pmatrix}
			0\\	0\\-\frac{1}{2\pi}\\0
		\end{pmatrix}  , &\quad \nu = \frac{1}{2}, 
	\end{aligned}   \right.
\end{equation}
and
\begin{equation} \label{def:q5}
	\begin{pmatrix}
		q_5(s)\\q_6(s)\\q_7(s)\\q_8(s)
	\end{pmatrix}=X_{L,0}(s)
	\left\{\begin{aligned}
		\begin{pmatrix}
			0\\ 1\\0\\0
		\end{pmatrix} ,&\quad \nu\neq \frac{1}{2}, \\
		\begin{pmatrix}
			1\\0\\0\\0
		\end{pmatrix}  , &\quad \nu = \frac{1}{2},
	\end{aligned}   \right. 
      \qquad \begin{pmatrix}
		q_9(s)\\q_{10}(s)\\q_{11}(s)\\q_{12}(s)
	\end{pmatrix}=X_{L,0}(s)
	\left\{\begin{aligned}
		\begin{pmatrix}
			0\\ 0\\1\\0
		\end{pmatrix} ,&\quad \nu\neq \frac{1}{2}, \\
		\begin{pmatrix}
			\ii(\gamma-1)\\0\\0\\-1
		\end{pmatrix}  , &\quad \nu = \frac{1}{2},
	\end{aligned}   \right.
\end{equation}
we obtain the expression of $A_2(s)$ in \eqref{def:A2}.

The computation of $U(z;s)$ is similar. It is easy to check that
\begin{equation}
	U(z;s)=\Boh(z^{-1}), \qquad z \to \infty,
\end{equation}
and
\begin{equation}
	U(z;s) \sim -\frac{A_1(s)}{z-s}, \quad z \to s; \quad  U(z;s)=\Boh(1), \quad z \to 0,
\end{equation}
where $A_1(s)$ is given in \eqref{def:A1-1}. The above equations imply the expression of $U(z;s)$ in \eqref{eq:U}.

It remains to establish the various relations satisfied by the functions $p_i (s)$ and $q_i(s)$, $i=1,\ldots,12$, in the definitions of $A_k(s)$, $k=0,1,2$. By \eqref{eq:trA1}, we have proved \eqref{eq:sumpq}. We recall that the compatibility condition
\begin{equation}
	\frac{\partial^2}{\partial z \partial s} X(z;s) = \frac{\partial^2}{\partial s \partial z} X(z;s)
\end{equation}
for the Lax pair \eqref{eq:lax} is the zero curvature relation
\begin{equation}\label{eq:compatibility}
	\frac{\partial}{\partial s}L(s;z) - \frac{\partial}{\partial z}U(s;z) = \frac{A_1'(s)}{z-s}+\frac{A_2'(s)}{z}=[U,L].
\end{equation}

If we calculate the residue at $z=s$ and $z=0$ on the both sides of \eqref{eq:compatibility}, it is easily seen that
\begin{equation}\label{def:A1's}
	A_1'(s)=-\left[A_1(s),A_0+\frac{A_2(s)}{s}\right],
\end{equation}
\begin{equation}\label{def:A2's}
	A_2'(s)=\left[\frac{A_1(s)}{s},A_2(s)\right].
\end{equation}
On account of \eqref{eq:lax}--\eqref{eq:U}, we obtain
\begin{equation}
    \left(\frac{\partial}{\partial z}X(z;s)+\frac{\partial}{\partial s}X(z;s)\right)X(z;s)^{-1}=L(z;s)+U(z;s)
   \sim A_0+\frac{A_2(s)}{s}, \quad z \to s.
\end{equation}
On the other hand, substituting \eqref{eq:X-near-s} into the left hand side of the above equation gives us
\begin{equation}
	\left(\frac{\partial}{\partial z}X(z;s)+\frac{\partial}{\partial s}X(z;s)\right)X(z;s)^{-1} \sim \frac{\ud}{\ud s} X_{R,0}(s) \cdot X_{R,0}(s)^{-1}, \quad z \to s.
\end{equation}
Therefore, we have from the above two formulas that
\begin{equation}
	\frac{\ud}{\ud s} X_{R,0}(s)=\left(A_0+\frac{A_2(s)}{s}\right)X_{R,0}(s).
\end{equation}
Recall the definitions of $q_k(s)$, $k=1,\ldots,4$, given in \eqref{def:q1p1}, it is readily seen that
\begin{equation}\label{eq:q1's}
	\begin{pmatrix}
		q_1'(s)\\q_2'(s)\\q_3'(s)\\q_4'(s)
	\end{pmatrix}=\left(A_0+\frac{A_2(s)}{s}\right)\begin{pmatrix}
		q_1(s)\\q_2(s)\\q_3(s)\\q_4(s)
	\end{pmatrix}.
\end{equation}
It then follows from \eqref{def:A0}, \eqref{def:A2} and straightforward calculations that $q_k(s)$, $k=1,\ldots,4$ satisfy the equations in \eqref{def:pq's}.

To show the equations for $p_i(s)$, $i=1,\ldots,4$, we see from \eqref{def:A1} and \eqref{def:A1's} that
\begin{align}
	A_1'(s)&= \begin{pmatrix}
		q_1'(s)\\q_2'(s)\\q_3'(s)\\q_4'(s)
	\end{pmatrix}\begin{pmatrix}
		p_1(s) & p_2(s)&p_3(s)&p_4(s)
	\end{pmatrix}+ \begin{pmatrix}
		q_1(s)\\q_2(s)\\q_3(s)\\q_4(s)
	\end{pmatrix}\begin{pmatrix}
		p_1'(s) & p_2'(s)&p_3'(s)&p_4'(s)
	\end{pmatrix}\nonumber\\
	&=-\left[A_1(s),A_0+\frac{A_2(s)}{s}\right]\nonumber\\
	&=-\begin{pmatrix}
		q_1(s)\\q_2(s)\\q_3(s)\\q_4(s)
	\end{pmatrix}\begin{pmatrix}
		p_1(s) & p_2(s)&p_3(s)&p_4(s)
	\end{pmatrix}\left(A_0+\frac{A_2(s)}{s}\right)\nonumber\\
	&\quad +\left(A_0+\frac{A_2(s)}{s}\right)\begin{pmatrix}
		q_1(s)\\q_2(s)\\q_3(s)\\q_4(s)
	\end{pmatrix}\begin{pmatrix}
		p_1(s) & p_2(s)&p_3(s)&p_4(s)
	\end{pmatrix}.
\end{align}
A combination of this formula and \eqref{eq:q1's} gives us
\begin{align}
	\begin{pmatrix}
		p_1'(s) & p_2'(s)&p_3'(s)&p_4'(s)
	\end{pmatrix}
	=-\begin{pmatrix}
		p_1(s) & p_2(s)&p_3(s)&p_4(s)
	\end{pmatrix}\left(A_0+\frac{A_2(s)}{s}\right),
\end{align}
which is equivalent to the first four equations in \eqref{def:pq's} by using \eqref{def:A0} and \eqref{def:A2}.

Similarly, taking $z\to 0$ in 
\begin{align}
	\frac{\partial}{\partial s}X(z;s) X(z;s)^{-1}=U(z;s),
\end{align}
we have
\begin{equation}
	\frac{\ud}{\ud s} X_{L,0}(s)=\frac{A_1(s)}{s}X_{L,0}(s).
\end{equation}
Recall the definitions of $q_k(s)$, $k=5,\ldots,12$, given in \eqref{def:q5}. It is readily seen that
\begin{equation}\label{eq:q5's}
	\begin{pmatrix}
		q_5'(s)\\q_6'(s)\\q_7'(s)\\q_8'(s)
	\end{pmatrix}=\frac{A_1(s)}{s}\begin{pmatrix}
		q_5(s)\\q_6(s)\\q_7(s)\\q_8(s)
	\end{pmatrix}
~
\textrm{and}
~
	\begin{pmatrix}
		q_9'(s)\\q_{10}'(s)\\q_{11}'(s)\\q_{12}'(s)
	\end{pmatrix}=\frac{A_1(s)}{s}\begin{pmatrix}
		q_9(s)\\q_{10}(s)\\q_{11}(s)\\q_{12}(s)
	\end{pmatrix}.
\end{equation}
We then obtain the last two equations in \eqref{def:pq's}  by using \eqref{def:A1}.

To show the equations of $p_i(s)$, $i=5,\ldots,12$, we see from \eqref{def:A2} and \eqref{def:A2's} that
	\begin{align}
		A_2'(s)& = \begin{pmatrix}
			q_5'(s)\\q_6'(s)\\q_7'(s)\\q_8'(s)
		\end{pmatrix}\begin{pmatrix}
			p_5(s) & p_6(s)&p_7(s)&p_8(s)
		\end{pmatrix}+ \begin{pmatrix}
			q_5(s)\\q_6(s)\\q_7(s)\\q_8(s)
		\end{pmatrix}\begin{pmatrix}
			p_5'(s) & p_6'(s)&p_7'(s)&p_8'(s)
		\end{pmatrix}\nonumber\\
		&\quad + \begin{pmatrix}
			q_9'(s)\\q_{10}'(s)\\q_{11}'(s)\\q_{12}'(s)
		\end{pmatrix}\begin{pmatrix}
			p_9(s) & p_{10}(s)&p_{11}(s)&p_{12}(s)
		\end{pmatrix}+ \begin{pmatrix}
			q_9(s)\\q_{10}(s)\\q_{11}(s)\\q_{12}(s)
		\end{pmatrix}\begin{pmatrix}
			p_9'(s) & p_{10}'(s)&p_{11}'(s)&p_{12}'(s)
		\end{pmatrix}\nonumber\\
		&=\left[A_1(s),\frac{A_2(s)}{s}\right]\nonumber\\
		&= -\begin{pmatrix}
			q_5(s)\\q_6(s)\\q_7(s)\\q_8(s)
		\end{pmatrix}\begin{pmatrix}
			p_5(s) & p_6(s)&p_7(s)&p_8(s)
		\end{pmatrix}\frac{A_1(s)}{s}\nonumber\\
		&\quad -\begin{pmatrix}
			q_9(s)\\q_{10}(s)\\q_{11}(s)\\q_{12}(s)
		\end{pmatrix}\begin{pmatrix}
			p_9(s) & p_{10}(s)&p_{11}(s)&p_{12}(s)
		\end{pmatrix}\frac{A_1(s)}{s}\nonumber\\
		&\quad +\frac{A_1(s)}{s}\begin{pmatrix}
			q_5(s)\\q_6(s)\\q_7(s)\\q_8(s)
		\end{pmatrix}\begin{pmatrix}
			p_5(s) & p_6(s)&p_7(s)&p_8(s)
		\end{pmatrix}\nonumber\\
		&\quad+\frac{A_1(s)}{s}\begin{pmatrix}
			q_9(s)\\q_{10}(s)\\q_{11}(s)\\q_{12}(s)
		\end{pmatrix}\begin{pmatrix}
			p_9(s) & p_{10}(s)&p_{11}(s)&p_{12}(s)
		\end{pmatrix}.
	\end{align}
A combination of this formula and \eqref{eq:q5's} gives us
\begin{align}
	&\begin{pmatrix}
		q_5(s)\\q_6(s)\\q_7(s)\\q_8(s)
	\end{pmatrix}\begin{pmatrix}
		p_5'(s) & p_6'(s)&p_7'(s)&p_8'(s)
	\end{pmatrix}+\begin{pmatrix}
		q_9(s)\\q_{10}(s)\\q_{11}(s)\\q_{12}(s)
	\end{pmatrix}\begin{pmatrix}
		p_9'(s) & p_{10}'(s)&p_{11}'(s)&p_{12}'(s)
	\end{pmatrix}\nonumber\\
	&=-\begin{pmatrix}
		q_5(s)\\q_6(s)\\q_7(s)\\q_8(s)
	\end{pmatrix}\begin{pmatrix}
		p_5(s) & p_6(s)&p_7(s)&p_8(s)
	\end{pmatrix}\frac{A_1(s)}{s}\nonumber\\
	&\quad -\begin{pmatrix}
		q_9(s)\\q_{10}(s)\\q_{11}(s)\\q_{12}(s)
	\end{pmatrix}\begin{pmatrix}
		p_9(s) & p_{10}(s)&p_{11}(s)&p_{12}(s)
	\end{pmatrix}\frac{A_1(s)}{s}.
\end{align}
Note that $\begin{pmatrix}
	q_5(s)\\q_6(s)\\q_7(s)\\q_8(s)
\end{pmatrix}$ and $\begin{pmatrix}
	q_9(s)\\q_{10}(s)\\q_{11}(s)\\q_{12}(s)
\end{pmatrix}$ are linearly independent, so we have 
\begin{align}
	&\begin{pmatrix}
		p_5'(s) & p_6'(s)&p_7'(s)&p_8'(s)
	\end{pmatrix} =\begin{pmatrix}
		p_5(s) & p_6(s)&p_7(s)&p_8(s)
	\end{pmatrix} \frac{A_1(s)}{s}, \\
	&\begin{pmatrix}
		p_9'(s) & p_{10}'(s)&p_{11}'(s)&p_{12}'(s)
	\end{pmatrix} =\begin{pmatrix}
		p_9(s) & p_{10}(s)&p_{11}(s)&p_{12}(s)
	\end{pmatrix} \frac{A_1(s)}{s},
\end{align}
 we obtain the fifth and sixth equations in \eqref{def:pq's}  by using \eqref{def:A1}.

This completes the proof of Proposition \ref{pro:lax}.
\end{proof}
From the general theory of Jimbo-Miwa-Ueno \cite{JMU81}, we have that the Hamiltonian associated with the Lax system \eqref{eq:lax} is given by
\begin{align}\label{eq:JMU}
H(s) &= \frac{\gamma}{2 \pi \ii} \tr{\left( -X_{R,1}(s)\begin{pmatrix}
		0 & 0 & 1 & 0\\
		0 & 0 & 0 & 0\\
		0 & 0 & 0 & 0\\
		0 & 0 & 0 & 0
	\end{pmatrix}\right)} = -\frac{\gamma}{2 \pi \ii} X_{R,1}(s)_{31},
\end{align}
where $X_{R,1}(s)$ is given in \eqref{eq: X-expand-s}.
Taking $z \to s$ in the first equation of the Lax pair \eqref{eq:lax}, we obtain from \eqref{eq:L} and \eqref{eq:X-near-s} that the $\Boh(1)$ term gives
\begin{align}
X_{R,1}(s) &= \frac{\gamma}{2 \pi \ii}\left[X_{R,1}(s), \begin{pmatrix}
	0 & 0 & 1 & 0\\
	0 & 0 & 0 & 0\\
	0 & 0 & 0 & 0\\
	0 & 0 & 0 & 0
\end{pmatrix}\right]  + X_{R,0}(s)^{-1} \left[A_0+\frac{A_2(s)}{s}\right] X_{R,0}(s).
\end{align}
Inserting the above equation into \eqref{eq:JMU} yields
\begin{align}\label{eq:H}
H(s) &= -\frac{\gamma}{2 \pi \ii} \begin{pmatrix}
	0 & 0 & 1 & 0
\end{pmatrix}X_{R,0}(s)^{-1} \left[A_0+\frac{A_2(s)}{s}\right] X_{R,0}(s) \begin{pmatrix}
	1\\0\\0\\0
\end{pmatrix} ,
\end{align}
which is consistent with the definition of $H$ in \eqref{def:H}.
\subsection{Differential identities for the Hamiltonian}
\begin{proposition}\label{prop:H}
With the Hamiltonian $H$ defined in \eqref{def:H}, we have
\begin{align}\label{eq:differentialH}
	\frac{\ud}{\ud s}H(s)= -\begin{pmatrix}
	    p_1(s)& p_2(s) & p_3(s) &p_4(s)
	\end{pmatrix} \frac{A_2(s)}{s^2} \begin{pmatrix}
	    q_1(s)& q_2(s) & q_3(s) &q_4(s)
	\end{pmatrix}^{\msf T}
\end{align}
and 
\begin{align}\label{eq:differentialH1}
	\sum_{k=1}^{12} \left(p_k(s)q'_k(s)  \right) - H(s) = H(s)-\frac{\ud}{\ud s}(sH(s)).
\end{align}
We also have the following differential identity with respect to the parameter $\gamma$:
\begin{align}\label{eq:differential:gamma}
	\frac{\partial}{\partial \gamma}\left(\sum_{k=1}^{12} p_k(s)q'_k(s) - H(s)\right) = \frac{\ud}{\ud s}\sum_{k=1}^{12}\left(p_k(s)\frac{\partial}{\partial \gamma}q_k(s) \right).
\end{align}
\end{proposition}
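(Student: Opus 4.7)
The proof consists of establishing three related identities, and the approach is to exploit the canonical Hamiltonian structure already recorded in \eqref{pq}, together with the constraint \eqref{eq:sumpq}.

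\textbf{Identity \eqref{eq:differentialH}.} I would invoke the standard fact that, for any Hamiltonian flow, the total $s$-derivative of $H$ along a trajectory equals the explicit partial derivative in $s$:
\begin{equation*}
\frac{\ud H}{\ud s}=\sum_{k=1}^{12}\Bigl(\frac{\partial H}{\partial q_k}q_k'+\frac{\partial H}{\partial p_k}p_k'\Bigr)+\frac{\partial H}{\partial s}=\frac{\partial H}{\partial s},
\end{equation*}
by virtue of \eqref{pq}. In our case the explicit $s$-dependence of $H$ in \eqref{def:H} comes only through the factor $1/s$, since $A_0$ is constant and $A_2(s)$ depends on $s$ only through the dynamical variables $p_j,q_j$, $j=5,\dots,12$. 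Hence $\partial_s H=-\begin{pmatrix}p_1&p_2&p_3&p_4\end{pmatrix}A_2(s)\begin{pmatrix}q_1&q_2&q_3&q_4\end{pmatrix}^{\msf T}/s^2$, which is exactly \eqref{eq:differentialH}.

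\textbf{Identity \eqref{eq:differentialH1}.} I would split $H=H_0+H_2/s$ where $H_0:=\begin{pmatrix}p_1&p_2&p_3&p_4\end{pmatrix}A_0\begin{pmatrix}q_1&q_2&q_3&q_4\end{pmatrix}^{\msf T}$ and $H_2:=\begin{pmatrix}p_1&p_2&p_3&p_4\end{pmatrix}A_2(s)\begin{pmatrix}q_1&q_2&q_3&q_4\end{pmatrix}^{\msf T}$. Using \eqref{eq:sumpq}, the scalar block $\tfrac{2\nu-1}{4}I_4$ of $A_2$ contributes zero, so that $H_2$ reduces to the sum of two products $\bigl(\sum_{j=1}^4 p_j q_{4+j}\bigr)\bigl(\sum_{j=1}^4 p_{4+j} q_j\bigr)$ and its $9$--$12$ analogue, each quadratic in the momenta (one factor from $\{p_1,\dots,p_4\}$ and one from $\{p_5,\dots,p_8\}$ or $\{p_9,\dots,p_{12}\}$). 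A direct Euler count then gives $\sum_{k=1}^{12} p_k\,\partial_{p_k}H_0=H_0$ and $\sum_{k=1}^{12} p_k\,\partial_{p_k}(H_2/s)=2H_2/s$. Combining with Hamilton's equation $q_k'=\partial_{p_k}H$ yields $\sum_{k=1}^{12} p_k q_k'-H=H_2/s$. On the other hand, \eqref{eq:differentialH} together with \eqref{eq:sumpq} gives $s\,\ud H/\ud s=-H_2/s$, so that $\ud(sH)/\ud s=H+s\,\ud H/\ud s=H_0$ and hence $H-\ud(sH)/\ud s=H_2/s$, matching the previous expression.

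\textbf{Identity \eqref{eq:differential:gamma}.} This is a ``mixed partials commute'' calculation. Since $H$ has no explicit $\gamma$-dependence, the chain rule and Hamilton's equations give
\begin{equation*}
\frac{\partial H}{\partial\gamma}=\sum_{k=1}^{12}\bigl(\partial_{p_k}H\bigr)\bigl(\partial_\gamma p_k\bigr)+\sum_{k=1}^{12}\bigl(\partial_{q_k}H\bigr)\bigl(\partial_\gamma q_k\bigr)=\sum_{k=1}^{12}q_k'\,\partial_\gamma p_k-\sum_{k=1}^{12}p_k'\,\partial_\gamma q_k.
\end{equation*}
Interchanging $\partial_\gamma$ and $\ud/\ud s$ on $q_k$ then produces
\begin{equation*}
\frac{\partial}{\partial\gamma}\Bigl(\sum_{k=1}^{12} p_k q_k'-H\Bigr)=\sum_{k=1}^{12} p_k'\,\partial_\gamma q_k+\sum_{k=1}^{12} p_k\,\frac{\ud}{\ud s}\partial_\gamma q_k=\frac{\ud}{\ud s}\sum_{k=1}^{12} p_k\,\partial_\gamma q_k,
\end{equation*}
which is \eqref{eq:differential:gamma}.

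The only delicate step is the Euler-type count in the argument for \eqref{eq:differentialH1}, where one has to track the multi-block structure of $A_2(s)$ carefully and invoke \eqref{eq:sumpq} to neutralise the diagonal $\tfrac{2\nu-1}{4}I_4$ contribution; the remaining manipulations are formal consequences of Hamilton's equations and the commutation of $\partial_\gamma$ with $\ud/\ud s$.
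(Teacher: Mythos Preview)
Your proof is correct and follows essentially the same route as the paper. For \eqref{eq:differentialH} the paper simply says it ``follows directly from \eqref{eq:H}'', which amounts to the same observation you make (only the explicit $1/s$ survives after the Hamiltonian cancellations); for \eqref{eq:differentialH1} the paper states the intermediate identity $\sum_{k}p_kq_k'-H=H_2/s$ without derivation, whereas you supply it via the clean Euler--homogeneity count, and for \eqref{eq:differential:gamma} both arguments are identical.
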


\begin{proof}
The differential identity \eqref{eq:differentialH} follows directly from \eqref{eq:H}. 
The equation \eqref{eq:differentialH1} could be obtained from \eqref{eq:differentialH} and the following equation  
	\begin{align}
		&\sum_{k=1}^{12} p_k(s)q'_k(s)   - H(s) \nonumber
   \\
		& = \frac{1}{s}\left[\left(\sum_{i=1}^{4} p_i(s) q_{4+i}(s)\right) \left(\sum_{i=1}^{4} p_{4+i}(s) q_i(s)\right)+\left(\sum_{i=1}^{4} p_i(s) q_{8+i}(s)\right) \left(\sum_{i=1}^{4} p_{8+i}(s) q_i(s)\right)\right].
	\end{align}
To see the differential identity with respect to the parameter $\gamma$, we have from \eqref{pq} that
\begin{align}
	\frac{\partial}{\partial \gamma}H(s)& = \sum_{k=1}^{12}\left(\frac{\partial H}{\partial p_k} \frac{\partial}{\partial \gamma} p_k(s) + \frac{\partial H}{\partial q_k} \frac{\partial}{\partial \gamma}q_k(s)  \right)=\sum_{k=1}^{12}\left(q'_k(s) \frac{\partial}{\partial \gamma} p_k(s)  -p'_k(s) \frac{\partial}{\partial \gamma}q_k(s) \right),
\end{align}
which leads to \eqref{eq:differential:gamma}.

This completes the proof of Proposition \ref{prop:H}.
\end{proof}

\section{Large $s$ asymptotic analysis of the RH problem with $\gamma =1$}
\label{sec:AsyX1}


\subsection{First transformation: $X \to T$}
This transformation is a rescaling of the $\mathrm{RH}$ problem for $X$, which is defined by
\begin{equation}\label{def:XtoT}
T(z)=\operatorname{diag}\left(s^{-\frac{1}{8}}, s^{\frac{3}{8}}, s^{-\frac{3}{8}}, s^{\frac{1}{8}}\right) {X_0}^{-1} X(sz) ,
\end{equation}
where $X_0$ is given in \eqref{eq:asyX}. In view of $\mathrm{RH}$ problem for $X$, it is readily seen that $T$ satisfies the following $\mathrm{RH}$ problem. 
\begin{rhp}\label{rhp:T}
\hfill
\begin{enumerate}
	\item[\rm (a)] $T(z)$ is defined and analytic in $\mathbb{C} \setminus \left\{\Gamma_T \cup\{0\} \cup\{1\}\right\}$, where
	\begin{equation}\label{jumpT}
		\Gamma_T:=\cup_{j=0}^5 \Gamma_j^{(1)}
	\end{equation}
	with the contours $\Gamma_j^{(1)}, j=0,1, \ldots, 5$, defined in \eqref{def:Gammajs} with $s=1$.
	
	\item[\rm (b)] T satisfies 
				$T_{+}(z)=T_{-}(z) J_T(z)$  for $z \in \Gamma_T$, 
	where
	\begin{equation}
		J_T(z):= \begin{cases}\begin{pmatrix}
		0 & 0 & 1 & 0 \\
				0 & 1 & 0 & 0 \\
				-1 & 0 & 0 & 0 \\
				0 & 0 & 0 & 1
			\end{pmatrix}, & \quad z \in \Gamma_0^{(1)}, \\
			
			I+E_{3,1}, & \quad z \in \Gamma_1^{(1)},\\
			
			I+e^{\nu\pi \mathrm{i}} E_{2,1}-e^{\nu\pi \mathrm{i}} E_{3,4}, & \quad z \in \Gamma_2^{(1)}, \\
			\begin{pmatrix}
				0 & -\mathrm{i}& 0 & 0 \\
				-\mathrm{i} & 0 & 0 & 0 \\
				0 & 0 & 0 & \mathrm{i}\\
				0 & 0 & \mathrm{i} & 0
			\end{pmatrix}, & \quad z \in \Gamma_3^{(1)},\\
			I-e^{-\nu\pi \mathrm{i}} E_{2,1}+e^{-\nu\pi \mathrm{i}} E_{3,4}, & \quad z \in \Gamma_4^{(1)}, \\
			I+E_{3,1}, & \quad z \in \Gamma_5^{(1)}.
		\end{cases}
	\end{equation}
	
	\item[\rm (c)] As $z \to \infty$ with $z \in \mathbb{C} \setminus  \Gamma_T$, we have
	\begin{align}\label{eq:asyT}
			T(z)&=  \left(I+\frac{T^{(1)}}{z}+\Boh(z^{-2})\right)\diag \left(z^{\frac{1}{4}}, z^{-\frac{1}{4}},z^{\frac{1}{4}}, z^{-\frac{1}{4}}\right)\nonumber\\
			&\quad \times    \diag \left(\begin{pmatrix}
				1 & -1 \\ 1 & 1 	\end{pmatrix},\begin{pmatrix}
				1 & 1 \\ -1 & 1 	\end{pmatrix}\right) 
			\diag \left((-\sqrt{z})^{-\frac{1}{4}}, z^{-\frac{1}{8}},(-\sqrt{z})^{\frac{1}{4}}, z^{\frac{1}{8}}\right)\nonumber\\
			&\quad \times A \diag \left(e^{-s^{\frac34} \what \theta_1(z)+\tau \sqrt{sz}}, e^{-s^{\frac34} \what \theta_2(z)-\tau \sqrt{sz}}, e^{s^{\frac34} \what \theta_1(z)+\tau \sqrt{sz}},  e^{s^{\frac34}\what \theta_2(z)-\tau \sqrt{sz}}\right),
		\end{align}
	where $T^{(1)}$ is independent of $z$, $A$ is defined in \eqref{def:A} and
	\begin{align}\label{def:wtilthetai}
			& \what \theta_1(z)=\frac{2}{3} (-\sqrt{z})^{\frac{3}{2}}+\frac{2 \tilde{s}}{\sqrt{s}}(-\sqrt{z})^{\frac{1}{2}}, &&\quad z \in \mathbb{C} \backslash \mathbb{R}, \\
			& \what \theta_2(z)=\frac{2}{3}   z^{\frac{3}{4}}+\frac{2 \tilde{s}}{\sqrt{s}} z^{\frac{1}{4}}, &&\quad z \in \mathbb{C} \backslash(-\infty, 0] .
		\end{align}
	\item[\rm (d)]
	As $z \to  1$, we have $ T(z)=\Boh(\ln(z - 1))$.
 \item [\rm(e)]
 As $z \to  0$, we have
 \begin{equation}
			T(z) =\Boh(1)\cdot \begin{pmatrix}
						z^{\frac{2\nu-1}{4}} & \delta_{\nu}(z) & 0 & 0 \\
						0 & z^{-\frac{2\nu-1}{4}} & 0 & 0 \\
						0 & 0 & z^{-\frac{2\nu-1}{4}} & 0 \\
						0 & 0 & -\delta_{\nu}(z) & z^{\frac{2\nu-1}{4}}
					\end{pmatrix}, \quad z\in \Omega_1^{(1)},
		\end{equation}
  with $\delta_{\nu}(z)$ defined in \eqref{def:delta}.
\end{enumerate}
\end{rhp}

\subsection{Second transformation: $T \to S$}
In this transformation we partially normalize RH problem \ref{rhp:T} for $T$ at infinity. For this purpose, we introduce the following two $g$-functions:
\begin{align}
g_1(z)=\frac{2}{3} (1-\sqrt{z})^{\frac{3}{2}}+\left(-1+\frac{2 \tilde{s}}{\sqrt{s}}\right)(1-\sqrt{z})^{\frac{1}{2}}, & \qquad z \in \mathbb{C} \backslash\{(-\infty,0]\cup [1,+\infty)\}, \label{def:g1}\\
g_2(z)=\frac{2}{3} (\sqrt{z}+1)^{\frac{3}{2}}+\left(-1+\frac{2 \tilde{s}}{\sqrt{s}}\right)(\sqrt{z}+1)^{\frac{1}{2}}, & \qquad z \in \mathbb{C} \backslash(-\infty,0]. \label{def:g2}
\end{align}

As $z\to \infty$, it is readily seen that
\begin{align}
g_1(z)&=\what\theta_{1}(z)+\left(-\frac{1}{4}+\frac{\tilde{s}}{\sqrt{s}}\right)(-\sqrt{z})^{-\frac12}+\Boh(z^{-\frac34}), \label{eq:asyg1}
\\
g_2(z)&=\what\theta_{2}(z)+\left(-\frac{1}{4}+\frac{\tilde{s}}{\sqrt{s}}\right)z^{-\frac14}+\Boh(z^{-\frac34}), \label{eq:asyg2}
\end{align}
where $\what \theta_{i}(z)$, $i=1,2$, are defined in \eqref{def:wtilthetai}. The second transformation is defined as 
\begin{equation}\label{def:TtoS}
\begin{aligned}
	S(z)= & \left(I+\mathrm{i}s^\frac{3}{4}\left(-\frac{1}{4}+\frac{\tilde{s}}{\sqrt{s}}\right)(E_{1,4}+E_{4,2}-E_{3,1})\right)T(z) \\
	& \times \diag \left(e^{s^{\frac{3}{4}} g_1(z)-\tau \sqrt{s z}}, e^{s^{\frac{3}{4}} g_2(z)+\tau \sqrt{s z}}, e^{-s^{\frac{3}{4}} g_1(z)-\tau \sqrt{s z}}, e^{-s^{\frac{3}{4}} g_2(z)+\tau \sqrt{s z}}\right) .
\end{aligned}
\end{equation}
Then, $S$ satisfies the following RH problem.
\begin{rhp}\label{rhp:S}
	\hfill
\begin{enumerate}

	\item[\rm (a)] $S(z)$ is defined and analytic in $\mathbb{C} \setminus \left\{\Gamma_T \cup\{0\} \cup\{1\}\right\}$, where $\Gamma_T $ is defined in \eqref{jumpT} .
	\item[\rm (b)] S satisfies 
				$S_{+}(z)=S_{-}(z) J_S(z)$  for $z \in \Gamma_T$,
	where
\begin{equation}\label{def:JS}
		J_S(z):= \begin{cases}\begin{pmatrix}
				0 & 0 & 1 & 0 \\
				0 & 1 & 0 & 0 \\
				-1 & 0 & 0 & 0 \\
				0 & 0 & 0 & 1
			\end{pmatrix}, & z \in \Gamma_0^{(1)}, \\
			
			I+e^{2s^\frac{3}{4}g_1(z)}E_{3,1}, & z \in \Gamma_1^{(1)},\\
			
			I+e^{\nu\pi \mathrm{i}+s^\frac{3}{4}(g_1(z)-g_2(z))-2\tau \sqrt{sz}} E_{2,1}-e^{\nu\pi \mathrm{i}+s^\frac{3}{4}(g_1(z)-g_2(z))+2\tau \sqrt{sz}} E_{3,4}, & z \in \Gamma_2^{(1)}, \\
			\begin{pmatrix}
				0 & -\mathrm{i}& 0 & 0 \\
				-\mathrm{i} & 0 & 0 & 0 \\
				0 & 0 & 0 & \mathrm{i}\\
				0 & 0 & \mathrm{i} & 0
			\end{pmatrix}, & z \in \Gamma_3^{(1)},\\
			I-e^{-\nu \pi \mathrm{i}+s^\frac{3}{4}(g_1(z)-g_2(z))-2\tau \sqrt{sz}} E_{2,1}+e^{-\nu\pi \mathrm{i}+s^\frac{3}{4}(g_1(z)-g_2(z))+2\tau \sqrt{sz}} E_{3,4}, & z \in \Gamma_4^{(1)}, \\
			
			I+e^{2s^\frac{3}{4}g_1(z)}E_{3,1}, & z \in \Gamma_5^{(1)}.
		\end{cases}
	\end{equation}
	\item[\rm (c)]As $z \to \infty$ with $z \in \mathbb{C} \setminus  \Gamma_T$, we have
	\begin{align}\label{eq:asyS}
		S(z)&=  \left(I+\frac{S^{(1)}}{z}+O(z^{-2})\right)\diag \left(z^{\frac{1}{4}}, z^{-\frac{1}{4}},z^{\frac{1}{4}}, z^{-\frac{1}{4}}\right)
		\nonumber \\
		&\quad \times    \diag \left(\begin{pmatrix}
			1 & -1 \\ 1 & 1 	\end{pmatrix},\begin{pmatrix}
			1 & 1 \\ -1 & 1 	\end{pmatrix}\right) 
		\diag \left((-\sqrt{z})^{-\frac{1}{4}}, z^{-\frac{1}{8}},(-\sqrt{z})^{\frac{1}{4}}, z^{\frac{1}{8}}\right) A,
	\end{align}
	where $S^{(1)}$ is independent of $z$ and $A$ is defined in \eqref{def:A}. 
	\item[\rm (d)]
	As $z \to  1$ or $z \to  0$, $S$ has the same local behaviors as $T$.
\end{enumerate}
\end{rhp}
\begin{proof}
All the items follow directly from \eqref{def:TtoS} and RH problem \ref{rhp:T} for $T$. In particular, to check the jump condition of $S$ on $\mathbb{R}$, we need the facts that
\begin{align*}
	g_{1,+}(x)+g_{1,-}(x)&=0, \qquad x \in [1,+\infty),
	\\
	-g_{1,\pm}(x)+g_{2,\mp}(x)&=0, \qquad x \in (-\infty,0].
\end{align*}

To establish the large $z$ behavior of $S$ shown in item (c), we observe from \eqref{eq:asyT}, \eqref{eq:asyg1} and \eqref{eq:asyg2} that, as $z\to \infty$,
\begin{align}
	& T(z) \diag \left(e^{s^{\frac{3}{4}} g_1(z)-\tau \sqrt{s z}}, e^{s^{\frac{3}{4}} g_2(z)+\tau \sqrt{s z}}, e^{-s^{\frac{3}{4}} g_1(z)-\tau \sqrt{s z}}, e^{-s^{\frac{3}{4}} g_2(z)+\tau \sqrt{s z}}\right)
	\nonumber
	\\
	& = \left( I+ \Boh(z^{-1}) \right) \diag \left(z^{\frac{1}{4}}, z^{-\frac{1}{4}},z^{\frac{1}{4}}, z^{-\frac{1}{4}}\right)    \diag \left(\begin{pmatrix}
		1 & -1 \\ 1 & 1 	\end{pmatrix},\begin{pmatrix}
		1 & 1 \\ -1 & 1 	\end{pmatrix}\right) 
\nonumber
	\\	
 & \quad \times \diag \left((-\sqrt{z})^{-\frac{1}{4}}, z^{-\frac{1}{8}},(-\sqrt{z})^{\frac{1}{4}}, z^{\frac{1}{8}}\right) A
	\nonumber
	\\
	& \quad \times \Big (I+ s^{\frac34}\left(-\frac{1}{4}+\frac{\tilde{s}}{\sqrt{s}}\right)(-\sqrt{z})^{-\frac12}E_{1,1} + s^{\frac34}\left(-\frac{1}{4}+\frac{\tilde{s}}{\sqrt{s}}\right)z^{\frac14}E_{2,2}
	\nonumber
	\\
	& \qquad -s^{\frac34}\left(-\frac{1}{4}+\frac{\tilde{s}}{\sqrt{s}}\right)(-\sqrt{z})^{-\frac12}E_{3,3} -s^{\frac34}\left(-\frac{1}{4}+\frac{\tilde{s}}{\sqrt{s}}\right)z^{\frac14}E_{4,4}+\Boh (z^{-\frac34}) \Big).
	\label{eq:Texp}
\end{align}
By a direct calculation, it follows that
\begin{align}
	& \diag \left(z^{\frac{1}{4}}, z^{-\frac{1}{4}},z^{\frac{1}{4}}, z^{-\frac{1}{4}}\right)    \diag \left(\begin{pmatrix}
		1 & -1 \\ 1 & 1 	\end{pmatrix},\begin{pmatrix}
		1 & 1 \\ -1 & 1 	\end{pmatrix}\right) 
	\diag \left((-\sqrt{z})^{-\frac{1}{4}}, z^{-\frac{1}{8}},(-\sqrt{z})^{\frac{1}{4}}, z^{\frac{1}{8}}\right) A
	\nonumber
	\\
	&  \times \Big (I+ s^{\frac34}\left(-\frac{1}{4}+\frac{\tilde{s}}{\sqrt{s}}\right)(-\sqrt{z})^{-\frac12}E_{1,1} + s^{\frac34}\left(-\frac{1}{4}+\frac{\tilde{s}}{\sqrt{s}}\right)z^{-\frac14}E_{2,2}
	\nonumber
	\\
	&\quad  -s^{\frac34}\left(-\frac{1}{4}+\frac{\tilde{s}}{\sqrt{s}}\right)(-\sqrt{z})^{-\frac12}E_{3,3} -s^{\frac34}\left(-\frac{1}{4}+\frac{\tilde{s}}{\sqrt{s}}\right)z^{-\frac14}E_{4,4} \Big)
	\nonumber
	\\
	&=\left(I-\mathrm{i}s^\frac{3}{4}(-\frac{1}{4}+\frac{\tilde{s}}{\sqrt{s}})(E_{1,4}+E_{4,2}-E_{3,1})+\Boh(z^{-1})\right)
	\nonumber
	\\
	&\quad \times  \diag \left(z^{\frac{1}{4}}, z^{-\frac{1}{4}},z^{\frac{1}{4}}, z^{-\frac{1}{4}}\right)    \diag \left(\begin{pmatrix}
		1 & -1 \\ 1 & 1 	\end{pmatrix},\begin{pmatrix}
		1 & 1 \\ -1 & 1 	\end{pmatrix}\right) 
\nonumber
	\\	
 &\quad \times \diag \left((-\sqrt{z})^{-\frac{1}{4}}, z^{-\frac{1}{8}},(-\sqrt{z})^{\frac{1}{4}}, z^{\frac{1}{8}}\right) A.
\end{align}
This, together with \eqref{def:TtoS} and \eqref{eq:Texp}, gives us \eqref{eq:asyS}.
\end{proof}

\subsection{Global parametrix}
As $s \to \infty$, all the jump matrix $J_S(z)$ of $S$ given in \eqref{def:JS} tend to the identity matrix exponentially fast for $z$ bounded away from the intervals $(-\infty,0)\cup(1,+\infty)$.
Indeed, by the definitions of $g$-functions in \eqref{def:g1} and \eqref{def:g2}, it is readily seen that, as $z$ large,
\begin{equation}\label{eq:largezg1}
\Re g_1(z) \sim \Re \left( \frac23 (1-\sqrt{z})^{\frac 32} \right)<0, \quad \arg (z-1)\in (-\pi, 0) \cup (0, \pi),
\end{equation}
and
\begin{equation}\label{eq:largezg1-g2}
\Re \left(g_1(z)-g_2(z)\right) \sim \Re \left( \frac23 (1-\sqrt{z})^{\frac 32} -\frac23 (\sqrt{z} + 1)^{\frac 32} \right)<0, \quad \arg z\in (-\pi, \pi).
\end{equation}
Moreover, when $s$ is large while $z$ remains bounded, the signature picture of $\Re g_1(z)$ and $\Re \left(g_1(z)-g_2(z)\right)$ is depicted in Figure \ref{fig:real}.
A combination of \eqref{eq:largezg1}, \eqref{eq:largezg1-g2} and Figure \ref{fig:real} indicates that $J_S(z) \to I$ as $s \to +\infty$ for $z \in \Gamma_T \setminus(\Gamma_0^{(1)} \cup \Gamma_3^{(1)})$ by deforming the contours if necessary.

\begin{figure}[!ht]
\centering
\subfloat[\label{subfig-1}]{%
	\begin{overpic}[width=0.45\textwidth]{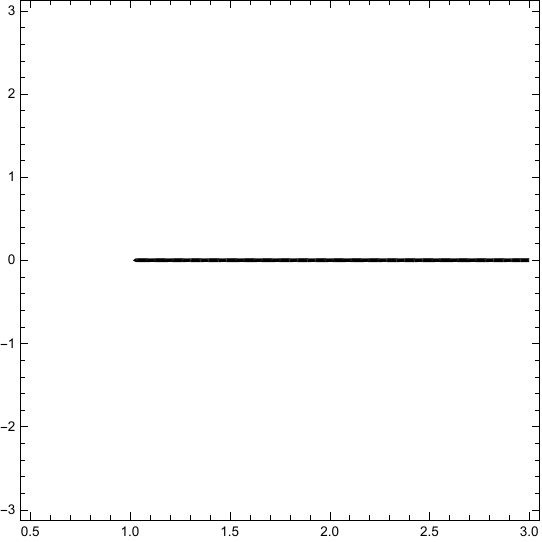}
		\put(20,45){$(1,0)$}
		\put(40,70){$-$}
        \put(40,25){$-$}
	\end{overpic}
}\hspace{1.5mm}
\subfloat[\label{subfig-2}]{%
	\begin{overpic}[width=0.45\textwidth]{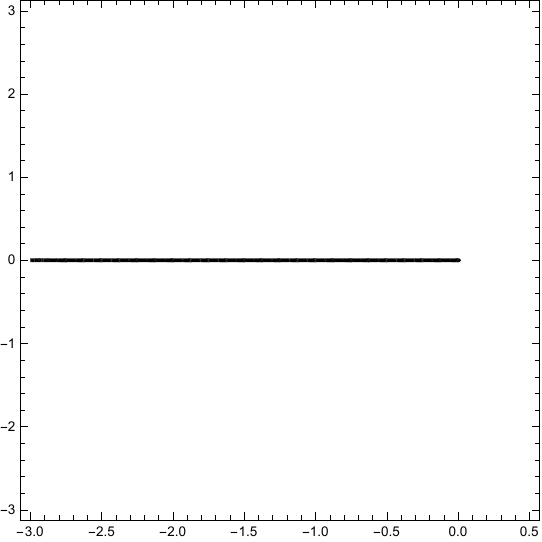}
		\put(80,45){$(0,0)$}
		\put(40,70){$-$}
        \put(40,25){$-$}
	\end{overpic}
}
\caption{Signature table of $\Re g_1(z)$ $(a)$ and $\Re \left(g_1(z)-g_2(z)\right)$ $(b)$ for large $s$ with $z$ bounded. 
The $``-"$ sign represents where $\Re g_1(z) < 0$ $(a)$ and $\Re (g_1(z)- g_2(z))< 0$ $(b)$.}\label{fig:real}
\end{figure}

\begin{rhp}
\hfill
\begin{enumerate}
	\item[\rm (a)] $N(z)$ is defined and analytic in $\mathbb{C} \setminus 
 \{(-\infty,0] \cup [1,+\infty)\} $.
	
	\item[\rm (b)] $N$ satisfies
		$N_{+}(x)=N_{-}(x) J_N(x)$ for $z\in (-\infty,0) \cup(1,+\infty)$, 
	where
	\begin{equation}
		J_N(x):= \begin{cases}\begin{pmatrix}
				0 & 0 & 1 & 0 \\
				0 & 1 & 0 & 0 \\
				-1 & 0 & 0 & 0 \\
				0 & 0 & 0 & 1
			\end{pmatrix}, & x>1, \\ 
			\begin{pmatrix}
				0 & -\mathrm{i}& 0 & 0 \\
				-\mathrm{i} & 0 & 0 & 0 \\
				0 & 0 & 0 & \mathrm{i}\\
				0 & 0 & \mathrm{i} & 0
			\end{pmatrix}, & x<0.
		\end{cases}
	\end{equation}
	\item[\rm (c)]As $z \to \infty$ with $z \in \mathbb{C} \setminus \mathbb{R}$,  we have
	\begin{align}\label{eq:asyN}
		N(z)&=\left( I+\frac{N^{(1)}}{z}+ \Boh(z^{-2}) \right) \diag \left(z^{\frac{1}{4}}, z^{-\frac{1}{4}},z^{\frac{1}{4}}, z^{-\frac{1}{4}}\right)
		\nonumber \\
		&\quad \times    \diag \left(\begin{pmatrix}
			1 & -1 \\ 1 & 1 	\end{pmatrix},\begin{pmatrix}
			1 & 1 \\ -1 & 1 	\end{pmatrix}\right) 
		\diag \left((-\sqrt{z})^{-\frac{1}{4}}, z^{-\frac{1}{8}},(-\sqrt{z})^{\frac{1}{4}}, z^{\frac{1}{8}}\right) A,
	\end{align}
	where $N^{(1)}$ is independent of $z$ and $A$ is defined in \eqref{def:A}.
\end{enumerate}
\end{rhp}
The above RH problem can be solved explicitly, and its solution is given by

\begin{align}\label{def:N}
N(z)&=\diag \left(z^{\frac{1}{4}}, z^{-\frac{1}{4}},z^{\frac{1}{4}}, z^{-\frac{1}{4}}\right)    \diag \left(\begin{pmatrix} 1 & -1 \\ 1 & 1 \end{pmatrix},\begin{pmatrix} 1 & 1 \\ -1 & 1 \end{pmatrix}\right) \nonumber
\\ &\quad \times\diag \left((1-\sqrt{z})^{-\frac{1}{4}}, (1+\sqrt{z})^{-\frac{1}{4}},(1-\sqrt{z})^{\frac{1}{4}}, (1+\sqrt{z})^{\frac{1}{4}}\right) A,
\end{align}
where we take the branch cuts of $(1-\sqrt{z})^{\frac14}$ and $(\sqrt{z}+1)^{\frac14}$ along $(-\infty,0]\cup [1,\infty)$ and $(-\infty,0]$, respectively.

\subsection{Local parametrix near $z=0$}
In a small neighborhood of the origin, we intend to solve the following RH problem, which serves as an approximation of $S$. 
\begin{rhp}\label{rhp:P0}
\hfill
\begin{itemize}

	\item[\rm (a)] $P^{(0)}(z)$ is defined and analytic in $D(0, \varepsilon) \setminus  \Gamma_T$, where $D\left(z_0, \varepsilon\right)$ and $\Gamma_T$  are defined in \eqref{def:dz0r} and \eqref{jumpT},	respectively.
	\item[\rm (b)] For $z \in D(0, \varepsilon) \cap \Gamma_T$, we have
	\begin{equation}\label{eq:P0-jump}
		P_{+}^{(0)}(z)=P_{-}^{(0)}(z) J_S(z),
	\end{equation}
	where $J_S(z)$ is defined in \eqref{def:JS}.
	\item[\rm (c)] As $s \rightarrow \infty, P^{(0)}(z)$ satisfies the matching condition
	\begin{equation}\label{eq:P0:matching}
		P^{(0)}(z)=\left(I+\Boh\left(s^{-\frac{3}{4}}\right)\right) N(z), \quad z \in \partial D(0, \varepsilon),
	\end{equation}
where $N(z)$ is given in \eqref{def:N}.
\end{itemize}
\end{rhp}
The above RH problem can be solved by using the  Bessel parametrix $\Phi_{\alpha}^{(\operatorname{Bes})}(z)$ defined in Appendix \ref{A}. To proceed,  we introduce the functions
\begin{equation}\label{def:f}
f_{0,1}(z)=\left ( \frac{1}{2} +\frac{ \tilde{s}}{\sqrt{s}}+\tau s^{-\frac{1}{4}}\right)^2 z \quad \textrm{and} \quad f_{0,2}(z)=\left ( \frac{1}{2} +\frac{ \tilde{s}}{\sqrt{s}}-\tau s^{-\frac{1}{4}}\right)^2 z.
\end{equation}
Clearly, $f_{0,1}(z)$ and $f_{0,2}(z)$ are analytic in $D(0, \varepsilon)$ and are conformal mappings for large positive $s$. We now define
\begin{small}
\begin{align}\label{def:P0}
	& P^{(0)}(z)=E_0(z)\nonumber\\
	&\times\begin{pmatrix}
		\Phi^{(\mathrm{Bes})}_{\alpha,11}\left(s^\frac{3}{2} f_{0,1}(z)\right) & -\mathrm{i}\Phi^{(\mathrm{Bes})}_{\alpha,12}\left(s^\frac{3}{2} f_{0,1}(z)\right) & 0 & 0 \\
		\Phi^{(\mathrm{Bes})}_{\alpha,21}\left(s^\frac{3}{2} f_{0,1}(z)\right) & -\mathrm{i}\Phi^{(\mathrm{Bes})}_{\alpha,22}\left(s^\frac{3}{2} f_{0,1}(z)\right) & 0 & 0 \\
		0 & 0 & \Phi^{(\mathrm{Bes})}_{\alpha,12}\left(s^\frac{3}{2} f_{0,2}(z)\right) & -\mathrm{i}\Phi^{(\mathrm{Bes})}_{\alpha,11}\left(s^\frac{3}{2} f_{0,2}(z)\right) \\
		0 & 0 & \Phi^{(\mathrm{Bes})}_{\alpha,22}\left(s^\frac{3}{2} f_{0,2}(z)\right) & -\mathrm{i}\Phi^{(\mathrm{Bes})}_{\alpha,21}\left(s^\frac{3}{2} f_{0,2}(z)\right)
	\end{pmatrix}\nonumber \\
	& \times\begin{pmatrix}
		e^{s^{\frac{3}{4}} \frac{g_1(z)-g_2(z) }{2}-\tau \sqrt{sz}} & 0 & 0 & 0 \\
		0 & e^{-s^{\frac{3}{4}} \frac{g_1(z)-g_2(z) }{2}+\tau \sqrt{sz}}  & 0 & 0 \\
		0 & 0 & e^{-s^{\frac{3}{4}} \frac{g_1(z)-g_2(z) }{2}-\tau \sqrt{sz}} & 0 \\
		0 & 0 & 0 & e^{s^{\frac{3}{4}} \frac{g_1(z)-g_2(z) }{2}+\tau \sqrt{sz}} 
	\end{pmatrix},
\end{align}
\end{small}
where
\begin{small}
\begin{align}\label{def:E0}
E_0(z)=\frac{1}{\sqrt{2}} N(z)\begin{pmatrix}
	\pi^{\frac{1}{2}} s^{\frac{3}{8}} f_{0,1}(z)^{\frac{1}{4}} &-\mathrm{i}\pi^{-\frac{1}{2}} s^{-\frac{3}{8}} f_{0,1}(z)^{-\frac{1}{4}} & 0 & 0 \\
	\pi^{\frac{1}{2}} s^{\frac{3}{8}} f_{0,1}(z)^{\frac{1}{4}}& \mathrm{i}\pi^{-\frac{1}{2}} s^{-\frac{3}{8}} f_{0,1}(z)^{-\frac{1}{4}} &0 & 0 \\
	0 & 0 & -\mathrm{i}\pi^{\frac{1}{2}} s^{\frac{3}{8}} f_{0,2}(z)^{\frac{1}{4}} & \pi^{-\frac{1}{2}} s^{-\frac{3}{8}} f_{0,2}(z)^{-\frac{1}{4}}\\
	0 & 0 &\mathrm{i}\pi^{\frac{1}{2}} s^{\frac{3}{8}}f_{0,2}(z)^{\frac{1}{4}}  & \pi^{-\frac{1}{2}} s^{-\frac{3}{8}} f_{0,2}(z)^{-\frac{1}{4}}
\end{pmatrix},
\end{align}
\end{small}
with $\alpha=\nu-1/2$.

\begin{proposition}\label{pro:P0}
The function $P^{(0)}(z)$ defined in \eqref{def:P0} solves RH problem \ref{rhp:P0}.
\end{proposition}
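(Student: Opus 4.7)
The plan is to verify the three defining properties of RH problem \ref{rhp:P0} in turn, exploiting the fact that the Bessel parametrix $\Phi_\alpha^{(\mathrm{Bes})}$ of Appendix \ref{A} already satisfies a $2\times 2$ RH problem with constant jumps and a prescribed large-argument asymptotic behavior. First I would show that the prefactor $E_0(z)$ defined in \eqref{def:E0} is analytic in $D(0,\varepsilon)$. Away from the cut $(-\infty,0)$ this is immediate from the analyticity of $N(z)$ and of the conformal maps $f_{0,1}, f_{0,2}$ defined in \eqref{def:f}. On $(-\infty,0)\cap D(0,\varepsilon)$ the jumps of the various branches of $z^{1/4}$, $(1\pm\sqrt{z})^{\pm 1/4}$ coming from $N(z)$ must cancel against those of $f_{0,j}(z)^{\pm 1/4}$; after writing everything in terms of $\sqrt{z}$ this is a direct $4\times 4$ matrix computation, and it should reproduce exactly the jump of $N(z)$ prescribed on $\Gamma_3^{(1)}$, so that $E_{0,+}=E_{0,-}$.

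Next I would check the jump condition \eqref{eq:P0-jump}. Since $E_0$ is analytic in the disc, jumps of $P^{(0)}$ are entirely produced by the Bessel block and the diagonal exponential factor. The two $2\times 2$ blocks involving $\Phi_\alpha^{(\mathrm{Bes})}(s^{3/2}f_{0,j}(z))$ should be matched against the $(1,1),(2,1),(3,1),(3,4),(2,4)$-type entries of $J_S$ listed in \eqref{def:JS} for $z\in \Gamma_k^{(1)}\cap D(0,\varepsilon)$, $k=2,3,4$. The conformal maps $f_{0,1},f_{0,2}$ send the local contour system to the standard Bessel contour, so the constant Bessel jumps become the required exponential multiplicative factors after conjugation by $\operatorname{diag}(e^{\pm s^{3/4}(g_1-g_2)/2\mp \tau\sqrt{sz}},\ldots)$. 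The key identity to use here is the fact that on $\Gamma_3^{(1)}$ one has $g_{1,+}+g_{2,-}=g_{1,-}+g_{2,+}=0$, so that the exponential conjugation precisely produces the anti-diagonal form required for $J_S$ on $\Gamma_3^{(1)}$; on the rays $\Gamma_2^{(1)},\Gamma_4^{(1)}$ the factor $e^{\pm \nu\pi\mathrm{i}}$ comes from the monodromy of the Bessel parametrix around the origin, which accounts for $\alpha=\nu-1/2$.

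For the matching condition \eqref{eq:P0:matching}, I would use the large-argument expansion of the Bessel parametrix,
\begin{equation*}
\Phi_\alpha^{(\mathrm{Bes})}(\zeta)=\bigl(2\pi\zeta^{1/2}\bigr)^{-\sigma_3/2}\tfrac{1}{\sqrt 2}\begin{pmatrix}1 & \mathrm{i}\\ \mathrm{i} & 1\end{pmatrix}\Bigl(I+\mathcal O(\zeta^{-1/2})\Bigr)e^{\zeta^{1/2}\sigma_3}, \qquad \zeta\to\infty,
\end{equation*}
to expand the two Bessel blocks at $\zeta=s^{3/2}f_{0,j}(z)$. On $\partial D(0,\varepsilon)$ we have $|\zeta|\asymp s^{3/2}$, so the error term is $\mathcal O(s^{-3/4})$. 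The choice of $E_0(z)$ in \eqref{def:E0} is precisely designed so that the leading factor of the Bessel asymptotics, combined with $f_{0,j}^{1/4}$ and the exponential factor $e^{\mp s^{3/4}(g_1-g_2)/2\pm\tau\sqrt{sz}}$, cancels $E_0(z)$ back into $N(z)$ once one uses $\sqrt{f_{0,j}(z)}=(\tfrac{1}{2}+\tilde s/\sqrt s\mp\tau s^{-1/4})\sqrt z$. I would carry out this bookkeeping by grouping the two $2\times 2$ blocks and checking they reassemble into $N(z)$, leaving the multiplicative error $I+\mathcal O(s^{-3/4})$.

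The main obstacle I anticipate is the jump verification on $\Gamma_3^{(1)}$, where the $4\times 4$ anti-diagonal jump of $S$ mixes the two Bessel blocks; one has to track carefully how the branch of $\sqrt{z}$ flips sign across $(-\infty,0)$, which swaps $f_{0,1}$ with $f_{0,2}$ (up to $\tau\to-\tau$) and exchanges the two blocks. A secondary nuisance is handling the $\nu=1/2$ (i.e.\ $\alpha=0$) case, where the behavior of the Bessel parametrix at the origin is logarithmic rather than a pure power, so the local behavior in item (e) of RH problem \ref{rhp:T} must be checked to be preserved by $P^{(0)}$ rather than violated; this should follow from the well-known local structure of $\Phi_0^{(\mathrm{Bes})}$ and the analyticity of $E_0$.
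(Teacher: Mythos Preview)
Your approach matches the paper's: verify that $E_0$ is analytic in $D(0,\varepsilon)$ (the paper additionally computes $E_0(0)$ explicitly to confirm that the apparent singularity of $N(z)$ at the origin is cancelled by the $f_{0,j}(z)^{\pm 1/4}$ factors), then read off the jumps and the matching condition from the Bessel parametrix. One correction to your anticipated difficulties: the jump $J_S$ on $\Gamma_3^{(1)}$ in \eqref{def:JS} is block-diagonal with two $2\times 2$ anti-diagonal blocks, so it does \emph{not} mix the two Bessel blocks---your ``main obstacle'' does not arise, the maps $f_{0,1},f_{0,2}$ (being linear in $z$, not in $\sqrt z$) do not swap across the cut, and the relevant $g$-identity on $(-\infty,0]$ is $g_{1,\pm}=g_{2,\mp}$ rather than $g_{1,+}+g_{2,-}=0$.
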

\begin{proof}
We start with showing that $E_0(z)$ is analytic in $D(0, \varepsilon)$. From \eqref{def:E0}, the only possible jump is on $(- \varepsilon, 0)$, and for $z \in (- \varepsilon, 0)$,
\begin{small}
	
	\begin{align}
		&E_{0,-}(z)^{-1}E_{0,+}(z)\nonumber\\
		&=\frac 12  \begin{pmatrix}
			\pi^{-\frac 12} s^{-\frac 38} f_{0,1,-}(z)^{-\frac 14} & \pi^{-\frac 12} s^{-\frac 38} f_{0,1,-}(z)^{-\frac 14} & 0 & 0\\
			\ii\pi^{\frac 12} s^{\frac 38} f_{0,1,-}(z)^{\frac 14} & -\ii \pi^{\frac 12} s^{\frac 38} f_{0,1,-}(z)^{\frac 14} & 0 & 0\\
			0 & 0 & \ii\pi^{-\frac 12} s^{-\frac 38} f_{0,2,-}(z)^{-\frac 14} & -\ii \pi^{-\frac 12} s^{-\frac 38} f_{0,2,-}(z)^{-\frac 14} \\
			0 & 0 & \pi^{\frac 12} s^{\frac 38} f_{0,2,-}(z)^{\frac 14}  & \pi^{\frac 12} s^{\frac 38} f_{0,2,-}(z)^{\frac 14}
		\end{pmatrix} \nonumber\\
		&\quad \times\begin{pmatrix}0&-\ii&0&0\\-\ii&0&0&0\\0&0&0&\ii\\0&0&\ii&0 \end{pmatrix}
		\nonumber\\ 	&\quad\times \begin{pmatrix}
			\pi^{\frac{1}{2}} s^{\frac{3}{8}} f_{0,1,+}(z)^{\frac{1}{4}} &-\mathrm{i}\pi^{-\frac{1}{2}} s^{-\frac{3}{8}} f_{0,1,+}(z)^{-\frac{1}{4}} & 0 & 0 \\
			\pi^{\frac{1}{2}} s^{\frac{3}{8}} f_{0,1,+}(z)^{\frac{1}{4}}& \mathrm{i}\pi^{-\frac{1}{2}} s^{-\frac{3}{8}} f_{0,1,+}(z)^{-\frac{1}{4}} &0 & 0 \\
			0 & 0 & -\mathrm{i}\pi^{\frac{1}{2}} s^{\frac{3}{8}} f_{0,2,+}(z)^{\frac{1}{4}} & \pi^{-\frac{1}{2}} s^{-\frac{3}{8}} f_{0,2,+}(z)^{-\frac{1}{4}}\\
			0 & 0 &\mathrm{i}\pi^{\frac{1}{2}} s^{\frac{3}{8}} f_{0,2,+}(z)^{\frac{1}{4}}  & \pi^{-\frac{1}{2}} s^{-\frac{3}{8}} f_{0,2,+}(z)^{-\frac{1}{4}}
		\end{pmatrix}=I. 
	\end{align}
\end{small}
Moreover, it is readily seen that 
\begin{align}
	E_0(0)=\begin{pmatrix}
		0 & \ii \pi^{-\frac 12} s^{-\frac 38} \textsf{a}^{-1} & 0 &\ii \pi^{-\frac 12} s^{-\frac 38} \textsf{b}^{-1} \\
		\pi^{\frac 12} s^{\frac 38} \textsf{a} & -\frac{1}{4}\ii \pi^{-\frac 12} s^{-\frac 38}\textsf{a}^{-1} & -\pi^{\frac 12} s^{\frac 38} \textsf{b} & -\frac{1}{4}\ii \pi^{-\frac 12} s^{-\frac 38}\textsf{b}^{-1} \\
		0 & -\pi^{-\frac 12} s^{-\frac 38} \textsf{a}^{-1} & 0 & -\ii \pi^{-\frac 12} s^{-\frac 38} \textsf{b}^{-1} \\
		\ii \pi^{\frac 12} s^{\frac 38} \textsf{a} &  -\frac{1}{4} \pi^{-\frac 12} s^{-\frac 38}\textsf{a}^{-1}  & -\ii \pi^{\frac 12} s^{\frac 38} \textsf{b}  & -\frac{1}{4} \pi^{-\frac 12} s^{-\frac 38}\textsf{b}^{-1}
	\end{pmatrix},
\end{align}
where
\begin{equation}
	\textsf{a}=\left ( \frac{1}{2} +\frac{ \tilde{s}}{\sqrt{s}}+\tau s^{-\frac{1}{4}}\right)^{\frac 12},\qquad\textsf{b}=\left ( \frac{1}{2} +\frac{ \tilde{s}}{\sqrt{s}}-\tau s^{-\frac{1}{4}}\right)^{\frac 12}.
\end{equation}
Therefore $E_0(z)$ is analytic in $D(0, \varepsilon)$.
The jump condition of $P^{(0)}(z)$ in \eqref{eq:P0-jump} can be verified from the analyticity of $E_0(z)$ and the jump condition in \eqref{eq:jump:Bessel}.

Finally, we check the matching condition in \eqref{eq:P0:matching}, it follows from the asymptotic behavior of the Bessel parametrix at infinity in \eqref{eq:infty:Bessel} that, as $s \to +\infty$,
\begin{equation}
	P^{(0)}(z) N(z)^{-1} = I + \frac{J^{(0)}_1(z)}{s^{3/4}} + \Boh(s^{-3/2}),
\end{equation}
where
\begin{equation}\label{def:J0}
	J^{(0)}_1(z) = \frac{1}{8} N(z) \begin{pmatrix}
		-\frac{1+4\alpha^2}{f_{0,1}(z)^{1/2}} & -\frac{2}{f_{0,1}(z)^{1/2}} & 0 & 0\\
		\frac{2}{f_{0,1}(z)^{1/2}} & \frac{1+4\alpha^2}{f_{0,1}(z)^{1/2}} & 0 & 0\\
		0 & 0 & \frac{1+4\alpha^2}{f_{0,2}(z)^{1/2}} & -\frac{2}{f_{0,2}(z)^{1/2}}\\
		0 & 0 & \frac{2}{f_{0,2}(z)^{1/2}} & -\frac{1+4\alpha^2}{f_{0,2}(z)^{1/2}}
	\end{pmatrix}N(z)^{-1}.
\end{equation}

This completes the proof of Proposition \ref{pro:P0}.
\end{proof}
	
For later use, we record the residue of $J^{(0)}_1(z)$ at $z = 0$:
	\begin{align}
		\Res_{\zeta = 0} J^{(0)}_1(\zeta) =-\frac{(4\alpha^2-1)\ii }{8 (\frac{1}{2} +\frac{ \tilde{s}}{\sqrt{s}}+\tau s^{-1/4})} E_{41}-\frac{(4\alpha^2-1)\ii }{8 (\frac{1}{2} +\frac{ \tilde{s}}{\sqrt{s}}-\tau s^{-1/4})} E_{23}.
	\end{align}
	
	\subsection{Local parametrix near $z=1$}
	We then move to the construction of the local parametrix near $z=1$.
	\begin{rhp}\label{rhp:P1}
		\hfill
		\begin{itemize}
			\item[\rm (a)] $P^{(1)}(z)$ is defined and analytic in $D(1, \varepsilon) \setminus  \Gamma_T$,  where $D\left(z_0, \varepsilon\right)$ and $\Gamma_T$  are defined in \eqref{def:dz0r} and \eqref{jumpT},	respectively.
			
			\item[\rm (b)] For $z \in D(1, \varepsilon) \cap \Gamma_T$, we have
			\begin{equation}\label{eq:P1-jump}
				P_{+}^{(1)}(z)=P_{-}^{(1)}(z) J_S(z),
			\end{equation}
			where $J_S(z)$ is defined in \eqref{def:JS}.
			\item[\rm (c)] As $s \rightarrow \infty$, $P^{(1)}(z)$ satisfies the following matching condition
			\begin{equation}\label{eq:P1:matching}
				P^{(1)}(z)=\left(I+\Boh \left(s^{-\frac{3}{4}}\right)\right) N(z), \quad z \in \partial D(1, \varepsilon) ,
			\end{equation}
   where $N(z)$ is given in \eqref{def:N}.
		\end{itemize}
		
	\end{rhp}
In a similar way, the above RH problem can be solved by using the  Bessel parametrix $\Phi_{\alpha}^{(\operatorname{Bes})}(z)$ with $\alpha=0$ defined in Appendix \ref{A}. We introduce the  function
	\begin{align}\label{def:tildef}
   & f_1(z)=g_1(z)^2
  \nonumber \\
  &=\frac{1}{2} \left(1-\frac{2 \tilde{s}}{\sqrt{s}}\right)^2(1-z)-\frac{1}{8} \left(\frac{5}{3}+\frac{2 \tilde{s}}{\sqrt{s}}\right) \left(1-\frac{2 \tilde{s}}{\sqrt{s}}\right)(1-z)^2+\Boh((1-z)^3), \quad z \to 1,
	\end{align}
where $g_1(z)$ is defined in \eqref{def:g1}. Clearly, $f_1(z)$ is analytic in $D(1, \varepsilon)$ and is a conformal mapping for large positive $s$. We now define
	\begin{align}\label{def:P1}
		P^{(1)}(z)&=E_1(z)\begin{pmatrix}
			\Phi^{(\mathrm{Bes})}_{0,11}\left(s^\frac{3}{2} f_1(z)\right) & 0 & -\Phi^{(\mathrm{Bes})}_{0,12}\left(s^\frac{3}{2} f_1(z)\right) & 0 \\
			0 & 1 & 0 & 0 \\
			-\Phi^{(\mathrm{Bes})}_{0,21}\left(s^\frac{3}{2} f_1(z)\right) & 0 & \Phi^{(\mathrm{Bes})}_{0,22}\left(s^\frac{3}{2} f_1(z)\right) & 0 \\
			0 & 0 & 0 & 1
		\end{pmatrix} \nonumber \\
		&\quad \times\begin{pmatrix}
			e^{s^{\frac{3}4} g_1(z)} & 0 & 0 & 0 \\
			0 & 1 & 0 & 0 \\
			0 & 0 & e^{-s^{\frac{3}4} g_1(z)} & 0 \\
			0 & 0 & 0 & 1
		\end{pmatrix},
	\end{align}
	where
	\begin{equation}\label{def:E1}
		E_1(z)=\frac{1}{\sqrt{2}} N(z)\begin{pmatrix}
			\pi^{\frac{1}{2}} s^{\frac{3}{8}} f_1(z)^{\frac{1}{4}} & 0 & \mathrm{i} \pi^{-\frac{1}{2}} s^{-\frac{3}{8}} f_1(z)^{-\frac{1}{4}} & 0 \\
			0 & \sqrt{2} & 0 & 0 \\
			\mathrm{i} \pi^{\frac{1}{2}} s^{\frac{3}{8}} f_1(z)^{\frac{1}{4}} & 0 & \pi^{-\frac{1}{2}} s^{-\frac{3}{8}} f_1(z)^{-\frac{1}{4}} & 0 \\
			0 & 0 & 0 & \sqrt{2}
		\end{pmatrix}.
	\end{equation}
	\begin{proposition}\label{pro:P1}
		$P^{(1)}(z)$ defined in \eqref{def:P1} solves RH problem \ref{rhp:P1}.
	\end{proposition}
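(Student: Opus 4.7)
The proof will run along the same lines as that of Proposition \ref{pro:P0}, and I outline the three checks to be carried out in order.

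\textbf{Analyticity of $E_1(z)$ on $D(1,\varepsilon)$.} The plan is to argue that the prefactor $E_1(z)$ in \eqref{def:E1} has no jump inside the disc, and that the apparent singularity at $z=1$ is removable. The only candidate jump contour is $(1,1+\varepsilon) \subset \Gamma_0^{(1)}$, since $N(z)$ from \eqref{def:N} has a cut there through the $(1-\sqrt{z})^{\pm 1/4}$ factors, and $f_1(z)^{\pm 1/4}$ also inherits a cut from $g_1(z)=f_1(z)^{1/2}$. I would compute $E_{1,-}(x)^{-1}E_{1,+}(x)$ directly: the $2\times 2$ nontrivial block involving $N_{\pm}$ and $f_1^{1/4}_\pm$ is designed so that the jump matrix of $N$ on $(1,\infty)$ (which exchanges columns $1$ and $3$ up to signs) is exactly cancelled by the sign flips of $(1-\sqrt{z})^{1/4}$ and $f_1(z)^{1/4}$, yielding $I$. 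Removability at $z=1$ is then a finite-order check using \eqref{def:tildef}: since $f_1(z)\sim c(1-z)$ with $c=\tfrac12(1-2\tilde{s}/\sqrt{s})^2>0$, the factors $s^{3/8}f_1(z)^{1/4}$ and $s^{-3/8}f_1(z)^{-1/4}$ produce only an at-worst $(z-1)^{-1/4}$ singularity that is absorbed by the analogous $(1-\sqrt{z})^{\pm 1/4}$ factors in $N(z)$, leaving a function analytic at $z=1$.

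\textbf{Jump condition \eqref{eq:P1-jump}.} Given analyticity of $E_1$, the jump of $P^{(1)}$ on $D(1,\varepsilon)\cap\Gamma_T$ reduces to the jump of the middle factor in \eqref{def:P1}. On $\Gamma_0^{(1)}\cap D(1,\varepsilon)$ the $2\times 2$ Bessel block carries the jump $\bigl(\begin{smallmatrix}0&1\\-1&0\end{smallmatrix}\bigr)$ (see \eqref{eq:jump:Bessel}), which, embedded in rows/columns $(1,3)$, gives exactly the $J_S$-block $\begin{pmatrix}0&0&1&0\\0&1&0&0\\-1&0&0&0\\0&0&0&1\end{pmatrix}$ of \eqref{def:JS}. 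On the rays $\Gamma_1^{(1)}$ and $\Gamma_5^{(1)}$ inside the disc, the Bessel jump $\bigl(\begin{smallmatrix}1&0\\e^{\pm\alpha\pi\ii}&1\end{smallmatrix}\bigr)$ with $\alpha=0$ reduces to $I+E_{21}$ in the Bessel block, which, together with the conjugation by $\mathrm{diag}(e^{s^{3/4}g_1(z)},1,e^{-s^{3/4}g_1(z)},1)$, produces the factor $e^{2s^{3/4}g_1(z)}E_{3,1}$ demanded by \eqref{def:JS}. The rays $\Gamma_2^{(1)}, \Gamma_3^{(1)}, \Gamma_4^{(1)}$ do not intersect $D(1,\varepsilon)$ and thus play no role.

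\textbf{Matching condition \eqref{eq:P1:matching}.} This is a straightforward computation from the large-argument expansion \eqref{eq:infty:Bessel} of $\Phi^{(\mathrm{Bes})}_0$. Since $f_1(z)$ is bounded away from $0$ on $\partial D(1,\varepsilon)$, we have $s^{3/2}f_1(z)\to\infty$ uniformly there as $s\to\infty$, and the standard asymptotics yields
\begin{equation}
 P^{(1)}(z) N(z)^{-1} = I + \frac{J^{(1)}_1(z)}{s^{3/4}} + \Boh(s^{-3/2}), \qquad z\in \partial D(1,\varepsilon),
\end{equation}
with $J^{(1)}_1(z)$ an explicit meromorphic matrix having a simple pole at $z=1$ analogous to \eqref{def:J0} (with $\alpha=0$, $f_{0,1}\leftrightarrow f_1$, and only the $(1,3)$-block nonzero). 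I expect the main technical point to be the careful matching of branches in the analyticity check at $z=1$, since $N(z)$, $f_1(z)^{1/4}$ and $g_1(z)$ all involve branches emanating from $z=1$; once the branch bookkeeping is settled, the rest reduces to the jump-matrix identities already used in Proposition \ref{pro:P0}.
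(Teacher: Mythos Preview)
Your proposal is correct and follows essentially the same route as the paper: verify analyticity of $E_1$ by checking the jump on $(1,1+\varepsilon)$ cancels and that $z=1$ is removable, then read off the jump condition of $P^{(1)}$ from the Bessel jumps, and finally obtain the matching from \eqref{eq:infty:Bessel}. One small point to tighten when you write it out: the embedding in \eqref{def:P1} is $\sigma_3\Phi^{(\mathrm{Bes})}_0\sigma_3$ in the $(1,3)$-block (note the minus signs on the off-diagonal Bessel entries), so the effective $2\times2$ jump is $\sigma_3 J_{\mathrm{Bes}}\sigma_3$, and you should also track that $f_1$ reverses orientation near $z=1$ (since $f_1(z)\sim c(1-z)$), which swaps the roles of the Bessel rays $\Gamma_1$ and $\Gamma_3$; together these two sign effects combine to give exactly $I+e^{2s^{3/4}g_1(z)}E_{3,1}$ on $\Gamma_1^{(1)}\cup\Gamma_5^{(1)}$ as required.
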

	\begin{proof}
		First, we show $E_1(z)$ is analytic in $D(1, \varepsilon)$. From \eqref{def:E1}, the only possible jump is on $(1, 1 + \varepsilon)$. For $z \in (1, 1 + \varepsilon)$,
		\begin{align}
			E_{1,-}(z)^{-1}E_{1,+}(z)&=\frac 12  \begin{pmatrix}
				\pi^{-\frac 12} s^{-\frac 38} f_{1,-}(z)^{-\frac 14} & 0 &  -\ii \pi^{-\frac 12} s^{-\frac 38} f_{1,-}(z)^{-\frac 14} & 0\\
				0 & \sqrt{2} & 0 & 0\\
				-\ii \pi^{\frac 12} s^{\frac 38} f_{1,-}(z)^{\frac 14} & 0 & \pi^{\frac 12} s^{\frac 38} f_{1,-}(z)^{\frac 14} & 0\\
				0 & 0 & 0 & \sqrt{2}
			\end{pmatrix}
			\begin{pmatrix}0&0&1&0\\0&1&0&0\\-1&0&0&0\\0&0&0&1 \end{pmatrix}\nonumber\\
			&\quad\times \begin{pmatrix}
				\pi^{\frac 12} s^{\frac 38} f_{1,+}(z)^{\frac 14} & 0 &  \ii \pi^{-\frac 12} s^{-\frac 38} f_{1,+}(z)^{-\frac 14} & 0\\
				0 & \sqrt{2} & 0 & 0\\
				\ii \pi^{\frac 12} s^{\frac 38} f_{1,+}(z)^{\frac 14} & 0 & \pi^{-\frac 12} s^{-\frac 38} f_{1,+}(z)^{-\frac 14} & 0\\
				0 & 0 & 0 & \sqrt{2}
			\end{pmatrix}=I.
		\end{align}
		Moreover, as $z \to 1$, we have
		\begin{equation}\label{def:localE1}
			E_1(z) = E_1(1) + E_1'(1)(z-1) + \Boh\left((z-1)^2\right),
		\end{equation}
		where
		\begin{equation}
			E_1(1) = \begin{pmatrix}
				\pi^{\frac 12} s^{\frac 38} \left(1 - \frac{2\tilde{s}}{\sqrt{s}}\right)^{\frac 12} & -2^{-\frac 34} & 0 &-\ii 2^{-\frac 34}\\
				\pi^{\frac 12} s^{\frac 38} \left(1 - \frac{2\tilde{s}}{\sqrt{s}}\right)^{\frac 12} & 2^{-\frac 34} & 0 & \ii 2^{-\frac 34}\\
				0 & \ii 2^{-\frac 14} & \pi^{-\frac 12} s^{-\frac 38} \left(1 - \frac{2\tilde{s}}{\sqrt{s}}\right)^{-\frac 12}& 2^{-\frac 14}\\
				0 & \ii 2^{-\frac 14} &- \pi^{-\frac 12} s^{-\frac 38} \left(1 - \frac{2\tilde{s}}{\sqrt{s}}\right)^{-\frac 12} & 2^{-\frac 14}
			\end{pmatrix},
		\end{equation}
		and
		\begin{small}
			\begin{equation}
				E_1'(1) = \begin{pmatrix}
					\frac{\pi^{\frac 12} s^{\frac 38}}{4}\left(1 - \frac{2\tilde{s}}{\sqrt{s}}\right)^{\frac 12}+\frac{\pi^{\frac 12} s^{\frac 38}}{6 \left(1 - \frac{2\tilde{s}}{\sqrt{s}}\right)^{1/2}} & -\frac{3}{16 \cdot 2^{3/4}} & 0 & -\frac{3\ii}{16 \cdot 2^{3/4}}\\
					-\frac{\pi^{\frac 12} s^{\frac 38}}{4}\left(1 - \frac{2\tilde{s}}{\sqrt{s}}\right)^{\frac 12}+\frac{\pi^{\frac 12} s^{\frac 38}}{6\left(1 - \frac{2\tilde{s}}{\sqrt{s}}\right)^{1/2}}  & -\frac{5}{16 \cdot 2^{3/4}} & 0 & -\frac{5\ii}{16 \cdot 2^{3/4}}\\
					0 & \frac{5\ii}{16 \cdot 2^{1/4}} & \frac{\pi^{-\frac 12} s^{-\frac 38}}{4\left(1 - \frac{2\tilde{s}}{\sqrt{s}}\right)^{1/2}}+\frac{\pi^{-1/2} s^{-3/8} }{6\left(1 - \frac{2\tilde{s}}{\sqrt{s}}\right)^{3/2}}  & \frac{5}{16 \cdot 2^{1/4}}\\
					0 & -\frac{3\ii}{16 \cdot 2^{1/4}} & \frac{\pi^{-\frac 12} s^{-\frac 38}}{4\left(1 - \frac{2\tilde{s}}{\sqrt{s}}\right)^{1/2}}-\frac{\pi^{-1/2} s^{-3/8} }{6\left(1 - \frac{2\tilde{s}}{\sqrt{s}}\right)^{3/2}}  & -\frac{3}{16 \cdot 2^{1/4}}
				\end{pmatrix}.
			\end{equation}
		\end{small}
Therefore, $E_1(z)$ is indeed analytic in $D(1, \varepsilon)$. The jump condition of $P^{(1)}(z)$ in \eqref{eq:P1-jump} can be verified from the analyticity of $E_1(z)$ and the jump condition of $\Phi^{(\mathrm{Bes})}$ in \eqref{eq:jump:Bessel}.
		
Finally, we check the matching condition \eqref{eq:P1:matching}. From the asymptotic behavior of the Bessel parametrix at infinity in \eqref{eq:infty:Bessel}, it follows that, as $s \to +\infty$,
		\begin{equation}
			P^{(1)}(z) N(z)^{-1} = I + \frac{J^{(1)}_1(z)}{s^{3/4}} + \Boh(s^{-3/2}),
		\end{equation}
		where
		\begin{equation}\label{def:J1}
			J^{(1)}_1(z) = \frac{1}{8f_1(z)^{1/2}} N(z) \begin{pmatrix}
				-1 & 0 & 2\ii & 0\\
				0 & 0 & 0 & 0\\
				2\ii & 0 & 1 & 0\\
				0 & 0 & 0 & 0
			\end{pmatrix}N(z)^{-1},
		\end{equation}
  as required. This completes the proof of Proposition \ref{pro:P1}.
	\end{proof}
We note from \eqref{def:N} and \eqref{def:J1} that
	\begin{align*}
		J^{(1)}_1(z) =  \frac{\ii}{16f_1(z)^{1/2}} \begin{pmatrix}
			0 & 0 & (1-\sqrt{z})^{-\frac{1}{2}} & -z^{\frac{1}{2}}(1-\sqrt{z})^{-\frac{1}{2}}\\
			0 & 0 & z^{-\frac{1}{2}}(1-\sqrt{z})^{-\frac{1}{2}} & -(1-\sqrt{z})^{-\frac{1}{2}}\\
			3(1-\sqrt{z})^{\frac{1}{2}} & -3z^{\frac{1}{2}}(1-\sqrt{z})^{\frac{1}{2}} & 0 & 0\\
			-3z^{-\frac{1}{2}}(1-\sqrt{z})^{\frac{1}{2}} & -3(1-\sqrt{z})^{\frac{1}{2}} & 0 & 0
		\end{pmatrix}.
	\end{align*}
Thus, it is readily seen that 
	\begin{equation}\label{J1-1}
		J^{(1)}_1(z) = \frac{\Res_{\zeta = 1} J^{(1)}_1(\zeta)}{z-1} +\mathscr{J}_0+\mathscr{J}_1(z-1)+\Boh \left((z-1)^2\right) ,\quad z \to 1,
	\end{equation}
	where
	\begin{align}\label{def:J_1}
		\mathscr{J}_1=\begin{pmatrix}
			\begin{smallmatrix}
				0 & 0 & \frac{\ii }{1296\left(1 - 2\tilde{s}/\sqrt{s}\right)}-\frac{\ii }{96\left(1 - 2\tilde{s}/\sqrt{s}\right)^2} -\frac{\ii }{144\left(1 - 2\tilde{s}/\sqrt{s}\right)^3}&\frac{\ii }{128\left(1 - 2\tilde{s}/\sqrt{s}\right)}-\frac{\ii }{32\left(1 - 2\tilde{s}/\sqrt{s}\right)^2} -\frac{\ii }{144\left(1 - 2\tilde{s}/\sqrt{s}\right)^3}\\
				0 & 0 & \frac{3\ii }{128\left(1 - 2\tilde{s}/\sqrt{s}\right)}+\frac{\ii }{96\left(1 - 2\tilde{s}/\sqrt{s}\right)^2} +\frac{\ii }{144\left(1 - 2\tilde{s}/\sqrt{s}\right)^3} &-\frac{\ii }{128\left(1 - 2\tilde{s}/\sqrt{s}\right)}+\frac{\ii }{96\left(1- 2\tilde{s}/\sqrt{s}\right)^2} +\frac{\ii }{144\left(1 - 2\tilde{s}/\sqrt{s}\right)^3} \\
				\frac{\ii }{16\left(1 - 2\tilde{s}/\sqrt{s}\right)^2} & \frac{\ii (5-\frac{6\tilde{s}}{\sqrt{s}})}{32\left(1 - 2\tilde{s}/\sqrt{s}\right)^2} & 0 & 0\\
				\frac{\ii (1-\frac{6\tilde{s}}{\sqrt{s}})}{32\left(1 - 2\tilde{s}/\sqrt{s}\right)^2} & -\frac{\ii }{16\left(1 - 2\tilde{s}/\sqrt{s}\right)^2} & 0 & 0
			\end{smallmatrix}
		\end{pmatrix}.
	\end{align}
	
	\subsection{Final transformation}
	The final transformation is defined by
	\begin{equation}\label{def:R}
		R(z) = \begin{cases}
			S(z) P^{(0)}(z)^{-1}, & \quad z \in D(0, \varepsilon),\\
			S(z) P^{(1)}(z)^{-1}, & \quad z \in D(1, \varepsilon),\\
			S(z) N(z)^{-1}, & \quad \textrm{elsewhere}.
		\end{cases}
	\end{equation}
	Based on the RH problems for $S$, $N$, $P^{(0)}$ and  $P^{(1)}$, it can be verified  that both $0$ and $1$ are actually removable singularities of $R$. Consequently, $R$ satisfies the following RH problem.
	
	\begin{figure}[t]
		\begin{center}
			\setlength{\unitlength}{1truemm}
			\begin{picture}(100,70)(-13,2)
				
				\put(20,35){\line(-1,-1){25}}
				\put(20,45){\line(-1,1){25}}
				
				\put(60,45){\line(1,1){25}}
				\put(60,35){\line(1,-1){25}}

				\put(10,55){\thicklines\vector(1,-1){1}}
				\put(10,25){\thicklines\vector(1,1){1}}
				\put(70,25){\thicklines\vector(1,-1){1}}
				\put(70,55){\thicklines\vector(1,1){1}}
				\put(25,47){\thicklines\vector(1,0){1}}
				\put(55,47){\thicklines\vector(1,0){1}}

				\put(-12,11){$\Gamma_4^{(1)}$}
				\put(-12,67){$\Gamma_2^{(1)}$}
				
				\put(85,11){$\Gamma_5^{(1)}$}
				\put(85,67){$\Gamma_1^{(1)}$}

				\put(25,40){\thicklines\circle*{1}}
				\put(55,40){\thicklines\circle*{1}}
				
				\put(25,40){\circle{20}}
				\put(55,40){\circle{20}}
				
				\put(24,36.3){$0$}
				\put(54,36.3){$1$}
				\put(20,29){$\partial D(0, \varepsilon)$}
				\put(49,29){$\partial D(1, \varepsilon)$}
			\end{picture}
			\caption{The contour $\Gamma_{R}$ of the RH problem for $R$.}
			\label{fig:R}
		\end{center}
	\end{figure}
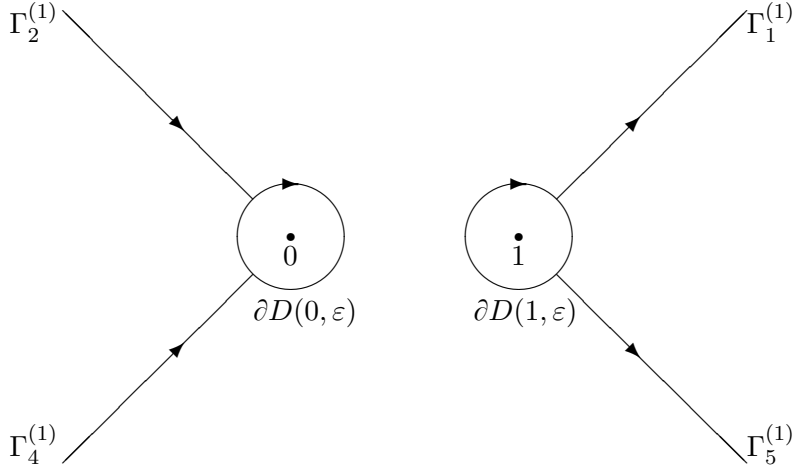
	\begin{rhp}\label{rhp:R}
		\hfill
		\begin{itemize}
			\item [\rm{(a)}] $R(z)$ is defined and analytic in $\mathbb{C} \setminus \Gamma_{R}$, where
			\begin{equation}
				\Gamma_{R}:=\Gamma_T \cup \partial D(0,\varepsilon) \cup \partial D(1,\varepsilon) \setminus \{\mathbb{R}
				\cup D(0,\varepsilon) \cup D(1,\varepsilon) \};
			\end{equation}
			see Figure \ref{fig:R} for an illustration.
			\item [\rm{(b)}] For $z \in \Gamma_{R}$, we have
			\begin{equation}\label{eq:Rjump}
				R_+(z) = R_-(z) J_R (z),
			\end{equation}
			where
			\begin{equation}
				J_R(z) = \begin{cases}
					P^{(0)}(z) N(z)^{-1}, & \quad z \in \partial D(0, \varepsilon),\\
					P^{(1)}(z) N(z)^{-1}, & \quad z \in \partial D(1, \varepsilon),\\
					N(z) J_S(z) N(z)^{-1}, & \quad z \in \Gamma_{R} \setminus \{\partial D(0, \varepsilon) \cup \partial D(1, \varepsilon)\},
				\end{cases}
			\end{equation}
			with $J_S(z)$ defined in \eqref{def:JS}.
			\item [\rm{(c)}] As $z \to \infty$, we have
			\begin{equation}\label{eq:asyR}
				R(z) = I + \frac{R^{(1)}}{z} + \Boh (z^{-2}),
			\end{equation}
			where $R^{(1)}$ is independent of $z$.
		\end{itemize}
	\end{rhp}
	Since the jump matrix $J_S(z)$ of $S$ given in \eqref{def:JS} tend to the identity matrix exponentially fast except for $z\in \Gamma_0^{(1)} \cup \Gamma_3^{(1)}$ as $s \to +\infty$, from the matching conditions \eqref{eq:P0:matching} and \eqref{eq:P1:matching}, we have
	\begin{equation}
		J_R(z) = I + \Boh (s^{-\frac 34}), \qquad s \to +\infty.
	\end{equation}
	By a standard argument \cite{Deift1999, Deift1993}, we conclude that
	\begin{equation}\label{def:RR}
		R(z) = I + \frac{R_1(z)}{s^{3/4}} + \Boh (s^{-3/2}), \quad s \to +\infty,
	\end{equation}
	uniformly for $z\in \mathbb{C}\setminus \Gamma_{R}$. Moreover, inserting the above expansion into \eqref{eq:Rjump}, it follows that function $R_1$ is analytic in $\mathbb{C} \setminus (\partial D(0, \varepsilon) \cup \partial D(1, \varepsilon))$ with asymptotic behavior $\Boh(1/z)$ as $z\to \infty$, and satisfies
	$$
	R_{1,+}(z)-R_{1,-}(z)= \left\{
	\begin{array}{ll}
		J^{(0)}_1(z), & \hbox{$z\in \partial D(0,\varepsilon)$,} \\
		J^{(1)}_1(z), & \hbox{$z\in \partial D(1,\varepsilon)$,}
	\end{array}
	\right.
	$$
	where the functions $J^{(0)}_1(z)$ and $J^{(1)}_1(z)$ are given in \eqref{def:J0} and \eqref{def:J1}, respectively.
	By Cauchy's residue theorem, we have
	\begin{align}\label{J1--1}
		R_1(z) &= \frac{1}{2 \pi \ii} \oint_{\partial D(0, \varepsilon)} \frac{J_1^{(0)} (\zeta)}{z-\zeta} \ud \zeta + \frac{1}{2 \pi \ii} \oint_{\partial D(1, \varepsilon)} \frac{J_1^{(1)} (\zeta)}{z-\zeta} \ud \zeta \nonumber\\
		&=\begin{cases}
			\frac{\Res_{\zeta = 0} J^{(0)}_1(\zeta)}{z} + \frac{\Res_{\zeta = 1} J^{(1)}_1(\zeta)}{z-1}, &\quad z \in \mathbb{C} \setminus \{D(0, \varepsilon) \cup D(1, \varepsilon)\},\\
			\frac{\Res_{\zeta = 0} J^{(0)}_1(\zeta)}{z} + \frac{\Res_{\zeta = 1} J^{(1)}_1(\zeta)}{z-1} - J^{(0)}_1(z), &\quad z \in D(0, \varepsilon),\\
			\frac{\Res_{\zeta = 0} J^{(0)}_1(\zeta)}{z} + \frac{\Res_{\zeta = 1} J^{(1)}_1(\zeta)}{z-1} - J^{(1)}_1(z), &\quad z \in D(1, \varepsilon).
		\end{cases}
	\end{align}
	In view of \eqref{J1--1} and \eqref{J1-1}, it follows that
	\begin{equation}\label{eq:R1'1}
		R_1'(1) = -\mathscr{J}_1-\Res_{\zeta = 0} J^{(0)}_1(\zeta),
	\end{equation}
where $\mathscr{J}_1$ is given in \eqref{def:J_1}.

	\section{Large $s$ asymptotic analysis of the RH problem for $X$ with $0<\gamma<1$}
	\label{sec:AsyXgamma}
	
	If $0< \gamma <1$, the matrix-valued function $X$ has a non-trivial jump
	$\begin{pmatrix}
		1&0&1-\gamma&0\\0&1&0&0\\0&0&1&0\\0&0&0&1 \end{pmatrix}$ over the interval $(0,s)$. This extra jump will lead to a completely different asymptotic analysis of the RH problem for $X$ as $s \to +\infty$, which will be conducted in this section.
	
	\subsection{First transformation: $X \mapsto \what T$}
	This transformation is a rescaling and normalization of the RH problem for $X$, and it is defined by
	\begin{align}\label{def:XToTgamma}
		\what T(z) &= \operatorname{diag}\left(s^{-\frac{1}{8}}, s^{\frac{3}{8}}, s^{-\frac{3}{8}}, s^{\frac{1}{8}}\right) X_0^{-1}  X(sz)\nonumber\\
		&\quad \times\diag \left(
		e^{\theta_1(\sqrt{sz})-\tau \sqrt{sz}}, e^{\theta_2(\sqrt{sz})+ \tau \sqrt{sz}}, e^{-\theta_1(\sqrt{sz})-\tau
			\sqrt{sz}},e^{-\theta_2(\sqrt{sz}) + \tau \sqrt{sz}} \right),
	\end{align}
	where $X_0, \theta_1$ and $\theta_2$ are given in \eqref{def:X0}, \eqref{def:theta1} and \eqref{def:theta2}, respectively. In view of the facts that
	\begin{equation}\label{eq:thetairelations}
		\begin{aligned}
			\theta_{1,+}(\sqrt{sx})+\theta_{1,-}(\sqrt{sx})= 0, &\qquad x>0,\\
			-\theta_{1,\pm}(\sqrt{sx})+\theta_{2,\mp}(\sqrt{sx})=0, &\qquad x<0,
		\end{aligned}
	\end{equation}
	and RH problem \ref{rhp:X} for $X$, it is readily seen that $\what T$ defined in \eqref{def:XToTgamma} satisfies the following RH problem.
	\begin{rhp}\label{rhp:hatT}
		\hfill
		\begin{enumerate}
			\item[\rm (a)] $\what T(z)$ is defined and analytic in $\mathbb{C} \setminus \Gamma_{\what T}$, where
			\begin{equation}\label{def:gammahatT}
				\Gamma_{\what T}:=\cup^5_{j=0}\Gamma_j^{(1)}\cup [0,1],
			\end{equation}
			and where the contours $\Gamma_j^{(1)}$, $j=0,1,\ldots,5$, are defined in \eqref{def:Gammajs} with $s=1$.
			
			\item[\rm (b)]  $\what T(z)$ satisfies
			$
				\what T_+(z)=\what T_-(z)J_{\what T}(z)
			$ for $z\in \Gamma_{\what T}$,
			where
			\begin{equation}\label{def:JhatT}
				J_{\what T}(z):=\left\{
				\begin{array}{ll}
					\begin{pmatrix}0&0&1&0\\0&1&0&0\\-1&0&0&0\\0&0&0&1 \end{pmatrix}, &  \hbox{$z\in \Gamma_0^{(1)}$,} \\
					I+e^{2\theta_1(\sqrt{sz})}E_{3,1},  &   \hbox{$z\in \Gamma_1^{(1)}$,} \\
					I+e^{\nu \pi \ii+\theta_1(\sqrt{sz})-\theta_2(\sqrt{sz})-2\tau \sqrt{sz}} E_{2,1}-e^{\nu \pi \ii+\theta_1(\sqrt{sz})-\theta_2(\sqrt{sz})+2\tau \sqrt{sz}}E_{3,4},  &  \hbox{$z\in \Gamma_2^{(1)}$,} \\
					\begin{pmatrix}0&-\ii&0&0\\ -\ii &0&0&0\\0&0&0&\ii \\0&0&\ii&0 \end{pmatrix}, &   \hbox{$z\in \Gamma_3^{(1)}$,} \\
					I-e^{-\nu \pi \ii+\theta_1(\sqrt{sz})-\theta_2(\sqrt{sz})-2\tau \sqrt{sz}} E_{2,1}+e^{-\nu \pi \ii+\theta_1(\sqrt{sz})-\theta_2(\sqrt{sz})+2\tau \sqrt{sz}}E_{3,4}, &   \hbox{$z\in \Gamma_4^{(1)}$,} \\
					I+e^{2\theta_1(\sqrt{sz})} E_{3,1}, &   \hbox{$z\in \Gamma_5^{(1)}$,} \\
					\msf J_{\msf R}(z), &   \hbox{$z \in (0,1)$,}
				\end{array}
				\right.
			\end{equation}
			with
			\begin{align}\label{def:msfJR}
				\msf J_{\msf R}(z):
				=\begin{pmatrix}e^{\theta_{1,+}(\sqrt{sz})-\theta_{1,-}(\sqrt{sz})}&0&1-\gamma&0
					\\0&1&0&0\\0&0&e^{\theta_{1,-}(\sqrt{sz})-\theta_{1,+}(\sqrt{sz})}&0\\0&0&0&1
				\end{pmatrix}.
			\end{align}

			\item[\rm (c)]As $z \to \infty$ with $z\in \mathbb{C} \setminus \Gamma_{\what T}$, we have
			\begin{align}\label{eq:asyhatT}
				\what T(z)& =\left( I+ \frac{\what T^{(1)}}{z} + \Boh(z^{-2}) \right) \operatorname{diag}\left(z^{\frac{1}{4}}, z^{-\frac{1}{4}},z^{\frac{1}{4}}, z^{-\frac{1}{4}}\right)    \operatorname{diag}\left(\begin{pmatrix} 1 & -1 \\ 1 & 1 \end{pmatrix},\begin{pmatrix} 1 & 1 \\ -1 & 1 \end{pmatrix}\right) \nonumber\\ 
				&~~ \times \operatorname{diag}\left((-\sqrt{z})^{-\frac{1}{4}}, z^{-\frac{1}{8}},(-\sqrt{z})^{\frac{1}{4}}, z^{\frac{1}{8}}\right) A,
			\end{align}
			where $\what T^{(1)}$ is independent of $z$ and $A$ is defined in \eqref{def:A}.
			\item[\rm (d)]
			As $z \to 1$, we have $\what T(z)=\Boh(\ln(z-1))$.
   \item [\rm(e)]
 As $z \to  0$, we have
 \begin{equation}
			\what T(z) = \Boh(1)\cdot \begin{pmatrix}
						z^{\frac{2\nu-1}{4}} & \delta_{\nu}(z) & (1-\gamma ) e^{-\nu\pi \mathrm{i}}\delta_{\nu}(z) & 0 \\
						0 & z^{-\frac{2\nu-1}{4}} & 0 & 0 \\
						0 & 0 & z^{-\frac{2\nu-1}{4}} & 0 \\
						0 & 0 & -\delta_{\nu}(z) & z^{\frac{2\nu-1}{4}}
					\end{pmatrix}\cdot \left(I+\Boh(z)\right), \quad z\in \Omega_1^{(1)},
		\end{equation}
  with $\delta_{\nu}(z)$ defined in \eqref{def:delta}.
		\end{enumerate}
	\end{rhp}
	
	\subsection{Second transformation: $\what T \mapsto \what S$}
On account of the definitions of $\theta_1(z)$ and $\theta_2(z)$ given in \eqref{def:theta1} and \eqref{def:theta2}, it is readily seen that $J_{\what T}(z)$ in \eqref{def:JhatT} tends to $I$ exponentially fast as $s\to +\infty$ for $z\in \Gamma_{\what T} \setminus \mathbb{R}$. Moreover, the $(1,1)$, $(3,3)$ entries of $J_{\msf R}$ in \eqref{def:msfJR} are highly oscillatory for large positive $s$. Thus, in contrast to the transformation \eqref{def:TtoS}, we need to open lens around the interval $(0,1)$ in what follows. The idea is to remove the highly oscillatory terms of $J_{\what T}$ with the cost of creating extra jumps that tend to the identity matrices on some new contours. To proceed, we observe from \eqref{eq:thetairelations} and \eqref{def:msfJR} that
	\begin{equation}
		\msf J_{\msf R}(z)=\msf J_{1,-}(z)
		\begin{pmatrix}
			0&0&1-\gamma&0
			\\
			0&1&0&0
			\\
			\frac{1}{\gamma-1}&0&0&0
			\\0&0&0&1
		\end{pmatrix}\msf J_{1,+}(z), \qquad z\in(0,1),
	\end{equation}
	where
	\begin{align}
		\msf J_1(z)&=
		I+\frac{e^{2\theta_{1}(\sqrt{sz})}}{1-\gamma}E_{3,1}.
		\label{def:msfJ1}
	\end{align}
Let $\Omega_{\msf R, \pm}$ be the lenses on the $\pm$-side of $(0,1)$, as shown in Figure \ref{fig:lenses}. The second transformation in this case is defined by
	\begin{equation}\label{def:hatS}
		\what S = \what T\left\{
		\begin{array}{ll}
			\msf J_1(z)^{-1}, & \hbox{$z\in \Omega_{\msf R,+}$,}
			\\
			\msf J_1(z), & \hbox{$z\in \Omega_{\msf R,-}$,}
			\\
			I, & \hbox{elsewhere.}
		\end{array}
		\right.
	\end{equation}
It is then readily seen from RH problem \ref{rhp:hatT} for $\what T$ that $\what S$ satisfies the following RH problem.
	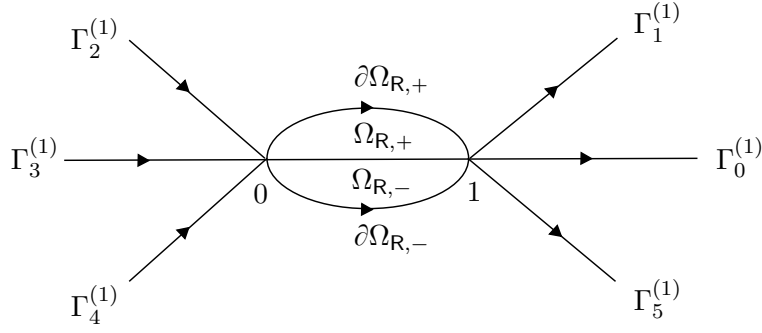
\begin{figure}[t]
		\center
		
		\tikzset{every picture/.style={line width=0.55pt}} 
		
		\begin{tikzpicture}[x=0.75pt,y=0.75pt,yscale=-1,xscale=1]
			
			\draw    (158,118.85) -- (474.11,117.86) ;
			\draw  [fill={rgb, 255:red, 0; green, 0; blue, 0 }  ,fill opacity=1 ] (200.13,118.98) -- (194.9,121.7) -- (195.03,116.05) -- cycle ;
			\draw   (259.24,117.86) .. controls (259.24,103.9) and (281.76,92.58) .. (309.55,92.58) .. controls (337.34,92.58) and (359.87,103.9) .. (359.87,117.86) .. controls (359.87,131.82) and (337.34,143.14) .. (309.55,143.14) .. controls (281.76,143.14) and (259.24,131.82) .. (259.24,117.86) -- cycle ;
			\draw    (190.45,58.43) -- (258.2,117.86) ;
			\draw    (359.44,117.86) -- (433.21,179.74) ;
			\draw    (434.16,57.37) -- (360.47,117.86) ;
			\draw    (259.24,117.86) -- (190.45,179.74) ;
			\draw  [fill={rgb, 255:red, 0; green, 0; blue, 0 }  ,fill opacity=1 ] (421.2,118) -- (415.97,120.71) -- (416.1,115.06) -- cycle ;
			\draw  [fill={rgb, 255:red, 0; green, 0; blue, 0 }  ,fill opacity=1 ] (311.7,143.64) -- (306.47,146.35) -- (306.6,140.7) -- cycle ;
			\draw  [fill={rgb, 255:red, 0; green, 0; blue, 0 }  ,fill opacity=1 ] (311.7,92.35) -- (306.47,95.07) -- (306.6,89.42) -- cycle ;
			\draw  [fill={rgb, 255:red, 0; green, 0; blue, 0 }  ,fill opacity=1 ] (220.1,84.12) -- (214.3,82.82) -- (218.34,78.68) -- cycle ;
			\draw  [fill={rgb, 255:red, 0; green, 0; blue, 0 }  ,fill opacity=1 ] (220.13,152.78) -- (218.28,158.18) -- (214.31,153.98) -- cycle ;
			\draw  [fill={rgb, 255:red, 0; green, 0; blue, 0 }  ,fill opacity=1 ] (406.11,157.03) -- (400.27,155.95) -- (404.13,151.67) -- cycle ;
			\draw  [fill={rgb, 255:red, 0; green, 0; blue, 0 }  ,fill opacity=1 ] (404.01,81.76) -- (402.16,87.17) -- (398.19,82.98) -- cycle ;

			\draw (251.15,129.58) node [anchor=north west][inner sep=0.75pt]   [align=left] {0};
			\draw (357.56,128.59) node [anchor=north west][inner sep=0.75pt]   [align=left] {1};
			\draw (482,106) node [anchor=north west][inner sep=0.75pt]   [align=left] {$\Gamma_0^{(1)}$};
			\draw (441,37.86) node [anchor=north west][inner sep=0.75pt]   [align=left] {$\Gamma_1^{(1)}$};
			\draw (160,46) node [anchor=north west][inner sep=0.75pt]   [align=left] {$\Gamma_2^{(1)}$};
			\draw (130,106) node [anchor=north west][inner sep=0.75pt]   [align=left] {$\Gamma_3^{(1)}$};
			\draw (160,180) node [anchor=north west][inner sep=0.75pt]   [align=left] {$\Gamma_4^{(1)}$};
			\draw (441,177) node [anchor=north west][inner sep=0.75pt]   [align=left] {$\Gamma_5^{(1)}$};
			\draw (301,99) node [anchor=north west][inner sep=0.75pt]   [align=left] {$\Omega_{\msf R,+}$};
			\draw (300,122) node [anchor=north west][inner sep=0.75pt]   [align=left] {$\Omega_{\msf R,-}$};
			\draw (301,70) node [anchor=north west][inner sep=0.75pt]   [align=left] {$\partial \Omega_{\msf R,+}$};
			\draw (301,150) node [anchor=north west][inner sep=0.75pt]   [align=left] {$\partial \Omega_{\msf R,-}$};
		\end{tikzpicture}
		\caption{Regions $\Omega_{\msf R, \pm}$ and the jump contours of the RH problem for $\what S$.}
		\label{fig:lenses}
	\end{figure}
	
	\begin{rhp}\label{rhp:hatS}
		\hfill
		\begin{enumerate}
			\item[\rm (a)] $\what S(z)$ is defined and analytic in $\mathbb{C} \setminus \Gamma_{\what S}$, where
			\begin{equation}\label{def:gammahatS}
				\Gamma_{\what S}:=\cup^5_{j=0}\Gamma_j^{(1)}\cup [0,1] \cup \partial \Omega_{\msf R, \pm};
			\end{equation}
			see Figure \ref{fig:lenses} for an illustration.
			
			\item[\rm (b)]  $\what S$ satisfies 
			$
				\what S_+(z)=\what S_-(z)J_{\what S}(z)
			$ for $z\in \Gamma_{\what S}$,
			where
			\begin{equation}\label{def:JhatS}
				J_{\what S}(z):=\left\{
				\begin{array}{ll}
					J_{\what T}(z), & \qquad \hbox{$z\in \cup^5_{j=0}\Gamma_j^{(1)}$,} \\
					\msf J_1(z), & \qquad  \hbox{$z\in \partial \Omega_{\msf R,+}$,} \\
					\msf J_1(z), & \qquad  \hbox{$z\in \partial \Omega_{\msf R,-}$,} \\
					\begin{pmatrix}
						0&0&1-\gamma&0
						\\
						0&1&0&0
						\\
						\frac{1}{\gamma-1}&0&0&0
						\\0&0&0&1
					\end{pmatrix}, & \qquad  \hbox{$z \in (0,1)$,}
				\end{array}
				\right.
			\end{equation}
			and where the functions $J_{\what T}(z)$ and $\msf J_1(z)$ are defined in \eqref{def:JhatT} and \eqref{def:msfJ1}, respectively.
			
			\item[\rm (c)]As $z \to \infty$ with $z\in \mathbb{C} \setminus \Gamma_{\what S}$, we have
			\begin{align}\label{eq:asyhatS}
				\what S(z)=&\left( I+ \frac{\what T^{(1)}}{z} + \Boh(z^{-2}) \right) \operatorname{diag}\left(z^{\frac{1}{4}}, z^{-\frac{1}{4}},z^{\frac{1}{4}}, z^{-\frac{1}{4}}\right)    \operatorname{diag}\left(\begin{pmatrix} 1 & -1 \\ 1 & 1 \end{pmatrix},\begin{pmatrix} 1 & 1 \\ -1 & 1 \end{pmatrix}\right)  \nonumber\\ 
				&\quad \times\operatorname{diag}\left((-\sqrt{z})^{-\frac{1}{4}}, z^{-\frac{1}{8}},(-\sqrt{z})^{\frac{1}{4}}, z^{\frac{1}{8}}\right) A,
			\end{align}
			where $\what T^{(1)}$ is given in \eqref{eq:asyhatT} and $A$ is defined in \eqref{def:A}.
			\item[\rm (d)]
		As $z \to  1$ or $z \to  0$ outside the lens, $\what S(z)$ has the same local behaviors as $\what T(z)$.
		\end{enumerate}
	\end{rhp}
	
	\subsection{Global parametrix}
	As $s\to +\infty$, it is now readily seen that all the jump matrices of $\what S$ tend to the identity matrices exponentially fast except for those along $\mathbb{R}$. This leads to the following global parametrix.
	
	\begin{rhp}\label{rhp:hatN}
		\hfill
		\begin{enumerate}
			\item[\rm (a)] $\what N(z)$ is defined and analytic in $\mathbb{C} \setminus \mathbb R$.

			\item[\rm (b)] For $x\in \mathbb{R}$, $\what N(x)$ satisfies the jump condition
			\begin{equation}\label{eq:hatN-jump}
				\what N_+(x)=\what N_-(x)\left\{ \begin{array}{ll}
					\begin{pmatrix}
						0&0&1&0\\0&1&0&0\\-1&0&0&0\\0&0&0&1
					\end{pmatrix}, & \quad \hbox{$x>1$,}
					\\
					\begin{pmatrix}
						0&0&1-\gamma&0
						\\
						0&1&0&0
						\\
						\frac{1}{\gamma-1}&0&0&0
						\\0&0&0&1
					\end{pmatrix}, & \quad  \hbox{$x \in (0,1)$,}
					\\
					\begin{pmatrix}0&-\ii&0&0\\-\ii &0&0&0\\0&0&0&\ii\\0&0&\ii&0 \end{pmatrix}, & \quad  \hbox{$x<0$.}
				\end{array}
				\right.
			\end{equation}

			\item[\rm (c)]As $z \to \infty$,  we have
			\begin{align} \label{eq:asyhatN}
				\what N(z) & =\left( I+ \frac{\what N^{(1)}}{z}+\Boh(z^{-2}) \right) \operatorname{diag}\left(z^{\frac{1}{4}}, z^{-\frac{1}{4}},z^{\frac{1}{4}}, z^{-\frac{1}{4}}\right)    \operatorname{diag}\left(\begin{pmatrix} 1 & -1 \\ 1 & 1 \end{pmatrix},\begin{pmatrix} 1 & 1 \\ -1 & 1 \end{pmatrix}\right)  \nonumber\\ 
				&\quad \times \operatorname{diag}\left((-\sqrt{z})^{-\frac{1}{4}}, z^{-\frac{1}{8}},(-\sqrt{z})^{\frac{1}{4}}, z^{\frac{1}{8}}\right) A,
			\end{align}
			where $\what N^{(1)}$ is independent of $z$ and $A$ is defined in \eqref{def:A}.
		\end{enumerate}
	\end{rhp}
To solve the above RH problem,	we set
	\begin{equation}\label{def:lambda}
		\lambda(\zeta):= \left(\frac{\zeta-\ii}{\zeta+\ii}\right)^{\beta},\qquad \zeta \in \mathbb{C} \setminus [-\ii,\ii],
	\end{equation}
	where $\beta=\ln (1-\gamma)/2\pi \ii$  and the branch cut is chosen such that $\lambda(\zeta) \to  1$ as $\zeta \to \infty$ with the orientation from $\ii$ to $-\ii$. We then further define
	\begin{equation}\label{def:di}
		d_1(z)=\lambda((-\sqrt{z})^{\frac12}),\quad d_2(z)=\lambda(z^{\frac14}), \quad d_3(z)=\lambda(-(-\sqrt{z})^{\frac12}),\quad d_4(z)=\lambda(-z^{\frac14}).
	\end{equation}
Some properties of these functions are collected in the proposition below.
	\begin{proposition}\label{prop:di}
		The functions  $d_i$, $i=1,\ldots,4$, in \eqref{def:di} satisfy the following properties.
		\begin{enumerate}
			\item[\rm (i)] $d_1(z)$ and $d_3(z)$ are analytic in $\mathbb{C} \setminus \mathbb{R}$. $d_2(z)$ and $d_4(z)$ are analytic in $\mathbb{C} \setminus (-\infty,0]$. Moreover, we have
			\begin{align}
				d_{1,\pm}(x)&=d_{2,\mp}(x), \quad d_{3,\pm}(x)=d_{4,\mp}(x), && x<0, \label{eq:dijump1}
				\\
				d_{1,\pm}(x)&=d_{3,\mp}(x)e^{-2\beta \pi \ii},  && 0<x<1.
				\\
				d_{1,\pm}(x)&=d_{3,\mp}(x),  && x>1.
			\end{align}
			\item[\rm (ii)] As $z\to \infty$, $\Im z>0$, we have
			\begin{align}
				d_1(z)&=1-\frac{2\beta\ii}{(-\sqrt{z})^{1/2}}+\frac{2\beta^2}{\sqrt{z}}+\frac{2(\beta+2\beta^3)\ii}{3(-\sqrt{z})^{3/2}}+\Boh(z^{-1}),
				\\
				d_2(z)&=1-\frac{2\beta \ii}{z^{1/4}}-\frac{2\beta^2}{\sqrt{z}}+\frac{2(\beta+2\beta^3)\ii}{3z^{3/4}}+\Boh(z^{-1}),
				\\
				d_3(z)&=1+\frac{2\beta\ii}{(-\sqrt{z})^{1/2}}+\frac{2\beta^2}{\sqrt{z}}-\frac{2(\beta+2\beta^3)\ii}{3(-\sqrt{z})^{3/2}}+\Boh(z^{-1}),
				\\
				d_4(z)&=1+\frac{2\beta \ii}{z^{1/4}}-\frac{2\beta^2}{\sqrt{z}}-\frac{2(\beta+2\beta^3)\ii}{3z^{3/4}}+\Boh(z^{-1}).
			\end{align}
			\item[\rm (iii)] As $z \to 0$, $\Im z>0$, we have
			\begin{align}
				d_1(z) &= e^{-\beta \pi \ii}\bigg(1 + 2 \ii \beta (-\sqrt{z})^{\frac12}+ 2 \beta^2 \sqrt{z} - \frac{2 \ii (\beta + 2 \beta^3)}{3} (-\sqrt{z})^{\frac 32} + \frac{4\beta^2+2\beta^4}{3}z 
				\nonumber \\
				&\quad +\frac{2\ii \beta(3+10\beta^2+2\beta^4)}{15}(-\sqrt{z})^{\frac 52}+\frac{2\beta^2(23+20\beta^2+2\beta^4)}{45}z^{\frac{3}{2}}\nonumber \\
				&\quad-\frac{2\ii \beta(45+196\beta^2+70\beta^4+4\beta^6)}{315}(-\sqrt{z})^{\frac 72}+\Boh(z^2)\bigg), \label{eq:d10}
				\\
				d_2(z) &= e^{-\beta \pi \ii}\bigg(1 + 2 \ii \beta z^{1/4} - 2 \beta^2  \sqrt{z} - \frac{2 \ii (\beta + 2 \beta^3)}{3} z^{\frac 34} + \frac{4\beta^2+2\beta^4}{3}z \nonumber \\
				&\quad+\frac{2\ii \beta(3+10\beta^2+2\beta^4)}{15}z^{\frac 54}-\frac{2\beta^2(23+20\beta^2+2\beta^4)}{45}z^{\frac{3}{2}}\nonumber \\
				&\quad-\frac{2\ii \beta(45+196\beta^2+70\beta^4+4\beta^6)}{315}z^{\frac 74}+\Boh(z^2)\bigg), \label{eq:d20}
				\\
				d_3(z) &= e^{\beta \pi \ii}\bigg(1 - 2 \ii \beta (-\sqrt{z})^{\frac12}+ 2 \beta^2 \sqrt{z} + \frac{2 \ii (\beta + 2 \beta^3)}{3} (-\sqrt{z})^{\frac 32} + \frac{4\beta^2+2\beta^4}{3}z \nonumber \\
				&\quad -\frac{2\ii \beta(3+10\beta^2+2\beta^4)}{15}(-\sqrt{z})^{\frac 52}+\frac{2\beta^2(23+20\beta^2+2\beta^4)}{45}z^{\frac{3}{2}}\nonumber \\ 
				&\quad+\frac{2\ii \beta(45+196\beta^2+70\beta^4+4\beta^6)}{315}(-\sqrt{z})^{\frac 72}+\Boh(z^2)\bigg), \label{eq:d30}
				\\
				d_4(z) &= e^{\beta \pi \ii}\bigg(1 - 2 \ii \beta z^{1/4} - 2 \beta^2  \sqrt{z} + \frac{2 \ii (\beta + 2 \beta^3)}{3} z^{\frac 34} + \frac{4\beta^2+2\beta^4}{3}z \nonumber \\
				&\quad-\frac{2\ii \beta(3+10\beta^2+2\beta^4)}{15}z^{\frac 54}-\frac{2\beta^2(23+20\beta^2+2\beta^4)}{45}z^{\frac{3}{2}}\nonumber \\
				&\quad+\frac{2\ii \beta(45+196\beta^2+70\beta^4+4\beta^6)}{315}z^{\frac 74}+\Boh(z^2)\bigg). \label{eq:d40}
			\end{align}
			\item[\rm (iv)] As $z \to 1$, $\Im z>0$, we have
			\begin{align}
				d_1(z) & = 8^{\beta} (z-1)^{-\beta} \left(1+\frac{\beta}{2}(z-1)+\Boh \left((z-1)^2\right)\right), \label{eq:d31}
				\\
				d_2(z) & = e^{-\frac{\beta \pi \ii}{2}}\left(1 + \frac{\beta \ii}{2}(z-1) + \Boh\left((z-1)^2\right)\right), \label{eq:d21}
				\\
				d_3(z) & = 8^{-\beta} (z-1)^{\beta} \left(1-\frac{\beta}{2}(z-1)+\Boh \left((z-1)^2 \right)\right), 
				\\
				d_4(z) & = e^{\frac{\beta \pi \ii}{2}}\left(1 - \frac{\beta \ii}{2}(z-1) + \Boh\left((z-1)^2\right)\right). \label{eq:d41}
			\end{align}
		\end{enumerate}
	\end{proposition}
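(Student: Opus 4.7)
\textbf{Proof proposal for Proposition~\ref{prop:di}.}

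The plan is to reduce every statement to a single ``master'' expansion of $\lambda(\zeta)=((\zeta-\ii)/(\zeta+\ii))^{\beta}$ at the three relevant points $\zeta=\infty$, $\zeta=0$ and $\zeta=\ii$, and then to pull these expansions back through the four inner maps $\zeta_1(z)=(-\sqrt z)^{1/2}$, $\zeta_2(z)=z^{1/4}$, $\zeta_3(z)=-(-\sqrt z)^{1/2}$, $\zeta_4(z)=-z^{1/4}$. Concretely, writing $\log\lambda(\zeta)=\beta[\log(1-\ii/\zeta)-\log(1+\ii/\zeta)]$ and expanding the logarithms gives
\begin{equation*}
\lambda(\zeta)=1-\frac{2\beta\ii}{\zeta}-\frac{2\beta^{2}}{\zeta^{2}}+\frac{2(\beta+2\beta^{3})\ii}{3\zeta^{3}}+\Boh(\zeta^{-4}),\qquad \zeta\to\infty,
\end{equation*}
and factoring $(\zeta\mp \ii)^{\pm\beta}=(\mp\ii)^{\pm\beta}(1\pm\ii\zeta/(\mp\ii))^{\pm\beta}$ yields analogous convergent series at $0$ and at $\ii$. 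The key point is that after substituting $\zeta_i(z)$, the three regimes (iii), (ii), (iv) correspond exactly to $\zeta_i(z)\to0$, $\to\infty$, $\to\ii$ (or $-\ii$), so each item of the proposition is just a one-line composition.

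For item (i), I would track the cuts of the composition: $\lambda$ has its cut along $[-\ii,\ii]$, whose pre-image under the analytic maps $\pm(-\sqrt z)^{1/2}$ and $\pm z^{1/4}$ lies in $\mathbb R$; together with the intrinsic cuts of the inner radicals this forces $d_1,d_3$ to be analytic off $\mathbb R$ and $d_2,d_4$ to be analytic off $(-\infty,0]$. The three jump relations are then checked separately: on $x<0$ the identities $\zeta_{1,\pm}(x)=\zeta_{2,\mp}(x)$ and $\zeta_{3,\pm}(x)=\zeta_{4,\mp}(x)$ follow immediately from the principal-branch definitions of the roots; on $x>1$, $\zeta_{1,\pm}(x)=\zeta_{3,\mp}(x)$ are related by reversal of sign without crossing the cut of $\lambda$; finally on $0<x<1$ the segment lifts to a loop around the endpoint $\zeta=\ii$, so an additional monodromy factor $e^{-2\beta\pi\ii}$ appears, giving the claimed formula. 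Item (ii) then follows by inserting the Taylor expansion of $\lambda$ at $\infty$ into $\zeta_i(z)$ and reading off the first few coefficients.

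For item (iii) I would substitute the expansion of $\lambda$ at $\zeta=0$, namely $\lambda(\zeta)=e^{-\ii\pi\beta}(1+\ii\zeta)^{\beta}(1-\ii\zeta)^{-\beta}$ (with the sign of $\pi\beta$ determined by the side of approach), and multiply out the two binomial series, then insert $\zeta_i(z)$; the prefactor $e^{\mp\beta\pi\ii}$ comes from $(\mp \ii/\pm \ii)^{\beta}$, and the polynomial part is a clean series multiplication that reproduces the coefficients up to $z^{7/4}$ displayed in \eqref{eq:d10}--\eqref{eq:d40}. Item (iv) is handled similarly at $\zeta=\ii$: writing $\lambda(\zeta)=(2\ii)^{-\beta}(\zeta-\ii)^{\beta}(1+(\zeta-\ii)/(2\ii))^{-\beta}$ and using the local expansion $\zeta_1(z)-\ii=\tfrac{\ii}{4}(z-1)+\Boh((z-1)^{2})$ gives the singular factor $(z-1)^{-\beta}$ with leading coefficient $(2\ii/(\ii/4))^{-\beta}=8^{\beta}$ up to the phase matching the branch convention; $d_3$ is obtained from the symmetric expansion at $\zeta=-\ii$, while $d_2,d_4$ are analytic at $z=1$ since $\pm z^{1/4}\to\pm1$ stays away from $\pm\ii$.

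The routine part is the series algebra. The main obstacle is entirely bookkeeping of branches: in particular, fixing the principal branches of $\sqrt z$, $z^{1/4}$ and $(-\sqrt z)^{1/2}$ consistently with the paper's convention so that $\lambda\circ\zeta_i$ gives exactly the phases $e^{\mp\beta\pi\ii}$ in \eqref{eq:d10}--\eqref{eq:d40}, the factor $8^{\beta}$ rather than $(-8)^{\beta}$ in \eqref{eq:d31}, and the correct sign of $e^{-2\beta\pi\ii}$ on $(0,1)$ in item (i). I would carry out this branch tracking once at the start and then apply it uniformly throughout.
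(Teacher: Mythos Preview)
Your approach is essentially identical to the paper's: the paper also records the jump $\lambda_+=\lambda_-e^{-2\beta\pi\ii}$ on $(-\ii,\ii)$ and the Taylor/Laurent expansions of $\lambda(\zeta)$ at $\zeta=\infty$, $\zeta=0$ (two sides), and $\zeta=1$, then declares that the proposition follows by composing with the four inner maps and ``straightforward calculations.'' Your outline is in fact slightly more complete, since you also sketch the expansion of $\lambda$ near its branch points $\pm\ii$, which is needed for $d_1,d_3$ in item~(iv) and which the paper leaves implicit.

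One small slip worth flagging: with the principal-branch conventions in force, as $z\to1$ from $\Im z>0$ one has $\sqrt z\to1$ from the upper half-plane, hence $-\sqrt z\to-1$ from the \emph{lower} half-plane, and therefore $(-\sqrt z)^{1/2}\to e^{-\ii\pi/2}=-\ii$, not $\ii$. So $\zeta_1(z)\to-\ii$ and $\zeta_3(z)\to\ii$, the reverse of what you wrote; correspondingly the relevant local factorization for $d_1$ is $\lambda(\zeta)=(-2\ii)^{\beta}(\zeta+\ii)^{-\beta}\bigl(1+(\zeta+\ii)/(-2\ii)\bigr)^{\beta}$ near $\zeta=-\ii$, and the local expansion is $\zeta_1(z)+\ii=-\tfrac{\ii}{4}(z-1)+\Boh((z-1)^2)$. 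This is exactly the branch bookkeeping you identify as the main obstacle, and once corrected it yields the $8^{\beta}(z-1)^{-\beta}$ prefactor in \eqref{eq:d31} as claimed.
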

	\begin{proof}
		From the definition of $\lambda$ given in \eqref{def:lambda}, it is readily seen that
		\begin{equation}
			\lambda_{+}(\zeta)=\lambda_{-}(\zeta) e^{-2\beta \pi \ii}, \qquad \zeta \in (-\ii,\ii).
		\end{equation}
	Moreover, we have 
		\begin{align}
			\lambda(\zeta)&=1-\frac{2 \beta \ii}{\zeta}-\frac{2\beta^2}{\zeta^2}+\Boh(\zeta^{-3}), \quad \zeta \to \infty,\\
			\lambda(\zeta)&= e^{-\frac{\beta \pi \ii}{2}} \left(1 + \beta \ii (\zeta-1) + \Boh\left((\zeta-1)^2\right)\right), \quad \zeta \to 1,
		\end{align}
  and as $\zeta \to 0$,
  \begin{equation}
      \lambda(\zeta)=\begin{cases}
				e^{-\beta \pi \ii}\left(1 + 2\ii \beta \zeta-2\beta^2 \zeta^2 -\frac{2 \ii (\beta + 2\beta^3)}{3} \zeta^3 +\frac{4\beta^2+2\beta^2}{3}\zeta^4+ \Boh(\zeta^5)\right), & \quad \Re{\zeta}>0,\\
				e^{\beta \pi \ii}\left(1 + 2\ii \beta \zeta-2\beta^2 \zeta^2 -\frac{2 \ii (\beta + 2\beta^3)}{3} \zeta^3 +\frac{4\beta^2+2\beta^2}{3}\zeta^4+ \Boh(\zeta^5)\right), & \quad \Re{\zeta}<0.
    \end{cases}
  \end{equation}
This, together with \eqref{def:di}, gives us the claims in the proposition after straightforward calculations.
	\end{proof}
We are now ready to solve RH problem \ref{rhp:hatN} for $\what N$.
	\begin{proposition}\label{prop:solhatN}
		Let $d_i$, $i=1,\ldots,4$, be the functions defined in \eqref{def:di}. A solution of RH problem \ref{rhp:hatN} is given by
		\begin{align}\label{eq:hatNexp}
			\begin{aligned}
				\what N(z)& = C_0\operatorname{diag}\left(z^{\frac{1}{4}}, z^{-\frac{1}{4}},z^{\frac{1}{4}}, z^{-\frac{1}{4}}\right)    \operatorname{diag}\left(\begin{pmatrix} 1 & -1 \\ 1 & 1 \end{pmatrix},\begin{pmatrix} 1 & 1 \\ -1 & 1 \end{pmatrix}\right) \\ 
				&\quad \times \operatorname{diag}\left((-\sqrt{z})^{-\frac{1}{4}}, z^{-\frac{1}{8}},(-\sqrt{z})^{\frac{1}{4}}, z^{\frac{1}{8}}\right) A  \operatorname{diag}\left(d_1(z),d_2(z),d_3(z),d_4(z)\right),
			\end{aligned}
		\end{align}
		where
		\begin{equation}\label{def:C_0}
			C_0=\begin{pmatrix}
1&2\beta^2&0&-2\beta\\0&1&0&0\\2\beta&\frac{2(\beta+2\beta^3)}{3}&1&-2\beta^2\\0&-2\beta&0&1
			\end{pmatrix}.
		\end{equation}
		Moreover, as $z \to \infty$, we have
		\begin{align} \label{eq:asyhatN1}
			\what N(z)=&\left( I+ \frac{\what N^{(1)}}{z}+\Boh(z^{-2}) \right) \operatorname{diag}\left(z^{\frac{1}{4}}, z^{-\frac{1}{4}},z^{\frac{1}{4}}, z^{-\frac{1}{4}}\right)    \operatorname{diag}\left(\begin{pmatrix} 1 & -1 \\ 1 & 1 \end{pmatrix},\begin{pmatrix} 1 & 1 \\ -1 & 1 \end{pmatrix}\right)  \nonumber\\ 
			&\quad \times \operatorname{diag}\left((-\sqrt{z})^{-\frac{1}{4}}, z^{-\frac{1}{8}},(-\sqrt{z})^{\frac{1}{4}}, z^{\frac{1}{8}}\right) A,
		\end{align}
		where $A$ is defined in \eqref{def:A} and
		\begin{equation}
			\what N^{(1)}=\begin{pmatrix}
				*&*&*&*\\
				2\beta^2&*&*&*\\
				*&*&*&*\\
				*&*&2\beta^2&*
			\end{pmatrix}.
		\end{equation}
	\end{proposition}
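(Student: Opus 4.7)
The plan is to verify the proposition in two steps: first confirm that the explicit formula \eqref{eq:hatNexp} satisfies the jump conditions of RH problem \ref{rhp:hatN}, and then extract its large $z$ expansion to pin down $C_0$ and the stated entries of $\what N^{(1)}$. Because the constant prefactor $C_0$ commutes through any jump analysis, for item (b) it suffices to study the factor
$$
N_0(z)\,\diag\bigl(d_1(z),d_2(z),d_3(z),d_4(z)\bigr),
$$
where $N_0(z)$ denotes the outer piece of \eqref{eq:hatNexp} without the $d_i$'s, i.e.\ precisely the global parametrix $N(z)$ from \eqref{def:N}. Hence $N_0$ already carries the correct jump structure: the anti-diagonal jump on $(-\infty,0)$ and the $(1,3)$-swap jump on $(1,\infty)$, with no jump on $(0,1)$.

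The jump verification then reduces to a scalar check using Proposition \ref{prop:di}(i). On $(-\infty,0)$, the relations $d_{1,\pm}=d_{2,\mp}$ and $d_{3,\pm}=d_{4,\mp}$ show that $\diag(d_1,\dots,d_4)$ intertwines the permutation $(1,2)$-$(3,4)$ embedded in the jump of $N_0$, so that its contribution cancels. On $(1,\infty)$, the relations $d_{1,\pm}=d_{3,\mp}$ together with the continuity of $d_2$ and $d_4$ similarly intertwine the $(1,3)$-swap of $N_0$. On $(0,1)$, where $N_0$ is analytic, the ratios $d_{1,+}/d_{1,-}=1/(1-\gamma)$ and $d_{3,+}/d_{3,-}=1-\gamma$ (read off from $d_{1,\pm}=d_{3,\mp}e^{-2\beta\pi\ii}$ and the definition of $\beta$) combined with the $(1,3)$-permutation structure of $A$ produce exactly the jump matrix with entries $1-\gamma$ and $1/(\gamma-1)$ required by \eqref{eq:hatN-jump}.

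For the asymptotic expansion, I would substitute Proposition \ref{prop:di}(ii) into $\diag(d_1,\dots,d_4)$ and conjugate by the outer factor of $N_0(z)$. The crucial point is that the fractional powers $z^{-1/4}, z^{-1/2}, z^{-3/4}$ and $(-\sqrt z)^{-1/2}, (-\sqrt z)^{-3/2}$ appearing in the $d_i$'s combine with the branch-cut diagonal $\diag((-\sqrt z)^{-1/4},z^{-1/8},(-\sqrt z)^{1/4},z^{1/8})$ inside $N_0$ in such a way that all half-integer powers cancel in $N_0\,\diag(d_1,\dots,d_4)\,N_0^{-1}$, leaving a Laurent series in $1/z$. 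Matching the constant term to $I$ forces $C_0$ to absorb that constant, yielding the block-unipotent form \eqref{def:C_0}; the $(2,1)$ and $(4,3)$ entries $2\beta^2$ of $\what N^{(1)}$ then emerge from the $2\beta^2/\sqrt z$ contributions of $d_1$ and $d_3$, which are the only half-integer terms that survive into the $1/z$ coefficient at those positions. The principal difficulty is precisely this bookkeeping: four distinct expansions must be transported through the non-diagonal matrix $B\,\diag(\cdots)\,A$, and one has to verify that every odd fractional power cancels so that the result is a genuine series in $1/z$. Once those cancellations are in place, the choice of $C_0$ and the identification of $\what N^{(1)}$ follow by direct inspection of the surviving integer-power coefficients.
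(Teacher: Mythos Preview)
Your overall strategy is the same as the paper's --- verify the jumps via intertwining relations for the diagonal factor $\diag(d_1,\ldots,d_4)$, and then read off the asymptotics from Proposition \ref{prop:di}(ii) --- but there is a concrete error in your identification of the outer factor.

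You write that ``$N_0(z)$ denotes the outer piece of \eqref{eq:hatNexp} without the $d_i$'s, i.e.\ precisely the global parametrix $N(z)$ from \eqref{def:N}.'' This is false: the middle diagonal in \eqref{eq:hatNexp} is $\diag\bigl((-\sqrt z)^{-1/4},z^{-1/8},(-\sqrt z)^{1/4},z^{1/8}\bigr)$, whereas in \eqref{def:N} it is $\diag\bigl((1-\sqrt z)^{-1/4},(1+\sqrt z)^{-1/4},(1-\sqrt z)^{1/4},(1+\sqrt z)^{1/4}\bigr)$. These are different functions with different branch structure. In particular, $(-\sqrt z)^{1/4}$ has its cut along all of $[0,\infty)$, so your $N_0$ is \emph{not} analytic on $(0,1)$: it carries the same $(1,3)$-swap jump there as on $(1,\infty)$. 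If $N_0$ were analytic on $(0,1)$, the jump of $\what N$ would be the diagonal matrix $D_-^{-1}D_+$, which cannot equal the off-diagonal matrix required in \eqref{eq:hatN-jump}; your appeal to ``the $(1,3)$-permutation structure of $A$'' does not repair this, since $A$ sits inside both $\what N_\pm$ and cancels.

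The correct argument (which the paper gives) is: on $(0,1)$ the jump of $\what N$ equals $D_-^{-1}J_{N_0}D_+$ with $J_{N_0}$ the $(1,3)$-swap. Using $d_{1,\pm}=d_{3,\mp}e^{-2\beta\pi\ii}$ from Proposition \ref{prop:di}(i) one gets $d_{3,+}/d_{1,-}=e^{2\beta\pi\ii}=1-\gamma$ and $-d_{1,+}/d_{3,-}=-e^{-2\beta\pi\ii}=1/(\gamma-1)$, which are exactly the required off-diagonal entries. Your description of the asymptotic step is essentially in line with the paper (which simply cites Proposition \ref{prop:di}(ii)), though note that the $-2\beta^2/\sqrt z$ terms in $d_2,d_4$ contribute to the $(2,1)$ and $(4,3)$ entries as well, not only $d_1,d_3$.
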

	\begin{proof}
		To check the desired jump condition \eqref{eq:hatN-jump}, it suffices to show
		\begin{equation}
			\left\{ \begin{array}{ll}
				\begin{pmatrix}
					0&0&1&0\\0&1&0&0\\-1&0&0&0\\0&0&0&1
				\end{pmatrix} D_+(x) =D_-(x) \begin{pmatrix}
					0&0&1&0\\0&1&0&0\\-1&0&0&0\\0&0&0&1
				\end{pmatrix}
				, & \quad \hbox{$x>1$,}
				\\
				\begin{pmatrix}
					0&0&1&0\\0&1&0&0\\-1&0&0&0\\0&0&0&1
				\end{pmatrix} D_+(x) =D_-(x) \begin{pmatrix}
					0&0&1-\gamma&0
					\\
					0&1&0&0
					\\
					\frac{1}{\gamma-1}&0&0&0
					\\0&0&0&1
				\end{pmatrix}, & \quad  \hbox{$x \in (0,1)$,}
				\\
				\begin{pmatrix}0&-\ii&0&0\\-\ii &0&0&0\\0&0&0&\ii\\0&0&\ii&0 \end{pmatrix}D_+(x)=D_-(x) \begin{pmatrix}0&-\ii&0&0\\-\ii &0&0&0\\0&0&0&\ii\\0&0&\ii&0 \end{pmatrix}, & \quad  \hbox{$x<0$,}
			\end{array}
			\right.
		\end{equation} 
		where \begin{equation}
 D(z):=
 \operatorname{diag}\left(d_1(z),d_2(z),d_3(z),d_4(z)\right).
		\end{equation}
These relations can be verified by using item (i) of Proposition \ref{prop:di} directly. Finally, the large $z$ behavior of $\what N$ in \ref{eq:asyhatN} is a consequence of item (ii) of Proposition \ref{prop:di}.
	\end{proof}
 
	\subsection{Local parametrix near $z=0$}
In small neighborhoods of $z=0$ and $z=1$, we need to construct local parametrices as approximations of $\what{S}$ and start with the local parametrix near the origin.
	
	\begin{rhp}\label{rhp:hatP0}
		\hfill
		\begin{enumerate}
			\item[\rm (a)] $\what P^{(0)}(z)$ is defined and analytic in $D(0, \varepsilon)\setminus \Gamma_{\what S}$, where $\Gamma_{\what S}$ is defined in \eqref{def:gammahatS}.
			
			\item[\rm (b)] For $z \in D(0, \varepsilon) \cap \Gamma_{\what S}$, $\what P^{(0)}(z)$ satisfies the jump condition
			\begin{equation}\label{eq:hatP0-jump}
				\what P^{(0)}_+(z)=\what P^{(0)}_-(z)J_{\what S}(z),
			\end{equation}
			where $J_{\what S}(z)$ is given in \eqref{def:JhatS}.
			
			\item[\rm (c)]As $s \to \infty$,  we have the matching condition
			\begin{equation}\label{eq:matchhatP0}
				\what P^{(0)}(z)=\left( I+ \Boh(s^{-\frac 14}) \right) \what N(z),\quad z \in \partial D(0, \varepsilon),
			\end{equation}
			where $\what N(z)$ is given in \eqref{eq:hatNexp}.
		\end{enumerate}
	\end{rhp}
	
This RH problem can be solved by using the solution $\widehat{M}(z)$ of  RH problem \ref{rhp:hard edge tac}. More precisely, we define
	\begin{align}\label{def:hatP0}
		\what P^{(0)}(z) & = \what E_0(z) \widehat{M}(sz) \diag \left((1-\gamma)^{-\frac 12}e^{\theta_1(\sqrt{sz}) - \tau \sqrt{sz}},(1-\gamma)^{-\frac 12}e^{\theta_2(\sqrt{sz}) + \tau\sqrt{sz}},\right.\nonumber\\
		&\qquad  \left.(1-\gamma)^{\frac 12}e^{-\theta_1(\sqrt{sz}) - \tau \sqrt{sz}},(1-\gamma)^{\frac 12}e^{-\theta_2(\sqrt{sz}) + \tau \sqrt{sz}}\right)
	\end{align}
	with
	\begin{align}\label{def:hatE0z}
		\what E_0(z)& = \frac{1}{2}\what N(z)\diag\left((1-\gamma)^{\frac 12},(1-\gamma)^{\frac 12},(1-\gamma)^{-\frac 12},(1-\gamma)^{-\frac 12}\right) A^{-1}\nonumber\\
		&\quad \times \diag \left((-\sqrt{sz})^{\frac 14}, (sz)^{\frac 18},(-\sqrt{sz})^{-\frac 14},(sz)^{-\frac 18}\right) \nonumber\\
		&\quad \times \operatorname{diag}\left(\begin{pmatrix} 1 & 1 \\ -1 & 1 \end{pmatrix},\begin{pmatrix} 1 & -1 \\ 1 & 1 \end{pmatrix}\right) \operatorname{diag}\left({(sz)}^{-\frac{1}{4}}, {(sz)}^{\frac{1}{4}},{(sz)}^{-\frac{1}{4}}, {(sz)}^{\frac{1}{4}}\right) ,
	\end{align}
	where $A$, $\theta_1$ and $\theta_2$ are given in \eqref{def:A} -- \eqref{def:theta2}, respectively, and $\what N (z)$ is given in \eqref{eq:hatNexp}.
	
	\begin{proposition}\label{prop:whatP0}
		The local parametrix $\what P^{(0)}(z)$ defined in \eqref{def:hatP0} solves RH problem \ref{rhp:hatP0}.
	\end{proposition}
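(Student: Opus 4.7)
The strategy is to verify the three items of RH problem \ref{rhp:hatP0} one by one, using the fact that $\widehat M(sz)$ solves RH problem \ref{rhp:hard edge tac} (with $z$ replaced by $sz$) and exploiting the explicit form of $\what N$ given in Proposition \ref{prop:solhatN}. For item (a), I would first show that the prefactor $\what E_0(z)$ in \eqref{def:hatE0z} is analytic in $D(0,\varepsilon)$. The only possible discontinuity of $\what E_0$ lies on $(-\varepsilon,0)$, and it must be checked by a direct computation that the jump produced by $\what N$ (item (b) of Proposition \ref{prop:solhatN}) together with the jumps of the diagonal root factors $\diag((-\sqrt{sz})^{\pm 1/4},(sz)^{\pm 1/8})$ and of $\diag(\bigl(\begin{smallmatrix}1&1\\-1&1\end{smallmatrix}\bigr),\bigl(\begin{smallmatrix}1&-1\\1&1\end{smallmatrix}\bigr))$ cancels exactly; the origin is then a removable singularity by the mild local behaviour dictated by \eqref{def:M_0} and \eqref{def:di}.

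For item (b), I would check the jump condition contour by contour. On $\Gamma_k^{(1)}\cap D(0,\varepsilon)$ for $k=1,\ldots,5$ and on $\Gamma_3^{(1)}\cap D(0,\varepsilon)$, the jump of $\widehat M(sz)$ is the one prescribed in Figure \ref{fig:hard edge tacnode}; the conjugation by the $\tau$-dependent exponential diagonal factor in \eqref{def:hatP0} converts these into exactly the jumps $J_{\what S}$ in \eqref{def:JhatS}, since the $\theta_1,\theta_2$ and $\tau\sqrt{sz}$ exponents match the ones appearing in \eqref{def:JhatT}. On the lens boundary $\partial\Omega_{\msf R,\pm}\cap D(0,\varepsilon)$ and on $(0,1)\cap D(0,\varepsilon)$ one uses the factorisation that defined $\msf J_1$ in \eqref{def:msfJ1}: the constant factor $(1-\gamma)^{\mp 1/2}$ built into the diagonal exponential of \eqref{def:hatP0} accounts precisely for the constant middle factor in that factorisation, while $\widehat M(sz)$ has no jump across these contours.

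For item (c), on $\partial D(0,\varepsilon)$ we have $|sz|\to\infty$ as $s\to\infty$, so the asymptotic expansion \eqref{eq:asy:Mhat} applies to $\widehat M(sz)$ and yields
\[
\widehat M(sz)=\bigl(I+\Boh(s^{-1/2})\bigr)\,\mathcal D(sz)\,\mathcal E(sz),
\]
where $\mathcal D$ collects all the root/triangular factors and $\mathcal E$ the exponential factor appearing in \eqref{eq:asy:Mhat}. Substituting into \eqref{def:hatP0}, the exponential factor in \eqref{def:hatP0} is designed to cancel $\mathcal E(sz)$, leaving the product $\what E_0(z)\,\mathcal D(sz)\,\diag((1-\gamma)^{\pm 1/2},\ldots)$, which by \eqref{def:hatE0z} and \eqref{eq:hatNexp} equals $\what N(z)$ up to $I+\Boh(s^{-1/4})$ (the slightly weaker $s^{-1/4}$ rate in \eqref{eq:matchhatP0}, compared to the $s^{-1/2}$ one gets from $\widehat M$, comes from the fact that $|sz|^{-1/2}$ is $\Boh(s^{-1/2})$ on the fixed circle but subleading corrections in the prefactor are only $\Boh(s^{-1/4})$ because of the $\tau s^{-1/4}$ entries hidden in $\what E_0$).

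The main technical obstacle is the algebraic verification in item (a) and the matching in item (c): one must keep track of the non-commuting constant matrices $A$, $\diag(\bigl(\begin{smallmatrix}1&-1\\1&1\end{smallmatrix}\bigr),\bigl(\begin{smallmatrix}1&1\\-1&1\end{smallmatrix}\bigr))$, the branch structure of the root factors under $z\mapsto sz$, and the $\tau$-dependent twist, so that the conjugation by these factors inside $\what E_0$ produces exactly the cancellation needed to reduce $\what P^{(0)}\what N^{-1}$ to $I+\Boh(s^{-1/4})$ uniformly on $\partial D(0,\varepsilon)$. Once these bookkeeping computations are carried out, items (a)--(c) all follow from the corresponding items of RH problem \ref{rhp:hard edge tac} for $\widehat M$.
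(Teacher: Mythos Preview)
Your overall strategy matches the paper's, but two concrete steps are wrong and would block the argument.

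First, in item (a) you claim the only possible discontinuity of $\what E_0$ is on $(-\varepsilon,0)$. This is not so: $\what N$ has a jump on all of $\mathbb{R}$ (see \eqref{eq:hatN-jump}), and the factor $(-\sqrt{sz})^{\pm 1/4}$ in \eqref{def:hatE0z} also has a branch cut along $(0,\infty)$. The paper checks both the jump on $(-\varepsilon,0)$ and on $(0,\varepsilon)$ and shows each equals $I$.

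Second, and more seriously, your treatment of item (b) on the lens boundaries and on $(0,1)$ is incorrect. You assert that ``$\widehat M(sz)$ has no jump across these contours'' and that the jump of $\what P^{(0)}$ there is produced by the constant factor $(1-\gamma)^{\mp 1/2}$. A constant diagonal factor has no jump, so this cannot work. What actually happens is that the rays $\Gamma_1,\Gamma_5$ of $\widehat M$ (carrying the jump $I+E_{3,1}$) are deformed to coincide with $\partial\Omega_{\msf R,\pm}$ near $0$, and its ray $\Gamma_0$ coincides with $(0,1)$; it is precisely these jumps of $\widehat M(sz)$, conjugated by the diagonal exponential in \eqref{def:hatP0}, that produce $\msf J_1(z)=I+\frac{e^{2\theta_1(\sqrt{sz})}}{1-\gamma}E_{3,1}$ on the lens and the constant $(0,1)$-jump in \eqref{def:JhatS}. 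The $(1-\gamma)^{\pm 1/2}$ factor is there to supply the $1/(1-\gamma)$ and $(1-\gamma)$ entries after conjugation, not to create a jump on its own.

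Finally, your explanation of the $s^{-1/4}$ rate in item (c) is off: it does not come from ``$\tau s^{-1/4}$ entries hidden in $\what E_0$''. The paper's computation shows that the leading correction $\what J^{(0)}_1(z)/s^{1/4}$ arises from the $M^{(1)}$-term in \eqref{eq:asy:Mhat}, whose off-block entries (of type $(1,4)$ and $(2,3)$) pick up a factor $s^{-1/4}$ under conjugation by $\diag(s^{-1/8},s^{3/8},s^{-3/8},s^{1/8})$; see \eqref{def:hatJ01}. The parameter $\tau$ does not appear in $\what J^{(0)}_1$.
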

	\begin{proof}
		We first show the prefactor $\what E_0(z)$ is analytic in $D(0, \varepsilon)$. From its definition in \eqref{def:hatE0z}, the only possible jump is on $(-\varepsilon, \varepsilon)$. For $x \in (-\varepsilon, 0)$, recalling the jump matrix of $\what{N}(z)$ in \eqref{eq:hatN-jump}, we have
		\begin{align}
			&\what E_{0,-}(x)^{-1} \what E_{0,+}(x)\nonumber\\
			&= \frac{1}{2} \diag \left((sz)_-^{\frac 14}, (sz)_-^{-\frac 14},(sz)_-^{\frac 14},(sz)_-^{-\frac 14}\right)  \operatorname{diag}\left(\begin{pmatrix} 1 & -1 \\ 1 & 1 \end{pmatrix},\begin{pmatrix} 1 & 1 \\ -1 & 1 \end{pmatrix}\right) \nonumber\\ 
			&\quad \times\diag \left((-\sqrt{sz})_-^{-\frac 14}, (sz)_-^{-\frac 18},(-\sqrt{sz})_-^{\frac 14},(sz)_-^{\frac 18}\right) A \diag\left((1-\gamma)^{-\frac 12},(1-\gamma)^{-\frac 12},(1-\gamma)^{\frac 12},(1-\gamma)^{\frac 12}\right)\nonumber\\
			&\quad \times \begin{pmatrix}
				0&-\ii&0&0
				\\
				-\ii&0&0&0
				\\
				0&0&0&\ii
				\\0&0&\ii&0
			\end{pmatrix} \diag\left((1-\gamma)^{\frac 12},(1-\gamma)^{\frac 12},(1-\gamma)^{-\frac 12},(1-\gamma)^{-\frac 12}\right) A^{-1}\nonumber\\
			&\quad \times \diag \left((-\sqrt{sz})_+^{\frac 14}, (sz)_+^{\frac 18},(-\sqrt{sz})_+^{-\frac 14},(sz)_+^{-\frac 18}\right) \operatorname{diag}\left(\begin{pmatrix} 1 & 1 \\ -1 & 1 \end{pmatrix},\begin{pmatrix} 1 & -1 \\ 1 & 1 \end{pmatrix}\right)\nonumber\\
			&\quad \times  \operatorname{diag}\left((sz)_+^{-\frac{1}{4}}, (sz)_+^{\frac{1}{4}},(sz)_+^{-\frac{1}{4}}, (sz)_+^{\frac{1}{4}}\right) =I.
		\end{align}
		Similarly, for $x \in (0, \varepsilon)$, we have
		\begin{align}
			&\what E_{0,-}(x)^{-1} \what E_{0,+}(x)\nonumber\\
			&= \frac{1}{2} \diag \left((sz)^{\frac 14}, (sz)^{-\frac 14},(sz)^{\frac 14},(sz)^{-\frac 14}\right)  \operatorname{diag}\left(\begin{pmatrix} 1 & -1 \\ 1 & 1 \end{pmatrix},\begin{pmatrix} 1 & 1 \\ -1 & 1 \end{pmatrix}\right)  \nonumber\\ 
			&\quad \times\diag \left((-\sqrt{sz})_-^{-\frac 14}, (sz)^{-\frac 18},(-\sqrt{sz})_-^{\frac 14},(sz)^{\frac 18}\right) A \diag\left((1-\gamma)^{-\frac 12},(1-\gamma)^{-\frac 12},(1-\gamma)^{\frac 12},(1-\gamma)^{\frac 12}\right)\nonumber\\
			&\quad \times \begin{pmatrix}
				0&0&1-\gamma&0
				\\
				0&1&0&0
				\\
				\frac{1}{\gamma -1}&0&0&0
				\\0&0&0&1
			\end{pmatrix} \diag\left((1-\gamma)^{\frac 12},(1-\gamma)^{\frac 12},(1-\gamma)^{-\frac 12},(1-\gamma)^{-\frac 12}\right) A^{-1}\nonumber\\
			&\quad \times \diag \left((-\sqrt{sz})_+^{\frac 14}, (sz)^{\frac 18},(-\sqrt{sz}))_+^{-\frac 14},(sz)^{-\frac 18}\right) \operatorname{diag}\left(\begin{pmatrix} 1 & 1 \\ -1 & 1 \end{pmatrix},\begin{pmatrix} 1 & -1 \\ 1 & 1 \end{pmatrix}\right)\nonumber\\
			&\quad \times  \operatorname{diag}\left((sz)^{-\frac{1}{4}}, (sz)^{\frac{1}{4}},(sz)^{-\frac{1}{4}}, (sz)^{\frac{1}{4}}\right) =I.
		\end{align}
		Moreover, as $z \to 0$, one has
		\begin{equation}
			\what E_0(z) = \what E_0(0) + \what E_0'(0)z + \Boh(z^{2}),
		\end{equation}
		where
		\begin{equation}\label{def:hatE0}
			\what E_0(0) = C_0\begin{pmatrix}
				1 & 0 & -2\beta & 0\\
				2\beta^2 & 1 & -\frac{2(\beta +2\beta^3)}{3} & 2 \beta\\
				0 & 0 & 1 & 0\\
				2\beta & 0  & -2\beta^2 & 1
			\end{pmatrix} \diag \left(s^{-\frac 18}, s^{\frac 38},s^{-\frac 38},s^{\frac 18}\right)
		\end{equation}
		and
			\begin{align}
				\what E_0'(0) &=C_0 \begin{pmatrix}
					\frac{4\beta^2+2\beta^4}{3}& 2\beta^2 & -\frac{2\beta(3+10\beta^2+2\beta^4)}{15} & \frac{4\beta^3+2\beta}{3}\\
					\frac{2\beta^2(23+20\beta^2+2\beta^4)}{45} & \frac{4\beta^2+2\beta^4}{3} & -\frac{2\beta(45+196\beta^2+20\beta^4+4\beta^6)}{315} & \frac{2\beta(3+10\beta^2+2\beta^4)}{15}\\
					-\frac{4\beta^3+2\beta}{3} & -2\beta & \frac{4\beta^2+2\beta^4}{3} & -2\beta^2\\
					\frac{2\beta(3+10\beta^2+2\beta^4)}{15} & \frac{4\beta^3+2\beta}{3}  & -\frac{2\beta^2(23+20\beta^2+2\beta^4)}{45} & 	\frac{4\beta^2+2\beta^4}{3}
				\end{pmatrix} \nonumber \\
				&\quad\times\diag \left(s^{-\frac 18}, s^{\frac 38},s^{-\frac 38},s^{\frac 18}\right)
			\end{align}
with $C_0$ given in \eqref{def:C_0}. Thus, $\what E_0(z)$ is indeed analytic in $D(0, \varepsilon)$ and the jump condition \eqref{eq:hatP0-jump} can be verified easily from this fact, \eqref{eq:thetairelations} and
		the jump condition of $\widehat{M}$ given in \eqref{jumps:Mhat}.
		
		For the matching condition, it follows from the asymptotic behavior of $\widehat{M}$ at infinity given in \eqref{eq:asy:Mhat} that, for $z \in \partial D(0, \varepsilon)$,
		\begin{equation}\label{eq:match01}
			\what P^{(0)}(z)\what N(z)^{-1} = I + \frac{\what J^{(0)}_1(z)}{s^{1/4}} +  \frac{\what J^{(0)}_2(z)}{s^{1/2}}+\frac{\what J^{(0)}_3(z)}{s^{3/4}}+\Boh(s^{-1}),\quad s \to +\infty,
		\end{equation}
		where
		\begin{equation}\label{def:hatJ01}
			\what J^{(0)}_1(z) =  \widetilde E_0(z) \begin{pmatrix}
				0 &0 &0 &-M^{(1)}_{13}+M^{(1)}_{14}\\
				0 & 0 &(M^{(1)}_{13}+M^{(1)}_{14})z^{-1} & 0\\
				0 & 0 & 0& 0\\
				0 & 0 & 0 &0
			\end{pmatrix}\widetilde E_0(z)^{-1},
		\end{equation}
		\begin{align}\label{def:hatJ02}
			\what J^{(0)}_2(z)=
			\widetilde E_0(z) \begin{pmatrix}
				0&M^{(1)}_{11}+M^{(1)}_{12} &0 &0\\
				(M^{(1)}_{11}-M^{(1)}_{12})z^{-1} & 0 &0 & 0\\
				0 & 0 & 0 &-M^{(1)}_{33}+M^{(1)}_{34}\\
				0 & 0 & (-M^{(1)}_{33}-M^{(1)}_{34})z^{-1} &0
			\end{pmatrix}\widetilde E_0(z)^{-1}
		\end{align}
		and
		\begin{align}\label{def:hatJ03}
			\what J^{(0)}_3(z)=
			\widetilde E_0(z) \begin{pmatrix}
				0 &0 &(M^{(2)}_{13}+M^{(2)}_{14})z^{-1} &0\\
				0 & 0 &0 & -M^{(2)}_{13}+M^{(2)}_{14}\\
				0 & M^{(1)}_{31}+M^{(1)}_{32} & 0 & 0\\
				(-M^{(1)}_{31}+M^{(1)}_{32})z^{-1} & 0 & 0 &0
			\end{pmatrix}\widetilde E_0(z)^{-1}
		\end{align}
	with $\widetilde E_0(z):=\what E_0(z) \diag \left(s^{\frac 18}, s^{-\frac 38},s^{\frac 38},s^{-\frac 18}\right) $, the matrices $M^{(1)}$ and $M^{(2)}$ given in \eqref{eq:asy:Mhat}.

 This completes the proof of Proposition \ref{prop:whatP0}. 
	\end{proof}
	
	\subsection{Local parametrix near $z=1$}
	Near the endpoint $z=1$, the local parametrix reads as follows.
	
	\begin{rhp}\label{rhp:hatP1}
		\hfill
		\begin{enumerate}
			\item[\rm (a)] $\what P^{(1)}(z)$ is defined and analytic in $D(1, \varepsilon)\setminus \Gamma_{\what S}$, where $\Gamma_{\what S}$ is defined in \eqref{def:gammahatS}.
			
			\item[\rm (b)] For $z \in D(1, \varepsilon) \cap \Gamma_{\what S}$, $\what P^{(1)}(z)$ satisfies the jump condition
			\begin{equation}\label{eq:hatP1-jump}
				\what P^{(1)}_+(z)=\what P^{(1)}_-(z)J_{\what S}(z),
			\end{equation}
			where $J_{\what S}(z)$ is given in \eqref{def:JhatS}.
			
			\item[\rm (c)]As $s \to \infty$,  we have the matching condition
			\begin{equation}\label{eq:matchhatP1}
				\what P^{(1)}(z)=\left( I+ \Boh(s^{-\frac{3}{4}}) \right) \what N(z), \quad z \in \partial D(1, \varepsilon),
			\end{equation}
			where $\what N(z)$ is given in \eqref{eq:hatNexp}.
		\end{enumerate}
	\end{rhp}
	
The above RH problem can be solved by using the confluent hypergeometric parametrix $\Phi^{(\CHF)}$ in Appendix \ref{sec:CHF}. To do this, we introduce the function
	\begin{equation}\label{def:hatf1}
		\what f_{1}(z) = -2 \ii s^{-\frac 34} \begin{cases}
			\theta_1(\sqrt{sz}) - \theta_{1,+}(\sqrt{s}), & \quad\Im{z} > 0,\\
			-\theta_1(\sqrt{sz}) + \theta_{1,-}(\sqrt{s}), & \quad\Im{z} < 0.
		\end{cases}
	\end{equation}
	By \eqref{def:theta1}, it is easily seen that
	\begin{equation}\label{eq:hatf1}
		\what f_{1}(z) = \left(1 - \frac{\tilde{s}}{\sqrt{s}}\right)(z-1) - \left(\frac{\sqrt{s}-3\tilde{s}}{8\sqrt{s}}\right)(z-1)^2+\Boh((z-1)^3), \qquad z \to 1.
	\end{equation}
	Hence, $\what f_{1}$ is a conformal mapping near $z=1$ for large positive $s$.
	
	We then define
	\begin{align}\label{def:hatP1}
		\what P^{(1)}(z) &= \what E_{1}(z) \begin{pmatrix}
			\Phi^{(\CHF)}_{11}(s^{\frac 34} \what f_{1}(z); \beta) & 0 & \Phi^{(\CHF)}_{12}(s^{\frac 34} \what f_{1}(z); \beta) & 0\\
			0 & 1 &0&0\\
			\Phi^{(\CHF)}_{21}(s^{\frac 34} \what f_{1}(z); \beta) & 0 & \Phi^{(\CHF)}_{22}(s^{\frac 34} \what f_{1}(z); \beta) & 0\\
			0 & 0 & 0 & 1
		\end{pmatrix}\nonumber\\
		&\quad \times \diag \left(e^{\theta_1(\sqrt{sz}) - \frac{\beta \pi \ii}{2}}, 1, e^{-\theta_1(\sqrt{sz}) + \frac{\beta \pi \ii}{2}}, 1\right),
	\end{align}
	where $\beta$ is given in \eqref{def:beta}, $\what f_{1}$ is defined in \eqref{def:hatf1} and
	\begin{align}\label{def:hatE1}
		&\what E_{1}(z) = \what N(z)\nonumber\\
		&\times \begin{cases}
			\diag \left(e^{-\theta_{1,+}(\sqrt{s}) + \frac{\beta \pi \ii}{2}}s^{\frac{3\beta}{4}} \what f_{1}(z)^{\beta}, 1, e^{\theta_{1,+}(\sqrt{s}) -\frac{\beta \pi \ii}{2}}s^{-\frac{3\beta}{4}} \what f_{1}(z)^{-\beta}, 1\right), &\quad \Im{z}>0,\\
			\begin{pmatrix}
				0 & 0 & 1 & 0\\
				0 & 1 & 0 & 0\\
				-1 & 0 & 0 & 0\\
				0 & 0 & 0 & 1
			\end{pmatrix}\diag \left(e^{\theta_{1,-}(\sqrt{s}) +\frac{\beta \pi \ii}{2}}s^{\frac{3\beta}{4}} \what f_{1}(z)^{\beta}, 1, e^{-\theta_{1,-}(\sqrt{s}) -\frac{\beta \pi \ii}{2}}s^{-\frac{3\beta}{4}} \what f_{1}(z)^{-\beta},1\right),&\quad \Im{z}<0.
		\end{cases}
	\end{align}
	
	\begin{proposition}\label{prop:whatP1}
		The local parametrix $\what P^{(1)}(z)$ defined in \eqref{def:hatP1} solves RH problem \ref{rhp:hatP1}.
	\end{proposition}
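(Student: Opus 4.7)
The proof follows the three-step template already used for Proposition \ref{prop:whatP0}. First I would show that the prefactor $\what E_1(z)$ defined in \eqref{def:hatE1} is analytic in $D(1,\varepsilon)$. The only potential non-analyticities within the disk arise from jumps of $\what N(z)$ across $(1-\varepsilon,1)$ and $(1,1+\varepsilon)$, together with the piecewise definition of $\what E_1$ across the real line and the branch cut of $\what f_1(z)^{\beta}$. Away from a neighborhood of $z=1$, the key observation is that $\what f_1(z)$ is a conformal map with $\what f_1(1)=0$ and positive derivative $1-\tilde s/\sqrt{s}$ (for large $s$), so it sends $(1,1+\varepsilon)$ to a positive interval and $(1-\varepsilon,1)$ to a negative one. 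Thus $\what f_1(z)^{\beta}$ (principal branch) is analytic across $(1,1+\varepsilon)$ but picks up a jump of $e^{2\pi i\beta}=1-\gamma$ in the first diagonal entry (and its inverse in the third) across $(1-\varepsilon,1)$.

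The analyticity of $\what E_1$ then reduces to two matrix identities. Using the jump of $\what N$ on $(1,+\infty)$, which is exactly the permutation $P=\begin{pmatrix}0&0&1&0\\0&1&0&0\\-1&0&0&0\\0&0&0&1\end{pmatrix}$ appearing in the $\Im z<0$ formula of \eqref{def:hatE1}, the combination $\what N_{+} D_{+}=\what N_{-}PD_{-}$ holds on $(1,1+\varepsilon)$. On $(1-\varepsilon,1)$ the required identity becomes $JD_{+}=PD_{-}$, where $J=\begin{pmatrix}0&0&1-\gamma&0\\0&1&0&0\\1/(\gamma-1)&0&0&0\\0&0&0&1\end{pmatrix}$ is the jump of $\what N$ there. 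Using $D_{+}=D_{-}\diag(1-\gamma,1,(1-\gamma)^{-1},1)$ from the $\what f_1^{\beta}$ jump and $\theta_{1,-}(\sqrt s)=-\theta_{1,+}(\sqrt s)$, a direct check gives $J\,\diag(1-\gamma,1,(1-\gamma)^{-1},1)=P$, confirming analyticity.

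With $\what E_1$ analytic, the jump condition \eqref{eq:hatP1-jump} reduces to matching the jumps of the CHF parametrix composition with the diagonal exponential factors against $J_{\what S}(z)$ from \eqref{def:JhatS}. The contours $\partial\Omega_{\msf R,\pm}$ and the segment $(0,1)\cap D(1,\varepsilon)$ of the original RH problem are the images under $s^{3/4}\what f_1$ of the jump contours for $\Phi^{(\CHF)}(\cdot;\beta)$, and the explicit jump formulas from Appendix \ref{sec:CHF} combine with $e^{\pm\theta_1(\sqrt{sz})}$ factors and the identity $\theta_{1,+}(\sqrt{sx})-\theta_{1,-}(\sqrt{sx})=2\theta_{1,+}(\sqrt{sx})$ for $x\in(0,1)$ to reproduce \eqref{def:JhatS} entry by entry. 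Finally, the matching \eqref{eq:matchhatP1} on $\partial D(1,\varepsilon)$ follows by inserting the large-$\zeta$ asymptotic expansion of $\Phi^{(\CHF)}(\zeta;\beta)\sim I+C_1(\beta)/\zeta+\cdots$ with $\zeta=s^{3/4}\what f_1(z)=\mathcal{O}(s^{3/4})$, which yields $\what P^{(1)}(z)\what N(z)^{-1}=I+\mathcal{O}(s^{-3/4})$ uniformly on the circle.

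The main obstacle is the bookkeeping in the analyticity step: one must simultaneously track the branch of $\what f_1(z)^{\beta}$ on each side of the negative real image, the piecewise definition of $\what E_1$ switching across the real axis, the jumps of $\what N$ on both sides of $z=1$, and the insertion of the exponentials $e^{\mp\theta_{1,\pm}(\sqrt s)}s^{\pm 3\beta/4}$ chosen to absorb constants so that the matching at $\partial D(1,\varepsilon)$ comes out at order $s^{-3/4}$ rather than order $1$. Once these pieces are lined up, each of the three steps becomes a finite algebraic verification.
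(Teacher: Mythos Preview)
Your proposal is correct and follows essentially the same approach as the paper: verify analyticity of $\what E_1$ by checking that the jumps of $\what N$, the piecewise definition across the real axis, and the branch behavior of $\what f_1(z)^{\beta}$ cancel on $(1-\varepsilon,1)$ and $(1,1+\varepsilon)$; then deduce the jump condition from that of $\Phi^{(\CHF)}$; and obtain the matching from the large-argument expansion \eqref{CHF at infinity}. One small omission: near $z=1$ the contour $\Gamma_{\what S}$ also contains $\Gamma_0^{(1)}$, $\Gamma_1^{(1)}$ and $\Gamma_5^{(1)}$, so the jump verification must match all six rays of $\Phi^{(\CHF)}$, not just the three on the left; this is the same routine check and does not affect your argument.
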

	\begin{proof}
		We first show the analyticity of $\what E_{1}(z)$. From its definition in \eqref{def:hatE1}, the only possible jump is on $(1-\varepsilon, 1+\varepsilon)$. For $x \in (1-\varepsilon, 1)$, we have $\what f_{1,+}(x)^{\beta} = e^{2 \beta \pi \ii}\what f_{1,-}(x)^{\beta}$, it then follows from \eqref{eq:hatN-jump} that
		\begin{equation}
			\what E_{1,-}(x)^{-1}\what E_{1,+}(x) = I.
		\end{equation}
		Similarly, for $x \in (1, 1+\varepsilon)$, we also have
		\begin{equation}
			\what E_{1,-}(x)^{-1}\what E_{1,+}(x) = I.
		\end{equation}
	Moreover, as $z \to 1$, 
		\begin{equation}
			\what E_{1}(z) = \what E_{1}(1)\left(I + \what E_{1}(1)^{-1}\what E_{1}'(1)(z-1) +\Boh\left((z-1)^2\right)\right), 
		\end{equation}
		where
		\begin{equation}\label{eq:hatE11}
			\what E_{1}(1) =  \frac{C_0}{\sqrt{2}} \begin{pmatrix}
				e^{\frac{\pi \ii}{4}}\what {\textsf{a}} & -e^{-\frac{\beta \pi \ii}{2}} & -\ii e^{\frac{\pi \ii}{4}}\what {\textsf{a}}^{-1}& -\ii e^{\frac{\beta \pi \ii}{2}} \\
				e^{\frac{\pi \ii}{4}}\what {\textsf{a}} & e^{-\frac{\beta \pi \ii}{2}} & -\ii e^{\frac{\pi \ii}{4}}\what {\textsf{a}}^{-1} & \ii e^{\frac{\beta \pi \ii}{2}}\\
				-\ii e^{-\frac{\pi \ii}{4}}\what {\textsf{a}} & \ii e^{-\frac{\beta \pi \ii}{2}} & e^{\frac{-\pi \ii}{4}}\what {\textsf{a}}^{-1} & e^{\frac{\beta \pi \ii}{2}} \\
				\ii e^{-\frac{\pi \ii}{4}}\what {\textsf{a}} & \ii e^{-\frac{\beta \pi \ii}{2}} & -e^{\frac{-\pi \ii}{4}}\what {\textsf{a}}^{-1} &  e^{\frac{\beta \pi \ii}{2}}
			\end{pmatrix}
		\end{equation}
		and
		\begin{equation}\label{eq:hatE1'1}
			\what E_{1}(1)^{-1}\what E_{1}'(1)=\left(
			\begin{array}{cccc}
				\what {\textsf{b}} & -\frac{e^{-\frac{\beta \pi \ii  }{2}}}{4 \sqrt{2} \what {\textsf{a}} } & \frac{i}{8 \what {\textsf{a}}^2} & -\frac{e^{\frac{\beta \pi \ii }{2}}}{4 \sqrt{2} \what {\textsf{a}}} \\
				-\frac{\what {\textsf{a}} e^{\frac{\beta \pi \ii }{2}}}{4 \sqrt{2}} & \frac{ \beta \ii }{2} & -\frac{e^{\frac{\beta \pi \ii }{2}}}{4 \sqrt{2} \what {\textsf{a}}} & -\frac{1}{8} i e^{\beta \pi \ii } \\
				-\frac{\ii \what {\textsf{a}}^2}{8}  & -\frac{e^{-\frac{\beta \pi \ii }{2}}\what {\textsf{a}}}{4 \sqrt{2} } & -	\what {\textsf{b}} & \frac{\what {\textsf{a}} e^{\frac{\beta \pi \ii }{2}}}{4 \sqrt{2}} \\
				-\frac{e^{-\frac{\beta \pi \ii }{2}}\what {\textsf{a}}}{4 \sqrt{2} } & \frac{\ii}{8 e^{\beta \pi \ii }} & \frac{e^{-\frac{\beta \pi \ii }{2}}}{4 \sqrt{2} \what {\textsf{a}} } & -\frac{ \beta \ii}{2}  \\
			\end{array}
			\right)
		\end{equation}
		with
		\begin{equation}\label{def:whata}
			\what {\textsf{a}} = e^{-\theta_{1,+}(\sqrt{s})+\pi \ii \beta/2}\what f_{1}'(1)^{\beta} 8^{\beta} s^{3 \beta /4},\qquad 
			\what {\textsf{b}}=\frac{\beta}{2}\left(1-\frac{\sqrt{s}-3\tilde{s}}{8(\sqrt{s}-\tilde{s})}\right).
		\end{equation}
Thus, $\what E_1(z)$ is indeed analytic in $D(1, \varepsilon)$ and the jump condition \eqref{eq:hatP1-jump} can be verified easily from this fact, \eqref{def:hatP1} and
the jump condition of $\Phi^{(\CHF)}$ given in \eqref{CHFJumps}.

For the matching condition, it follows from  the asymptotic behavior of the confluent hypergeometric parametrix at infinity in \eqref{CHF at infinity} that, as $s \to \infty$,
		\begin{align}\label{eq:match11}
			\what P^{(1)}(z) \what N(z)^{-1 }= I+ \frac{\what J^{(1)}_3(z)}{s^{3/4}}+\Boh(s^{-1}),
		\end{align}
		where 
		\begin{align}\label{def:hatJ11}
			\what J^{(1)}_3(z)=\frac{1}{\what f_1(z)}\what E_{1}(1)\Phi_1 \what E_{1}(1)^{-1}
		\end{align}
		with
		\begin{align}\label{def:Phi1}
			\Phi_1=\begin{pmatrix}
				\beta^2 \ii &0 &-\frac{\Gamma(1-\beta)}{\Gamma(\beta)}e^{-\beta \pi \ii} \ii &0 \\
				0 &1 &0 &0 \\
				\frac{\Gamma(1+\beta)}{\Gamma(-\beta)}e^{\beta \pi \ii} \ii &0 &-\beta^2 \ii &0 \\
				0 &0 &0&1\\
			\end{pmatrix}.
		\end{align}
This completes the proof of Proposition \ref{prop:whatP1}. 		
	\end{proof}
 
	\subsection{Final transformation}
The final transformation is defined by
	\begin{equation}\label{def:hatR}
		\what R(z) = \begin{cases}
			\what S(z) \what P^{(0)}(z)^{-1}, & \quad z \in D(0, \varepsilon),\\
			\what S(z) \what P^{(1)}(z)^{-1}, & \quad z \in D(1, \varepsilon),\\
			\what S(z) \what N(z)^{-1}, & \quad \textrm{elsewhere}.
		\end{cases}
	\end{equation}
Similar to RH problem \ref{rhp:R}, it follows from the RH problems for $\what S$, $\what N$, $\what P^{(0)}$ and $\what P^{(1)}$ that $\what R$ satisfies the following RH problem.
	\begin{figure}[t]
		\begin{center}
			
			\tikzset{every picture/.style={line width=0.55pt}} 
			
			\begin{tikzpicture}[x=0.75pt,y=0.75pt,yscale=-1,xscale=1]
				
				\draw   (190,146) .. controls (190,132.19) and (201.19,121) .. (215,121) .. controls (228.81,121) and (240,132.19) .. (240,146) .. controls (240,159.81) and (228.81,171) .. (215,171) .. controls (201.19,171) and (190,159.81) .. (190,146) -- cycle ;
				\draw  [fill={rgb, 255:red, 0; green, 0; blue, 0 }  ,fill opacity=1 ] (214,145) .. controls (214,144.45) and (214.45,144) .. (215,144) .. controls (215.55,144) and (216,144.45) .. (216,145) .. controls (216,145.55) and (215.55,146) .. (215,146) .. controls (214.45,146) and (214,145.55) .. (214,145) -- cycle ;
				
				\draw   (300,145) .. controls (300,131.19) and (311.19,120) .. (325,120) .. controls (338.81,120) and (350,131.19) .. (350,145) .. controls (350,158.81) and (338.81,170) .. (325,170) .. controls (311.19,170) and (300,158.81) .. (300,145) -- cycle ;
				\draw  [fill={rgb, 255:red, 0; green, 0; blue, 0 }  ,fill opacity=1 ] (324,144) .. controls (324,143.45) and (324.45,143) .. (325,143) .. controls (325.55,143) and (326,143.45) .. (326,144) .. controls (326,144.55) and (325.55,145) .. (325,145) .. controls (324.45,145) and (324,144.55) .. (324,144) -- cycle ;
				
				\draw    (130.58,58.92) -- (200,126) ;
				\draw    (340.58,164.92) -- (410,232) ;
				\draw    (130,231) -- (200,166) ;
				\draw    (340,125) -- (410,60) ;
				\draw  [fill={rgb, 255:red, 0; green, 0; blue, 0 }  ,fill opacity=1 ] (171.11,97.8) -- (165.5,96.45) -- (169.43,92.28) -- cycle ;
				\draw  [fill={rgb, 255:red, 0; green, 0; blue, 0 }  ,fill opacity=1 ] (171.19,192.46) -- (169.24,197.88) -- (165.52,193.52) -- cycle ;
				\draw  [fill={rgb, 255:red, 0; green, 0; blue, 0 }  ,fill opacity=1 ] (375.29,198.46) -- (369.68,197.11) -- (373.61,192.94) -- cycle ;
				\draw  [fill={rgb, 255:red, 0; green, 0; blue, 0 }  ,fill opacity=1 ] (375,92.5) -- (373.05,97.92) -- (369.33,93.56) -- cycle ;
				\draw  [draw opacity=0] (227.81,123.69) .. controls (234.13,115.74) and (250.41,110.08) .. (269.5,110.08) .. controls (288.59,110.08) and (304.87,115.74) .. (311.19,123.69) -- (269.5,131.04) -- cycle ; \draw   (227.81,123.69) .. controls (234.13,115.74) and (250.41,110.08) .. (269.5,110.08) .. controls (288.59,110.08) and (304.87,115.74) .. (311.19,123.69) ;  
				\draw  [fill={rgb, 255:red, 0; green, 0; blue, 0 }  ,fill opacity=1 ] (271.77,110.08) -- (265.41,112.95) -- (265.41,107.21) -- cycle ;
				
				\draw  [draw opacity=0] (312.19,167.52) .. controls (305.87,175.48) and (289.59,181.13) .. (270.5,181.13) .. controls (251.41,181.13) and (235.13,175.48) .. (228.81,167.52) -- (270.5,160.17) -- cycle ; \draw   (312.19,167.52) .. controls (305.87,175.48) and (289.59,181.13) .. (270.5,181.13) .. controls (251.41,181.13) and (235.13,175.48) .. (228.81,167.52) ;  
				\draw  [fill={rgb, 255:red, 0; green, 0; blue, 0 }  ,fill opacity=1 ] (274.59,181.13) -- (268.23,184) -- (268.23,178.27) -- cycle ;
				
				\draw  [fill={rgb, 255:red, 0; green, 0; blue, 0 }  ,fill opacity=1 ] (217.5,121) -- (212.5,123.87) -- (212.5,118.13) -- cycle ;
				\draw  [fill={rgb, 255:red, 0; green, 0; blue, 0 }  ,fill opacity=1 ] (328,120) -- (323,122.87) -- (323,117.13) -- cycle ;
				
				\draw (210,148) node [anchor=north west][inner sep=0.75pt]   [align=left] {$0$};
				\draw (321,148) node [anchor=north west][inner sep=0.75pt]   [align=left] {$1$};
				\draw (105,50) node [anchor=north west][inner sep=0.75pt]   [align=left] {$\Gamma_2^{(1)}$};
				\draw (105,225) node [anchor=north west][inner sep=0.75pt]   [align=left] {$\Gamma_4^{(1)}$};
				\draw (415,50) node [anchor=north west][inner sep=0.75pt]   [align=left] {$\Gamma_1^{(1)}$};
				\draw (415,225) node [anchor=north west][inner sep=0.75pt]   [align=left] {$\Gamma_5^{(1)}$};
				\draw (257,90) node [anchor=north west][inner sep=0.75pt]   [align=left] {$\partial \Omega_{R,+}$};
				\draw (257,190) node [anchor=north west][inner sep=0.75pt]   [align=left] {$\partial \Omega_{R,-}$};

			\end{tikzpicture}
			
			\caption{The jump contour $\Gamma_{\what R}$ of the RH problem for $\what R$.}
			\label{fig:hatR}
		\end{center}
	\end{figure}

	\begin{rhp}
		\hfill
		\begin{itemize}
			\item [\rm{(a)}] $\what R(z)$ is defined and analytic in $\mathbb{C} \setminus \Gamma_{\what R}$, where
			\begin{equation}
				\Gamma_{\what R}:=\Gamma_{\what S} \cup \partial D(0,\varepsilon) \cup \partial D(1,\varepsilon) \setminus \{\mathbb{R}
				\cup D(0,\varepsilon) \cup D(1,\varepsilon) \};
			\end{equation}
			see Figure \ref{fig:hatR} for an illustration of $\Gamma_{\what R}$.
			\item [\rm{(b)}] For $z \in \Gamma_{\what R}$, we have
			\begin{equation}\label{eq:hatRjump}
				\what R_+(z) = \what R_-(z) J_{\what R} (z),
			\end{equation}
			where
			\begin{equation}\label{def:JhatR}
				J_{\what R}(z) = \begin{cases}
					\what P^{(0)}(z) \what N(z)^{-1}, & \quad z \in \partial D(0, \varepsilon),\\
					\what P^{(1)}(z) \what N(z)^{-1}, & \quad z \in \partial D(1, \varepsilon),\\
					\what N(z) J_{\what S}(z) \what N(z)^{-1}, & \quad \Gamma_{\what R} \setminus \left\{\partial D(0, \varepsilon) \cup \partial D( 1, \varepsilon) \right\},
				\end{cases}
			\end{equation}
			with $J_{\what S}(z)$ defined in \eqref{def:JhatS}.
			\item [\rm{(c)}] As $z \to \infty$, we have
			\begin{equation}\label{eq:asy:hatR}
				\what R(z) = I + \frac{\what R^{(1)}}{z} + \Boh (z^{-2}),
			\end{equation}
			where $\what R^{(1)}$ is independent of $z$.
		\end{itemize}
	\end{rhp}
	As $s\to +\infty$, we have the following estimate of $J_{\what R}(z)$ in \eqref{def:JhatR}. For $z \in \Gamma_{\what R} \setminus \left\{\partial D(0, \varepsilon) \cup \partial D( 1, \varepsilon) \right\}$, it is readily seen from \eqref{def:JhatS} and \eqref{eq:hatNexp} that there exists a positive constant $c$ such that
	\begin{equation}\label{eq:estJhatR1}
		J_{\what R}(z) = I + \Boh\left(e^{-c s^{3/4}}\right).
	\end{equation}
	For $z \in \partial D( 1, \varepsilon)$, it follows from  \eqref{eq:matchhatP1} that
	\begin{equation}	
		J_{\what R}(z) = I +\frac{\what J^{(1)}_3(z)}{s^{3/4}}+ \Boh\left(s^{-1}\right),
	\end{equation}
where $\what J^{(1)}_3(z)$ is given in \eqref{def:hatJ11}.
	For $z \in \partial D(0, \varepsilon)$, we obtain from \eqref{eq:match01} that
	\begin{equation}\label{eq:estJhatR3}
		J_{\what R}(z) = I + \frac{\what J^{(0)}_1(z)}{s^{1/4}} + \frac{\what J^{(0)}_2(z)}{s^{1/2}}+\frac{\what J^{(0)}_3(z)}{s^{3/4}}+ \Boh\left(s^{-1}\right),
	\end{equation}
	where $\what J^{(0)}_1(z)$, $\what J^{(0)}_2(z)$ and $\what J^{(0)}_3(z)$ are given in \eqref{def:hatJ01}, \eqref{def:hatJ02} and \eqref{def:hatJ03}.
	
	By \cite{Deift1999, Deift1993}, the estimates \eqref{eq:estJhatR1}--\eqref{eq:estJhatR3} imply that
	\begin{equation}\label{eq:hatRexpansion}
		\what R(z) = I + \frac{\what R_1(z)}{s^{1/4}} + \frac{\what R_2(z)}{s^{1/2}}+\frac{\what R_3(z)}{s^{3/4}}+ \Boh\left(s^{-1}\right), \qquad s \to +\infty,
	\end{equation}
	uniformly for $z \in \mathbb{C} \setminus \Gamma_{\what R}$. Moreover, by inserting \eqref{eq:estJhatR3} and  \eqref{eq:hatRexpansion} into \eqref{eq:hatRjump}, it follows that $\what R_1$ satisfies the following RH problem.
	\begin{rhp}
		\hfill
		\begin{itemize}
			\item [\rm{(a)}] $\what R_1(z)$ is analytic in $\mathbb{C} \setminus \partial D(0, \varepsilon)$.
			\item [\rm{(b)}] For $z \in \partial D(0, \varepsilon)$, we have
			$
				\what R_{1,+}(z)-\what R_{1,-}(z)=\what J^{(0)}_1(z),
			$
			where $\what J^{(0)}_1(z)$ is given in \eqref{def:hatJ01}.
			\item [\rm{(c)}] As $z \to \infty$, we have
			$
				\what R_1(z) = \Boh (z^{-1}).
			$
		\end{itemize}
	\end{rhp}
	By Cauchy's residue theorem, we have
	\begin{align}\label{eq:hatR1}
		\what R_1(z) = \frac{1}{2 \pi \ii} \int_{\partial D(0, \varepsilon)} \frac{\what J^{(0)}_1(\zeta)}{\zeta -z} \ud \zeta
		=\begin{cases}
			\frac{\Res_{\zeta =0}\what J^{(0)}_1(\zeta)}{z} - \what J^{(0)}_1(z), & \quad z \in D(0, \varepsilon),\\
			\frac{\Res_{\zeta =0}\what J^{(0)}_1(\zeta)}{z}, & \quad \textrm{elsewhere}.
		\end{cases}
	\end{align} 
	Similarly, we have that $\what R_2$ and $\what R_3$ in \eqref{eq:hatRexpansion} solve the following two RH problems respectively.
	\begin{rhp}
		\hfill
		\begin{itemize}
			\item [\rm{(a)}] $\what R_2(z)$ is analytic in $\mathbb{C} \setminus \partial D(0, \varepsilon)$.
			\item [\rm{(b)}] For $z \in \partial D(0, \varepsilon)$, we have
			$\what R_{2,+}(z)-\what R_{2,-}(z)=\what R_{1,-}(z) \what J^{(0)}_1(z)+\what J^{(0)}_2(z)$,
			where $\what J^{(0)}_1(z)$ and $\what J^{(0)}_2(z)$ are given in \eqref{def:hatJ01} and \eqref{def:hatJ02}, respectively.
			\item [\rm{(c)}] As $z \to \infty$, we have
			$\what R_2(z) = \Boh (z^{-1})$.
		\end{itemize}
	\end{rhp}
 \begin{rhp}
		\hfill
		\begin{itemize}
			\item [\rm{(a)}] $\what R_3(z)$ is analytic in $\mathbb{C} \setminus \left\{\partial D(0, \varepsilon) \cup \partial D( 1, \varepsilon) \right\}$.
			\item [\rm{(b)}] For $z \in \partial D(0, \varepsilon)$, we have
			\begin{equation*}
				\what R_{3,+}(z)-\what R_{3,-}(z)=\what R_{1,-}(z) \what J^{(0)}_2(z)+\what R_{2,-}(z) \what J^{(0)}_1(z)+\what J^{(0)}_3(z),
			\end{equation*}
			where $\what J^{(0)}_i(z)$, $ i=1,2,3$, are given in \eqref{def:hatJ01}--\eqref{def:hatJ03}, respectively. For $z \in \partial D(1, \varepsilon)$, we have
		$\what R_{3,+}(z)-\what R_{3,-}(z)=\what J^{(1)}_3(z)$,
     where $\what J^{(1)}_3(z)$ is given in \eqref{def:hatJ11}.
			
			\item [\rm{(c)}] As $z \to \infty$, we have
			$\what R_3(z) = \Boh (z^{-1})$.
		\end{itemize}
	\end{rhp}
	Again by Cauchy's residue theorem, we have
	\begin{align}\label{eq:hatR2}
		\what R_2(z) &= \frac{1}{2 \pi \ii} \int_{\partial D(0, \varepsilon)} \frac{\what R_{1,-}(\zeta) \what J^{(0)}_1(\zeta)+\what J^{(0)}_2(\zeta)}{\zeta -z} \ud \zeta
		\nonumber
		\\
		&=\begin{cases}
			\frac{\Res_{\zeta =0}(\what R_{1,-}(\zeta) \what J^{(0)}_1(\zeta)+\what J^{(0)}_2(\zeta))}{z} - \what R_{1,-}(z) \what J^{(0)}_1(z)-\what J^{(0)}_2(z), & \quad z \in D(0, \varepsilon),\\
			\frac{\Res_{\zeta =0}(\what R_{1,-}(\zeta) \what J^{(0)}_1(\zeta)+\what J^{(0)}_2(\zeta))}{z}, & \quad \textrm{elsewhere},
		\end{cases}
	\end{align}
	and
	\begin{align}\label{eq:hatR3}
		\what R_3(z) &= \frac{1}{2 \pi \ii} \int_{\partial D(0, \varepsilon)} \frac{\what R_{1,-}(\zeta) \what J^{(0)}_2(\zeta)+\what R_{2,-}(\zeta) \what J^{(0)}_1(\zeta)+\what J^{(0)}_3(\zeta)}{\zeta -z} \ud \zeta+\frac{1}{2 \pi \ii} \int_{\partial D(1, \varepsilon)} \frac{\what J^{(1)}_3(\zeta)}{\zeta -z} \ud \zeta
		\nonumber
		\\
		&=\begin{cases}
			\frac{\Res_{\zeta =0}(\what R_{1,-}(\zeta) \what J^{(0)}_2(\zeta)+\what R_{2,-}(\zeta) \what J^{(0)}_1(\zeta)+\what J^{(0)}_3(\zeta))}{z}+\frac{\Res_{\zeta =1}(\what J^{(1)}_3(\zeta))}{z-1} \nonumber  \\ 
			- \what R_{1,-}(z) \what J^{(0)}_2(z)-\what R_{2,-}(z) \what J^{(0)}_1(z)-\what J^{(0)}_3(z), & z \in D(0, \varepsilon),\\
			\frac{\Res_{\zeta =0}(\what R_{1,-}(\zeta) \what J^{(0)}_2(\zeta)+\what R_{2,-}(\zeta) \what J^{(0)}_1(\zeta)+\what J^{(0)}_3(\zeta))}{z}+\frac{\Res_{\zeta =1}(\what J^{(1)}_3(\zeta))}{z-1}  -\what J^{(1)}_3(z), & z \in D(1, \varepsilon),\\
			\frac{\Res_{\zeta =0}(\what R_{1,-}(\zeta) \what J^{(0)}_2(\zeta)+\what R_{2,-}(\zeta) \what J^{(0)}_1(\zeta)+\what J^{(0)}_3(\zeta))}{z}+\frac{\Res_{\zeta =1}(\what J^{(1)}_3(\zeta))}{z-1}, &  \textrm{elsewhere}.
		\end{cases}
	\end{align}
 
	\section{Small $s$ asymptotic analysis of the RH problem for $X$}\label{sec:AsyX0}
	
	In this section, we analyze the RH problem for $X$ as $s \to 0^+$ and assume that $0 < \gamma \le 1$ throughout this section.
	
\subsection*{Global parametrix}
	As $s \to 0^+$, the interval $(0, s)$ vanishes, it is then easily seen that the RH problem for $X$ is approximated by following global parametrix $\widecheck{N}$ for $|z|>\delta > s$:
	\begin{equation}\label{eq:widecheckN}
		\widecheck{N}(z) = \what M(z) \begin{cases}
			J_1(z), \qquad & \textrm{$\arg z < \varphi$ and $\arg (z-s) > \varphi$,} \\
			J_5(z)^{-1}, \qquad & \textrm{$\arg z >- \varphi$ and $\arg (z-s) <- \varphi$,} \\
			I, \qquad & \textrm{elsewhere,}
		\end{cases}
	\end{equation}
	where $\what M$ solves  RH problem \ref{rhp:hard edge tac}, $\varphi$ is given in \eqref{phi} and $J_k$, $k=1, 5$, denote the jump matrix of $\what M$ on the ray $\Gamma_k$ as shown in Figure \ref{fig:hard edge tacnode}.
	
	\subsection*{Local parametrix}
	For $|z|<\delta$, which particularly includes a neighborhood of $(0, s)$, we approximate $X$ by the following local parametrix.
	\begin{rhp}\label{rhp:widecheckP0}
		\hfill
		\begin{itemize}
			\item [\rm{(a)}] $\widecheck P^{(0)}(z)$ is defined and analytic in $D(0, \delta) \setminus \Gamma_{X}$, where $\Gamma_{X}$ is defined in \eqref{def:gammaX}.
			\item [\rm{(b)}]  $\widecheck P^{(0)}$ satisfies 
				$\widecheck P^{(0)}_+(z) = \widecheck P^{(0)}_-(z) J_{X}(z)$
		for $z \in D(0, \delta) \cap \Gamma_{X}$,
			where $J_{X}(z)$ is given in \eqref{def:JX}.
			\item [\rm{(c)}] As $s \to 0^+$, we have the matching condition
			\begin{equation}\label{eq:widecheckP0:matching}
				\widecheck P^{(0)}(z) = (I+ \Boh(s^{\frac{2\nu+1}{2}})) \widecheck{N}(z), \qquad z \in \partial D(0, \delta),
			\end{equation}
			where $\widecheck{N}(z)$ is given in \eqref{eq:widecheckN}.
		\end{itemize}
	\end{rhp}
	Recall that  $\Omega_k^{(s)}$, $k=1,\ldots,6$, are six regions as shown in Figure \ref{fig:X}, we look for a solution to the above RH problem of the following form:
	\begin{align}\label{eq:widecheckP0}
			\widecheck P^{(0)}(z) &= M_0(z) 
			\begin{pmatrix}
				z^{\frac{2\nu-1}{4}} & \delta_{\nu}(z) & e^{-\nu\pi \mathrm{i}}\delta_{\nu}(z)+z^{-\frac{2\nu-1}{4}}\eta \left(\frac{z}{s}\right)  & 0 \\
				0 & z^{-\frac{2\nu-1}{4}} & 0 & 0 \\
				0 & 0 & z^{-\frac{2\nu-1}{4}} & 0 \\
				0 & 0 & -\delta_{\nu}(z) & z^{\frac{2\nu-1}{4}}
			\end{pmatrix} \nonumber\\
			&\quad\times\begin{cases}
				J_1(z)^{-1}, \quad & z \in \Omega_0^{(s)},\\
				I, \quad & z \in \Omega_1^{(s)},\\
				J_2(z)^{-1}, \quad & z \in \Omega_2^{(s)},\\
				J_1(z)^{-1}J_0(z)^{-1}J_5(z)^{-1}J_4(z), \quad & z \in \Omega_3^{(s)},\\
				J_1(z)^{-1}J_0(z)^{-1}J_5(z)^{-1}, \quad & z \in \Omega_4^{(s)},\\
				J_1^{-1}(z)J_0^{-1}(z), \quad & z \in \Omega_5^{(s)},
			\end{cases}
	\end{align}
where $M_0(z)$ and $\delta_{\nu}(z)$ are given in \eqref{def:M_0} and \eqref{def:delta}, respectively, and $\eta(z)$ is a function to be determined later. In view of RH problem \ref{rhp:widecheckP0}, it follows that $\eta$ solves the following scalar RH problem.
	\begin{rhp}
		\hfill
		\begin{itemize}
			\item [\rm{(a)}] $\eta (z)$ is defined and analytic in $\mathbb{C} \setminus [0, 1]$.
			\item [\rm{(b)}] For $x \in (0, 1)$, we have $\eta_+(x) = \eta_-(x) -\gamma\left(sx\right)^{\frac{2\nu-1}{2}}$.
			\item [\rm{(c)}] As $z \to \infty$, we have
			$\eta(z) = \Boh (z^{-1})$.
		\end{itemize}
	\end{rhp}
	By the Sokhotske-Plemelj formula, it is easy to find
	\begin{equation}\label{def:eta}
		\eta (z) = - \frac{\gamma s^{\frac{2\nu-1}{2}}}{2 \pi \ii} \int_{0}^{1} \frac{x^{\frac{2\nu-1}{2}}}{x-z} \, dx.
	\end{equation}
	
	Since $\eta(z/s) = \Boh(s^{\frac{2\nu+1}{2}})$ as $ s \to 0^+$ for $z \in \partial D(0, \delta)$, we deduce the matching condition \eqref{eq:widecheckP0:matching} from \eqref{eq:widecheckN}, \eqref{eq:widecheckP0} and the fact that $M_0(z)$ is bounded near the origin.

	\subsection*{Final transformation}
	We define the final transformation as
	\begin{equation}\label{def:widecheckR}
		\widecheck R(z) = \begin{cases}
			X(z) \widecheck{N}(z)^{-1}, \qquad & z \in \mathbb{C} \setminus D(0, \delta),\\
			X(z) \widecheck P^{(0)}(z)^{-1}, \qquad & z \in D(0, \delta).
		\end{cases}
	\end{equation}
	From the RH problems for $X$ and $\widecheck P^{(0)}$, it is readily seen $\widecheck R$ is analytic in $D(0, \delta) \setminus \{0, s\}$.
	On account of the facts that
	\begin{equation}
		\eta (z/s) = \Boh(\ln{(z-s)}),  \quad z \to s,
	\end{equation}
 and
	\begin{equation}
		\eta (z/s) =\begin{cases}
			\gamma e^{-\nu \pi \ii} \frac{z^{\nu-1/2}}{2\sin (\nu-\frac{1}{2})\pi}+\Boh(1) , &\quad \text{if} \quad \nu-\frac{1}{2} \neq 0,\\
			\frac{ \gamma}{2\pi \ii}\ln z+\Boh(1), &\quad \text{if} \quad \nu-\frac{1}{2} =0,
		\end{cases} \qquad z \to 0,
	\end{equation}
we conclude from \eqref{eq:X-near-s}, \eqref{eq:X-near-0} and \eqref{eq:widecheckP0} that  $0$ and $s$ are removable singularities. Moreover, it is readily seen that $\widecheck R$ solves the following RH problem.
	\begin{rhp}
		\hfill
		\begin{itemize}
			\item [\rm{(a)}] $\widecheck R(z)$ is defined and analytic in $\mathbb{C} \setminus \partial D(0, \delta)$.
			\item [\rm{(b)}] For $z \in \partial D(0, \delta)$, we have
			$\widecheck R_+(z) = \widecheck R_-(z) J_{\widecheck{R}}(z)$,
			where
			\begin{equation}\label{def:JwidecheckR}
				J_{\widecheck{R}}(z) = \widecheck P^{(0)}(z) \widecheck{N}(z)^{-1}.
			\end{equation}
			\item [\rm{(c)}] As $z \to \infty$, we have
			$\widecheck R(z) = I + \Boh(z^{-1})$.
		\end{itemize}
	\end{rhp}
With $J_{\widecheck R}$ defined in \eqref{def:JwidecheckR}, we have, as $s\to 0^+$, 
	\begin{equation}\label{eq:estJwidecheckR}
		J_{\widecheck R}(z) = I + \widecheck{J_1}(z) s^{\frac{2\nu+1}{2}}+ \Boh\left(s^{\frac{2\nu+3}{2}}\right), \qquad z \in \partial D(0, \delta),
	\end{equation}
	where 
	\begin{align}\label{def:widecheckJ1}
		\widecheck J_1(z)=\frac{\gamma}{(2\nu+1)\pi \ii z}M_0(z)\begin{pmatrix}
			0 & 0 & 1  & 0 \\
			0 & 0 & 0 & 0 \\
			0 & 0 & 0 & 0 \\
			0 & 0 & 0 & 0
		\end{pmatrix}M_0(z)^{-1}.
	\end{align}
	
	By \cite{Deift1999, Deift1993}, the estimate \eqref{eq:estJwidecheckR} implies that
	\begin{equation}\label{eq:widecheckRexpansion}
		\widecheck  R(z) = I + \widecheck  R_1(z) s^{\frac{2\nu+1}{2}}+ \Boh \left(s^{\frac{2\nu+3}{2}}\right), \qquad s \to 0^+,
	\end{equation}
	uniformly for $z \in \mathbb{C}\setminus \partial D(0, \delta)$. Moreover, $\widecheck R_1$ satisfies the following RH problem.
	\begin{rhp}
		\hfill
		\begin{itemize}
			\item [\rm{(a)}] $\widecheck  R_1(z)$ is analytic in $\mathbb{C} \setminus \partial D(0, \delta)$.
			\item [\rm{(b)}] For $z \in \partial D(0, \varepsilon)$, we have
			$\widecheck  R_{1,+}(z)-\widecheck  R_{1,-}(z)=\widecheck J_1(z)$,
			where $\widecheck J_1(z)$ is given in \eqref{def:widecheckJ1}.
			\item [\rm{(c)}] As $z \to +\infty$, we have
			$\widecheck  R_1(z) = \Boh (z^{-1})$.
		\end{itemize}
	\end{rhp}
	By Cauchy's residue theorem, we have
	\begin{align}
		\widecheck R_1(z) = \frac{1}{2 \pi \ii} \int_{\partial D(0, \delta)} \frac{\widecheck J_1(\zeta)}{\zeta -z} \ud \zeta
		=\begin{cases}
			\frac{\Res_{\zeta =0}\widecheck J_1(\zeta)}{z} - \widecheck J_1(z), & \quad z \in D(0, \delta),\\
			\frac{\Res_{\zeta =0}\widecheck J_1(\zeta)}{z}, & \quad \textrm{elsewhere}.
		\end{cases}
	\end{align} 
	Particularly, we note that, as $s \to 0^+$, 
	\begin{align}\label{eq:estwidecheckR1}
		\widecheck R_1(z) =\frac{\gamma}{(2\nu+1)\pi \ii}\left(-M_0'(0)E_{13} M_0(0)^{-1}+M_0(0) E_{13} M_0(0)^{-1}M_0'(0)M_0(0)^{-1}+\Boh(s)\right).
	\end{align} 
	
\section{Large and small $s$ asymptotics of $\sum p_k(s) \frac{\partial q_k(s)}{\partial \beta}$} \label{sec:pq}
	
 We have defined the functions $p_k(s)$ and $q_k(s)$, $k=1,\ldots,12$, in \eqref{def:q1p1}, \eqref{def:p5} and\eqref{def:q5}, which satisfy the equations  \eqref{def:pq's} and \eqref{eq:sumpq}. It is the aim of this section to derive large and small $s$ asymptotics of $\sum p_k(s) \frac{\partial q_k(s)}{\partial \beta}$ for $0< \gamma < 1$. These asymptotic formulas are essential in the proof of large gap asymptotics of $F(s;\gamma)$.
	\begin{proposition}\label{th:pq}
		For the purely imaginary parameter $\beta$ given in \eqref{def:beta}, we have, as $s\to +\infty$, 
		\begin{align}
			\sum_{k=1}^{4} p_k(s) \frac{\partial q_k(s)}{\partial \beta} = 2\beta \ii \frac{\partial \vartheta(s)}{\partial \beta}-\frac{4\beta \ii}{3}\sin\left(2\vartheta(s)\right)+\frac{10}{3}\beta+\Boh(s^{-\frac{1}{4}}),
		\end{align}
		and
		\begin{multline}
		    \sum_{k=5}^{12} p_k(s) \frac{\partial q_k(s)}{\partial \beta} = \frac{\ii}{3}s^{3/4}-3\ii \tilde{s} s^{1/4}+C_{\nu}(\tilde{s},\tau)
      \\ +\frac{4\beta \ii}{3} \sin(2\vartheta(s))+\frac{\partial\left(\beta \ii \cos(2\vartheta(s))-\beta^2\right)}{4\partial \beta}+\Boh(s^{-\frac{1}{4}}),
		\end{multline}
where the constnat $C_{\nu}(\tilde{s},\tau)$ is given in \eqref{def:C} below.
		
  As $s \to 0^+$, we have 
		\begin{align}
			\sum_{k=1}^{12} p_k(s) \frac{\partial q_k(s)}{\partial \beta} =\Boh(s^{\frac{2\nu+1}{2}}).
		\end{align}
		
	\end{proposition}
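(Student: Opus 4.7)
The plan is to extract the needed asymptotics of $p_k(s)$ and $q_k(s)$ directly from the matrix-valued functions $X_{R,0}(s)$ and $X_{L,0}(s)$, which determine them through \eqref{def:q1p1}, \eqref{def:p5} and \eqref{def:q5}, and then differentiate with respect to $\beta$.

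For the large-$s$ regime with $0<\gamma<1$, I would unwind the chain of transformations $X\mapsto \what T\mapsto \what S\mapsto \what R$ near the points $z=s$ and $z=0$ in the original variable. At $z=s$, matching the local behavior \eqref{eq:X-near-s} against the parametrix \eqref{def:hatP1}--\eqref{def:hatE1} and using the $\what R$-expansion \eqref{eq:hatRexpansion}, one reads off $X_{R,0}(s)$ up to errors $\Boh(s^{-1/4})$. The $\beta$-dependent data of $X_{R,0}(s)$ is encoded in the constant $\what{\textsf{a}}$ of \eqref{def:whata}, which carries the factor $s^{3\beta/4}\cdot 8^{\beta}$, together with the gamma-function ratios $\Gamma(1\pm\beta)/\Gamma(\mp\beta)$ appearing in the matrix $\Phi_1$ of \eqref{def:Phi1}. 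These three sources of $\beta$-dependence are precisely the three $\beta$-linear terms $\frac{3\beta\ii}{4}\ln s$, $\beta\ii\ln(8(1-\tilde s/\sqrt s))$ and $\arg\Gamma(1+\beta)$ in the definition \eqref{def:theta} of $\vartheta(s)$. The analogous construction at $z=0$ via \eqref{def:hatP0}--\eqref{def:hatE0} yields $X_{L,0}(s)$, where $\beta$ enters through the prefactor $C_0$ of \eqref{def:C_0} and through the large-argument expansion of $\what M$.

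Substituting these expressions into \eqref{def:q1p1}, \eqref{def:p5} and \eqref{def:q5} and applying $\partial/\partial\beta$, I expect the sum $\sum_{k=1}^{4}p_k\,\partial_\beta q_k$ to be governed entirely by the phase $\vartheta(s)$: the term $2\beta\ii\,\partial\vartheta/\partial\beta$ arises from a direct chain-rule differentiation of $\what{\textsf{a}}$, while the $\sin(2\vartheta(s))$ piece and the constant $\tfrac{10}{3}\beta$ emerge from cross-terms between entries of $\what E_1(1)$ and $\what E_1(1)^{-1}$ in \eqref{eq:hatE11}. The sum $\sum_{k=5}^{12}p_k\,\partial_\beta q_k$ picks up the polynomial growth $\frac{\ii}{3}s^{3/4}-3\ii\tilde s\, s^{1/4}$ from the explicit factors of $s^{\pm 1/8},s^{\pm 3/8}$ in $\what E_0(0)$ combined with the $\beta$-linear entries of $C_0$; the $\Boh(1)$ constant $C_\nu(\tilde s,\tau)$ arises from subleading contributions involving entries of $M^{(1)}$ and $M_0(0)$, and the oscillatory piece arises from the cross-interaction between $X_{L,0}$ and the correction $\what R_1$ of \eqref{eq:hatR1}, whose residue at $z=0$ carries the $\Gamma(1\pm\beta)/\Gamma(\mp\beta)$ signature of the confluent-hypergeometric parametrix. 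For the small-$s$ regime, I would use Section \ref{sec:AsyX0}: from \eqref{def:widecheckR}, \eqref{eq:widecheckP0} and \eqref{eq:widecheckRexpansion}, both $X_{R,0}(s)$ and $X_{L,0}(s)$ differ from their $\gamma$-independent leading parts by $\Boh(s^{(2\nu+1)/2})$, because the global parametrix \eqref{eq:widecheckN} is independent of $\gamma$ and $\gamma$-dependence enters only through $\eta$ in \eqref{def:eta}. Hence $\partial/\partial\beta$ annihilates the leading terms and the claimed bound follows.

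The main obstacle is the careful bookkeeping of the three independent $\beta$-mechanisms in the large-$s$ regime and verifying that the $\sin(2\vartheta(s))$ contributions from the two sums combine exactly as stated (partially cancelling when added). In particular, confirming that the remainder $\partial(\beta\ii\cos(2\vartheta(s))-\beta^2)/(4\,\partial\beta)$ in the $k\geq 5$ sum is produced by the interaction between the $z=0$ expansion of $\what R_1$ and the $\Gamma$-function-induced pieces of the $z=1$ parametrix --- and that the purely constant $\Boh(1)$ remainder $C_\nu(\tilde s,\tau)$ is indeed independent of $\beta$ and $\tau$ --- requires tracking subleading contributions carefully through the unwinding of all four transformations.
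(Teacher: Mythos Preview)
Your overall plan --- unwind the transformations to extract $X_{R,0}(s)$ and $X_{L,0}(s)$, then differentiate in $\beta$ --- is correct and matches the paper. The small-$s$ argument is also right. But there is a genuine gap in the large-$s$ treatment of $\sum_{k=5}^{12}$, and one factual misattribution.

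The gap: after you compute $X_{L,0}(s)$ via \eqref{eq:XL0}, the resulting expression for $\sum_{k=5}^{12} p_k\,\partial_\beta q_k$ involves specific combinations of the entries $m_{ij}$ of $M_0(0)$ (the paper packages these as $\mathsf{u},\mathsf{x},\mathsf{y},\mathsf{z}$ in \eqref{8.25}). These entries are \emph{not} known explicitly, and you give no mechanism for evaluating them. The paper's device is a bootstrap: it substitutes the partially determined expansion \eqref{8.25} together with \eqref{eq:p1q1infinite} into the differential identity \eqref{eq:differentialH1}, compares against the large-$s$ asymptotics of $H(s)$ established independently in \eqref{asy:H}, and solves the resulting linear system in powers of $s$ and $\beta$ to obtain \eqref{eq:uexplicit}--\eqref{def:z} (and, as a byproduct, the relation $M^{(1)}_{11}=M^{(1)}_{12}$). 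Without this step you cannot produce the leading terms $\tfrac{\ii}{3}s^{3/4}-3\ii\tilde s\,s^{1/4}$ with the correct coefficients, nor isolate $C_\nu(\tilde s,\tau)$.

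The misattribution: the oscillatory piece $\tfrac{4\beta\ii}{3}\sin(2\vartheta)+\tfrac{1}{4}\partial_\beta(\beta\ii\cos(2\vartheta)-\beta^2)$ in the $k\geq 5$ sum does \emph{not} come from $\what R_1$. The confluent-hypergeometric data (and hence the $\Gamma(1\pm\beta)$ factors that generate $\vartheta$) enter the $\what R$-expansion only through $\what J^{(1)}_3$ at order $s^{-3/4}$, i.e.\ through $\what R_3$; see \eqref{eq:hatR3} and \eqref{eq:middcal}. The function $\what R_1$ is built solely from $\what J^{(0)}_1$ and carries no oscillation. Finally, note that $C_\nu(\tilde s,\tau)$ is not independent of $\beta$ --- formula \eqref{def:C} shows explicit $\beta,\beta^2,\beta^3$ dependence; you may be conflating it with the constants $D_{\nu,1},D_{\nu,2}$ of Theorem~\ref{th:1}.
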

	\begin{proof}
		We split the proof into two parts, which deal with the large and small $s$ asymptotics, respectively.
  
		\subsubsection*{Asymptotics of $\sum p_k(s) \frac{\partial q_k(s)}{\partial \beta}$ as $s \to +\infty$}
		We first consider the asymptotics of $p_k$ and $q_k$ for $k=1, \ldots, 4$.
		Recall that
		\begin{equation}\label{pq2}
			\begin{pmatrix}
				q_1(s)\\q_2(s)\\q_3(s)\\q_4(s)
			\end{pmatrix}=X_{R,0}(s) \begin{pmatrix}
				1\\0\\0\\0
			\end{pmatrix} \quad \textrm{and} \quad \begin{pmatrix}
				p_1(s)\\p_2(s)\\p_3(s)\\p_4(s)
			\end{pmatrix}=-\frac{\gamma}{2 \pi \ii}X_{R,0}(s)^{-\msf T} \begin{pmatrix}
				0\\0\\1\\0
			\end{pmatrix},
		\end{equation}
		where
		\begin{align}\label{eq:XR0s}
			X_{R,0}(s) = \lim_{z \to 1 ~ \textrm{from} ~  \Omega_1^{(1)}} X(sz)
			\begin{pmatrix}
				1 & 0 & \frac{\gamma}{2 \pi \ii} \ln(sz-s) & 0\\	
				0 & 1 & 0 & 0\\
				0 & 0 & 1 & 0\\
				0 & 0 & 0 & 1
			\end{pmatrix}.
		\end{align}
		
		Tracing back the transformations $X \to \what{T} \to \what S \to \what R$ in \eqref{def:XToTgamma}, \eqref{def:hatS} and \eqref{def:hatR}, it follows that, as $z\to 1$ from $\Omega_1^{(1)} \setminus \Omega_{\mathsf{R},+}$, 
		\begin{align}
			X(sz) & =  \operatorname{diag}\left(s^{\frac{1}{8}}, s^{-\frac{3}{8}},s^{\frac{3}{8}}, s^{-\frac{1}{8}}\right) \what T(z)  \nonumber \\ 
			&\quad \times
              \diag \left(e^{-\theta_1(\sqrt{sz})+\tau \sqrt{sz}}, e^{-\theta_2(\sqrt{sz})-\tau \sqrt{sz}}, e^{\theta_1(\sqrt{sz})+\tau \sqrt{sz}}, e^{\theta_2(\sqrt{sz})-\tau \sqrt{sz}} \right) \nonumber\\
			& = \operatorname{diag}\left(s^{\frac{1}{8}}, s^{-\frac{3}{8}},s^{\frac{3}{8}}, s^{-\frac{1}{8}}\right) \what S(z) 
              \nonumber \\ 
			&\quad \times \diag \left(e^{-\theta_1(\sqrt{sz})+\tau \sqrt{sz}}, e^{-\theta_2(\sqrt{sz})-\tau \sqrt{sz}}, e^{\theta_1(\sqrt{sz})+\tau \sqrt{sz}}, e^{\theta_2(\sqrt{sz})-\tau \sqrt{sz}} \right)\nonumber\\
			& = \operatorname{diag}\left(s^{\frac{1}{8}}, s^{-\frac{3}{8}},s^{\frac{3}{8}}, s^{-\frac{1}{8}}\right)\what R(z) \what P^{(1)}(z) \nonumber \\ 
			&\quad \times  \diag \left(e^{-\theta_1(\sqrt{sz})+\tau \sqrt{sz}}, e^{-\theta_2(\sqrt{sz})-\tau \sqrt{sz}}, e^{\theta_1(\sqrt{sz})+\tau \sqrt{sz}}, e^{\theta_2(\sqrt{sz})-\tau \sqrt{sz}} \right).
		\end{align}
		This, together with \eqref{def:hatP1} and \eqref{eq:XR0s}, implies that
		\begin{align}
			X_{R,0}(s) & =  \operatorname{diag}\left(s^{\frac{1}{8}}, s^{-\frac{3}{8}},s^{\frac{3}{8}}, s^{-\frac{1}{8}}\right) \what{R}(1) \what E_1(1)\nonumber\\
			&\quad \times\lim_{z \to 1 ~ \textrm{from} ~  \Omega_1^{(1)}} \left[\begin{pmatrix}
				\Phi^{(\CHF)}_{11}(s^{\frac 34} \what f_{1}(z); \beta) & 0 & \Phi^{(\CHF)}_{12}(s^{\frac 34} \what f_{1}(z); \beta) & 0\\
				0 & 1 &0&0\\
				\Phi^{(\CHF)}_{21}(s^{\frac 34} \what f_{1}(z); \beta) & 0 & \Phi^{(\CHF)}_{22}(s^{\frac 34} \what f_{1}(z); \beta) & 0\\
				0 & 0 & 0 & 1
			\end{pmatrix}\nonumber\right.\\
			&\quad \times \diag \left(e^{\theta_1(\sqrt{sz}) - \frac{\beta \pi \ii}{2}}, 1, e^{-\theta_1(\sqrt{sz}) + \frac{\beta \pi \ii}{2}}, 1\right) \nonumber\\
			&\quad \times  \diag \left(e^{-\theta_1(\sqrt{sz})+\tau \sqrt{sz}}, e^{-\theta_2(\sqrt{sz})-\tau \sqrt{sz}}, e^{\theta_1(\sqrt{sz})+\tau \sqrt{sz}}, e^{\theta_2(\sqrt{sz})-\tau \sqrt{sz}} \right)\nonumber\\
			&\quad \times \left.\begin{pmatrix}
				1 & 0 & \frac{\gamma}{2 \pi \ii} \ln(sz-s) & 0\\
				0 & 1 & 0 & 0\\
				0 & 0 & 1 & 0\\
				0 & 0 & 0 & 1
			\end{pmatrix}\right],
		\end{align}
		where $ \what E_1$ and $\what f_{1}$ are defined in \eqref{def:hatE1} and \eqref{def:hatf1}, respectively.  On account of the behavior of $\Phi^{(\CHF)}$ near the origin given in \eqref{eq:H-expand-2}, it is readily seen that
		\begin{align}\label{eq:XR0}
			X_{R,0}(s) &=  \operatorname{diag}\left(s^{\frac{1}{8}}, s^{-\frac{3}{8}},s^{\frac{3}{8}}, s^{-\frac{1}{8}}\right)\what{R}(1) \what E_1(1)\what{\Upsilon}_0\nonumber\\
			& \quad\times \begin{pmatrix}
				e^{\tau \sqrt{s}} & 0 & e^{\tau \sqrt{s}} \frac{\gamma}{2 \pi \ii}\left(\ln s - \ln (e^{-\frac{\pi \ii}{2}} s^{\frac 34}\what f_{1}'(1))\right) & 0\\
				0 & e^{-\theta_2(\sqrt{s})-\tau \sqrt{s}} & 0 & 0\\
				0 & 0 & e^{\tau \sqrt{s}} & 0\\
				0 & 0 & 0 & e^{\theta_2(\sqrt{s})-\tau \sqrt{s}}
			\end{pmatrix},
		\end{align}
		where
		\begin{equation}\label{def:whatUpsilon}
			\what{\Upsilon}_0 = \begin{pmatrix}
				\Upsilon_{0,11} & 0 & \Upsilon_{0,12} & 0\\
				0 & 1 & 0 & 0\\
				\Upsilon_{0,21} & 0 & \Upsilon_{0,22} & 0\\
				0 & 0 & 0 & 1
			\end{pmatrix}
		\end{equation}
		with $\Upsilon_0$ defined in \eqref{eq:H-expand-coeff-0}.
		
Inserting \eqref{eq:XR0} into  \eqref{pq2}, it follows that
		\begin{align}\label{def:q1234}
			\begin{pmatrix}
				q_1(s)\\q_2(s)\\q_3(s)\\q_4(s)
			\end{pmatrix}&=e^{\tau \sqrt{s} } \operatorname{diag}\left(s^{\frac{1}{8}}, s^{-\frac{3}{8}},s^{\frac{3}{8}}, s^{-\frac{1}{8}}\right)\what{R}(1) \what E_1(1)\what{\Upsilon}_0\begin{pmatrix}
				1 \\0\\0\\0
			\end{pmatrix},
   \\
   \begin{pmatrix}
				p_1(s)\\p_2(s)\\p_3(s)\\p_4(s)
			\end{pmatrix}&=-e^{-\tau \sqrt{s} }\frac{\gamma}{2 \pi \ii} \operatorname{diag}\left(s^{-\frac{1}{8}}, s^{\frac{3}{8}},s^{-\frac{3}{8}}, s^{\frac{1}{8}}\right)\what{R}(1)^{-\msf T}\what E_1(1)^{-\msf T}\what{\Upsilon}_0^{-\msf T}\begin{pmatrix}
				0\\0\\1\\0
			\end{pmatrix}. \label{def:p1234}
\end{align}
Moreover, we see from \eqref{eq:hatRexpansion} that
		\begin{align}\label{eq:hatR-1}
			\what R(1) = I + \Boh(s^{-\frac{1}{4}}).
		\end{align}
This, together with \eqref{eq:hatE11} and \eqref{def:whatUpsilon}--\eqref{def:p1234}, implies that
		\begin{align}\label{eq:p1q1infinite}
			&\sum_{k=1}^{4} p_k(s) \frac{\partial q_k(s)}{\partial \beta} =-\frac{\gamma}{2 \pi \ii}\begin{pmatrix}
				0 &0 &1 &0 
			\end{pmatrix}
			\what{\Upsilon}_0^{-1} \what E_1(1)^{-1} \frac{\partial \left(\what E_1(1) \what{\Upsilon}_0\right)}{\partial \beta} \begin{pmatrix}
				1 \\ 0 \\ 0 \\0
			\end{pmatrix}+\Boh(s^{-\frac{1}{4}}) \nonumber \\
			&= \frac{\gamma}{2 \pi \ii}\bigg[2\frac{\beta \pi}{\sin (\beta \pi) }e^{-\beta \pi \ii}\left(\ln8(1-\frac{\tilde{s}}{\sqrt{s}})+\frac{3}{4}\ln s+\frac{2\ii}{3}\sin\left(2\vartheta(s)\right)-\frac{5}{3}\right)\nonumber \\
			&\quad -\Gamma^{'}(1+\beta)\Gamma(1-\beta)e^{-\beta \pi \ii}+\Gamma(1+\beta)\Gamma^{'}(1-\beta)e^{-\beta \pi \ii}\bigg]+\Boh(s^{-\frac{1}{4}}) \nonumber \\
			&= -2\beta\left(\ln8(1-\frac{\tilde{s}}{\sqrt{s}})+\frac{3}{4}\ln s+\frac{2\ii}{3}\sin\left(2\vartheta(s)\right)-\frac{5}{3}\right)+2\beta\ii \frac{\partial \arg\Gamma(1+\beta)}{\partial \beta}+\Boh(s^{-\frac{1}{4}})\nonumber \\
			&= 2\beta \ii \frac{\partial \vartheta(s)}{\partial \beta}-\frac{4\beta \ii}{3}\sin\left(2\vartheta(s)\right)+\frac{10}{3}\beta+\Boh(s^{-\frac{1}{4}}),
		\end{align}
		where $\vartheta(s)$ is given in \eqref{def:theta}.
		
		Next, recall the definitions of $p_k$ and $q_k$, $k= 5, \ldots, 12$, in \eqref{def:p5} and \eqref{def:q5}, where
		\begin{align}\label{eq:XL0s}
			X_{L,0}(s) = \lim_{z \to 0~ \textrm{from} ~ \Omega_1^{(1)}} X(z)
			\begin{pmatrix}
				z^{-\frac{2\nu-1}{4}} & -\delta_{\nu}(z) & (\gamma-1) e^{-\nu\pi \mathrm{i}}\delta_{\nu}(z) & 0 \\
				0 & z^{\frac{2\nu-1}{4}} & 0 & 0 \\
				0 & 0 & z^{\frac{2\nu-1}{4}} & 0 \\
				0 & 0 & \delta_{\nu}(z) & z^{-\frac{2\nu-1}{4}}
			\end{pmatrix}
		\end{align}
		with $\delta_{\nu}(z)$  shown in $\eqref{def:delta}$. From now on, it is assumed that $\nu \neq 1/2$. 
		Tracing back the transformations $X \to \what{T} \to \what S \to \what R$ in \eqref{def:XToTgamma}, \eqref{def:hatS} and \eqref{def:hatR}, it follows that, as $z \to 0$ from $ \Omega_1^{(1)} \setminus \Omega_{\mathsf{R},+}$, 
		\begin{align}
			X(sz) & =\operatorname{diag}\left(s^{\frac{1}{8}}, s^{-\frac{3}{8}},s^{\frac{3}{8}}, s^{-\frac{1}{8}}\right) \what T(z)  \nonumber\\
			&\quad \times 
   \diag \left(e^{-\theta_1(\sqrt{sz})+\tau \sqrt{sz}}, e^{-\theta_2(\sqrt{sz})-\tau \sqrt{sz}}, e^{\theta_1(\sqrt{sz})+\tau \sqrt{sz}}, e^{\theta_2(\sqrt{sz})-\tau \sqrt{sz}} \right)\nonumber\\
			& =\operatorname{diag}\left(s^{\frac{1}{8}}, s^{-\frac{3}{8}},s^{\frac{3}{8}}, s^{-\frac{1}{8}}\right) \what S(z) 
   \nonumber\\
			&\quad \times \diag \left(e^{-\theta_1(\sqrt{sz})+\tau \sqrt{sz}}, e^{-\theta_2(\sqrt{sz})-\tau \sqrt{sz}}, e^{\theta_1(\sqrt{sz})+\tau \sqrt{sz}}, e^{\theta_2(\sqrt{sz})-\tau \sqrt{sz}} \right)\nonumber\\
			& =\operatorname{diag}\left(s^{\frac{1}{8}}, s^{-\frac{3}{8}},s^{\frac{3}{8}}, s^{-\frac{1}{8}}\right)\what R(z) \what P^{(0)}(z) \nonumber\\
			&\quad \times \diag \left(e^{-\theta_1(\sqrt{sz})+\tau \sqrt{sz}}, e^{-\theta_2(\sqrt{sz})-\tau \sqrt{sz}}, e^{\theta_1(\sqrt{sz})+\tau \sqrt{sz}}, e^{\theta_2(\sqrt{sz})-\tau \sqrt{sz}} \right).
		\end{align}
		This, together with \eqref{def:hatP0} and \eqref{eq:XL0s}, implies that
		\begin{align}\label{eq:XL0}
			&X_{L,0}(s)= \operatorname{diag}\left(s^{\frac{1}{8}}, s^{-\frac{3}{8}},s^{\frac{3}{8}}, s^{-\frac{1}{8}}\right) \what{R}(0) \what E_0(0)\lim_{z \to 0 ~ \textrm{from} ~  \Omega_1^{(1)}} \what M(z) \nonumber\\
			& \times\diag\left((1-\gamma)^{-\frac{1}{2}},(1-\gamma)^{-\frac{1}{2}},(1-\gamma)^{\frac{1}{2}},(1-\gamma)^{\frac{1}{2}}\right)    \begin{pmatrix}
				z^{-\frac{2\nu-1}{4}} & -\delta_{\nu}(z) & (\gamma-1) e^{-\nu\pi \mathrm{i}}\delta_{\nu}(z) & 0 \\
				0 & z^{\frac{2\nu-1}{4}} & 0 & 0 \\
				0 & 0 & z^{\frac{2\nu-1}{4}} & 0 \\
				0 & 0 & \delta_{\nu}(z) & z^{-\frac{2\nu-1}{4}}
			\end{pmatrix} \nonumber \\ 
			&= \operatorname{diag}\left(s^{\frac{1}{8}}, s^{-\frac{3}{8}},s^{\frac{3}{8}}, s^{-\frac{1}{8}}\right)  \what{R}(0) \what E_0(0) M_0(0) \diag\left((1-\gamma)^{-\frac{1}{2}},(1-\gamma)^{-\frac{1}{2}},(1-\gamma)^{\frac{1}{2}},(1-\gamma)^{\frac{1}{2}}\right).
		\end{align}
	Then, we have
		\begin{align}
		&	X_{L,0}(s)^{-1}\frac{\partial X_{L,0}(s)}{\partial  \beta} \nonumber 
  \\
  &=\diag\left((1-\gamma)^{\frac{1}{2}},(1-\gamma)^{\frac{1}{2}},(1-\gamma)^{-\frac{1}{2}},(1-\gamma)^{-\frac{1}{2}}\right)M_0(0)^{-1}\what E_0(0)^{-1} \what R(0)^{-1} \nonumber \\ 
			&\quad \times \frac{\partial\left( \what R(0) \what E_0(0)\right)}{\partial \beta} M_0(0) \diag\left((1-\gamma)^{-\frac{1}{2}},(1-\gamma)^{-\frac{1}{2}},(1-\gamma)^{\frac{1}{2}},(1-\gamma)^{\frac{1}{2}}\right) \nonumber \\ 
			&\quad +\diag \left(-\pi \ii,-\pi \ii, \pi \ii ,\pi \ii \right) .
		\end{align}
To further evaluate right hand side of the above formula, we first note from \eqref{eq:hatR1} that 
		\begin{align}\label{8.18}
			&\what E_0(0)^{-1} \what R(0)^{-1} \frac{\partial\left( \what R(0) \what E_0(0)\right)}{\partial \beta}= \what E_0(0)^{-1} \what R(0)^{-1} \frac{\partial \what R(0)  }{\partial \beta}\what E_0(0)+\what E_0(0)^{-1} \frac{\partial \what E_0(0) }{\partial \beta} \nonumber \\
			&=\what E_0(0)^{-1}\bigg[s^{-\frac{1}{4}}\frac{\partial \what R_1(0)}{\partial \beta}+s^{-\frac{1}{2}}\left(\frac{\partial \what R_2(0)}{\partial \beta}-\what R_1(0)\frac{\partial R_1(0)}{\partial \beta} \right)+s^{-\frac{3}{4}}  \nonumber \\
			& \quad \times \left(\frac{\partial \what R_3(0)}{\partial \beta}-\what R_1(0)\frac{\partial \what R_2(0)}{\partial \beta}- (\what R_2(0)-\what R_1(0)^2)\frac{\partial \what R_1(0)}{\partial \beta}\right)\bigg]
			\what E_0(0) +\what E_0(0)^{-1} \frac{\partial \what E_0(0) }{\partial \beta}
       \nonumber 
       \\
       & \quad +\Boh(s^{-\frac{1}{4}}).
		\end{align}
We then calculate right hand side of \eqref{8.18} term by term. For the last one, we see from \eqref{def:hatE0}
that
\begin{multline}\label{eq:lastcal}
			\what E_0(0)^{-1} \frac{\partial \what E_0(0) }{\partial \beta}=\begin{pmatrix}
				\frac{8 \beta^3}{3} & \frac{4 \beta }{3}s^{\frac{1}{2}}& 0 & \left(\frac{8 \beta^2}{3}-2\right) s^{\frac{1}{4}} \\
				0 & \frac{8}{9} \beta \left(\beta^2+5\right) & 0 & 0 \\
				\left(\frac{4 \beta^2}{3}+2\right) s^{\frac{1}{4}} & \frac{2}{3}s^{\frac{3}{4}} & -\frac{8}{9} \beta \left(\beta^2+5\right) & \frac{4\beta}{3}s^{\frac{1}{2}} \\
				0 & -\frac{2}{3} \left(2 \beta^2+3\right) s^{\frac{1}{4}} & 0& -\frac{8 \beta^3}{3}  \\
			\end{pmatrix}
   \\ +\Boh(s^{-\frac{1}{4}}).
		\end{multline}
For the first term, it follows from \eqref{def:hatE0} and \eqref{eq:hatR1} that 
		\begin{align}
		& s^{-\frac{1}{4}}\what E_0(0)^{-1}\frac{\partial \what R_1(0)}{\partial \beta}\what E_0(0)
   \nonumber \\
   & =2s^{\frac{1}{2}}(M^{(1)}_{13}+M^{(1)}_{14}) E_{32}\Xi_{32} \Psi_{32} +s^{\frac{1}{4}}(M^{(1)}_{13}-M^{(1)}_{14})\left(E_{34}\Xi_{31}-E_{12} \Xi_{42}\right) \nonumber \\
			& \quad+s^{\frac{1}{4}}(M^{(1)}_{13}+M^{(1)}_{14})E_{12}\left(\Xi_{12} 
			\Psi_{32} +\Xi_{32} \Psi_{12}\right) + s^{\frac{1}{4}}(M^{(1)}_{13}+M^{(1)}_{14})E_{34}\left(\Xi_{32} 
			\Psi_{34} +\Xi_{34} \Psi_{32}\right)\nonumber \\
			&\quad+(M^{(1)}_{13}-M^{(1)}_{14})E_{14} \left(\Xi_{11} 
			-\Xi_{44}\right) +(M^{(1)}_{13}+M^{(1)}_{14})E_{14}\left(\Xi_{12} 
			\Psi_{34} +\Xi_{34} \Psi_{12}\right) \nonumber \\
			&\quad +(M^{(1)}_{13}+M^{(1)}_{14})E_{31}\left(\Xi_{32} \Psi_{31}+\Xi_{31} \Psi_{32}\right) +(M^{(1)}_{13}+M^{(1)}_{14})E_{42}\left(\Xi_{32} \Psi_{42}+\Xi_{42} \Psi_{32}\right) \nonumber \\
			&=-\frac{8\beta}{3}s^{\frac{1}{2}}(M^{(1)}_{13}+M^{(1)}_{14}) E_{32}+\frac{4\beta^2+6}{3}s^{\frac{1}{4}}(M^{(1)}_{13}-M^{(1)}_{14})(E_{34}+E_{12})\nonumber \\
			&\quad-4\beta^2s^{\frac{1}{4}}(M^{(1)}_{13}+M^{(1)}_{14})(E_{34}+E_{12})+ \frac{16 \beta^3}{3} (M^{(1)}_{13}-M^{(1)}_{14}) E_{14}-\frac{16\beta^3}{3}(M^{(1)}_{13}+M^{(1)}_{14}) E_{14}\nonumber \\
			&\quad-\frac{8}{9}(4\beta^3+5\beta)(M^{(1)}_{13}+M^{(1)}_{14}) (E_{31}-E_{42})+\Boh(s^{-\frac{1}{4}}),
		\end{align}
  where $\Xi:=\widetilde E_0(0)^{-1} \frac{\partial \widetilde E_0(0)}{\partial \beta}$ and $\Psi:=\widetilde E_0(0)^{-1}  \widetilde E'_0(0)$.
  
		Similarly, we have 
		\begin{align}
			&s^{-\frac{1}{2}}\what E_0(0)^{-1}\left(\frac{\partial \what R_2(0)}{\partial \beta}-\what R_1(0)\frac{\partial \what R_1(0)}{\partial \beta} \right)\what E_0(0)
   \nonumber \\
   & =-\frac{4\beta}{9}\bigg[2(-5+2\beta^2)(M^{(1)}_{11}+M^{(1)}_{12})
   +30\beta^2(M^{(1)}_{13}+M^{(1)}_{14})^2
   \nonumber \\
   & \quad +2(5+4\beta^2)(M^{(1)}_{13}+M^{(1)}_{14})(-M^{(1)}_{13}+M^{(1)}_{14})+2(5+8\beta^2)(M^{(1)}_{33}+M^{(1)}_{34})\bigg]E_{12}
   \nonumber \\
   & \quad -\frac{2}{3}\bigg[(3+2\beta^2)(M^{(1)}_{34}-M^{(1)}_{33})-12\beta^2(M^{(1)}_{13}+M^{(1)}_{14})^2
   \nonumber\\
   &\quad-6\beta^2(M^{(1)}_{33} 
			+M^{(1)}_{34})+(3+2\beta^2)(M^{(1)}_{11}+M^{(1)}_{12})
			+(-3+22\beta^2+4\beta^4) (M^{(1)}_{11}-M^{(1)}_{12}) \bigg]s^{\frac{1}{4}} E_{32}\nonumber \\
			&\quad+\frac{8\beta}{45}\bigg[-3(7+20\beta^2+8\beta^4)(M^{(1)}_{11}-M^{(1)}_{12})-30\beta^2\left(3(M^{(1)}_{13}+M^{(1)}_{14})^2+(M^{(1)}_{33}+M^{(1)}_{34})\right)\nonumber \\
			&\quad+5(5-2\beta^2)(M^{(1)}_{34}-M^{(1)}_{33})+5(5+4\beta^2)((M^{(1)}_{13})^2-(M^{(1)}_{14})^2)\bigg]E_{34}+\Boh(s^{-\frac{1}{4}}),
		\end{align}
		and
		\begin{align}\label{eq:middcal}
		&s^{-\frac{3}{4}}\what E_0(0)^{-1}\left(\frac{\partial \what R_3(0)}{\partial \beta}-\what R_1(0)\frac{\partial \what R_2(0)}{\partial \beta}- (\what R_2(0)-\what R_1(0)^2)\frac{\partial \what R_1(0)}{\partial \beta}\right)\what E_0(0)
   \nonumber \\
   & =\bigg[\frac{8}{9} \beta \bigg(10 (M^{(1)}_{31}+M^{(1)}_{32}) \nonumber \\
			&\quad+ (14+6\beta^2)(M^{(1)}_{13}+M^{(1)}_{14})(M^{(1)}_{33}-M^{(1)}_{34}) + (5+4\beta^2) (M^{(1)}_{13}-M^{(1)}_{14}) (M^{(1)}_{33}+M^{(1)}_{34})  \nonumber \\
			&\quad+ 8\beta^2 (M^{(1)}_{31}-M^{(1)}_{32}) + 10\beta^2 (M^{(1)}_{13}+M^{(1)}_{14}) (M^{(1)}_{33}-M^{(1)}_{34})  + 4\beta^2 (M^{(1)}_{13}-M^{(1)}_{14})(M^{(1)}_{33}+M^{(1)}_{34})  \nonumber \\
			&\quad  -(5 + 4 \beta^2) (M^{(2)}_{13}+M^{(2)}_{14})  +
			(2 + 4 \beta^2)M^{(1)}_{12}(M^{(1)}_{13}+M^{(1)}_{14})+ 12\beta^2 (M^{(1)}_{13}+M^{(1)}_{14})^3  \bigg)\nonumber\\
			&\quad+\frac{8\beta \ii}{3} \sin(2\vartheta(s))+\frac{\partial\left(\beta \ii \cos(2\vartheta(s)\right)-\beta^2)}{2\partial \beta} \bigg]E_{32}+\Boh(s^{-\frac{1}{4}}).
		\end{align}
In the above formulas, the matrices $M^{(k)}$, $k=1,2$, are given in \eqref{eq:asy:Mhat}.

Inserting \eqref{eq:lastcal}--\eqref{eq:middcal} into \eqref{8.18}, it is readily seen that
		\begin{align}
			&\what E_0(0)^{-1} \what R(0)^{-1} \frac{\partial\left( \what R(0) \what E_0(0)\right)}{\partial \beta} \nonumber\\
			&=(cs^{\frac{1}{4}}+d)E_{12}+eE_{14}+fE_{31}+(gs^{\frac{1}{2}}+hs^\frac{1}{4}+j)E_{32}
   \nonumber \\ 
   & \quad +(cs^\frac{1}{4}+k)E_{34}-fE_{42}+\Boh(s^{-\frac{1}{4}}),
		\end{align}
where the functions $c, d, e, f, g, h, j, k$ are  independent of $s$ and can be given explicitly in terms of  $\beta$ and the entries of $M^{(k)}$, $k=1,2$. For instance, we have
  \begin{align}
  c&=-4\beta^2(M^{(1)}_{13}+M^{(1)}_{14}),\\
  d&=-\frac{4\beta}{9}\bigg[2(-5+2\beta^2)(M^{(1)}_{11}+M^{(1)}_{12})+30\beta^2(M^{(1)}_{13}+M^{(1)}_{14})^2 \nonumber\\
  &\quad+2(5+4\beta^2)(M^{(1)}_{13}+M^{(1)}_{14})(-M^{(1)}_{13}+M^{(1)}_{14})+2(5+8\beta^2)(M^{(1)}_{33}+M^{(1)}_{34})\bigg], \label{def:d}
  \\
  k&=\frac{8\beta}{45}\bigg[-3(7+20\beta^2+8\beta^4)(M^{(1)}_{11}-M^{(1)}_{12})-30\beta^2\left(3(M^{(1)}_{13}+M^{(1)}_{14})^2+(M^{(1)}_{33}+M^{(1)}_{34})\right)\nonumber\\
  &\quad+5(5-2\beta^2)(M^{(1)}_{34}-M^{(1)}_{33})+5(5+4\beta^2)((M^{(1)}_{13})^2-(M^{(1)}_{14})^2)\bigg]. \label{def:k}
   \end{align}
		
  
As a consequence, it follows from  the definitions of $p_k(s)$ and $q_k(s)$, $k=5,\ldots,12$, in \eqref{def:p5}, \eqref{def:q5} and direct calculations that
		\begin{multline}\label{8.25}
			\sum_{k=5}^{12} p_k(s) \frac{\partial q_k(s)}{\partial \beta} 
			=\frac{1-2\nu}{2}\bigg[\left(\frac{2}{3}s^\frac{3}{4}+gs^\frac{1}{2}+hs^\frac{1}{4}+j\right)\msf{u}+\left(\frac{4\beta}{3}s^\frac{1}{2}+cs^\frac{1}{4}+d\right)\msf{x}  \\
			 +\left(\frac{4\beta}{3}s^\frac{1}{2}+cs^\frac{1}{4}+k\right)\msf{y}-\left(\left(\frac{8\beta^2}{3}-2\right)s^\frac{1}{4}+e\right)\msf{z}+\frac{16\beta^3-40\beta}{9}\bigg]+\Boh(s^{-\frac{1}{4}}),
		\end{multline}
where
  \begin{align}
			\msf{u}&=\frac{ m_{13} m_{22} - m_{12} m_{23}}{ m_{13} m_{32} -  m_{12} m_{33}}, \qquad \msf{x}=\frac{ m_{23} m_{32} - m_{22} m_{33}}{ m_{13} m_{32} -  m_{12} m_{33}},\nonumber\\
			\msf{y}&=\frac{ m_{13} m_{42} - m_{12} m_{43}}{ m_{13} m_{32} -  m_{12} m_{33}}, \qquad \msf{z}=\frac{ m_{33} m_{42} - m_{32} m_{43}}{ m_{13} m_{32} -  m_{12} m_{33}},
		\end{align}
with $m_{ij}$ being the entries of $M_0(0)$ given in  \eqref{defi:M00} and independent of $\beta$.
  
In Section \ref{sec:proof}, we will adopt an alternative approach to determine the asymptotics of $H(s)$. Consequently, by substituting \eqref{asy:H}, \eqref{eq:p1q1infinite} and \eqref{8.25} into \eqref{eq:asy:H3}, and  comparing the coefficients of various orders of $s$, it follows that
		\begin{align}
			&\frac{1-2\nu}{2} \msf{u}= \frac{\ii}{2}, \\
			& g \msf{u}+\frac{4\beta}{3}(\msf{x+y})=0, \\
			& \frac{1-2\nu}{2}\bigg[h\msf{u}+c(\msf{x+y})-\left(\frac{8\beta^2}{3}-2\right)\msf{z}\bigg] = -3\ii \tilde{s}. \label{eq:s1}
		\end{align}
With the aid of explicit formulas for $c,g,h$, we can solve the above equations by analyzing the powers of $\beta$ and obtain
		\begin{align}
			& \msf{u}=\frac{\ii}{1-2\nu}, \label{eq:uexplicit} \\
			&\msf{x+y}=\frac{2\ii}{1-2\nu}(M^{(1)}_{13}+M^{(1)}_{14}), \\
			&\msf{z}=\frac{\ii}{1-2\nu}(-M^{(1)}_{12} -(M^{(1)}_{14})^2+(M^{(1)}_{13})^2+2M^{(1)}_{33}+M^{(1)}_{34}  ) ,\label{def:z}
		\end{align} 
 where we have made use of \cite[Equation (149)]{Del13} in the derivation of \eqref{def:z}. We actually even get an unexpected relation, namely, 
 \begin{equation}\label{eq:M11M12}
 M^{(1)}_{11}-M^{(1)}_{12}=0,
 \end{equation}
 since the coefficient of $\beta^4$ in \eqref{eq:s1} must vanish.
		
 A combination of \eqref{8.25} and 
 \eqref{eq:uexplicit}--\eqref{def:z} yields that 
		\begin{multline}\label{eq:p5q5infinite}
			\sum_{k=5}^{12} p_k(s) \frac{\partial q_k(s)}{\partial \beta} = \frac{\ii}{3}s^{3/4}-3\ii \tilde{s} s^{1/4}+C_{\nu}(\tilde{s},\tau)+\frac{4\beta \ii}{3} \sin(2\vartheta(s)) \\ 
            +\frac{\partial(\beta \ii \cos(2\vartheta(s))-\beta^2)}{4\partial \beta}+\Boh\left(s^{-\frac{1}{4}}\right),
		\end{multline}
		where 
		\begin{align}\label{def:C}
			C_{\nu}(\tilde{s},\tau)&=\frac{4}{9} \beta \ii \bigg(10 (M^{(1)}_{31}+M^{(1)}_{32}) + (14+6\beta^2)(M^{(1)}_{13}+M^{(1)}_{14})(M^{(1)}_{33}-M^{(1)}_{34}) \nonumber \\
			&\quad+ (5+4\beta^2) (M^{(1)}_{13}-M^{(1)}_{14}) (M^{(1)}_{33}+M^{(1)}_{34})+ 12\beta^2 (M^{(1)}_{13}+M^{(1)}_{14})^3   + 8\beta^2 (M^{(1)}_{31}-M^{(1)}_{32}) \nonumber \\
			&\quad+ 10\beta^2 (M^{(1)}_{13}+M^{(1)}_{14}) (M^{(1)}_{33}-M^{(1)}_{34})  + 4\beta^2 (M^{(1)}_{13}-M^{(1)}_{14})(M^{(1)}_{33}+M^{(1)}_{34}) \nonumber \\
			& \quad-(5 + 4 \beta^2)  (M^{(2)}_{13}+M^{(2)}_{14}) +
			(2 + 4 \beta^2)M^{(1)}_{12}(M^{(1)}_{13}+M^{(1)}_{14}) \bigg)\nonumber\\
			&\quad+\frac{1-2\nu}{2}(d\msf{x}+k\msf{y})-\frac{16\beta^3\ii}{3}M^{(1)}_{14}
			\left(-M^{(1)}_{12} -(M^{(1)}_{14})^2+(M^{(1)}_{13})^2+2M^{(1)}_{33}+M^{(1)}_{34}\right).
		\end{align}
The case when $\nu=1/2$ can be handled in a similar manner, and we omit the details here. 

		\subsubsection*{Asymptotics of $\sum p_k(s) \frac{\partial q_k(s)}{\partial \beta}$ as $s \to 0^+$}
		The small $s$  asymptotics of $\sum p_k(s) \frac{\partial q_k(s)}{\partial \beta}$ follows from the asymptotic analysis performed in Section \ref{sec:AsyX0}. 		
	If $|z| < \delta$, we obtain from \eqref{eq:widecheckP0}--\eqref{def:widecheckR} that for $z \in \Omega_1^{(s)}$,
		\begin{align}\label{eq:Xs0}
			X(z)=\widecheck R(z) \widecheck P^{(0)}(z)
			=\widecheck R(z) M_0(z)\begin{pmatrix}
				z^{-\frac{2\nu-1}{4}} & -\delta_{\nu}(z) & e^{-\nu\pi \mathrm{i}}\delta_{\nu}(z)+z^{-\frac{2\nu-1}{4}}\eta \left(\frac{z}{s}\right)  & 0 \\
				0 & z^{\frac{2\nu-1}{4}} & 0 & 0 \\
				0 & 0 & z^{\frac{2\nu-1}{4}} & 0 \\
				0 & 0 & \delta_{\nu}(z) & z^{-\frac{2\nu-1}{4}}
			\end{pmatrix}.
		\end{align}
		This, together with  \eqref{eq:X-near-s}, \eqref{eq: X-expand-s} and \eqref{eq:widecheckRexpansion}, implies that
		\begin{align} \label{def:R00}
			X_{R,0}(s)&=\widecheck R(s) M_0(s)\lim_{z \to s}  \begin{pmatrix}
				z^{\frac{2\nu-1}{4}} & \delta_{\nu}(z) & e^{-\nu \pi \mathrm{i}}\delta_{\nu}(z)+z^{-\frac{2\nu-1}{4}}\eta \left(\frac{z}{s}\right)  & 0 \\
				0 & z^{-\frac{2\nu-1}{4}} & 0 & 0 \\
				0 & 0 & z^{-\frac{2\nu-1}{4}} & 0 \\
				0 & 0 & -\delta_{\nu}(z) & z^{\frac{2\nu-1}{4}}
			\end{pmatrix} \nonumber\\
   &\quad \times \left(I+\frac{\gamma}{2 \pi \ii} \ln (z-s) E_{13}\right)\nonumber\\
			&=\left(I+\Boh(s^{\frac{2\nu+1}{2}})\right)M_0(s) \nonumber\\
			&\quad \times \lim_{z \to s} \begin{pmatrix}
				z^{\frac{2\nu-1}{4}} & \delta_{\nu}(z) & e^{-\nu\pi \mathrm{i}}\delta_{\nu}(z)+z^{-\frac{2\nu-1}{4}}\eta \left(\frac{z}{s}\right)+ \frac{\gamma}{2 \pi \ii} \ln (z-s) z^{\frac{2\nu-1}{4}} & 0 \\
				0 & z^{-\frac{2\nu-1}{4}} & 0 & 0 \\
				0 & 0 & z^{-\frac{2\nu-1}{4}} & 0 \\
				0 & 0 & -\delta_{\nu}(z) & z^{\frac{2\nu-1}{4}}
			\end{pmatrix}.
		\end{align}
		As a consequence, it follows from  the definitions of $p_k(s)$ and $q_k(s)$, $k=1,\ldots,4$, in \eqref{def:q1p1} and \eqref{eq:estwidecheckR1} that 
		\begin{align}\label{eq:p1q1zero}
			\sum_{k=1}^{4} p_k(s) \frac{\partial q_k(s)}{\partial \beta} &=-\frac{\gamma}{2 \pi \ii} \begin{pmatrix}
				0 & 0 & s^{\frac{2\nu-1}{4}} & 0 \end{pmatrix}  M_0(s)^{-1}\left(I-\widecheck R_1(s) s^{\frac{2\nu+1}{2}}+\Boh(s^{\frac{2\nu+3}{2}})\right)\nonumber\\
			&\quad \times  \left(\frac{\partial \widecheck R_1(s) }{\partial \beta}s^{\frac{2\nu+1}{2}}+ \Boh(s^{\frac{2\nu+3}{2}})\right) M_0(s) \begin{pmatrix}
				s^{\frac{2\nu-1}{4}} \\ 0 \\ 0 \\0
			\end{pmatrix}  \nonumber\\
			&= (1-\gamma) \begin{pmatrix}
				0 & 0 & s^{\frac{2\nu-1}{4}} & 0 \end{pmatrix} \left(M_0(0)^{-1}+\Boh(s)\right)\left(\widecheck R_1(s)-\widecheck R_1(s)^2+\Boh(s)\right)\nonumber\\
			&\quad \times 
			\left(M_0(0)+\Boh(s)\right) \begin{pmatrix}
				s^{\frac{2\nu-1}{4}} \\ 0 \\ 0 \\0
			\end{pmatrix}
			= \Boh(s^{\frac{2\nu+1}{2}}), \qquad s \to 0^+.
		\end{align}
We next consider the asymptotics of $p_k$ and $q_k$ for $k= 5, \ldots, 12$. Recall \eqref{eq:X-near-0}, \eqref{eq:X-expand-0}, \eqref{eq:widecheckRexpansion} and \eqref{eq:Xs0},  we have
		\begin{align}\label{def:L00}
			X_{L,0}(s)&=\widecheck R(0) M_0(0)\lim_{z \to 0}  \begin{pmatrix}
				z^{\frac{2\nu-1}{4}} & \delta_{\nu}(z) & e^{-\nu\pi \mathrm{i}}\delta_{\nu}(z)+z^{-\frac{2\nu-1}{4}}\eta \left(\frac{z}{s}\right)  & 0 \\
				0 & z^{-\frac{2\nu-1}{4}} & 0 & 0 \\
				0 & 0 & z^{-\frac{2\nu-1}{4}} & 0 \\
				0 & 0 & -\delta_{\nu}(z) & z^{\frac{2\nu-1}{4}}
			\end{pmatrix} \nonumber \\
			&\quad\times \begin{pmatrix}
				z^{-\frac{2\nu-1}{4}} & -\delta_{\nu}(z) & \left(\gamma-1\right)e^{-\nu\pi \mathrm{i}}\delta_{\nu}(z) & 0 \\
				0 & z^{\frac{2\nu-1}{4}} & 0 & 0 \\
				0 & 0 & z^{\frac{2\nu-1}{4}} & 0 \\
				0 & 0 & \delta_{\nu}(z) & z^{-\frac{2\nu-1}{4}}
			\end{pmatrix} \nonumber \\
			&= \widecheck R(0) M_0(0)\lim_{z \to 0}\begin{pmatrix}
				1 & 0& \gamma e^{-\nu\pi \mathrm{i}}\delta_{\nu}(z) z^{\frac{2\nu-1}{4}}+\eta \left(\frac{z}{s}\right) & 0 \\
				0 & 1 & 0 & 0 \\
				0 & 0 & 1 & 0 \\
				0 & 0 & 0 & 1
			\end{pmatrix}.
		\end{align}
		
		As a consequence, it follows from \eqref{eq:widecheckRexpansion} and the definitions of $p_k(s)$ and $q_k(s)$, $k=5,\ldots,12$, in \eqref{def:p5} and 
         \eqref{def:q5} that as $s \to 0^+$, if $\nu\neq \frac{1}{2}$,
		\begin{align}\label{eq:p5q5zero}
			\sum_{k=5}^{8} p_k(s) \frac{\partial q_k(s)}{\partial \beta} &= \frac{1-2\nu}{2}\begin{pmatrix}
				0 & 1 & 0 & 0 \end{pmatrix} M_0(0)^{-1}\left(I+\Boh(s^{\frac{2\nu+1}{2}})\right) \frac{\partial \left(I+\Boh(s^{\frac{2\nu+1}{2}})\right)M_0(0)}{\partial \beta} \begin{pmatrix}
				0 \\ 1 \\ 0 \\0
			\end{pmatrix}  \nonumber\\
			&= \frac{1-2\nu}{2}\begin{pmatrix}
				0 & 1 & 0 & 0 \end{pmatrix} M_0(0)^{-1}\Boh(s^{\frac{2\nu+1}{2}}) M_0(0) \begin{pmatrix}
				0 \\ 1 \\ 0 \\0
			\end{pmatrix}\
			= \Boh(s^{\frac{2\nu+1}{2}}),
		\end{align}
		and
		\begin{align}\label{eq:p9q9zero}
			\sum_{k=9}^{12} p_k(s) \frac{\partial q_k(s)}{\partial \beta} &= \begin{pmatrix}
				0 & 0 & \frac{1-2\nu}{2} & 0 \end{pmatrix} M_0(0)^{-1}\left(I+\Boh(s^{\frac{2\nu+1}{2}})\right) \frac{\partial \left(I+\Boh(s^{\frac{2\nu+1}{2}})\right)M_0(0)}{\partial \beta} \nonumber\\
			&\quad \times \begin{pmatrix}
				\gamma e^{-\nu \pi \mathrm{i}}\delta_{\nu}(z) z^{\frac{2\nu-1}{4}}+\eta \left(\frac{z}{s}\right)\\ 0 \\ 1 \\0
			\end{pmatrix}  \nonumber\\
			&=\frac{1-2\nu}{2}\begin{pmatrix}
				0 & 0 & 1 & 0 \end{pmatrix} M_0(0)^{-1}\Boh(s^{\frac{2\nu+1}{2}}) M_0(0) \begin{pmatrix}
				\gamma e^{-\nu\pi \mathrm{i}}\delta_{\nu}(z) z^{\frac{2\nu-1}{4}}+\eta \left(\frac{z}{s}\right) \\ 0 \\ 1 \\0
			\end{pmatrix}\nonumber\\
			&= \Boh(s^{\frac{2\nu+1}{2}});
		\end{align}
		if $\nu =\frac{1}{2}$,
		\begin{align}\label{eq:p5q5zero'}
			\sum_{k=5}^{8} p_k(s) \frac{\partial q_k(s)}{\partial \beta} &= -\frac{1}{2\pi}\begin{pmatrix}
				0 & 1 & 0 & 0 \end{pmatrix} M_0(0)^{-1}\left(I+\Boh(s^{\frac{2\nu+1}{2}})\right) \frac{\partial \left(I+\Boh(s^{\frac{2\nu+1}{2}})\right)M_0(0)}{\partial \beta} \begin{pmatrix}
				1 \\ 0 \\ 0 \\0
			\end{pmatrix}  \nonumber\\
			&= -\frac{1}{2\pi}\begin{pmatrix}
				0 & 1 & 0 & 0 \end{pmatrix} M_0(0)^{-1}\Boh(s^{\frac{2\nu+1}{2}}) M_0(0) \begin{pmatrix}
				1 \\ 0 \\ 0 \\0
			\end{pmatrix}
			= \Boh(s^{\frac{2\nu+1}{2}}),
		\end{align}
		and
		\begin{align}\label{eq:p9q9zero'}
			\sum_{k=9}^{12} p_k(s) \frac{\partial q_k(s)}{\partial \beta} &=-\frac{1}{2\pi} \begin{pmatrix}
				0 & 0 & 1 & 0 \end{pmatrix} M_0(0)^{-1}\left(I+\Boh(s^{\frac{2\nu+1}{2}})\right) \frac{\partial \left(I+\Boh(s^{\frac{2\nu+1}{2}})\right)M_0(0)}{\partial \beta} \begin{pmatrix}
				\ii (\gamma -1)\\ 0 \\ 0 \\-1
			\end{pmatrix}  \nonumber\\
			&= -\frac{1}{2\pi}\begin{pmatrix}
				0 & 0 & 1 & 0 \end{pmatrix} M_0(0)^{-1}\Boh(s^{\frac{2\nu+1}{2}}) M_0(0) \begin{pmatrix}
				\ii (\gamma -1)\\ 0 \\ 0 \\-1
			\end{pmatrix}
			 =\Boh(s^{\frac{2\nu+1}{2}}).
		\end{align}
		This completes the proof of Proposition \ref{th:pq}.
	\end{proof}
\begin{remark}
From the proof of Proposition \ref{th:pq}, it is also straightforward to derive large and small $s$ asymptotics of the functions $p_k(s)$ and $q_k(s)$, $k=1,\ldots,12$. We will not go into the details here. 
\end{remark}

	\section{Proofs of main results}\label{sec:proof}
	\subsection{Proof of Theorem \ref{th:F-H}}
 On account of \eqref{eq:derivativeinsX-2} and \eqref{eq:JMU}, it is easily seen that
\begin{equation}\label{9.1}
\frac{\ud}{\ud s} F(s;\gamma)=H(s),
\end{equation}
which leads to the integral representation of $F$ claimed in \eqref{eq:F-H2} after integrating on both sides of the  \eqref{9.1}. 

We next establish asymptotics of the Hamiltonian 
$H$ for large and small $s$.
	\subsubsection*{Asymptotics of $H(s)$ as $s \to +\infty$ for $\gamma=1$}
 Combining \eqref{eq:derivativeinsX-2} and \eqref{9.1} results in
	\begin{align}
		H(s)=-\frac{1}{2 \pi \ii}\lim_{z \to s} \left(X(z)^{-1}X'(z)\right)_{31}, \quad z \in \Omega_1^{(s)}.
	\end{align}
	By inverting the transformations $X \to T \to S$ in \eqref{def:XtoT} and \eqref{def:TtoS}, it follows from the above formula that
	\begin{align}\label{ds-F-1}
		H(s) &=-\frac{1}{2 \pi \ii s} \lim_{z \to 1}\left(T(z)^{-1}T'(z)\right)_{31}\nonumber\\
		&= -\frac{1}{2 \pi \ii s} \lim_{z \to 1} \left(\diag \left(e^{s^{\frac 34}g_1(z)-\tau \sqrt{sz}}, e^{s^{\frac 34}g_2(z)+ \tau s \sqrt{sz}}, e^{-s^{\frac 34}g_1(z)-\tau \sqrt{sz}}, e^{-s^{\frac 34}g_2(z) + \tau \sqrt{sz}} \right)\right.\nonumber\\
		&\quad\left. \times S(z)^{-1}S'(z)\diag \left(e^{-s^{\frac 34}g_1(z)+\tau \sqrt{sz}}, e^{-s^{\frac 34}g_2(z)- \tau\sqrt{sz}}, e^{s^{\frac 34}g_1(z)+\tau \sqrt{sz}}, e^{s^{\frac 34}g_2(z) -\tau\sqrt{sz}} \right)\right)_{31}\nonumber\\
  &=-\frac{1}{2 \pi \ii s} \lim_{z \to 1}\left(S(z)^{-1}S'(z)\right)_{31},
	\end{align}
	where the limits are taken from $\Omega_1^{(1)}$.
	
To evaluate the above limit, we note from \eqref{def:R} that if $z$ is close to $1$,
	\begin{align}\label{eq:SinvSder}
		& S(z)^{-1}S'(z)\nonumber\\
		&= P^{(1)}(z)^{-1} R(z)^{-1} R'(z) P^{(1)}(z) + P^{(1)}(z)^{-1} (P^{(1)})'(z) \nonumber\\
		&=\mathscr{A}_{1}(z)^{-1}\mathscr{B}_{1}(s^{\frac{3}{2}} f_1(z))^{-1}E_1(z)^{-1} R(z)^{-1} R'(z) E_1(z)\mathscr{B}_{1}(s^{\frac{3}{2}} f_1(z))\mathscr{A}_{1}(z)\nonumber\\
		&\quad +\mathscr{A}_{1}(z)^{-1}\mathscr{B}_{1}(s^{\frac{3}{2}} f_1(z))^{-1}E_1(z)^{-1} E_1'(z)\mathscr{B}_{1}(s^{\frac{3}{2}} f_1(z)\mathscr{A}_{1}(z) \nonumber\\
		&\quad+s^{\frac{3}{2}} f_1'(z)\mathscr{A}_{1}(z)^{-1}\mathscr{B}_{1}(s^{\frac{3}{2}} f_1(z))^{-1}\mathscr{B}_{1}'(s^{\frac{3}{2}}f_1(z))\mathscr{A}_{1}(z) + \mathscr{A}_{1}(z)^{-1}\mathscr{A}_{1}'(z),
	\end{align}
	where $E_1$ is defined in \eqref{def:E1},
	\begin{equation}\label{def:B1}
		\mathscr{A}_{1}(z) = \begin{pmatrix}
			e^{s^{\frac34}g_1(z)} & 0 & 0 & 0\\
			0 &1 & 0 & 0\\
			0 & 0 & e^{-s^{\frac34}g_1(z)} &0\\
			0 & 0 & 0 & 1
		\end{pmatrix}
	~~ 
	\textrm{and}
        ~~
		\mathscr{B}_{1}(z)=
		\begin{pmatrix}
			\Phi^{(\Bes)}_{11}(z) & 0 & -\Phi^{(\Bes)}_{12}(z) & 0\\
			0 & 1 & 0 & 0\\
			-\Phi^{(\Bes)}_{21}(z) & 0 & \Phi^{(\Bes)}_{22}(z)\\
			0 & 0 & 0 & 1
		\end{pmatrix}.
	\end{equation}
 Recall the following properties of the modified Bessel functions $I_0$ and $K_0$ (cf. \cite[Chapter 10]{DLMF}):
	\begin{align}
		I_0(z) & = \sum_{k=0}^{\infty} \frac{(z/2)^{2k}}{(k!)^2},\label{eq:I0}\\
		K_0(z)&=-\left(\ln \left(\frac{z}{2}\right) + \gamma_E\right)I_0(z) + \Boh(z^2), \qquad z \to 0,\label{eq:K0}
	\end{align}
	where $\gamma_E$ is the Euler's constant, we see from \eqref{def:B1} and \eqref{def:Bespara} that
	\begin{equation}\label{eq:evaBinBder}
		\lim_{z \to 0} \left(\mathscr{B}_{1}(z)^{-1} \mathscr{B}_{1}'(z)\right)_{31} = -\frac{\pi \ii}{2}.
	\end{equation}
 By using the local behavior of $E_1(z)$ in \eqref{def:localE1} and the explicit expression of $R_1'(1)$ in \eqref{eq:R1'1}, we have
	\begin{align}
	& \lim_{z \to 1} \left(E_1(z)^{-1} R(z)^{-1} R'(z) E_1(z)\right)_{31} 
 \nonumber  \\
  & = \lim_{z \to 1} \left(E_1(z)^{-1} \left(\frac{R_1'(z)}{s^{3/4}} + \Boh(s^{-3/2})\right) E_1(z) \right)_{31}\nonumber\\
		&= -\frac{\left(4\alpha^2-1\right) \pi \ii \left(\frac{1}{2} +\frac{ \tilde{s}}{\sqrt{s}}\right) }{8 \left(\frac{1}{2} +\frac{ \tilde{s}}{\sqrt{s}}+\tau s^{-1/4}\right)}+\frac{3\pi\ii  \left(\frac{1}{2} +\frac{ \tilde{s}}{\sqrt{s}}\right) }{32\left(1-\frac{ \tilde{s}}{2\sqrt{s}}\right)}+\Boh(s^{-\frac{3}{4}})
	\end{align}
	and
	\begin{equation}
		\lim_{z \to 1}\left( E_1(z)^{-1} E_1'(z)\right)_{31} =0.
	\end{equation}
To proceed, note that for an arbitrary $4 \times 4$ matrix $\msf M$,  we have, as $s \to +\infty$,
	\begin{align}
		\lim_{z \to 1} \left(\mathscr{A}_{1}(z)^{-1} \msf M \mathscr{A}_{1}(z)\right)_{31} &= \msf M_{31},
	\end{align}
	and
	\begin{equation}\label{eq:BinvMB}
		\lim_{z \to 0} \left(\mathscr{B}_{1}(z)^{-1} \msf M \mathscr{B}_{1}(z)\right)_{31}  = \msf M_{31}.
	\end{equation}

A combination of \eqref{eq:SinvSder} and \eqref{eq:evaBinBder}--\eqref{eq:BinvMB} yields
	\begin{align}\label{eq:SS'11}
		\lim_{z \to 1} \left(S(z)^{-1}S'(z)\right)_{31} &= \frac{\pi \ii}{4}\left(1 - \frac{2\tilde{s}}{\sqrt{s}}\right)^2 s^{\frac{3}{2}} - \frac{(4\alpha^2-1)\pi \ii}{8}+\frac{3\ii \pi}{64}+\Boh(s^{-\frac{1}{4}}).
	\end{align}
This, together with \eqref{ds-F-1}, gives us the first formula in \eqref{asy:H}. 
	
	\subsubsection*{Asymptotics of $H(s)$ as $s \to +\infty$ for $0\leq \gamma<1$}
	If $\gamma \in [0, 1)$, we turn to use the relation \eqref{eq:JMU}, which can be written as
	\begin{equation}\label{eq:HXRL}
		H(s) = -\frac{\gamma}{2 \pi \ii} \begin{pmatrix}
			0 & 0 & 1 & 0
		\end{pmatrix}  X_{R,1}(s) \begin{pmatrix}
			1\\0\\0\\0
		\end{pmatrix},
	\end{equation}
	where $X_{R,1}(s)$ is given in \eqref{eq: X-expand-s} .
	
	From \eqref{eq:X-near-s} and \eqref{eq: X-expand-s}, it follows that
	\begin{align}
		X_{R,1}(s) = \frac{X_{R,0}(s)^{-1}}{s} \lim_{z \to 1 ~\textrm{from}~ \Omega_1^{(1)}} \left[X(sz) \begin{pmatrix}
			1 & 0 & \frac{\gamma}{2 \pi \ii} \ln{(sz-s)} & 0\\
			0 & 1 & 0 & 0\\
			0 & 0 & 1 & 0\\
			0 & 0 & 0 & 1
		\end{pmatrix}\right]'.
	\end{align}
	Note that (see \eqref{eq:XR0})
	\begin{align}
			\begin{pmatrix}
				0 & 0 & 1 & 0
			\end{pmatrix} X_{R,0}(s)^{-1} &= \begin{pmatrix}
				0 & 0 & e^{-\tau s} & 0
			\end{pmatrix} \what{\Upsilon}_0^{-1}\what E_1(1)^{-1} \what R(1)^{-1} \diag (s^{\frac 18}, s^{\frac 18}, s^{-\frac 18}, s^{-\frac 18})\nonumber \\
			& \quad \times \operatorname{diag}\left(\begin{pmatrix}
				1 & -1 \\ 1 & 1 \end{pmatrix} ,\begin{pmatrix} 1 & 1 \\ -1 & 1 \end{pmatrix}\right)\operatorname{diag}\left(s^{-\frac{1}{4}}, s^{\frac{1}{4}},s^{-\frac{1}{4}}, s^{\frac{1}{4}}\right) ,
	\end{align}		
	and by tracing back the transformations $X \to \what{T} \to \what S \to \what R$ in \eqref{def:XToTgamma}, \eqref{def:hatS} and \eqref{def:hatR}, we have
	\begin{align}
		&\lim_{z \to 1 ~\textrm{from}~ \Omega_1^{(1)}} \left[X(sz) \begin{pmatrix}
			1 & 0 & \frac{\gamma}{2 \pi \ii} \ln{(z-s)} & 0\\
			0 & 1 & 0  & 0\\
			0 & 0 & 1 & 0\\
			0 & 0 & 0 & 1
		\end{pmatrix}\right]'\begin{pmatrix}
			1\\0\\0\\0
		\end{pmatrix} \nonumber\\
		&= \frac{1}{2}\diag (s^{\frac 14}, s^{-\frac 14}, s^{\frac 14}, s^{-\frac 14})\operatorname{diag}\left(\begin{pmatrix}
			1 & -1 \\ 1 & 1 \end{pmatrix} ,\begin{pmatrix} 1 & 1 \\ -1 & 1 \end{pmatrix}\right)\diag (s^{-\frac 18}, s^{-\frac 18}, s^{\frac 18}, s^{\frac 18})\nonumber\\
		&\quad  \times \lim_{z \to 1 ~\textrm{from}~ \Omega_1^{(1)}} \left[ \what R(z) \what P^{(1)} (z) \diag(e^{- \theta_1(\sqrt{sz}) + \tau \sqrt{sz}},e^{- \theta_2(\sqrt{sz}) - \tau \sqrt{sz}},e^{ \theta_1(\sqrt{sz}) + \tau \sqrt{sz}},e^{\theta_2(\sqrt{sz}) - \tau \sqrt{sz}}) \right]'\nonumber\\
       & \quad \times
       \begin{pmatrix}
			1\\0\\0\\0
       \end{pmatrix}
       \nonumber \\
		&=\frac{1}{2}\diag (s^{\frac 14}, s^{-\frac 14}, s^{\frac 14}, s^{-\frac 14})\operatorname{diag}\left(\begin{pmatrix}
			1 & -1 \\ 1 & 1 \end{pmatrix} ,\begin{pmatrix} 1 & 1 \\ -1 & 1 \end{pmatrix}\right) \diag (s^{-\frac 18}, s^{-\frac 18}, s^{\frac 18}, s^{\frac 18})\nonumber\\
		& \quad \times \left(\what R'(1) \what E_1 (1)\what{\Upsilon}_0 e^{\tau \sqrt{s}} + \what R(1) \what E_1' (1)\what{\Upsilon}_0 e^{\tau \sqrt{s}}\right. \left. +s^{\frac 34} \what f_1'(1) \what R(1) \what E_1(1)\what{\Upsilon}_0 \what{\Upsilon}_1 e^{\tau \sqrt{s}}\right.\nonumber \\
  &\left.\qquad + \frac{\tau \sqrt{s} }{2}\what R(1) \what E_1(1)\what{\Upsilon}_0e^{\tau \sqrt{s}}\right)\begin{pmatrix}
			1\\0\\0\\0
		\end{pmatrix},
	\end{align}
where $\what{\Upsilon}_0$ is defined in \eqref{def:whatUpsilon} and $\what{\Upsilon}_1$ is a $4\times 4$ matrix related to $\Upsilon_1$ in \eqref{eq:H-expand-2} with $(\what{\Upsilon}_1)_{31}=\frac{ \beta \pi \ii e^{-\beta \pi \ii} }{\sin(\beta \pi )}$. It then follows from the above three equations that
	\begin{align}\label{HR1}
		-\frac{\gamma}{2 \pi \ii} \begin{pmatrix}
			0 & 0 & 1 & 0
		\end{pmatrix} X_{R,1}(s) \begin{pmatrix}
			1\\0\\0\\0
		\end{pmatrix}&=-\frac{\gamma}{2 \pi \ii s} \left(\what{\Upsilon}_0^{-1}\what E_1(1)^{-1} \what R(1)^{-1}\what R'(1) \what E_1 (1)\what{\Upsilon}_0\right.\nonumber\\
		&\quad\left.+\what{\Upsilon}_0^{-1}\what E_1(1)^{-1}\what E_1' (1)\what{\Upsilon}_0+s^{\frac 34} \what f_1'(1) \what{\Upsilon}_1 +\frac{\tau \sqrt{s}}{2}  I \right)_{31}.
	\end{align}
	To calculate the first term in the bracket, we substitute \eqref{eq:hatE11}, \eqref{def:whatUpsilon} and \eqref{eq:hatR-1} into the above equation and obtain
	\begin{align}\label{term1}
		\left(\what{\Upsilon}_0^{-1}\what E_1(1)^{-1} \what R(1)^{-1}\what R'(1) \what E_1 (1)\what{\Upsilon}_0\right)_{31}=\Boh(s^{-\frac 14}).
	\end{align}
	
	From the explicit expression of $\what{\Upsilon}_0$ in \eqref{def:whatUpsilon} and $\what E_1(1)^{-1}\what E_1' (1)$ in \eqref{eq:hatE1'1}, one has
	\begin{align}\label{eq:hatUpsinv}
		\left(\what{\Upsilon}_0^{-1}\what E_1(1)^{-1}\what E_1' (1)\what{\Upsilon}_0\right)_{31}&=-\frac{7 \beta e^{-\beta \pi \ii}}{8} \Gamma (1+\beta) \Gamma (1-\beta)  \nonumber \\
		&\quad-\frac{\ii}{8} \left(\frac{\Gamma(1+\beta)^2}{\what {\textsf{a}}^2} + \what {\textsf{a}}^2 \Gamma(1-\beta)^2 e^{-2 \beta \pi \ii} \right)
		+ \Boh(s^{-\frac{1}{2}}),
	\end{align}
	where $\what {\textsf{a}}$ is given in \eqref{def:whata}. Since $\beta$ is purely imaginary, we have
	\begin{align}
		\Gamma (1+\beta) \Gamma (1-\beta) = |\Gamma (1+\beta)|^2=\frac{\beta \pi}{\sin (\beta \pi)}.
	\end{align}
This, together with \eqref{eq:hatUpsinv}, implies that
	\begin{align}\label{term2}
		\left(\what{\Upsilon}_0^{-1}\what E_1(1)^{-1}\what E_1' (1)\what{\Upsilon}_0\right)_{31}=-\frac{7 \beta^2 \pi e^{-\beta \pi \ii}}{8 \sin (\beta \pi)} -\frac{\beta \pi \ii e^{-\beta \pi \ii}}{4\sin (\beta \pi)} \cos \left(2 \vartheta (s)\right)+ \Boh(s^{-1}),
	\end{align}
	where $\vartheta (s)$ is given in \eqref{def:theta}. Also note from \eqref{eq:hatf1} that
	\begin{align}\label{term3}
		\left(s^{\frac 34} \what f_1'(1) \what{\Upsilon}_1\right)_{31} = \frac{ \beta \pi \ii e^{-\beta \pi \ii}}{\sin (\beta \pi)}(s^\frac{3}{4}-\tilde{s}s^\frac{1}{4}),
	\end{align}
	and
	\begin{align}\label{term4}
		\left(\tau s I \right)_{31} = 0.
	\end{align}
Substituting \eqref{def:beta}, \eqref{term1} and \eqref{term2}--\eqref{term4} into \eqref{HR1}, it is readily seen from \eqref{eq:HXRL} that
	\begin{equation}\label{9.29}
		H(s) =\beta \ii  s^{-\frac{1}{4}}-\beta \ii \tilde{s} s^{-\frac{3}{4}}-\frac{7 \beta^2 }{8}s^{-1}-\frac{\beta \ii }{4} \cos  \left(2 \vartheta (s)\right)s^{-1}+\Boh({s^{-\frac{5}{4}}}), \quad s\to +\infty, 
	\end{equation}
as required.	
	
\subsubsection*{Asymptotics of $H(s)$ as $s \to 0^+$}
	From \eqref{eq:H}, it follows that 
	\begin{align}\label{eq:sH}
		sH(s) &= -\frac{\gamma}{2 \pi \ii} \begin{pmatrix}
			0 & 0 & 1 & 0
		\end{pmatrix}X_{R,0}(s)^{-1} \left[sA_0+A_2(s)\right] X_{R,0}(s) \begin{pmatrix}
			1\\0\\0\\0
		\end{pmatrix}.
	\end{align}
	From \eqref{def:A0} and \eqref{def:R00}, we have
	\begin{align}
		\begin{pmatrix}
			0 & 0 & 1 & 0
		\end{pmatrix}X_{R,0}(s)^{-1} sA_0 X_{R,0}(s) \begin{pmatrix}
			1\\0\\0\\0
		\end{pmatrix}		=\Boh(s^{\frac{2\nu+1}{2}}).
	\end{align}
	From \eqref{def:A2}, \eqref{def:R00} and \eqref{def:L00}, we have if $\nu \neq 1/2$,
	\begin{align}
		&\begin{pmatrix}
			0 & 0 & 1 & 0
		\end{pmatrix}X_{R,0}(s)^{-1} A_2(s) X_{R,0}(s) \begin{pmatrix}
			1\\0\\0\\0
		\end{pmatrix}	\nonumber\\
		&= \frac{1-2\nu}{2}
		\begin{pmatrix}
			0 & 0 & 1 & 0
		\end{pmatrix}X_{R,0}(s)^{-1}X_{L,0}(s)\begin{pmatrix}
			0 & 0 & 0 & 0\\
			0 & 1 & 0 & 0\\
			0 & 0 & 1 & 0\\
			0 & 0 & 0 & 0
		\end{pmatrix}  X_{L,0}^{-1}(s) X_{R,0}(s) \begin{pmatrix}
			1\\0\\0\\0
		\end{pmatrix} \nonumber \\
		&=\frac{1-2\nu}{2}
		\begin{pmatrix}
			0 & 0 & s^{\frac{2\nu-1}{4}} & 0
		\end{pmatrix}\left(I+\Boh(s)\right) \begin{pmatrix}
			0 & 0 & \Boh(1) & 0\\
			0 & 1 & 0 & 0\\
			0 & 0 & 1 & 0\\
			0 & 0 & 0 & 0
		\end{pmatrix}\left(I+\Boh(s)\right) \begin{pmatrix}
			s^{\frac{2\nu-1}{4}}\\0\\0\\0
		\end{pmatrix}\nonumber \\ 
		&=\Boh (s^{\frac{2\nu+1}{2}});
	\end{align}
	and if $\nu = 1/2 $,
	\begin{align}\label{eq:nuhalf}
		&\begin{pmatrix}
			0 & 0 & 1 & 0
		\end{pmatrix}X_{R,0}(s)^{-1} A_2(s) X_{R,0}(s) \begin{pmatrix}
			1\\0\\0\\0
		\end{pmatrix}		\nonumber\\
		&= -\frac{1}{2\pi}
		\begin{pmatrix}
			0 & 0 & 1 & 0
		\end{pmatrix}X_{R,0}(s)^{-1}X_{L,0}(s)\begin{pmatrix}
			0 & 1 & \ii(\gamma-1) & 0\\
			0 & 0 & 0 & 0\\
			0 & 0 & 0 & 0\\
			0 & 0 & -1 & 0
		\end{pmatrix} X_{L,0}^{-1}(s) X_{R,0}(s) \begin{pmatrix}
			1\\0\\0\\0
		\end{pmatrix} \nonumber \\
		&=-\frac{1}{2\pi}
		\begin{pmatrix}
			0 & 0 & 1 & 0
		\end{pmatrix}\left(I+\Boh(s)\right) \begin{pmatrix}
			0 & 1 & \ii(\gamma-1) & 0\\
			0 & 0 & 0 & 0\\
			0 & 0 & 0 & 0\\
			0 & 0 & -1 & 0
		\end{pmatrix}\left(I+\Boh(s)\right) \begin{pmatrix}
			1\\0\\0\\0
		\end{pmatrix}=\Boh(s).
	\end{align}
A combination of \eqref{eq:sH}--\eqref{eq:nuhalf} gives us $sH(s)=\Boh(s^{\frac{2\nu+1}{2}})$ as $s\to 0^+$, which is \eqref{eq:H-0}. 

This finishes the proof of Theorem \ref{th:F-H}.
\qed

	\subsection{Proof of Theorem \ref{th:1}}
 Large $s$ asymptotics of $F(s; 1)$ in \eqref{ds-F} follows directly from \eqref{asy:H} and the integral representation of $F$ in \eqref{eq:F-H2}. To derive the large $s$ asymptotics of $F(s; \gamma)$ with $\gamma \in [0, 1)$, we first show 
	\begin{equation}\label{tauindep}
		\frac{\partial}{\partial \tau} F(s;\gamma,\tilde{s},\nu, \tau) = \Boh(s^{-\frac 14}), \quad s\to +\infty.
	\end{equation}
	Tracing back the transformations $X \to \what{T} \to \what{S} \to \what{R}$ in \eqref{def:XToTgamma}, \eqref{def:hatS} and \eqref{def:hatR}, we obtain that for $z \in \mathbb{C}\setminus\{D(0,\varepsilon)\cup D(1,\varepsilon)\}$,
	\begin{multline}
		X(sz)=X_0\diag{\left(s^{\frac 18}, s^{-\frac 38}, s^{\frac 38}, s^{-\frac 18}\right)} \what R(z) \what N(z)
		\\
		\times \diag{\left(e^{-\theta_1(\sqrt{sz})+\tau \sqrt{sz}}, e^{-\theta_2(\sqrt{sz})- \tau \sqrt{sz}}, e^{\theta_1(\sqrt{sz})+\tau
				\sqrt{sz}},e^{\theta_2(\sqrt{sz}) - \tau \sqrt{sz}} \right)}.
	\end{multline}
	Taking $z \to \infty$ and comparing the coefficient of $\Boh (1/z)$ term on both sides of the above formula, we have
	\begin{equation}\label{def:X11}
		X^{(1)} = s X_0 \diag{\left(s^{\frac 18}, s^{-\frac 38}, s^{\frac 38}, s^{-\frac 18}\right)} \left(\what R^{(1)}+\what N^{(1)}\right)\diag{\left(s^{-\frac 18}, s^{\frac 38}, s^{-\frac 38}, s^{\frac 18}\right)},
	\end{equation}
	where $\what N^{(1)}$ and $\what R^{(1)}$ are given in \eqref{eq:asyhatN} and \eqref{eq:asy:hatR}, respectively.
	In view of \eqref{eq:asyhatN1}, it is readily seen that 
	\begin{equation}\label{def:asyN1}
		\what N^{(1)}_{21}=\what N^{(1)}_{43}=2\beta^2.
	\end{equation}
	By \eqref{eq:asy:hatR}, \eqref{eq:estJhatR3}, \eqref{eq:hatR1} and \eqref{eq:hatR2}, one has
	\begin{align}
		\what R^{(1)} = s^{-\frac 14} \Res_{\zeta =0}\what J^{(0)}_1(\zeta) + s^{-\frac 12} \Res_{\zeta =0}(\what R_{1,-}(\zeta) \what J^{(0)}_1(\zeta)+\what J^{(0)}_2(\zeta)) +\Boh(s^{-\frac 34}).
	\end{align}
	Substituting \eqref{eq:hatR1}, the expressions of $\what J^{(0)}_1$ and $\what J^{(0)}_2$ in \eqref{def:hatJ01} and \eqref{def:hatJ02} into the above formula gives us
	\begin{align}
		\what R^{(1)}_{21} & = -2\beta(M^{(1)}_{13}+M^{(1)}_{14}) s^{-\frac 14} \nonumber\\
		&\quad+ \left(4\beta^2(M^{(1)}_{33}+M^{(1)}_{34}-M^{(1)}_{11}+M^{(1)}_{12}+(M^{(1)}_{13}+M^{(1)}_{14})^2)+(M^{(1)}_{11}-M^{(1)}_{12})\right)s^{-\frac 12}
  \nonumber \\
  &\quad+\Boh (s^{-\frac 34} ), \label{eq:hatR143asy}\\
  \what R^{(1)}_{43} & = -2\beta(M^{(1)}_{13}+M^{(1)}_{14}) s^{-\frac 14} \nonumber\\
		&\quad+4\beta^2(M^{(1)}_{33}+M^{(1)}_{34}-M^{(1)}_{11}+M^{(1)}_{12}+(M^{(1)}_{13}+M^{(1)}_{14})^2)-(M^{(1)}_{33}+M^{(1)}_{34}))s^{-\frac 12}
  \nonumber \\
  &\quad +\Boh (s^{-\frac 34} ).
  \label{eq:hatR121asy}
	\end{align}
Combining \eqref{def:X11}, \eqref{def:asyN1}, \eqref{eq:hatR121asy} and \eqref{eq:hatR143asy}, it follows that
	\begin{align}
		X^{(1)}_{21} & = 2 \beta^2 s^{\frac 12}  -2\beta(M^{(1)}_{13}+M^{(1)}_{14}) s^{\frac 14} \nonumber\\
		&\quad+4\beta^2(M^{(1)}_{33}+M^{(1)}_{34}-M^{(1)}_{11}+M^{(1)}_{12}+(M^{(1)}_{13}+M^{(1)}_{14})^2)
  \nonumber\\
		&\quad + M^{(1)}_{11}-M^{(1)}_{12} +\Boh (s^{-\frac 14} ), \label{eq:asyX111} 
       \\
		X^{(1)}_{43} & =  2 \beta^2 s^{\frac 12}  -2\beta(M^{(1)}_{13}+M^{(1)}_{14}) s^{\frac 14} \nonumber \\
		&\quad+ 4\beta^2(M^{(1)}_{33}+M^{(1)}_{34}-M^{(1)}_{11}+M^{(1)}_{12}+(M^{(1)}_{13}+M^{(1)}_{14})^2)
        \nonumber \\
		&\quad-(M^{(1)}_{33}+M^{(1)}_{34})+\Boh (s^{-\frac 14} ).
	\end{align}
	Substituting the above equations and \eqref{def:M2} into \eqref{eq:derivativeins-tau} yields \eqref{tauindep}.
	
As a consequence of \eqref{tauindep}, we conclude that	the first few terms till the constant term in the large $s$ asymptotics of $F(s; \gamma)$ are independent of $\tau$. We thus simply take $\tau=0$. By \eqref{eq:M11M12} and 
\cite[Corollary 1]{Del13}, we arrive at \eqref{eq:zerotau}, 
which implies $d=k$ from \eqref{def:d} and \eqref{def:k}. Thus, by \ref{def:C}, 
	\begin{align}\label{eq:Cnvs0}
		C_{\nu}(\tilde{s},0)&=\frac{4}{9} \beta \ii \bigg[10 (M^{(1)}_{31}+M^{(1)}_{32}) + (4\beta^2-6)(M^{(1)}_{13}+M^{(1)}_{14})M^{(1)}_{12}+ 36\beta^2  (M^{(1)}_{13}+M^{(1)}_{14})^3 \nonumber \\
		&\quad+ 8\beta^2 (M^{(1)}_{31}-M^{(1)}_{32})-(10+8\beta^2)(M^{(1)}_{13}+M^{(1)}_{14})^2 (M^{(1)}_{13}-M^{(1)}_{14})  \nonumber\\
  &\quad-(5 + 4 \beta^2)  (M^{(2)}_{13}+M^{(2)}_{14}) 
		\bigg]+\frac{16\beta^3\ii}{3}M^{(1)}_{14}
		\left(2M^{(1)}_{12} +(M^{(1)}_{14})^2-(M^{(1)}_{13})^2\right).
	\end{align}
When $\tau=0$, similar to the proof of \cite[Lemma 5]{Del13}, we can derive the following relations through direct calculations:
\begin{align}
	&M^{(1)}_{12}=\frac{(M^{(1)}_{13})^2-(M^{(1)}_{14})^2+\tilde{s}}{2},\\
	&M^{(2)}_{13}-M^{(1)}_{31}=M^{(1)}_{12}(M^{(1)}_{13}+M^{(1)}_{14})+\tilde{s}M^{(1)}_{13},\\
	&M^{(2)}_{14}=M^{(1)}_{12}(M^{(1)}_{13}+M^{(1)}_{14}),\\
	&M^{(1)}_{32}=-2M^{(1)}_{12}(M^{(1)}_{13}+M^{(1)}_{14})-\tilde{s}M^{(1)}_{14}+\ii\nu.
\end{align}
Inserting these relations into \eqref{eq:Cnvs0} gives us
\begin{align}\label{def:C0}
	C_{\nu}(\tilde{s},0)&=\frac{4}{9} \beta \ii \bigg[5M^{(1)}_{31}-28(M^{(1)}_{13}+M^{(1)}_{14})^2 (M^{(1)}_{13}-M^{(1)}_{14})-\tilde{s}(13M^{(1)}_{13}+18M^{(1)}_{14})+10\ii\nu\bigg]\nonumber\\
	&\quad +\frac{8}{9} \beta^3 \ii\bigg[2M^{(1)}_{31}+18(M^{(1)}_{13}+M^{(1)}_{14})^3-(M^{(1)}_{13}+M^{(1)}_{14})^2 (M^{(1)}_{13}-M^{(1)}_{14})\nonumber\\
 &\qquad\qquad\qquad+\tilde{s}(M^{(1)}_{13}+13M^{(1)}_{14})-4\ii\nu\bigg].
\end{align}
	
 To proceed, we note from \eqref{eq:differentialH1} that
	\begin{align}\label{eq:int0sH}
		\int_0^s H(t) \ud t & =\int_0^s  \left(\sum_{k=1}^{12}p_k(t)q_k'(t) -H(t)\right) \ud t +  [tH(t)]_{t=0}^s.
	\end{align}
	The integral on the right hand side of the above equation can be evaluated with the aid of \eqref{eq:differential:gamma}, where also holds if we replace $\gamma$ by $\beta$. By integrating both sides of \eqref{eq:differential:gamma} with respect to $s$, it is readily seen that
	\begin{multline}
		\frac{\partial}{\partial \beta} \int_0^s  \left(\sum_{k=1}^{12}p_k(t)q_k'(t) -H(t)\right) \ud t 
		\\ = \sum_{k=1}^{12}\left(p_k(s)\frac{\partial}{\partial \beta}q_k(s)-p_k(0)\frac{\partial}{\partial \beta}q_k(0) \right) =\sum_{k=1}^{12} p_k(s)\frac{\partial}{\partial \beta}q_k(s),
	\end{multline}
where the second equality follows from small $s$ asymptotics of $\sum p_k(s) \frac{\partial q_k(s)}{\partial \beta}$ established in Proposition \ref{th:pq}. Inserting the above formula into \eqref{eq:int0sH}, we arrive at
	\begin{align}\label{eq:asy:H3}
		\int_0^s H(t) \ud t = \int_0^{\beta}\sum_{k=1}^{12}p_k(s)\frac{\partial}{\partial \beta'}q_k(s) \ud \beta'+[tH(t)]_{t=0}^s.
	\end{align}
By using the results in Proposition \ref{th:pq} and large $s$ asymptotics of $H$ in \eqref{asy:H}, we have
	\begin{align}\label{9.49}
		\int_0^s H(t) \ud t&=\int_0^{\beta}\bigg[ 2\beta' \ii \frac{\partial \vartheta(s)}{\partial \beta'}+\frac{10}{3}\beta'+\frac{\ii}{3}s^{3/4}-3\ii \tilde{s} s^{1/4}+C_{\nu}(\tilde{s},0)+\frac{\partial(\beta' \ii \cos(2\theta(s))-\beta'^2)}{4\partial \beta'}
  \nonumber\\
  &\qquad\qquad+\Boh\left(s^{-\frac{1}{4}}\right)\bigg]\ud \beta'
		+\beta \ii  s^{\frac{3}{4}}-\beta \ii \tilde{s} s^{\frac{1}{4}}-\frac{7 \beta^2 }{8}-\frac{\beta \ii }{4} \cos  \left(2 \vartheta (s)\right)+\Boh({s^{-\frac{1}{4}}})
		\nonumber\\
		&=\frac{4}{3}\beta \ii  s^{\frac{3}{4}}-4\beta \ii \tilde{s} s^{\frac{1}{4}}+\frac{13}{24}\beta^2+\int_0^{\beta}\left(2\beta' \ii \frac{\partial \vartheta(s)}{\partial \beta'}+C_{\nu}(\tilde{s},0)\right)\ud \beta'+\Boh({s^{-\frac{1}{4}}}).
	\end{align}
Recall the definition of $\vartheta(s)$ and $C_{\nu}(\tilde{s},0)$ in \eqref{def:theta} and \eqref{def:C0}, we have
	\begin{align}\label{eq:intetheta}
		\int_0^{\beta}2  \beta'\ii \frac{\partial \vartheta(s)}{\partial \beta'} \ud \beta'&=\int_0^{\beta}2  \beta'\ii \left(\frac{\partial}{\partial \beta'}\arg \Gamma(1+\beta) + \ii \frac{3}{4} \ln s + \ii \ln8\left(1-\frac{\tilde{s}}{\sqrt{s}}\right)\right) \ud \beta'\nonumber\\
		&=\ln G(1+\beta)G(1-\beta)-\frac{3}{4}\beta^2 \ln s -\beta^2 \ln 8+\Boh(s^{-1}),
	\end{align}
 and 
	\begin{align}\label{def:D}
	& \int_0^{\beta} C_{\nu}(\tilde{s},0)\ud \beta' \nonumber \\
		& = \frac{2}{9} \beta^2 \ii \bigg[5M^{(1)}_{31}-28(M^{(1)}_{13}+M^{(1)}_{14})^2 (M^{(1)}_{13}-M^{(1)}_{14})-\tilde{s}(13M^{(1)}_{13}+18M^{(1)}_{14})+10\ii\nu\bigg]\nonumber\\
		&\quad +\frac{2}{9} \beta^4 \ii\bigg[2M^{(1)}_{31}+18(M^{(1)}_{13}+M^{(1)}_{14})^3-(M^{(1)}_{13}+M^{(1)}_{14})^2 (M^{(1)}_{13}-M^{(1)}_{14})\nonumber\\
  &\qquad\qquad\qquad+\tilde{s}(M^{(1)}_{13}+13M^{(1)}_{14})-4\ii\nu\bigg].
	\end{align}
By setting 
\begin{align}
	&D_{\nu,1}=\frac{2}{9}  \ii \bigg[5M^{(1)}_{31}-28(M^{(1)}_{13}+M^{(1)}_{14})^2 (M^{(1)}_{13}-M^{(1)}_{14})-\tilde{s}(13M^{(1)}_{13}+18M^{(1)}_{14})+10\ii\nu\bigg],  \label{def:D1} \\
	&D_{\nu,2}=\frac{2}{9}  \ii\bigg[2M^{(1)}_{31}+18(M^{(1)}_{13}+M^{(1)}_{14})^3-(M^{(1)}_{13}+M^{(1)}_{14})^2 (M^{(1)}_{13}-M^{(1)}_{14})\nonumber\\
&\qquad\qquad +\tilde{s}(M^{(1)}_{13}+13M^{(1)}_{14})-4\ii\nu\bigg],\label{def:D2}
\end{align}
we obtain the second formula in \eqref{ds-F} from \eqref{9.49}--\eqref{def:D2}. We emphasize that here the parameter $\tau=0$ in the matrix $M^{(1)}$, although not explicitly expressed.   

This completes the proof of Theorem \ref{th:1}. \qed

	\subsection{Proof of Corollary \ref{coro1}}
	It is readily seen that, as $x \to 0$,
	\begin{align}
		\mathbb{E} \left(e^{-2\pi x N(s)}\right) = 1-2 \pi \mathbb{E}(N(s)) x + 2 \pi^2 \mathbb{E}(N(s)^2) x^2 + \Boh(x^3).
	\end{align}
	Then we have
	\begin{align}\label{eq:ENs}
		F(s; 1-e^{-2\pi x}) &=\ln  \mathbb{E} \left(e^{-2\pi x N(s)}\right)\nonumber\\
		&=-2 \pi \mathbb{E}(N(s)) x + 2 \pi^2 \Var(N(s)) x^2 + \Boh(x^3), \qquad x \to 0,
	\end{align}
	where $F$ ie defined in \eqref{def:Fnotation}. In view of \eqref{ds-F}, we have
	\begin{align}\label{eq:FENs}
		F(s; 1-e^{-2\pi x}) &= -2 \pi \mu (s) x + 2 \pi^2\left(\sigma (s)^2 - \frac{-\frac{13}{24} + \ln 8-D_{\nu,1}}{2 \pi^2}\right)x^2 \nonumber\\
		&\quad+D_{\nu,2}x^4+ \ln (G(1+\ii x)G(1-\ii x)) + \Boh(s^{-\frac 14}), \quad s\to +\infty,
	\end{align}
	where the functions $\mu (s)$ and $\sigma (s)^2$ are defined in \eqref{def:mu-sigma}, $D_{\nu,1}$ and $D_{\nu,2}$ are defined in \eqref{def:D1} and \eqref{def:D2}.
	
	Note that
	\begin{align}
		G(1+z) = 1 + \frac{\ln (2\pi)-1}{2} z+\left(\frac{(\ln (2\pi)-1)^2}{8}-\frac{1 + \gamma_E}{2}\right) z^2 + \Boh(z^3), \qquad z \to 0,
	\end{align}
	where $\gamma_E$ is Euler's constant, we then obtain \eqref{def:EN} and \eqref{def:VarN} from \eqref{eq:ENs} and \eqref{eq:FENs}. Note that the additional factors $\ln s$ and $(\ln s)^2$ appear in the error terms due to the derivative with respect to $x$.
	
	Finally, since $\sigma (s)^2 \to +\infty$ for large positive $s$, it is easily shown that
	\begin{align}
		\mathbb{E}\left(e^{t \cdot \frac{N(s) - \mu (s)}{\sqrt{\sigma (s)^2}}}\right) \to e^{\frac{t^2}{2}}, \qquad s \to +\infty,
	\end{align}
	which implies the convergence of $\frac{N(s) - \mu (s)}{\sqrt{\sigma (s)^2}}$ in distribution to the normal law $\mathcal{N} (0,1)$. The upper bound \eqref{ub} follows directly from a combination of \cite[Lemma 2.1 and Theorem 1.2]{CC21}, \eqref{eq:ENs} and \eqref{eq:FENs}.
	
	This completes the proof of Corollary \ref{coro1}.
	\qed
	\section*{Acknowledgements}
Luming Yao was partially supported by National Natural Science Foundation of China under grant number 12401316. Lun Zhang was partially supported by National Natural Science Foundation of China under grant numbers 12271105, 11822104, and ``Shuguang Program'' supported by Shanghai Education Development Foundation and Shanghai Municipal Education Commission.
	
	\begin{appendices}

\section{The  Bessel parametrix}
\label{A}
The  Bessel parametrix is the unique solution of the following RH problem.
		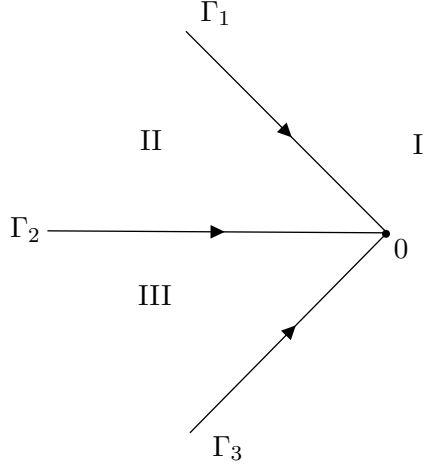
\begin{figure}[t]
			\centering

\tikzset{every picture/.style={line width=0.5pt}} 

\begin{tikzpicture}[x=0.75pt,y=0.75pt,yscale=-1,xscale=1]

\draw    (100,130) -- (269.5,131) ;
\draw [shift={(188.55,130.52)}, rotate = 180.34] [fill={rgb, 255:red, 0; green, 0; blue, 0 }  ][line width=0.08]  [draw opacity=0] (7.14,-3.43) -- (0,0) -- (7.14,3.43) -- cycle    ;
\draw   (169.12,29.61) -- (269.5,131) ;
\draw [shift={(221.98,83.01)}, rotate = 225.29] [fill={rgb, 255:red, 0; green, 0; blue, 0 }  ][line width=0.08]  [draw opacity=0] (7.14,-3.43) -- (0,0) -- (7.14,3.43) -- cycle    ;
\draw     (269.5,131) -- (171.12,231.61) ;
\draw [shift={(224.01,177.52)}, rotate = 134.36] [fill={rgb, 255:red, 0; green, 0; blue, 0 }  ][line width=0.08]  [draw opacity=0] (7.14,-3.43) -- (0,0) -- (7.14,3.43) -- cycle    ;
\draw  [fill={rgb, 255:red, 0; green, 0; blue, 0 }  ,fill opacity=1 ][line width=0.75]  (267.5,131.5) .. controls (267.5,130.67) and (268.17,130) .. (269,130) .. controls (269.83,130) and (270.5,130.67) .. (270.5,131.5) .. controls (270.5,132.33) and (269.83,133) .. (269,133) .. controls (268.17,133) and (267.5,132.33) .. (267.5,131.5) -- cycle ;

\draw (281,80) node [anchor=north west][inner sep=0.75pt]   [align=left] {I};
\draw (271.5,133) node [anchor=north west][inner sep=0.75pt]   [align=left] {0};
\draw (145,78) node [anchor=north west][inner sep=0.75pt]   [align=left] {II};
\draw (144,156) node [anchor=north west][inner sep=0.75pt]   [align=left] {III};
\draw (175,13) node [anchor=north west][inner sep=0.75pt]   [align=left] {$\Gamma_1$};
\draw (80,122) node [anchor=north west][inner sep=0.75pt]   [align=left] {$\Gamma_2$};
\draw (181,232) node [anchor=north west][inner sep=0.75pt]   [align=left] {$\Gamma_3$};

\end{tikzpicture}
			\caption{The jump contours $\Gamma_j$, $j=1,2,3$, and the domains I--III in the RH problem for $\Phi_{\alpha}^{(\text{Bes})}$.}
			\label{figure-Bessel}
		\end{figure}
		\begin{rhp}\label{rhp:Bessel}
			\hfill
			\begin{itemize}
				\item [\rm(a)] $\Phi_{\alpha}^{(\Bes)}(z)$ is defined and analytic in $\mathbb{C} \setminus (\cup_{j=1}^3 \Gamma_j \cup\{0\})$, where the contours $\Gamma_j$, $j=1,2,3$, are shown in Figure \ref{figure-Bessel}.
				\item [\rm(b)] For $z \in \cup_{j=1}^3 \Gamma_j$, we have
				\begin{equation}\label{eq:jump:Bessel}
					\Phi_{\alpha,+}^{(\Bes)}(z) = \Phi_{\alpha, -}^{(\Bes)}(z) \begin{cases}
						\begin{pmatrix}
							1 & 0\\
							e^{\alpha \pi \ii} & 1
						\end{pmatrix}, \quad & z \in \Gamma_1,\\
						\begin{pmatrix}
							0 &1\\
							-1 & 0
						\end{pmatrix}, \quad & z \in \Gamma_2,\\
						\begin{pmatrix}
							1 & 0\\
							e^{-\alpha \pi \ii} & 1
						\end{pmatrix}, \quad & z \in \Gamma_3,
					\end{cases}
				\end{equation}
    where $\alpha>-1$. 
				\item [\rm(c)] As $z \to \infty$, we have
				\begin{multline}\label{eq:infty:Bessel}
					\Phi_{\alpha}^{(\Bes)}(z) = \frac{(\pi^2 z)^{-\sigma_3/4}}{\sqrt{2}} \begin{pmatrix}
						1 & \ii\\
						\ii & 1
					\end{pmatrix}
					\left(I + \frac{1}{8z^{1/2}}\begin{pmatrix}
						-1-4\alpha^2 & -2 \ii\\
						-2 \ii & 1+4\alpha^2
					\end{pmatrix} + \Boh\left(\frac{1}{z}\right) \right)e^{z^{1/2}\sigma_3}.
				\end{multline}
				\item [\rm(d)] As $z \to 0$, we have
				\begin{equation}
					\Phi_{\alpha}^{(\Bes)}(z) = \begin{cases}
						\Boh (|z|^{\frac{\alpha}{2}})
      \quad & \alpha <0,\\
						\Boh (\ln|z|)
     \quad & \alpha =0,\\
						\Boh\begin{pmatrix}
							|z|^{\frac{\alpha}{2}} & |z|^{-\frac{\alpha}{2}}\\
							|z|^{\frac{\alpha}{2}} & |z|^{-\frac{\alpha}{2}}
						\end{pmatrix}, \quad & \textrm{$\alpha >0$ and $z \in \rm{I}$},\\
						\Boh\begin{pmatrix}
							|z|^{-\frac{\alpha}{2}} & |z|^{-\frac{\alpha}{2}}\\
							|z|^{-\frac{\alpha}{2}} & |z|^{-\frac{\alpha}{2}}
						\end{pmatrix}, \quad & \textrm{$\alpha >0$ and $z \in \rm{II} \cup \rm{III}$},
					\end{cases}
				\end{equation}
				where the domains $\rm{I}$--$\rm{III}$ are illustrated in Figure \ref{figure-Bessel}.
			\end{itemize}
		\end{rhp}
By \cite{KMVV04}, we have 
\begin{equation}\label{def:Bespara}
			\Phi_{\alpha}^{(\Bes)}(z) = \begin{cases}
				\begin{pmatrix}
					I_\alpha(z^{\frac12}) & \frac{\ii}{\pi} K_\alpha(z^{\frac12})\\
					\pi \ii z^{\frac12}I_\alpha'(z^{\frac12}) & -z^{1/2}K_\alpha'(z^{\frac12})
				\end{pmatrix}, &\quad z \in \rm{I},\\
				\begin{pmatrix}
					I_\alpha(z^{\frac12}) & \frac{\ii}{\pi} K_\alpha(z^{\frac12})\\
					\pi \ii z^{1/2}I_\alpha'(z^{\frac12}) & -z^{1/2}K_\alpha'(z^{\frac12})
				\end{pmatrix}\begin{pmatrix}
					1 & 0\\
					-1 & 1
				\end{pmatrix},& \quad z \in \rm{II},\\
				\begin{pmatrix}
					I_\alpha(z^{\frac12}) & \frac{\ii}{\pi} K_\alpha(z^{\frac12})\\
					\pi \ii z^{\frac12}I_\alpha'(z^{\frac12}) & -z^{1/2}K_\alpha'(z^{\frac12})
				\end{pmatrix}\begin{pmatrix}
					1 & 0\\
					1 & 1
				\end{pmatrix}, &\quad z \in \rm{III},
			\end{cases}
		\end{equation}
		where $I_\alpha(z)$ and $K_\alpha(z)$ denote the modified Bessel functions of order $\alpha$ (cf. \cite[Chapter 10]{DLMF}) and the principle branch is taken for $z^{1/2}$. 
		
		\section{The confluent hypergeometric parametrix}\label{sec:CHF}
The confluent hypergeometric parametrix is the unique solution of the following RH problem.

		\subsection*{RH problem for $\Phi^{(\CHF)}$}
		\begin{description}
			\item(a)   $\Phi^{(\CHF)}(z)=\Phi^{(\CHF)}(z;\beta)$ is analytic in $\mathbb{C}\setminus \{\cup^6_{j=1}\widehat\Sigma_j\cup\{0\}\}$, where the contours $\widehat\Sigma_j$, $j=1,\ldots,6,$ are indicated in Figure \ref{fig:jumps-Phi-C}.
			
			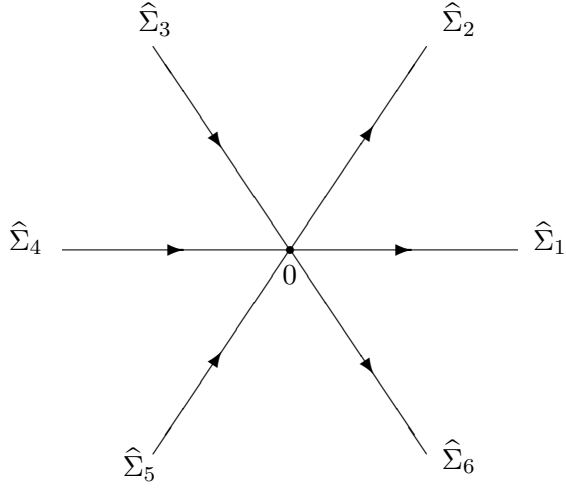
\begin{figure}[h]
				\begin{center}
					\setlength{\unitlength}{1truemm}
					\begin{picture}(100,70)(-5,2)
						\put(40,40){\line(-2,-3){18}}
						\put(40,40){\line(-2,3){18}}
						\put(40,40){\line(-1,0){30}}
						\put(40,40){\line(1,0){30}}
						\put(40,40){\line(2,-3){18}}
						\put(40,40){\line(2,3){18}}
						
						\put(30,55){\thicklines\vector(2,-3){1}}
						\put(25,40){\thicklines\vector(1,0){1}}
						\put(55,40){\thicklines\vector(1,0){1}}
						\put(30,25){\thicklines\vector(2,3){1}}
						\put(50,25){\thicklines\vector(2,-3){1}}
						\put(50,55){\thicklines\vector(2,3){1}}
						
						
						\put(39,35.5){$0$}
						\put(72,40){$\widehat \Sigma_1$}
						\put(60,69){$\widehat \Sigma_2$}
						\put(20,69){$\widehat \Sigma_3$}
						\put(3,40){$\widehat \Sigma_4$}
						\put(18,10){$\widehat \Sigma_5$}
						\put(60,11){$ \widehat \Sigma_6$}

						%
						
						\put(40,40){\thicklines\circle*{1}}
					\end{picture}
					\caption{The jump contours of the RH problem for $\Phi^{(\CHF)}$.}
					\label{fig:jumps-Phi-C}
				\end{center}
			\end{figure}
			
			\item(b) $\Phi^{(\CHF)}$ satisfies the jump condition
			\begin{equation}\label{CHFJumps}
				\Phi^{(\CHF)}_+(z)=\Phi^{(\CHF)}_-(z) \widehat J_j(z), \quad z \in \widehat\Sigma_j,\quad j=1,\ldots,6,
			\end{equation}
			where
			\begin{equation*}
				\widehat J_1(z) = \begin{pmatrix}
					0 &   e^{-\beta \pi \ii} \\
					-  e^{\beta \pi \ii} &  0
				\end{pmatrix}, \qquad \widehat J_2(z) = \begin{pmatrix}
					1 & 0 \\
					e^{ \beta \pi \ii } & 1
				\end{pmatrix}, \qquad
				\widehat J_3(z) = \begin{pmatrix}
					1 & 0 \\
					e^{ -\beta\pi \ii} & 1
				\end{pmatrix},                                                         
			\end{equation*}
			\begin{equation*}
				\widehat J_4(z) = \begin{pmatrix}
					0 &   e^{\beta\pi \ii} \\
					-  e^{-\beta\pi \ii} &  0
				\end{pmatrix}, \qquad
				\widehat J_5(z) = \begin{pmatrix}
					1 & 0 \\
					e^{- \beta\pi \ii} & 1
				\end{pmatrix},\qquad
				\widehat J_6(z) = \begin{pmatrix}
					1 & 0 \\
					e^{\beta\pi \ii} & 1
				\end{pmatrix}.
			\end{equation*}

			\item(c) As $z\to \infty$, we have 
			\begin{multline}\label{CHF at infinity}
				\Phi^{(\CHF)}(z)=\left(I +\frac{\Phi_1^{(\CHF)}}{z} +\Boh(z^{-2})\right) z^{-\beta \sigma_3}e^{-\frac{\ii z}{2}\sigma_3}
				\left\{\begin{array}{ll}
					I, & ~0< \arg z <\pi,
					\\
					\begin{pmatrix}
						0 &   -e^{\beta\pi \ii} \\
						e^{-\beta\pi \ii } &  0
					\end{pmatrix}, &~ \pi< \arg z<\frac{3\pi}{2},
					\\
					\begin{pmatrix}
						0 &   -e^{-\beta\pi \ii} \\
						e^{\beta\pi \ii} &  0
					\end{pmatrix}, & -\frac{\pi}{2}<\arg z<0,
				\end{array}\right.
			\end{multline}
			where 
			\begin{align}\label{def:CHF1}
				\Phi_1^{(\CHF)}=\begin{pmatrix}
					\beta^2 \ii & -\frac{\Gamma(1-\beta)}{\Gamma(\beta)} e^{-\beta \pi \ii}\ii \\
					\frac{\Gamma(1+\beta)}{\Gamma(-\beta)} e^{\beta \pi \ii}\ii &-\beta^2 \ii
				\end{pmatrix}.
			\end{align}
			\item(d) As $z\to 0$, we have $\Phi^{(\CHF)}(z)=\Boh(\ln |z|)$.
		\end{description}
		
By \cite{IK}, it follows that the above RH problem can be solved explicitly in terms of the confluent hypergeometric functions. Moreover, as $z \to 0$, we have
		
		\begin{equation}\label{eq:H-expand-2}
			\Phi^{(\CHF)}(z) e^{-\frac{\beta \pi \ii}{2} \sigma_3} = \Upsilon_0\left( I+ \Upsilon_1z+\Boh(z^2) \right) \begin{pmatrix} 1 & -\frac{\gamma}{2\pi \ii} \ln (e^{-\frac{\pi \ii}{2}}z) \\
				0 & 1  \end{pmatrix},
		\end{equation}
		for $z$ belonging to the region bounded by the rays $\widehat \Sigma_2$ and $\widehat \Sigma_3$, where $\gamma=1-e^{2\beta \pi \ii}$,
		\begin{align}\label{eq:H-expand-coeff-0}
			\Upsilon_0
			=\begin{pmatrix} \Gamma\left(1-\beta\right) e^{-\beta \pi \ii} &\frac{1}{\Gamma(\beta)} \left( \frac{\Gamma'\left(1-\beta\right)}{\Gamma\left(1-\beta\right)} +2\gamma_{\textrm{E}} \right) \vspace{5pt} \\
				\Gamma\left(1+\beta\right) & -\frac{e^{\beta \pi \ii}}{\Gamma(-\beta)} \left( \frac{\Gamma'\left(-\beta\right)}{\Gamma\left(-\beta\right) } +2\gamma_{\textrm{E}}\right) \end{pmatrix}
		\end{align}
		with $\gamma_{\textrm{E}}$ being the Euler's constant,
		and
		\begin{equation}\label{eq:H-expand-coeff-1}
			(\Upsilon_1)_{21}=\frac{ \beta \pi \ii \, e^{-\beta \pi \ii} }{\sin(\beta \pi )}.
		\end{equation}
	\end{appendices}
	
\end{document}